\numberwithin{figure}{section}
\definecolor{my-blue}{rgb}{0.0,0.0,0.6}
\definecolor{my-red}{rgb}{0.5,0.0,0.0}
\definecolor{my-green}{rgb}{0.0,0.5,0.0}
\definecolor{nicos-red}{rgb}{0.75,0.0,0.0}
\newtheorem{theorem}{\sc Theorem}[section]
\newtheorem{lemma}[theorem]{\sc Lemma}
\newtheorem{proposition}[theorem]{\sc Proposition}
\newtheorem{corollary}[theorem]{\sc Corollary}
\newtheorem{definition}[theorem]{\sc Definition}
\numberwithin{equation}{section}
\theoremstyle{remark}
\newtheorem{remark}[theorem]{Remark}
\newtheorem{example}[theorem]{Example}
\newcommand{\be}{\begin{equation}}
\newcommand{\ee}{\end{equation}}
\newcommand{\beq}{\begin{equation}}
\newcommand{\eeq}{\end{equation}}
\newcommand{\nn}{\nonumber}
\providecommand{\abs}[1]{\vert#1\vert}
\newcommand{\fl}[1]{\lfloor{#1}\rfloor} 
\newcommand{\ce}[1]{\lceil{#1}\rceil}
  \def\cD{\mathcal{D}}
\def\cG{\mathcal{G}}
\def\cH{\mathcal{H}}
\def\cR{\mathcal{R}}
\def\cU{\mathcal{U}}
\def\cK{\mathcal{K}}
\def\cA{\mathcal{A}}  
\def\cY{\mathcal{Y}}
\def\esssup{\mathop{\mathrm{ess\,sup}}}
\font \mymathbb = bbold10 at 11pt
\newcommand{\one}{\mbox{\mymathbb{1}}}
\def\kS{\mathfrak{S}}
\def\bE{\mathbb{E}} 
\def\bP{\mathbb{P}}
\def\bR{\mathbb{R}}
\def\bZ{\mathbb{Z}}
 \def\Z{\bZ}  \def\R{\bR}
  \def\uvec{\mathbf{u}}\def\vvec{\mathbf{v}}
\def\wvec{\mathbf{w}}
\def\w{\omega}
\def\e{\varepsilon}
\def\ind{\mathbf{1}}
\def\ddd{\displaystyle}
\def\m1{\mathbf{1}}
 \def\Vvv{{\rm\mathbb{V}ar}}  \def\Cvv{{\rm\mathbb{C}ov}}
 \def\wt{\widetilde}  \def\wh{\widehat} \def\wb{\overline} \def\wc{\widecheck}
\def\E{\bE}
\def\P{\bP} 
\def\north{S_{\mathcal N}}  \def\south{S_{\mathcal S}}  \def\east{S_{\mathcal E}}  
\def\west{S_{\mathcal W}}  
\def\OSP{(\Omega, \kS, \P)}
\def\funct lp{L} 
\def\funct lpbar{\bar L} 
\def\range{\mathcal R}
\def\Uset{\mathcal U}
\def\cA{\mathcal A}
\def\Gpl{G}
\def\Gpp{G}
\def\gpp{g_{\text{\rm pp}}}
\def\gpl{g_{\text{\rm pl}}}
\def\B{{B}}
\def\cE{{\mathcal E}}
\DeclareMathOperator{\dist}{dist}
\DeclareMathOperator{\Var}{Var}
\DeclareMathOperator{\ri}{ri}    
\def\cH{\mathcal{H}}
\DeclareMathOperator{\conv}{co}   
\definecolor{darkgreen}{rgb}{0.0,0.5,0.0}
\definecolor{darkblue}{rgb}{0.0,0.0,0.3}
\definecolor{nicosred}{rgb}{0.65,0.1,0.1}
\definecolor{light-gray}{gray}{0.7}
\def\cif1{v}   
\def\sigmawc{\wc\sigma}   
\def\loexp{\chi}  
\def\trexp{\zeta}  
\def\deq{\overset{d}=}
\def\qedex{\hfill$\triangle$} 
   \def\tincr{t}   \def\Iincr{I}
\def\Xw{X}  
\def\Yw{Y}  
 \def\ul{\underline}  
\def\exitt{\tau} 
\def\Gppo{{^o\Gpp}}
\newcommand\bbullet{{\raisebox{0.5pt}{\scaleobj{0.6}{\bullet}}}} 
\newcommand\brbullet{{\raisebox{-1pt}{\scaleobj{0.5}{\bullet}}}} 
\newcommand\cbullet{{\raisebox{1pt}{\scaleobj{0.6}{\bullet}}}}
\newcommand\dbullet{{\scaleobj{0.7}{\bullet}}}   
\newcommand\blfootnote[1]{%
  \begingroup
  \renewcommand\thefootnote{}\footnote{#1}%
  \addtocounter{footnote}{-1}%
  \endgroup
}
\begin{document}

\begin{center}
\Large

{\bf Variational formulas, Busemann functions, and fluctuation exponents for the corner growth model with exponential weights}  \\[14pt]

\normalsize 

Lecture notes  by Timo Sepp\"al\"ainen\footnote{Department of Mathematics, 480 Lincoln Drive, University of Wisconsin--Madison, Madison, WI 53711, USA.  URL {\tt http://www.math.wisc.edu/$\sim$seppalai} \ The author was partially supported by  National Science Foundation grant DMS-1602486   and by the Wisconsin Alumni Research Foundation.}  \\[13pt]

\end{center} 

{\it Abstract.}  These lecture notes discuss several  related features of the exactly solvable two-dimensional corner growth model with exponentially distributed weights.  A  key property of this model is the availability of a fairly explicit stationary version  that possesses useful independence properties.   With the help of couplings and estimates,  we prove the existence of Busemann functions for this model,  and the precise values of the longitudinal and transversal  fluctuation exponents for the stationary corner growth model.   The Busemann functions in turn furnish extremals for variational formulas that describe limiting shape functions. 

Early versions of these  notes were  used at  the Research School on  Random Structures in Statistical Mechanics and Mathematical Physics   at  CIRM in Marseille Luminy (March 2017) and developed for  the Proceedings of the    AMS Short Course on Random Growth Models (January 2017).   

\tableofcontents

\blfootnote{Version of \today}

\section{Introduction}



{\bf Overview.} 
These notes discuss a circle of ideas associated with  random growth models.  The results are developed in the context of one of the oldest  exactly solvable models in the 1+1 dimensional Kardar-Parisi-Zhang (KPZ) class, namely the {\it corner growth model}   (CGM) with exponential weights. 
 The centerpiece of the development is an increment-stationary stochastic process that represents a stationary version of the random growth model.    Section \ref{sec:var}  describes how this  stochastic process arises as the extremal in a variational formula for the limiting shape function.  This part of the text treats a very general   last-passage percolation (LPP) model  on the $d$-dimensional integer lattice.  
 
  From Section \ref{v:s-stat-cgm} onwards we restrict ourselves to the exponential CGM in two dimensions.   Section \ref{v:s-stat-cgm} constructs the stationary version of the exponential CGM and shows how it enables an explicit calculation of    the limiting shape function of both the point-to-point and the point-to-line version of the CGM.  Additionally we prove the shape theorem and show that macroscopically geodesics are straight lines. 
  
     Section \ref{sec:bus} proves  the existence of limiting Busemann functions.  These Busemann functions are    extremals for the variational formulas described in   Section \ref{sec:var}.
As an application of the results on Busemann functions we prove a CGM version of the midpoint problem.  
  
  A feature of the  KPZ class is that two  fluctuation exponents take universal values for all models in the class.    The longitudinal exponent $\loexp$  describes the magnitude of fluctuations of particle current in conservative particle systems,   last-passage  and first-passage times in growth models,  and free energy fluctuations in positive temperature polymers.  The transversal exponent $\trexp$  describes the magnitude of  spatial correlations and the   fluctuations of random paths such as geodesics in percolation models    and   polymer paths.  
  
     In 1+1 dimensions the  values of the KPZ  exponents are  $\loexp=\tfrac13$  and $\trexp=\tfrac23$.   So far these exact exponent values have been proved only for  solvable models in the KPZ class, with the possible exception of the class of zero-range processes treated in \cite{bala-komj-sepp-12}.   
    In  Section \ref{sec:exp} we prove $\loexp=\tfrac13$  and $\trexp=\tfrac23$ in the 1+1 dimensional stationary exponential CGM.   The key to these coupling proofs is the explicit stationary version of the CGM.  
  
  Appendix \ref{app:coupl} develops  some useful couplings for the CGM.  
  
 This text does not have to be read in its entirety in linear order.   Section  \ref{sec:var} sets the stage but strictly speaking  is  not needed for the remainder.  Section \ref{v:s-stat-cgm} is a prerequisite for   Sections  \ref{sec:bus}  and  \ref{sec:exp}, but these last two sections   are independent of each other. 
 
 \medskip


Assuming standard background in measure theory and  probability theory,  
these notes are fairly  self-contained,    except for the following points in  Section \ref{sec:var}.  We take for granted  the existence of  limiting shape functions (Theorem \ref{v:t-lln2}).  This existence theorem  is an application of subadditive ergodic theory, together with some estimation to produce limit functions defined on $\R^d$. Three further results are quoted from the literature:   Lemma \ref{v:lm-xih}, Theorem \ref{cc-ergthm} which is  an ergodic theorem for cocycles, and the existence of a minimizer in Theorem \ref{th:K-var}.  

%


\medskip


{\bf Literature.}   Here are some brief notes on the history of the topics of these notes, without any claim to completeness. 

  The first derivation of  the limit shape of the corner growth model with exponential weights came in  Rost's seminal 1981 paper \cite{rost} on the hydrodynamic limit of the totally asymmetric simple exclusion process (TASEP)  with step initial condition.   An even earlier instance of work on similar limit shapes involved the longest common subsequence of a random permutation  \cite{hamm, loga-shep-77, vers-kero-77}.  The   Poissonization of this problem can be cast  as a last-passage growth model on a  homogeneous planar Poisson point process. 
The study of last-passage percolation began to flourish in the 1990s.  A  number of papers appeared on limit shapes    and large deviations, examples of which include  \cite{aldo-diac95, cohn-elki-prop-96, deus-zeit-95, deus-zeit-99,  jock-prop-shor-98, kimj-96, sepp-96, sepp98ebp, sepp98mprf, sepp-ptrf-98}.   

The notion that the fluctuations of these models do not follow the standard central limit theorem,  but have a different exponent $1/3$, was a well-established conjecture  from  physics,  but mathematical proof was lacking.  The first rigorous results of KPZ fluctuations  came at the turn of the millennium in the breakthrough papers \cite{baik-deif-joha-99, joha}. 

Variational formulas of the type presented in Section \ref{sec:var} arose first in homogenization and random walk in random environment (RWRE).  See equation (4.19) in the review paper \cite{kosy-07} and also  Definition 4 on page 5 of \cite{rose-phd-06}.      Versions of these formulas for   last-passage percolation came  in  \cite{geor-rass-sepp-16}.  Another related variational formula was introduced independently in undirected first-passage percolation  in \cite{kris-16}.

Busemann function is a notion from geometry.  It was introduced into first-passage percolation by  Hoffman \cite{hoff-05} and Newman \cite{newm-icm-95}.  Newman  and coauthors developed an argument for the existence of Busemann functions based on the coalescence of directed geodesics.  This method  relies on an assumption on the curvature of the limit shape.   For the exponential corner growth model the existence of Busemann functions  was first proved by Ferrari and Pimentel \cite{ferr-pime-05} with Newman's approach.  (The limiting Busemann function appears in equation (38) of \cite{ferr-pime-05}.)  Newman's method was applicable because the limit shape of the exponential corner growth model is known explicitly.   In Section \ref{sec:bus} of these notes we apply  an alternative method that uses  a stationary version of the LPP process.  This approach has been applied to the CGM with general weights in \cite{geor-rass-sepp-17-buse}.    
See \cite{bakh-cato-khan-14, cato-pime-13, ferr-mart-pime-09} for a further selection of papers that use Busemann functions   to study percolation, growth models,  and interacting particle systems.


The seminal papers \cite{baik-deif-joha-99, joha} on KPZ fluctuations  proved  Tracy-Widom limits with refined combinatorics and asymptotic analysis.  A more robust probabilistic proof of only the fluctuation exponents appeared a few years later in  \cite{cato-groe-06}.   This approach was adapted to the corner growth model in \cite{bala-cato-sepp}  and to a positive-temperature polymer model in \cite{sepp-12-aop-corr}.  


Review articles and lecture notes on   KPZ universality include the following: 
 \cite{boro-gori-16, corw-12-rev,  johalect-05}.  
Fluctuation exponents in both the KPZ class and the EW (Edwards-Wilkinson) class are developed in lectures  \cite{sepp-10-ens}.  
Lecture notes \cite{come-16, denholl-polymer} give overviews of the state of the art  in  directed polymer models, which are the positive temperature counterpart of directed percolation.  
Articles \cite{corw-16-rev, damr-rass-sepp-16} are introductions to KPZ universality and growth models aimed at general mathematical  audiences.  

\medskip

 
 {\bf Some general notation and terminology.} 
 $\Z_{\ge0}=\{0,1,2,3, \dotsc\}$ and $\Z_{>0}=\{1,2,3,\dotsc\}$.  For $n\in\Z_{>0}$ we abbreviate  $[n]=\{1,2,\dotsc,n\}$.   A sequence of $n$   points  is denoted by $x_{0,n}=(x_k)_{k=0}^n=\{x_0,x_1,\dotsc,x_n\}$, and in case it is a path of length $n$  also by $x_{\bbullet}$.   $a\vee b=\max\{a,b\}$.     $C$ is a constant whose value can change from line to line.

 The standard basis vectors of $\R^2$ are  $e_1=(1,0)$ and $e_2=(0,1)$.  For a point  $x=(x_1,x_2)\in\R^2$  the $\ell^1$-norm  is   $\abs{x}_1=\abs{x_1} + \abs{x_2}$ and integer parts  are taken coordinatewise:  $\fl{x}=(\fl{x_1}, \fl{x_2})$.     We call the $x$-axis occasionally the $e_1$-axis, and similarly the $y$-axis and the $e_2$-axis are the same thing.  
 Inequalities on $\R^2$ are interpreted coordinatewise: for $x=(x_1,x_2)\in\R^2$ and $y=(y_1,y_2)\in\R^2$,   $x\le y$  means $x_1\le y_1$ and $x_2\le y_2$.    Notation $[x,y]$ represents both   the line segment $[x,y]=\{tx+(1-t)y: 0\le t\le 1\}$ and the rectangle  $[x,y]=\{(z_1,z_2)\in\R^2:   x_i\le z_i\le  y_i \text{ for }i=1,2\}$. The context will make clear which case is used.   $0$ denotes the origin of both $\R$ and $\R^2$.  
 
 $X\sim$ Exp($\lambda$) for $0<\lambda<\infty$ means that random variable $X$ has exponential distribution with rate $\lambda$.   This $X$ is a positive random variable  whose probability distribution satisfies $P(X>t)=e^{-\lambda t}$ for $t\ge 0$.  It has mean $E(X)=\lambda^{-1}$ and variance $\Var(X)=\lambda^{-2}$.

 We write $\w_x$ and $\w(x)$ interchangeably for the weight attached to lattice point $x$.    In general,  $\overline X=X-EX$ denotes a random variable $X$ centered at its mean.  
 
 \newpage    



\section{Variational formulas for last-passage percolation shapes}  \label{sec:var} 

\subsection{Directed last-passage percolation on $\Z^d$} \label{s:LPPgen} 
We consider here a general setting before specializing   to the two-dimensional corner growth model.    Let $\OSP$   be a Polish product probability space $\Omega=\Gamma^{\Z^d}$ of random environments $\w=(\w_x)_{x\in\Z^d}\in\Omega$, with Borel $\sigma$-algebra $\kS$,  and a product probability  measure $\P$ under which the coordinates are i.i.d.\ random variables: for any distinct lattice points $x_1,\dotsc, x_n\in\Z^d$ and any Borel sets  $B_1,\dotsc, B_n\subset\Gamma$, 
\be\label{v:P}  
\P\{\w:  \text{$\w_{x_i}\in B_i$ for $i=1,\dotsc,n$}\}=\prod_{i=1}^n \P\{\w:  \w_0\in B_i\}.  \ee
The group of translations or shifts $\{\theta_x\}_{x\in\Z^d}$  act on $\Omega$ by $(\theta_x\w)_y=\w_{x+y}$. 

Let $\range$ be a finite subset of $\Z^d$.    A lattice path $x_{0,n}=(x_k)_{k=0}^n\subset\Z^d$ is admissible if its steps satisfy  $z_k=x_k-x_{k-1}\in\range$.   Let $\Uset=\conv\range$ be the convex hull of $\range $ in $\R^d$, and $\ri\Uset$ the relative interior of $\Uset$.    We put ourselves in the directed setting by assuming that 
\be\label{v:dir}  0\notin\Uset. \ee
This implies the existence of a vector $\hat u\in\R^d$ and $\delta>0$ such that $z\cdot\hat u\ge\delta$ for all $z\in\range$.  

For convenience we also assume that $\Z^d$ is the smallest additive group that contains $\range$.  Without this assumption we would carry along the group generated by $\range$ in the development. 

The weights of admissible steps $z$  are  determined by a measurable function 
   $V:\Omega\times\range\to\R$ about which we assume the following:    
\be     \label{v:V1}  
\begin{aligned}   &\text{$\forall z\in\range$,  $V(\w,z)$ is a local function of $\w$ and for some $p>d$,   $V(\cdot\,,z)\in L^p(\P)$.}
\end{aligned}\ee
By definition, a local function of $\w$ is one that depends on only finitely many coordinates of $\w$. 

\begin{example}
The basic example to think about is the two-dimensional corner growth model, with real weights on the vertices:  $\w=(\w_x)_{x\in\Z^2}\in\Omega=\R^{\Z^2}$. The set of  admissible steps is $\range=\{e_1,e_2\}$, and potential given by the weight at the origin: $V(\w,z)=\w_0$.    The set of possible limiting velocities of paths is the  closed line segment $\cU=[e_1,e_2]$, and its relative interior is the open line segment $\ri\cU=(e_1,e_2)$. 
\qedex\end{example}

\begin{example}
The formulation covers also weights on directed  edges.  Let  $\range=\{e_1,e_2,\dotsc, e_d\}$ and   let  $\vec\cE_d=\{(x,y)\in\Z^d\times\Z^d:  y-x\in\range\}$ be the set of directed nearest-neighbor edges on $\Z^d$.   Let 
 $\w=(\w(e))_{e\in\vec\cE_d}$ be a configuration of weights on directed nearest-neighbor edges.  
  The potential picks out the edge weight: $V(\theta_x\w,z)=\w(x,x+z)$ for $x\in\Z^d$ and $z\in\range$.    
\qedex\end{example}

The point-to-level last-passage percolation with  {\it external field} or {\it tilt}  $h\in\R^d$  is defined by 
 \be
\Gpl_n(h)=  
 \max_{x_{0,n}: \,x_0=0} \Bigl\{ \;\sum_{k=0}^{n-1}V(\theta_{x_k}\w, z_{k+1}) + h\cdot x_n \Bigr\},  \qquad    h\in\R^d.  
 \label{v:p2lh}\ee
 The maximum is   over admissible $n$-step paths $x_{0,n}=(x_k)_{k=0}^n$   that start at the origin $x_0=0$ and whose steps are denoted by  $z_k=x_k-x_{k-1}$.

The  point-to-point last-passage percolation with  restricted path length 
 is defined by 
\be\Gpp_{x,(n),y}=\max_{x_{0,n}:\, x_0=x,\,x_n=y}\sum_{k=0}^{n-1}V(\theta_{x_k}\w,\, z_{k+1}) ,    \qquad x\in\Z^d.  \label{v:gpp1}\ee
 The maximum is   over admissible $n$-step paths $x_{0,n}=(x_k)_{k=0}^n$   that start at $x$  and end at $y$.   
If $y$ cannot be reached from $x$ with an admissible $n$-step path then set $\Gpp_{x,(n),y}=-\infty$.   Our convention is $G_{x,(0),x}=0$. 

Any path $x_{0,n}=(x_k)_{k=0}^n$  with steps $z_{k+1}=x_{k+1}-x_k$  is a {\it geodesic} or a {\it maximizing path}  if 
\be\label{v:geod.01}    
\sum_{k=0}^{n-1}V(\theta_{x_k}\w,\, z_{k+1}) = \Gpp_{x_0,(n),\,x_n}. 
\ee

\begin{remark}    The number of steps in an admissible path from $x$ to $y$  is determined uniquely by $x$ and $y$ for all pairs $x,y$  iff $0$ does not lie in the affine hull of $\range$. 
 This is true for natural  directed examples such as 
$\range=\{e_1,e_2,\dotsc, e_d\}$. 
  Then we can  write  $\Gpp_{x,y}=\Gpp_{x,(n),y}$ where $n$ is the unique number of admissible steps from $x$ to $y$. 
\qedex\end{remark}

We take the existence of the limiting shape functions for granted, as stated in the next theorem. 

\begin{theorem}\label{v:t-lln2}  Let $\P$ be an i.i.d.\ product probability measure and assume \eqref{v:dir} and \eqref{v:V1}.  

{\rm (i)}
There exists a  nonrandom finite, convex, Lipschitz   function $\gpl:\R^d\to\R$ such that 
\be\label{v:p2line} 
\gpl(h)=  \lim_{n\to\infty} n^{-1}\Gpl_{n}(h)   \quad  \text{$\P$-a.s.}
\ee 

{\rm (ii)} There exists a nonrandom  finite, concave, continuous   function  $\gpp:\Uset\to\R$  such that 
\begin{align}\label{v:p2p}
	\gpp(\xi)=\lim_{n\to\infty}n^{-1}\Gpp_{0,(n),[n\xi]}, \qquad  \xi\in\Uset  
	\end{align}
where $[n\xi]$ is a point reachable in $n$ steps and approximately $n\xi$.   The limits satisfy 
\be\label{v:pl}    \gpl(h)=\sup_{\xi\in\Uset} \{ \gpp(\xi)+h\cdot\xi\}.  
\ee
\end{theorem}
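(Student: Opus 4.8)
The plan is to take for granted the existence of $\gpl$ and $\gpp$ as stated, and to prove the duality \eqref{v:pl}; everything else asserted about $\gpl$ then comes for free. Indeed, once \eqref{v:pl} holds, $\gpl(h)$ is finite because $\gpp$ is finite on the compact set $\Uset$; it is convex because for each fixed $\xi$ the map $h\mapsto\gpp(\xi)+h\cdot\xi$ is affine and a pointwise supremum of affine functions is convex; and it is Lipschitz with constant $\max_{\xi\in\Uset}\abs{\xi}_1$. (If one also wants the concavity of $\gpp$, it follows by concatenating a near-geodesic to $[m\xi]$, $m\approx\lambda n$, with a near-geodesic from there to $[n(\lambda\xi+(1-\lambda)\xi')]$, dividing by $n$, and letting $n\to\infty$.)

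The starting point is the elementary identity obtained from \eqref{v:p2lh} by sorting admissible $n$-step paths from the origin according to their endpoint: for every $n$,
\[
\Gpl_n(h)=\max_{y\in\range^{*n}}\bigl\{\Gpp_{0,(n),y}+h\cdot y\bigr\},
\]
where $\range^{*n}=\{z_1+\dots+z_n:z_i\in\range\}$ is the set of endpoints reachable from $0$ in exactly $n$ steps. By construction $\range^{*n}\subset n\Uset$, and the points $[n\xi]$ of \eqref{v:p2p} lie in $\range^{*n}$ with $\abs{[n\xi]-n\xi}_1=o(n)$. The lower bound in \eqref{v:pl} is immediate from this identity: fixing $\xi\in\Uset$, keeping only the term $y=[n\xi]$ in the maximum, dividing by $n$, and letting $n\to\infty$ along the almost sure event of Theorem \ref{v:t-lln2} gives $\gpl(h)\ge\gpp(\xi)+h\cdot\xi$; taking the supremum over $\xi$ in a countable dense subset of $\Uset$ (to keep the exceptional set null) and using continuity of $\gpp$ yields $\gpl(h)\ge\sup_{\xi\in\Uset}\{\gpp(\xi)+h\cdot\xi\}$.

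The matching upper bound is the substantive direction, and the obstacle is that the maximizing endpoint $y=y_n(\w)\in\range^{*n}$ in the identity above moves with $n$ and with $\w$, so the pointwise convergence \eqref{v:p2p} cannot be applied to it as such. What is required is a \emph{uniform} shape estimate: for each $\e>0$, almost surely $n^{-1}\Gpp_{0,(n),y}\le\gpp(n^{-1}y)+\e$ for all $y\in\range^{*n}$ once $n$ is large. Granting this, $n^{-1}\Gpl_n(h)\le\sup_{\xi\in\Uset}\{\gpp(\xi)+h\cdot\xi\}+\e$ for all large $n$, and letting $\e\downarrow0$ proves \eqref{v:pl}.

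To establish the uniform estimate I would cover the compact set $\Uset$ by finitely many cells of small diameter, with one representative per cell taken from a fixed countable dense set, chosen fine enough (using uniform continuity of $\gpp$ on $\Uset$) that $\gpp$ oscillates by at most $\e/2$ on each cell; apply \eqref{v:p2p} at the finitely many representatives; and control $\Gpp_{0,(n),y}$ for the remaining endpoints by comparison with the value at the nearby representative. This last comparison — uniform in $n$, and in particular for endpoints near $\bd\Uset$, where $\range^{*n}$ itself hugs the boundary of $n\Uset$ and there is little room to maneuver — is exactly where the moment hypothesis $p>d$ in \eqref{v:V1} is used (through a Borel--Cantelli bound ruling out linearly large weights in the relevant windows), and where one can lean on the regularity of the limit shape, including the quoted Lemma \ref{v:lm-xih}. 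I expect this uniform upper bound for the point-to-point passage times, not the convex-duality bookkeeping around it, to be the main difficulty.
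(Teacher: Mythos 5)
Your proposal follows the same route the paper itself sketches: take the existence of $\gpl$ and $\gpp$ from the subadditive-ergodic-theorem literature as given, rewrite $\Gpl_n(h)$ as a maximum of $\Gpp_{0,(n),y}+h\cdot y$ over reachable endpoints $y$, get the easy lower bound by fixing $\xi$, and reduce the upper bound to interchanging $\lim_n$ with $\sup_\xi$ via a discretization/covering of $\Uset$ controlled by uniform shape estimates (the paper delegates exactly this interchange to ``a discretization argument and estimates of the kind used \dots in Lemma 2.9 of \cite{rass-sepp-p2p}''). You correctly identify the uniform upper bound on $n^{-1}\Gpp_{0,(n),y}$ over all $y\in\range^{*n}$ as the substantive step, which is precisely where the paper leans on the cited lemma.
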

The theorem above is a part of Theorem 2.4 in \cite{geor-rass-sepp-16}. 

 Here is a sketch of the argument for the  duality \eqref{v:pl}  between the  point-to-point and point-to-line shape functions. 
\begin{align*}
 \gpl(h) &=\lim_{n\to\infty} \frac1n \Gpl_n(h) =  
\lim_{n\to\infty} \;\max_{x_{0,n}: \,x_0=0}  \frac1n \Bigl\{ \;\sum_{k=0}^{n-1}V(\theta_{x_k}\w, z_{k+1}) + h\cdot x_n \Bigr\}  \\
 &=\lim_{n\to\infty} \,\max_x    \frac1n \bigl\{ \Gpp_{0,(n),x} + h\cdot x \bigr\} 
 =\lim_{n\to\infty} \;\sup_{\xi\in\Uset} \, \Bigl\{ \, \frac1n  \Gpp_{0,(n),[n\xi]}  +    h\cdot  \frac{[n\xi]}n  \Bigr\}   \\
 &= \sup_{\xi\in\Uset}\, \lim_{n\to\infty} \, \Bigl\{ \, \frac1n  \Gpp_{0,(n),[n\xi]}  +    h\cdot  \frac{[n\xi]}n  \Bigr\} =  \sup_{\xi\in\Uset}  \; \{ \,\gpp(\xi) + h\cdot\xi  \,\}  . 
\end{align*}
The interchange of the limit and the supremum is justified by a discretization argument and estimates of the kind used for positive temperature polymer models in Lemma 2.9 of \cite{rass-sepp-p2p}. 

We can extend equation \eqref{v:pl} to a convex duality by setting $\gpp\equiv-\infty$ on $\Uset^c$ so that then the supremum extends over all $\xi\in\R^d$.   Then   
by convex duality,  
\be\label{v:pl2}     \gpp(\xi)=\inf_{h\in\R^d} \{  \gpl(h)-h\cdot\xi\}, \qquad \xi\in\Uset.  
\ee
Let us say that $\xi\in\Uset$ and $h\in\R^d$ are {\it dual}  if 
\[  \gpl(h)=  \gpp(\xi)+h\cdot\xi. \]

\begin{lemma} \label{v:lm-xih}  Every $\xi\in\ri\Uset$ has a dual $h\in\R^d$. 
\end{lemma}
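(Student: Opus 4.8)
This is the classical fact of convex analysis that a proper convex function has a nonempty subdifferential at every point of the relative interior of its effective domain, and that is the route I would take. Set $f:=-\gpp$, extended by $+\infty$ outside $\Uset$. By Theorem \ref{v:t-lln2}(ii) this is a proper convex function with $\effdom f=\Uset$, continuous on $\Uset$, and by \eqref{v:pl} its convex conjugate is $f^{\ast}(h)=\sup_{\xi\in\R^d}\{h\cdot\xi-f(\xi)\}=\gpl(h)$. Saying that $\xi\in\Uset$ and $h\in\R^d$ are dual, i.e.\ $\gpl(h)=\gpp(\xi)+h\cdot\xi$, is exactly the Fenchel equality $f(\xi)+f^{\ast}(h)=h\cdot\xi$, which holds if and only if $h\in\partial f(\xi)$. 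So the lemma amounts to showing $\partial f(\xi)\neq\emptyset$ for every $\xi\in\ri\Uset=\ri(\effdom f)$.

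To prove this directly I would use a supporting hyperplane to the epigraph. Let $A=\aff\Uset$ with direction subspace $L$, $\dim L=\dim\Uset$. Relative to $A\times\R$, the convex set $\epi(f|_A)$ has nonempty interior, since $\Uset$ has nonempty interior in $A$ and $f$ is continuous on $\Uset$; on the other hand $(\xi,f(\xi))$ lies on its boundary. Hence there is a nonzero functional $(a,b)$ on $A\times\R$ supporting $\epi(f|_A)$ at $(\xi,f(\xi))$. Letting $t\to+\infty$ forces $b\le 0$, and $b=0$ is impossible because it would give a hyperplane of $A$ separating $\xi$ from $\Uset$, contradicting $\xi\in\ri\Uset$. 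Normalizing $b=-1$ yields $f(\xi')\ge f(\xi)+a\cdot(\xi'-\xi)$ for all $\xi'\in\Uset$; extending $a\in L$ to $h\in\R^d$ by zero on $L^{\perp}$ (which alters none of these inner products and makes the inequality trivially true for $\xi'\notin\Uset$, where $f=+\infty$) gives $h\in\partial f(\xi)$, the desired dual vector.

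Alternatively, and more in the spirit of these notes, one can argue directly with $\gpl$: by \eqref{v:pl2} a dual of $\xi$ is any minimizer of $h\mapsto\gpl(h)-h\cdot\xi$, so it suffices to show this infimum is attained. Using \eqref{v:pl} one checks that for $h=h'+h''$ with $h'\in L$ and $h''\in L^{\perp}$ the objective equals $\sup_{\xi'\in\Uset}\{\gpp(\xi')+h'\cdot\xi'\}-h'\cdot\xi$, depending on $h$ only through $h'$; so a minimizing sequence may be taken in $L$. If $\lvert h_n\rvert\to\infty$ along such a sequence, extract a limiting direction $\hat h\in L$ with $\lvert\hat h\rvert=1$; comparing $\gpl(h_n)-h_n\cdot\xi\ge\gpp(\xi')+h_n\cdot(\xi'-\xi)$ for fixed $\xi'\in\Uset$, boundedness of the left side forces $\hat h\cdot(\xi'-\xi)\le 0$ for all $\xi'\in\Uset$, and since $\xi\in\ri\Uset$ this forces $\hat h\perp L$, hence $\hat h=0$, a contradiction. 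Thus the (projected) minimizing sequence is bounded, and by continuity of $\gpl$ the infimum is attained.

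The only genuine subtlety is that $\Uset$ need not be full-dimensional; in the corner growth example it is a segment in $\R^2$. One must therefore work inside $\aff\Uset$ and, crucially, exploit that $\xi$ lies in the \emph{relative interior}, not merely in $\Uset$, to ensure the supporting hyperplane to the epigraph is non-vertical (equivalently, that the minimizing sequence is coercive up to the harmless $L^{\perp}$ direction); at a relative boundary point of $\Uset$ a finite dual $h$ may simply fail to exist.
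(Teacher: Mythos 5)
Your proposal is correct, and your second argument — showing the infimum in \eqref{v:pl2} is attained by taking a minimizing sequence in $L=\Span(\Uset-\xi)$ and using coercivity together with $\xi\in\ri\Uset$ to bound it — is essentially the approach the paper cites from \cite{geor-rass-sepp-16} (``the infimum can be restricted to a compact set''). Your first argument is an alternative packaging of the same fact: translate duality into the Fenchel equality $f(\xi)+f^*(h)=h\cdot\xi$ for $f=-\gpp$, and then invoke (or reprove, via a supporting hyperplane to $\epi(f|_{\aff\Uset})$) the classical theorem that a proper convex function has nonempty subdifferential on the relative interior of its effective domain. The two routes are mathematically equivalent; the first is cleaner if you are willing to quote Rockafellar, the second is more self-contained and closer to what the paper actually needs, since it produces a minimizer of $h\mapsto\gpl(h)-h\cdot\xi$ directly. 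You correctly isolate the genuine technical point: $\Uset$ may have empty interior in $\R^d$ (in the CGM it is a segment), so one must argue inside $\aff\Uset$ and use $\xi\in\ri\Uset$ — not merely $\xi\in\Uset$ — to rule out a vertical supporting hyperplane (respectively, an unbounded minimizing sequence in $L$). Both versions handle this correctly.
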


The lemma is proved by arguing that the infimum in \eqref{v:pl2} can be restricted to a compact set.  See Lemma 4.3 in \cite{geor-rass-sepp-16}. 

In order to develop  variational formulas for the limits $\gpp$ and $\gpl$,   we introduce a class of stationary processes we call cocycles and state an ergodic theorem for them. 

\medskip 

\subsection{Stationary cocycles}  

\begin{definition}[Cocycles]
\label{def:cK}
A measurable function $B:\Omega\times\Z^d\times\Z^d\to\R$ is   a {\rm stationary  cocycle}
if it satisfies these two   conditions for $\P$-a.e.\ $\w$ and  all $x,y,z\in\Z^d$:
\begin{align*}
B(\w, x+z,y+z)&=B(\theta_z\w, x,y)  \qquad \text{{\rm(}stationarity{\rm)}}  \\
B(\w,x,y)+B(\w,y,z)&=B(\w, x,z) \qquad  \;\  \ \text{{\rm(}additivity{\rm)}.}
\end{align*}
$\cK$ denotes the space of stationary   cocycles $B$ such that  $\E\abs{B(x,y)} <\infty$   $\forall x,y\in\Z^d$.  $\cK_0$ denotes  the subspace of  $F\in\cK$ such that   $\E[F(x,y)]=0$  $\forall x,y\in\Z^d$. 
\end{definition}

An equivalent way to define a stationary cocycle is as a shift-invariant gradient of a function on $\Omega\times\Z^d$.   For such a function $f$, define $\nabla_uf(\w,x)=f(\w,x+u)-f(\w,x)$.  Say that $f$ has a {\it shift-invariant    gradient}  if  $\nabla_uf(\w,x)=\nabla_uf(\theta_x\w,0)$  for all $u,x\in\Z^d$. 

\begin{lemma}    Let  $B$ be a real function on $\Omega\times\Z^d\times\Z^d$.  Then $B$ is a stationary cocycle if and only if there exists   a real function  $f$ on $\Omega\times\Z^d$ with a  shift-invariant    gradient  and such that  $B(\w,x,y)=\nabla_{y-x}f(\w,x)=f(\w,y)-f(\w,x)$ for all $x,y\in\Z^d$. 
\end{lemma}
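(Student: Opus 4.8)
\emph{The plan.} I would prove both implications via the explicit correspondence: in one direction $f(\w,x)=B(\w,0,x)$, and in the other $B(\w,x,y)=f(\w,y)-f(\w,x)$. Measurability is inherited in both directions, since $f(\cdot\,,x)=B(\cdot\,,0,x)$ on one side and each $B(\cdot\,,x,y)$ is a difference of two measurable functions on the other. Because $\Z^d$ is countable, a property holding ``for $\P$-a.e.\ $\w$ and all $x,y,z\in\Z^d$'' is governed by a single $\P$-null exceptional set, so there is no issue in moving between the cocycle identities and the gradient identities.

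\emph{Cocycle $\Rightarrow$ shift-invariant gradient.} Given a stationary cocycle $B$, set $f(\w,x)=B(\w,0,x)$. Applying additivity to the triple $(0,x,y)$ gives $B(\w,0,y)=B(\w,0,x)+B(\w,x,y)$, hence for all $x,y\in\Z^d$
\[
B(\w,x,y)=B(\w,0,y)-B(\w,0,x)=f(\w,y)-f(\w,x)=\nabla_{y-x}f(\w,x),
\]
which is the asserted representation (note $f(\w,0)=B(\w,0,0)=0$ by additivity with $(0,0,0)$). For the shift-invariance of the gradient I use this identity twice: $\nabla_uf(\w,x)=f(\w,x+u)-f(\w,x)=B(\w,x,x+u)$, while $\nabla_uf(\theta_x\w,0)=f(\theta_x\w,u)-f(\theta_x\w,0)=B(\theta_x\w,0,u)$; by stationarity of $B$ (the pair $0,u$ shifted by $x$) one has $B(\theta_x\w,0,u)=B(\w,x,x+u)$, so the two agree.

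\emph{Shift-invariant gradient $\Rightarrow$ cocycle.} Given $f$ with a shift-invariant gradient, put $B(\w,x,y)=f(\w,y)-f(\w,x)$. Additivity is the telescoping identity $B(\w,x,y)+B(\w,y,z)=f(\w,z)-f(\w,x)=B(\w,x,z)$. For stationarity I rewrite each increment as a gradient and transport it to the origin, using shift-invariance of the gradient together with $\theta_x\theta_z=\theta_{x+z}$:
\begin{align*}
B(\w,x+z,y+z)&=\nabla_{y-x}f(\w,x+z)=\nabla_{y-x}f(\theta_{x+z}\w,0)\\
&=\nabla_{y-x}f\bigl(\theta_x(\theta_z\w),0\bigr)=\nabla_{y-x}f(\theta_z\w,x)=B(\theta_z\w,x,y).
\end{align*}
This completes both directions. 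There is no genuine analytic obstacle in this lemma; the only points that need care are the composition rule $\theta_x\theta_z=\theta_{x+z}$ in the stationarity check and the quantifier order for the exceptional null sets, both of which are handled routinely because $\Z^d$ is countable.
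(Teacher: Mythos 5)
Your proof is correct and follows essentially the same route as the paper: in one direction you set $f(\w,x)=B(\w,0,x)$ and verify the shift-invariant gradient, and in the other you take $B(\w,x,y)=f(\w,y)-f(\w,x)$ and check additivity by telescoping and stationarity via $\theta_x\theta_z=\theta_{x+z}$. The only difference is that you spell out the composition step and the measurability and null-set bookkeeping explicitly, which the paper leaves implicit.
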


\begin{proof} Suppose $f$ exists.  Additivity of $B$ is immediate.  For stationarity, 
\begin{align*}
B(\w, u+x,u+y) =  \nabla_{y-x}f(\w,u+x)=   \nabla_{y-x}f(\theta_u\w, x)=B(\theta_u\w, x,y). 
\end{align*} 

Conversely, suppose $B$ is a stationary cocycle.  Set $f(\w,x)=B(\w, 0,x)$.   Then $f(\w,0)=B(\w,0,0)=0$.   
\begin{align*}
\nabla_uf(\w,x)&=f(\w,x+u)-f(\w,x)=B(\w,0, x+u)-B(\w,0,x)= B(\w,x, x+u)\\
&= B(\theta_x\w,0, u)=f(\theta_x\w, u)=f(\theta_x\w, u)-f(\theta_x\w, 0) =\nabla_uf(\theta_x\w, 0). 
\end{align*} 
Thus $f$ has a shift-invariant gradient.  
\end{proof} 

The second part of the proof above illustrates that the  first lattice  variable in our definition of a cocycle is superfluous.    If we put $\wt B(\w,y)=B(\w, 0,y)$ then $B(\w,x,y)=B(\theta_x\w, 0, y-x)=\wt B(\theta_x\w, y-x)$. 

A special class of cocycles is given by gradients   
$B(\w,x,y)=\varphi(\theta_y\w)-\varphi(\theta_x\w)$ of functions $\varphi$ on $\Omega$.  The next lemma shows that   $\cK_0$ is the $L^1(\P)$-closure of such gradients.   It can also be found as Proposition 3 in \cite{boiv-derr}. 

\begin{lemma}  Given $F\in\cK_0$, there exist $\varphi_n\in L^1(\P)$ such that for each fixed $z\in\Z^d$, 
$\varphi_n(\theta_z\w)-\varphi_n(\w)\to F(\w,0,z)$ as $n\to\infty$,  both almost surely and in $L^1(\P)$.    If  $1<p<\infty$ and $F(x,y)\in L^p(\P)$ for all $x,y\in\Z^d$, then the convergence   holds  also  in $L^p(\P)$. 
\end{lemma}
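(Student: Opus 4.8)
The plan is to produce the approximating functions $\varphi_n$ directly out of $F$ by spatial averaging over large cubes, and then to pass to the limit using the multiparameter ergodic theorem. Concretely, set $\Lambda_n=\{0,1,\dots,n-1\}^d\subset\Z^d$ and define
\[
  \varphi_n(\w)=-\frac{1}{\abs{\Lambda_n}}\sum_{x\in\Lambda_n}F(\w,0,x).
\]
Since $F\in\cK_0\subset\cK$, each summand $F(\cdot,0,x)$ lies in $L^1(\P)$ (and in $L^p(\P)$ under the extra integrability hypothesis, using stationarity to move $F(\w,0,x)$ to $F(\w,x',x'+ (\text{shift}))$-type bounds), so $\varphi_n$ is well defined in the required space, for every $n$. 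Note that this single sequence will work simultaneously for all $z$, even though the assertion only asks for convergence at each fixed $z$.

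The first step is a purely algebraic identity. Fix $z\in\Z^d$. Using stationarity in the form $F(\theta_z\w,0,x)=F(\w,z,z+x)$ together with additivity, one rewrites
\[
  F(\theta_z\w,0,x)=-F(\w,0,z)+F(\w,0,x)+F(\theta_x\w,0,z).
\]
Summing over $x\in\Lambda_n$, dividing by $-\abs{\Lambda_n}$, and recognizing the definition of $\varphi_n$ on both sides yields, for every $z\in\Z^d$,
\[
  \varphi_n(\theta_z\w)-\varphi_n(\w)=F(\w,0,z)-\frac{1}{\abs{\Lambda_n}}\sum_{x\in\Lambda_n}F(\theta_x\w,0,z),
\]
so it remains only to control the last average.

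The second step is the ergodic theorem. The last term is exactly the ergodic average over the cube $\Lambda_n$ of the function $g_z(\w):=F(\w,0,z)\in L^1(\P)$ for the $\Z^d$-action $\{\theta_x\}$. The Wiener pointwise ergodic theorem along cubes, together with the mean ergodic theorem in $L^1$ (respectively in $L^p$, when $g_z\in L^p(\P)$), shows this average converges $\P$-a.s.\ and in $L^1(\P)$ (respectively $L^p(\P)$) to $\E[g_z\mid\mathcal I]$, where $\mathcal I$ is the $\sigma$-algebra of $\{\theta_x\}$-invariant events. Because $\P$ is an i.i.d.\ product measure, the $\Z^d$-shift is mixing, hence ergodic, so $\mathcal I$ is $\P$-trivial and the limit equals $\E[g_z]=\E[F(0,z)]=0$ since $F\in\cK_0$. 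Substituting back into the identity gives $\varphi_n(\theta_z\w)-\varphi_n(\w)\to F(\w,0,z)$ almost surely and in $L^1(\P)$, and in $L^p(\P)$ under the stated hypothesis.

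I do not anticipate a serious obstacle: the only external inputs are the multiparameter (Wiener) ergodic theorem along cubes and the ergodicity of a product measure under the $\Z^d$-shift, both standard, and the algebraic identity is a routine telescoping computation. If one wishes to avoid the multiparameter ergodic theorem altogether, an equally good route is to average one coordinate at a time — average over $x_1\in\{0,\dots,n-1\}$, then over $x_2$, and so on — invoking the one-dimensional Birkhoff and $L^p$ mean ergodic theorems $d$ times in succession, with the telescoping identity applied once per coordinate direction.
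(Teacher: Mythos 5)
Your proposal is correct and takes essentially the same approach as the paper: the same averaged-cocycle construction $\varphi_n(\w)=-\abs{\Lambda_n}^{-1}\sum_{x\in\Lambda_n}F(\w,0,x)$, the same telescoping identity reducing the problem to the vanishing of the ergodic average $\abs{\Lambda_n}^{-1}\sum_{x\in\Lambda_n}F(\theta_x\w,0,z)$, and the same appeal to the multidimensional pointwise ergodic theorem plus an $L^p$ truncation. The only cosmetic difference is your choice of corner cubes $\{0,\dots,n-1\}^d$ in place of the paper's centered cubes $[-n,n]^d\cap\Z^d$, which is immaterial.
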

\begin{proof}   Let $\Lambda_n=[-n,n]^d\cap\Z^d$ and  
\[  \varphi_n(\w)= -\,\frac{1}{\abs{\Lambda_n}}\sum_{x\in\Lambda_n}  F(\w,0,x). \]
Then 
\begin{align*}
&\varphi_n(\theta_z\w)-\varphi_n(\w) =  \frac1{\abs{\Lambda_n}}\sum_{x\in\Lambda_n}\bigl[ -F(\w, z,x+z)+  F(\w,0,x)\bigr] \\
&\qquad =  \frac1{\abs{\Lambda_n}}\sum_{x\in\Lambda_n}\bigl[  F(\w,0,z)-F(\w, x,x+z)\bigr] 
=  F(\w,0,z) -  \frac1{\abs{\Lambda_n}}\sum_{x\in\Lambda_n} F(\theta_x\w, 0,z). 
\end{align*}
The second equality is true term by term by additivity.   As $n\to\infty$, the last term converges to $\E F(0,z)=0$  by the multidimensional ergodic theorem (for example, almost sure convergence by the multidimensional  pointwise ergodic theorem given in Theorem 14.A8 in \cite{geor} and then $L^p$ convergence by truncation).  
\end{proof}

Our convention for centering non-mean-zero cocycles is the following.   For $B\in\cK$     there exists a vector $h(\B)\in\R^d$ such that   
 \be \E[\B(0,x)]=-h(\B)\cdot x  \qquad \forall x\in\Z^d.    \label{EB}\ee
   Existence of $h(B)$ follows  because $c(x)=\E[\B(0,x)]$ is an additive function  on the 
  group $\Z^d$.  
Then 
\be F(\w, x,y)=  -h(\B)\cdot (y-x)-\B(\w, x,y), \qquad x,y\in\Z^d \label{FF}\ee
 is a centered  stationary $L^1(\P)$  cocycle.  

 Consider this  assumption on a given $F\in\cK_0$.
 \be \label{cc-ass1}  \begin{aligned}  
 &\text{$\exists$ $\overline F:\Omega\times\range\to\R$ such that the following properties hold for $z\in \range\setminus\{0\}$ and $\P$-a.s.: }   \\
 &  \text{\phantom{such that the following }}   F(\w,0,z)\le \overline F(\w,z)    \\
&\text{and}\\ 
& \text{\phantom{such that the  }}  \varlimsup_{\delta\searrow0 } \varlimsup_{n\to\infty} \max_{\abs{x}_1\le n} \frac1n \sum_{0\le i\le n\delta}  \abs{\overline F(\theta_{x+iz}\w, z)} =0 . 
\end{aligned}\ee 

A sufficient condition for the limit above is that $\E\abs{\overline F(\w, z)}^{d+\e} <\infty$ for some $\e>0$ and the shifts of $\overline F$ have  finite range of dependence.  The latter condition means that there exists  $r_0<\infty $ such that the random vectors 
$\{(\overline F(\theta_{x_i}\w, z))_{z\in\range}: 1\le i\le m\}$ are independent whenever $\abs{x_i-x_j}\ge r_0$ for each pair $i\ne j$.  

\begin{theorem}  \label{cc-ergthm} Let $F\in\cK_0$.  Under  assumption \eqref{cc-ass1}   we have the following uniform ergodic theorem:
\[ \lim_{n\to\infty}  \max_{\abs{x}_1\le n} \frac{\abs{F(\w,0,x)}}n \;=\; 0 \qquad\P\text{-a.s.} \]

\end{theorem}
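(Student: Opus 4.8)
The plan is to reduce the two-dimensional-style maximum over the $\ell^1$-ball to a one-dimensional statement along each admissible direction $z\in\range$, and then invoke assumption \eqref{cc-ass1}. First I would fix $\delta>0$ and use the directedness hypothesis \eqref{v:dir}: there is a unit vector $\hat u$ with $z\cdot\hat u\ge\delta_0>0$ for all $z\in\range$. For any $x$ with $|x|_1\le n$, write $x$ as an admissible-type lattice combination. More robustly, since $\Z^d$ is the group generated by $\range$, pick a finite generating set and express $0\to x$ as a concatenation of at most $C|x|_1$ single $\range$-steps (allowing steps in $\pm\range$); by additivity of the cocycle, $F(\w,0,x)$ telescopes into a sum of at most $Cn$ increments of the form $\pm F(\theta_{x_j}\w, 0, z_j)$ with $z_j\in\range$ and each intermediate point $x_j$ satisfying $|x_j|_1\le Cn$. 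This is the mechanism that turns the cocycle bound into a sum we can control.

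Next I would bound $|F(\w,0,x)|$ using the one-sided domination $F(\w,0,z)\le\overline F(\w,z)$ together with the identity $F(\w,0,-z)=-F(\theta_{-z}\w,0,z)\ge-\overline F(\theta_{-z}\w,z)$, so that $|F(\theta_{x_j}\w,0,z_j)|$ is dominated by $\overline F$ evaluated at $\theta_{x_j}\w$ or at $\theta_{x_j-z_j}\w$. The point is then that along a path the intermediate points $x_j$ that use a fixed step direction $z$ trace out a segment $\{x+iz: 0\le i\le m\}$ with $m\le Cn$, so the telescoped sum is bounded by a constant times $\max_z\max_{|x|_1\le Cn}\frac1n\sum_{0\le i\le Cn\delta}|\overline F(\theta_{x+iz}\w,z)|$ plus a $\delta$-free remainder that I must also handle (see below).

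The main obstacle is exactly that the crude path from $0$ to $x$ can revisit and oscillate, so naively one only gets a bound by $\frac1n\sum$ over a \emph{generic} long segment, which is what \eqref{cc-ass1} controls only for the \emph{short} tail of length $n\delta$, not for the full length $n$. To get around this I would iterate: split the path $0\to x$ into $\lceil 1/\delta\rceil$ blocks each of $\ell^1$-displacement at most $n\delta$; the cocycle increment over each block is $F$ evaluated between two points at distance $O(n)$, each block's contribution is bounded by a max over segments of length $O(n\delta)$ exactly of the form appearing in \eqref{cc-ass1}. Summing $O(1/\delta)$ such blocks and dividing by $n$, the bound becomes
\[
\limsup_{n\to\infty}\max_{|x|_1\le n}\frac{|F(\w,0,x)|}{n}\;\le\; \frac{C}{\delta}\cdot \Bigl(\varlimsup_{n\to\infty}\max_{|x|_1\le Cn}\frac1n\sum_{0\le i\le Cn\delta}|\overline F(\theta_{x+iz}\w,z)|\Bigr),
\]
and here the subtlety is that the inner $\limsup$ is over scale $Cn\delta$ while $\delta$ is fixed — rescaling $n'=Cn$ and $\delta'=\delta$ shows the inner quantity is what \eqref{cc-ass1} sends to $0$ as first $n\to\infty$ then $\delta\searrow0$; but the factor $C/\delta$ out front blows up. The resolution, which is the genuinely delicate step, is to keep $\delta$ coupled to the inner limit: apply \eqref{cc-ass1} with its double limit $\varlimsup_{\delta\searrow0}\varlimsup_{n\to\infty}(\cdots)=0$ to conclude that for every $\e>0$ there is $\delta$ small with $\varlimsup_n(\text{inner})<\e\delta/C$, which kills the $C/\delta$ factor. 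Equivalently, one re-runs the block decomposition choosing the number of blocks to be $\delta^{-1}$ but the per-block length scaled so the relevant max is over segments of length $n\delta$, so that the per-block bound already carries a factor $\delta$ (the block only sums $n\delta$ terms), making the total $\sum_{\text{blocks}} O(\delta)\cdot o(1)=o(1)$.

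Finally, having shown $\max_{|x|_1\le n}|F(\w,0,x)|/n\to0$ along the sequence of integers, I would note monotonicity in $n$ is not automatic, but the $\limsup$ bound above already yields the full limit since the right-hand side is $0$; the $\P$-a.s.\ qualifier is inherited because every inequality used (the domination, the telescoping, and the limit in \eqref{cc-ass1}) holds for $\P$-a.e.\ $\w$, and there are only countably many relevant events. I expect the bookkeeping of the block decomposition and the uniformity of the constant $C$ (depending only on $\range$, not on $\w$ or $n$) to be the only place where care is needed; everything else is routine telescoping and an application of the cocycle axioms.
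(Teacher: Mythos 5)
There is a genuine gap, and it is not just bookkeeping: your argument never invokes the hypothesis $\E F(0,z)=0$ built into $F\in\cK_0$, yet this cancellation is the engine of the theorem. Bounding $\abs{F(\w,0,x)}$ by summing $\overline F$ along a telescoping path of length $\Theta(n)$ can only give a bound of order $n\,\E\abs{\overline F(\w,z)}$, which is $\Theta(n)$ unless $\overline F\equiv 0$; this is what the law of large numbers forces. The block decomposition does not escape it. For fixed $\delta>0$ and the single choice $x=0$, the LLN yields $\varlimsup_{n\to\infty}\frac1n\sum_{0\le i\le n\delta}\abs{\overline F(\theta_{iz}\w,z)}=\delta\,\E\abs{\overline F(\w,z)}$ a.s., so the inner quantity in \eqref{cc-ass1} is bounded \emph{below} by $\delta\,\E\abs{\overline F}$ and cannot be made $<\e\delta/C$ by shrinking $\delta$; assumption \eqref{cc-ass1} only guarantees that it is $<\e$ once $\delta$ is small. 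Consequently summing $1/\delta$ blocks, each contributing at least $\delta\,\E\abs{\overline F}$ after dividing by $n$, gives at least $\E\abs{\overline F}$, not $o(1)$, so your ``resolution'' is not available. A secondary problem: the domination $F(\w,0,z)\le\overline F(\w,z)$ is strictly one-sided, so ``$\abs{F(\theta_{x_j}\w,0,z_j)}$ is dominated by $\overline F$'' does not follow; along a mixed $\pm\range$ path the telescoped sum mixes terms bounded only from above with terms bounded only from below, giving no sign-definite control on the total.

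The correct mechanism, which the paper delegates to Appendix A.3 of \cite{geor-rass-sepp-yilm-15}, applies the pointwise ergodic theorem to $F$ itself. For each $z\in\range$ the shift $\theta_z$ is ergodic under the i.i.d.\ product $\P$, so $\frac1n F(\w,0,nz)=\frac1n\sum_{k<n}F(\theta_{kz}\w,0,z)\to\E F(0,z)=0$ a.s.; together with cocycle additivity this gives $n^{-1}F(\w,0,\fl{n\zeta})\to 0$ a.s.\ for the finitely many $\zeta$ in a fixed $\delta$-net of the cone generated by $\range$. Given $x$ with $\abs{x}_1\le n$, one chooses grid points $y'$ and $y$ such that $x$ is reachable from $y'$, and $y$ is reachable from $x$, by $O(\delta n)$ steps in $\range$; additivity and the one-sided domination then give $F(\w,0,x)=F(\w,0,y')+F(\w,y',x)\le F(\w,0,y')+\sum_{\rm short}\overline F$ and, symmetrically, $F(\w,0,x)=F(\w,0,y)-F(\w,x,y)\ge F(\w,0,y)-\sum_{\rm short}\overline F$, where $\sum_{\rm short}\overline F$ denotes a sum of $\overline F$ over a path of $O(\delta n)$ steps in $\range$. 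Dividing by $n$, the grid terms are $o(1)$ by the ray ergodic theorem, while the short $\overline F$-sums are controlled uniformly over $\abs{x}_1\le n$ by the inner quantity of \eqref{cc-ass1} and vanish when $\delta\searrow0$. The essential point your proposal misses is that $\overline F$-domination is invoked only on scale $\delta n$, where it contributes $O(\delta)$; the length-$\Theta(n)$ contributions are handled by the mean-zero cancellation in $F$, not by domination.
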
 
For a proof see Appendix A.3 of \cite{geor-rass-sepp-yilm-15}.  

\medskip 

\subsection{Variational formulas}  

In this section we derive variational formulas for the restricted path length  point-to-level and point-to-point last-passage values. 

\begin{theorem}\label{th:K-var}
 	\begin{align}
	\gpl(h)=\inf_{F\in\cK_0} \,\P\text{-}\esssup_\w\;  \max_{z\in\range} \{V(\w,z)+ h\cdot z+F(\w, 0,z)\}.\label{eq:g:K-var}
	\end{align}
A minimizing 	$F\in\cK_0$ exists for each $h\in\R^d$.  
 \end{theorem}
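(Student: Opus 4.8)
The plan is to prove the two inequalities separately. The bound $\gpl(h)\le\inf_{F\in\cK_0}(\cdots)$ comes from a telescoping argument along geodesics together with the cocycle ergodic theorem. Fix $F\in\cK_0$ and set $M=\P\text{-}\esssup_\w\max_{z\in\range}\{V(\w,z)+h\cdot z+F(\w,0,z)\}$; if $M=+\infty$ there is nothing to prove, so assume $M<\infty$. Then $\P$-a.s.\ $F(\w,0,z)\le M-h\cdot z-V(\w,z)$ for every $z\in\range$, and since each $V(\cdot,z)$ is local and lies in $L^p(\P)$ with $p>d$, this dominating function has finite $(d+\e)$-th moment and shifts with finite range of dependence, so assumption \eqref{cc-ass1} holds for $F$ and Theorem \ref{cc-ergthm} gives $n^{-1}\max_{\abs{x}_1\le n}\abs{F(\w,0,x)}\to0$ $\P$-a.s. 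Now take a maximizing path $x_{0,n}=(x_k)_{k=0}^n$ for $\Gpl_n(h)$. Using $h\cdot x_n=\sum_{k=0}^{n-1}h\cdot z_{k+1}$ and, by additivity and stationarity of the cocycle, $F(\w,0,x_n)=\sum_{k=0}^{n-1}F(\theta_{x_k}\w,0,z_{k+1})$, one rewrites
\[
\Gpl_n(h)=\sum_{k=0}^{n-1}\bigl[V(\theta_{x_k}\w,z_{k+1})+h\cdot z_{k+1}+F(\theta_{x_k}\w,0,z_{k+1})\bigr]-F(\w,0,x_n)\le nM-F(\w,0,x_n),
\]
where the last step uses that, by shift-invariance of $\P$, $\P$-a.s.\ the defining essential-sup inequality holds simultaneously along the whole orbit $\{\theta_x\w:x\in\Z^d\}$. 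Since $\abs{x_n}_1\le Cn$ with $C=\max_{z\in\range}\abs{z}_1$, dividing by $n$ and sending $n\to\infty$, with Theorem \ref{v:t-lln2}(i) on the left and the ergodic statement on the right, gives $\gpl(h)\le M$; taking the infimum over $F$ finishes this half.

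For the reverse inequality it suffices to produce a single $F\in\cK_0$ with $\P\text{-}\esssup_\w\max_{z\in\range}\{V(\w,z)+h\cdot z+F(\w,0,z)\}\le\gpl(h)$, which is then automatically a minimizer. The natural candidate is a Busemann-type object: a short computation gives the Bellman recursion $\Gpl_{n+1}(h)(\w)=\max_{z\in\range}\{V(\w,z)+h\cdot z+\Gpl_n(h)(\theta_z\w)\}$, so if $\Gpl_n(h)(\w)-n\gpl(h)$ converged to some $g_\infty\in L^1(\P)$, then $F(\w,x,y):=g_\infty(\theta_y\w)-g_\infty(\theta_x\w)\in\cK_0$ would satisfy $\max_{z\in\range}\{V(\w,z)+h\cdot z+F(\w,0,z)\}=\gpl(h)$ identically, hence be a minimizer with essential sup exactly $\gpl(h)$. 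Equivalently, using the centering \eqref{EB}--\eqref{FF}, one looks for $B\in\cK$ with $h(B)=h$ satisfying the recovery inequality $V(\w,z)-B(\w,0,z)\le\gpl(h)$ $\P$-a.s.\ for all $z\in\range$, and then takes $F(\w,x,y)=-h\cdot(y-x)-B(\w,x,y)$. Since the a.s.\ convergence of $\Gpl_n(h)-n\gpl(h)$ is not available at this generality, one instead extracts a limit point of the finite-$n$ increment cocycles $\{\Gpl_n(h)(\theta_y\w)-\Gpl_n(h)(\theta_x\w)\}$ in a suitable weak topology on $\cK$ under the mean constraint $h(\cdot)=h$, and checks that the recovery inequality persists in the limit. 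This construction of a minimizing cocycle is precisely the input I take from \cite{geor-rass-sepp-16}; combined with the upper bound it yields $\gpl(h)=\inf_{F\in\cK_0}(\cdots)$ with the infimum attained.

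The main obstacle is this last construction. Everything else is bookkeeping: the telescoping identity, the reduction \eqref{FF}, and invoking Theorems \ref{v:t-lln2} and \ref{cc-ergthm}. The genuinely hard part is the stationary Busemann-type cocycle --- proving tightness of the finite-volume increment processes and passing the recovery inequality to a stationary limit point (plain a.s.\ convergence of $\Gpl_n(h)-n\gpl(h)$ really does fail in general) --- which is why it is quoted rather than proved here.
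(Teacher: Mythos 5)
Your upper bound is essentially the paper's own argument: with $M=K(F)<\infty$, the pointwise bound $F(\w,0,z)\le M-V(\w,z)-h\cdot z$ plus $V(\cdot,z)\in L^p$ with $p>d$ and locality puts $F$ under hypothesis \eqref{cc-ass1}, so Theorem \ref{cc-ergthm} kills the boundary term $F(\w,0,x_n)/n$, and the telescoping identity bounds each summand by $M$ a.s.\ along the full orbit. The paper phrases this by inserting $F(\w,0,x_n)$ inside the running maximum rather than picking an optimizing path, but the content is identical.

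Your lower bound, however, takes a genuinely different route from the paper's and, in doing so, misses a much more elementary argument. You fold the inequality $\gpl(h)\ge\inf_{F}K(F)$ into the existence of a minimizing cocycle and outsource the whole thing to \cite{geor-rass-sepp-16}. The paper instead proves the lower bound directly, with no minimizer needed, via a soft Laplace-transform trick: for any $\lambda>\gpl(h)$ put $u_n(\w)=e^{G_n(h)-n\lambda}$, so by the shape theorem $u_n(\w)<e^{-n\e}$ eventually and $f(\w)=\sum_{n\ge0}u_n(\w)$ is finite. Peeling off the first step of the maximum in $G_n(h)$ gives
\[
f(\w)\;\ge\;e^{-\lambda}\cdot e^{\max_{z}[\,V(\w,z)+h\cdot z+\log f(\theta_z\w)\,]},
\]
i.e.\ $\lambda\ge\max_z\{V(\w,z)+h\cdot z+\nabla\log f(\w,0,z)\}$ a.s.\ for the gradient cocycle $\nabla\log f$. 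A short lemma (pointwise ergodic theorem applied to $\sum_k\nabla\log f(\theta_{kz}\w,0,z)$) shows $\nabla\log f\in\cK_0$, so $\lambda\ge K(\nabla\log f)\ge\inf_{F\in\cK_0}K(F)$; letting $\lambda\searrow\gpl(h)$ finishes. Only afterwards is the attainment of the infimum treated, by the separate weak-convergence argument the paper cites from \cite{rass-sepp-yilm-17}.

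The trade-off: your version makes the variational formula itself hostage to the heavy limit construction, whereas the paper's $f$-trick proves the formula in a few self-contained lines using only the LLN and the ergodic theorem, with minimizer existence cleanly factored out as a separate cited fact. There is also a whiff of circularity in citing \cite{geor-rass-sepp-16} for the minimizing cocycle, since that is the source of the variational formula itself; the paper avoids this by proving the formula independently. Both routes are logically sound, but the paper's is both more modular and more elementary on the lower bound side, and it is worth knowing.
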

 
Abbreviate 
\be\label{cc-KF}   K(F)=  \P\text{-}\esssup_\w\;  \max_{z\in\range} \{V(\w,z)+ h\cdot z+F(\w, 0,z)\} .\ee

\begin{proof}  

{\bf Upper bound.}    Let $F\in\cK_0$.   Assume  $K(F)<\infty$.  Then 
\[   F(\w, 0,z)\le -V(\w,z) -h\cdot z+K(F)  \]
together with  assumption \eqref{v:V1}  on  $V$ imply that $F$ satisfies assumption \eqref{cc-ass1} and therefore the uniform ergodic theorem  (Theorem \ref{cc-ergthm})  applies. 
 \begin{align*}
\gpl(h)&=\lim_{n\to\infty} \max_{x_{0,n}} \frac1n\Bigl\{ \;\sum_{k=0}^{n-1} V(\theta_{x_k}\w, z_{k+1}) +h\cdot x_n\Bigr\}  \\
&=\lim_{n\to\infty} \max_{x_{0,n}} \frac1n\Bigl\{ \;\sum_{k=0}^{n-1} V(\theta_{x_k}\w, z_{k+1}) +h\cdot x_n + F(\w, 0, x_n) \Bigr\}  \\
&=\lim_{n\to\infty} \max_{x_{0,n}} \frac1n\sum_{k=0}^{n-1} \bigl[ V(\theta_{x_k}\w, z_{k+1}) +h\cdot z_{k+1}  + F(\theta_{x_k} \w, 0, z_{k+1}) \bigr]  \\
&\le K(F)
\end{align*}  
because  the last upper  bound is valid $\P$-a.s. for each term. 
We have shown that 
\[  \ddd   \gpl(h)\le \inf_{F\in\cK_0}  K(F).   \]

{\bf Lower bound.}    Let $\lambda>\gpl(h)$.  
Set $  u_n(\w) = e^{G_n(h)-n\lambda} $ with the interpretation that $u_0=1$.  Since  $n^{-1}G_n(h)\to\gpl(h)$ almost surely,   $  u_n(\w) < e^{-n\e} $  for large $n$ and a fixed $\e>0$.   
Hence   $f$  below is a  well-defined finite function: 
\begin{align*}    f(\w)\; & = \; \sum_{n=0}^\infty  u_n(\w) 
\; = \;  1+ \sum_{n=1}^\infty  \exp\biggl\{\max_{x_{0,n}}  \biggl[   V(\w, x_1)  +h\cdot x_1 - \lambda  \\
&\qquad \qquad\qquad\qquad\qquad
 +   \sum_{k=1}^{n-1} V(\theta_{x_k}\w, z_{k+1})  + h\cdot (x_n-x_1)  - (n-1)\lambda 
  \biggr]\biggr\} \\
  &\ge\;  1+   \max_z  e^{ V(\w, z)  + h\cdot z -  \lambda  }  \sum_{n=1}^\infty  u_{n-1}(\theta_z\w) 
  \; \ge \;   \max_z  e^{  V(\w, z)  + h\cdot z -  \lambda   }   f(\theta_z\w) \\
   &= \; e^{-\lambda }  \cdot  e^{   \max\limits_z  [ V(\w, z)  + h\cdot z  +  \log   f(\theta_z\w)]}. 
\end{align*}
In the first inequality above we take the sum inside the maximum over the first step $z$. 
Rearrange this to 
\begin{align*} \lambda  &\ge    \max_z   \{ V(\w, z)  + h\cdot z  + \log   f(\theta_z\w) - \log   f(\w)\}
\qquad \text{a.s.} \end{align*} 
Utilizing the notation $\nabla\log f(\w,x,y)= \log   f(\theta_y\w) - \log   f(\theta_x\w)$,  we can now write
\[  \lambda  \ge  K(\nabla\log f)   \ge \inf_{F\in\cK_0} K(F)  
\]
where the second inequality holds provided   $\nabla\log f\in\cK_0$.   This point   is implied by the next lemma.   Hence we can let $\lambda\searrow\gpl(h)$ to get 
\[   \gpl(h)   \ge \inf_{F\in\cK_0} K(F) . \]  
The  existence of a minimizer is proved by a weak convergence argument that we skip.   It is given for positive temperature polymer models in Theorem 2.3 of \cite{rass-sepp-yilm-17}. 
\end{proof}

\begin{lemma}
For  a measurable function $\varphi:\Omega\to\R$ define $\nabla\varphi(\w,x,y)=\varphi(\theta_y\w)-\varphi(\theta_x\w)$.     Then  $K(\nabla\varphi)<\infty$ implies $\nabla\varphi(\cdot\,,z)\in L^1(\P)$ and  $\E[\nabla\varphi(\cdot\,,z)]=0$,  for all  $z\in\range$.  
\end{lemma}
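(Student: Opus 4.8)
The plan is to extract from $K(\nabla\varphi)<\infty$ a one-sided integrable bound on the increments $\nabla\varphi(\w,0,z)$, $z\in\range$, and then to recover the matching lower bound and the mean-zero property by a truncation argument based on Fatou's lemma and dominated convergence, using only that each shift $\theta_z$ preserves $\P$.

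First, abbreviate $g_z(\w)=\nabla\varphi(\w,0,z)=\varphi(\theta_z\w)-\varphi(\w)$ and $C=K(\nabla\varphi)<\infty$. By the definition \eqref{cc-KF} of $K$, for every $z\in\range$ we have $\P$-almost surely $V(\w,z)+h\cdot z+g_z(\w)\le C$, that is, $g_z(\w)\le \overline F(\w,z):=C-V(\w,z)-h\cdot z$. Assumption \eqref{v:V1} gives $\overline F(\cdot\,,z)\in L^p(\P)\subseteq L^1(\P)$ (since $\P$ is a probability measure and $p>d\ge 1$), hence $g_z^+\le \overline F(\cdot\,,z)^+$ and $\E[g_z^+]<\infty$. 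It remains to bound $\E[g_z^-]$ and to show $\E[g_z]=0$.

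Introduce the truncations $\varphi_n=\pi_n\circ\varphi$, where $\pi_n(t)=(t\vee(-n))\wedge n$ is nondecreasing and $1$-Lipschitz; then $\varphi_n$ is bounded, so $\varphi_n\in L^1(\P)$, and since $\theta_z$ preserves $\P$ we get $\E[\varphi_n\circ\theta_z-\varphi_n]=0$ for every $n$. Writing $a=\varphi(\w)$ and $b=\varphi(\theta_z\w)$, monotonicity of $\pi_n$ forces $\pi_n(b)-\pi_n(a)\le 0$ whenever $b<a$, while the $1$-Lipschitz property gives $0\le \pi_n(b)-\pi_n(a)\le b-a$ when $b\ge a$; in both cases $\varphi_n\circ\theta_z-\varphi_n\le g_z^+$, and in all cases $|\varphi_n\circ\theta_z-\varphi_n|\le |b-a|=|g_z|$. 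From $\E[\varphi_n\circ\theta_z-\varphi_n]=0$ and the first bound, $\E[(\varphi_n\circ\theta_z-\varphi_n)^-]=\E[(\varphi_n\circ\theta_z-\varphi_n)^+]\le \E[g_z^+]$ for every $n$. Since $\varphi$ is finite everywhere, $\varphi_n\circ\theta_z-\varphi_n\to g_z$ pointwise, so $(\varphi_n\circ\theta_z-\varphi_n)^-\to g_z^-$ pointwise, and Fatou's lemma yields $\E[g_z^-]\le \E[g_z^+]<\infty$. Thus $g_z\in L^1(\P)$ and $|g_z|\in L^1(\P)$; now $|\varphi_n\circ\theta_z-\varphi_n|\le |g_z|$ and $\varphi_n\circ\theta_z-\varphi_n\to g_z$ pointwise, so dominated convergence gives $\E[g_z]=\lim_n\E[\varphi_n\circ\theta_z-\varphi_n]=0$. (For general $x,y\in\Z^d$, which is what the application to $\nabla\log f$ in Theorem \ref{th:K-var} uses, write $y-x$ as an integer combination of elements of $\range$ — possible since $\range$ generates $\Z^d$ — and invoke additivity and stationarity of the gradient together with the case just proved.)

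The argument is essentially routine; the only point that needs care is the inequality $\varphi_n\circ\theta_z-\varphi_n\le g_z^+$, which is precisely what makes the Fatou step go through: composing with the nondecreasing $1$-Lipschitz map $\pi_n$ cannot enlarge a positive increment and turns a negative increment into something $\le 0$. Everything else is invariance of $\P$ under $\theta_z$, Fatou's lemma, and dominated convergence.
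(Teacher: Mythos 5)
Your proof is correct, and it takes a genuinely different route from the paper's. The paper settles the question with Birkhoff's pointwise ergodic theorem along the one-parameter shift $(\theta_{kz})_{k\ge0}$: since $(\nabla\varphi(\cdot,z))^+\in L^1$, the one-sided ergodic theorem forces the Cesàro averages $\frac1n\sum_{k=0}^{n-1}\nabla\varphi(\theta_{kz}\w,z)$ to converge almost surely to $-\infty$ if $\E[(\nabla\varphi(\cdot,z))^-]=\infty$, whereas the telescoping identity $\frac1n\sum_{k=0}^{n-1}\nabla\varphi(\theta_{kz}\w,z)=\frac1n[\varphi(\theta_{nz}\w)-\varphi(\w)]\to 0$ in probability, a contradiction; and then, once integrability is known, the ergodic theorem applied again to the telescoping identity gives the mean-zero conclusion. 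You avoid ergodic theory altogether: truncating $\varphi$ by a bounded nondecreasing $1$-Lipschitz cutoff $\pi_n$ makes $\varphi_n\circ\theta_z-\varphi_n$ a bounded mean-zero random variable by measure preservation alone, and the two pointwise bounds $\varphi_n\circ\theta_z-\varphi_n\le g_z^+$ and $|\varphi_n\circ\theta_z-\varphi_n|\le|g_z|$ (the first from monotonicity plus Lipschitz, the second from Lipschitz) let you deduce $\E[g_z^-]\le\E[g_z^+]<\infty$ via Fatou and then $\E[g_z]=0$ via dominated convergence. Your approach is more elementary — it relies only on invariance of $\P$ under $\theta_z$ and the standard convergence theorems — and it obtains integrability and the mean-zero identity in a single truncation pass. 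The paper's proof is shorter given the ergodic theorem, which the paper invokes elsewhere anyway, but yours would carry over verbatim to any measure-preserving bijection $\theta_z$ without needing ergodicity of the dynamical system. Both are complete and correct; the key inequality $\varphi_n\circ\theta_z-\varphi_n\le g_z^+$ that you flag as the crux is indeed exactly what makes the Fatou step yield a finite bound on $\E[g_z^-]$.
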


\begin{proof}  For each $z\in\range$,   
\[  V(\w, z)+ h\cdot z+ \nabla \varphi(\w, 0,z) \le  K(\nabla\varphi)<\infty   \qquad \text{a.s.}   \]
and so $(\nabla\varphi(\cdot\,,z))^+\in L^1(\P)$.  
Suppose  $(\nabla\varphi(\cdot\,,z))^-\notin L^1$.    Then a  contradiction arises as follows, where the first equality comes by the pointwise ergodic theorem: 
\begin{align*}
-\infty &=  \lim_{n\to\infty} \frac1n \sum_{k=0}^{n-1}   \nabla\varphi(\theta_{kz}\w,z) 
=  \lim_{n\to\infty} \frac1n \sum_{k=0}^{n-1}   [ \varphi(\theta_{(k+1) z}\w)  -  \varphi(\theta_{kz}\w) ]   \\
& =  \lim_{n\to\infty} \frac1n    [ \varphi(\theta_{n z}\w)  -  \varphi(\w) ] 
= 0 \qquad \text{in probability.} 
\qedhere  \end{align*}
Once we know  $\nabla\varphi(\cdot\,,z)\in L^1(\P)$,    the ergodic theorem and the calculation above give $\E[\nabla\varphi(\cdot\,,z)]=0$. 
\end{proof}


\begin{theorem}\label{v:th-var-pp}
For each $\xi\in\ri\Uset$  we have this variational formula. 
	\begin{align}
	\gpp(\xi)&\;=\;\inf_{B\in\cK}\, \P\text{-}\esssup_\w\;  \max_{z\in\range}\, \{V(\w,z)-   B(\w, 0,z)-  h(B)\cdot \xi\}.  \label{v:var-pp}
	\end{align}
For each $\xi\in\ri\Uset$   there exists   a minimizing $B\in\cK$  such that $h(B)$ is dual to $\xi$.   
 \end{theorem}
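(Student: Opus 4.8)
The plan is to deduce the point-to-point formula from the point-to-level formula of Theorem~\ref{th:K-var} by convex duality, exploiting the canonical splitting of an integrable cocycle into a deterministic linear part and a centered cocycle. Recall from \eqref{EB}--\eqref{FF} that every $B\in\cK$ determines a vector $h(B)\in\R^d$ with $\E[B(0,x)]=-h(B)\cdot x$ together with a centered cocycle $F\in\cK_0$ via $F(\w,x,y)=-h(B)\cdot(y-x)-B(\w,x,y)$; conversely, given any $g\in\R^d$ and $F\in\cK_0$, the function $B(\w,x,y):=-g\cdot(y-x)-F(\w,x,y)$ lies in $\cK$ and satisfies $h(B)=g$ (integrability and the centering identity are immediate, since $F$ is centered and integrable while the remaining term is deterministic and additive). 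Thus $B\mapsto(h(B),F)$ is a bijection of $\cK$ onto $\R^d\times\cK_0$.

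Now fix $\xi\in\Uset$. For $B\in\cK$ with $g=h(B)$ and associated $F\in\cK_0$, substituting $B(\w,0,z)=-g\cdot z-F(\w,0,z)$ gives $V(\w,z)-B(\w,0,z)-h(B)\cdot\xi=\bigl[V(\w,z)+g\cdot z+F(\w,0,z)\bigr]-g\cdot\xi$. Maximizing over $z\in\range$, taking $\P\text{-}\esssup_\w$, and pulling out the deterministic term $g\cdot\xi$, the functional in \eqref{v:var-pp} evaluated at $B$ equals $K_g(F)-g\cdot\xi$, where $K_g(F):=\P\text{-}\esssup_\w\max_{z\in\range}\{V(\w,z)+g\cdot z+F(\w,0,z)\}$ is the quantity of Theorem~\ref{th:K-var} with tilt $g$. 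Using the bijection to write $\inf_{B\in\cK}$ as $\inf_{g\in\R^d}\inf_{F\in\cK_0}$, then Theorem~\ref{th:K-var} (which gives $\inf_{F\in\cK_0}K_g(F)=\gpl(g)$), and finally the convex duality \eqref{v:pl2}, one obtains $\inf_{B\in\cK}\bigl(\cdots\bigr)=\inf_{g\in\R^d}\bigl(\gpl(g)-g\cdot\xi\bigr)=\gpp(\xi)$, which proves \eqref{v:var-pp}.

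For the existence of a minimizer with $h(B)$ dual to $\xi$, take $\xi\in\ri\Uset$ and use Lemma~\ref{v:lm-xih} to pick $h$ dual to $\xi$, so that $\gpl(h)=\gpp(\xi)+h\cdot\xi$; by Theorem~\ref{th:K-var} pick $F^*\in\cK_0$ with $K_h(F^*)=\gpl(h)$. Set $B^*(\w,x,y):=-h\cdot(y-x)-F^*(\w,x,y)$; by the correspondence, $B^*\in\cK$ with $h(B^*)=h$, and by the computation above the functional at $B^*$ equals $K_h(F^*)-h\cdot\xi=\gpl(h)-h\cdot\xi=\gpp(\xi)$. Hence $B^*$ attains the infimum in \eqref{v:var-pp} and $h(B^*)=h$ is dual to $\xi$, as claimed.

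There is no serious obstacle: the real work is already packaged in Theorem~\ref{th:K-var} and in the duality \eqref{v:pl2}. The only step that demands care is the bijection $\cK\leftrightarrow\R^d\times\cK_0$ of the first paragraph --- that is, checking that subtracting the deterministic linear part genuinely interchanges $\cK$ and $\cK_0$ and that $h(B)$ exactly tracks that linear part --- but this is precisely what \eqref{EB}--\eqref{FF} formalize. One should also be mindful that \eqref{v:pl2} is invoked only on $\Uset$, where $\gpp$ is finite, so the convex-duality bookkeeping (with $\gpp\equiv-\infty$ off $\Uset$) is legitimate.
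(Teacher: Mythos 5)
Your proof is correct and takes essentially the same route as the paper: apply the duality \eqref{v:pl2}, substitute Theorem~\ref{th:K-var}, and absorb the pair $(h,F)\in\R^d\times\cK_0$ into a single cocycle $B\in\cK$ via $B(\w,x,y)=-h\cdot(y-x)-F(\w,x,y)$, noting $h(B)=h$. You spell out the bijection $\cK\leftrightarrow\R^d\times\cK_0$ a bit more explicitly than the paper does, which is a small clarity gain, but the argument is the same.
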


\begin{proof}  From  duality \eqref{v:pl2},
\begin{align*}
\gpp(\xi) &=\inf_{h\in\R^d}  \{   \gpl(h)-h\cdot\xi\} \\
& =\inf_{h\in\R^d}     \inf_{F\in\cK_0} \,\P\text{-}\esssup_\w\;  \max_{z\in\range} \{V(\w,z)+ h\cdot z+F(\w, 0,z)-h\cdot\xi\} \\
& =    \inf_{B\in\cK} \,\P\text{-}\esssup_\w\;  \max_{z\in\range} \{V(\w,z)-B(\w, 0,z)-h(B)\cdot\xi\}
\end{align*}
where we let  \[ B(\w,x,y)=-h\cdot(y-x)-F(\w,x,y)\]  define  an element of $\cK$ with 
\[  h(B)\cdot z=-\E[B(0,z)]\cdot z=h\cdot z 
\quad\Longrightarrow \quad h(B)=h. \]

The first infimum above is achieved at some $h$ dual to $\xi$ whose existence is given in Lemma \ref{v:lm-xih}, and for this $h$  a minimizing $F$ exists for $\gpl(h)$.  Thus the $B$ defined above has $h(B)$ dual to $\xi$.  
\end{proof}

\subsection{Cocycles adapted to the potential} 

  The cocycles and the potential $V$ that defines the percolation both live on a general product space with a product probability measure $\P$.   It is not evident how these two structures are connected.   Next we identify a local  condition that characterizes  those cocycles that are relevant to the percolation problem in various ways.  
  
 
\begin{definition}
 A cocycle $B\in\cK$ is adapted to the potential $V$ if 
\be\label{v:VB}   \max_{z\in\cR} \, [ V(\w,z)-B(\w,0,z)]=0 \quad \P\text{-a.s.} \ee
 \end{definition} 
 
 This condition is linked to (i) minimizing cocycles, (ii) geodesics, (iii) Busemann functions and (iv) stationary percolation.  We discuss briefly these four issues  before moving to an exactly solvable model. 

\medskip 

{\bf Minimizing cocycles.}  
Suppose $B\in\cK$ satisfies \eqref{v:VB}.  Define a mean-zero cocycle 
  $F\in\cK_0$ by 
\[   F(\w, x,y)=  -h(\B)\cdot (y-x)-\B(\w, x,y)  .    \]
Then \eqref{v:VB} becomes 
\[   0=  \max_{z\in\cR} [ V(\w,z)+h(B)\cdot z + F(\w,0,z)]  \quad \P\text{-a.s.}   \]
The one-sided bound $B(\w,0,z)\ge V(\w,z)$ is enough for the uniform ergodic theorem to work for $F$.  
Thus we can iterate the identity above and take a limit. 
\begin{align*}
  0&=  \max_{x_{0,n}} \Bigl\{ \; \frac1n\sum_{k=0}^{n-1} V(\theta_{x_k}\w, z_{k+1})   +\frac1n h(B)\cdot x_n + \frac1n F(\w,0,x_n) \Bigr\}  \\
  &=  \frac1n G_n(h(B)) + o(1)  
  \ \longrightarrow\  \gpl(h(B)) \quad\text{as $n\to\infty$.} 
\end{align*}  
We can conclude that 
\[   \gpl(h(B))=0=  \max_{z\in\cR} \bigl[ V(\w,z)+h(B)\cdot z + F(\w,0,z) \bigr]   \quad \P\text{-a.s.}   \]
The equality above shows  that $F$ minimizes in the variational formula 
\[   \gpl(h)=\inf_{F\in\cK_0} \,\P\text{-}\esssup_\w\;  \max_{z\in\range} \{V(\w,z)+ h\cdot z+F(\w, 0,z)\}  \]
 for $h=h(B)$,  even without the essential supremum over $\w$.    
 
 Furthermore: suppose $h(B)$ and $\xi$ are dual.   Then  from above, almost surely,   
 \begin{align*}
 \gpp(\xi)=\gpl(h(B))-h(B)\cdot\xi
 =-h(B)\cdot\xi =   \max_{z\in\cR} [ V(\w,z)-B(\w,0,z)] -h(B)\cdot\xi. 
 \end{align*}
 Thus $B$ is a minimizer for $\gpp(\xi)$.  
 
 In summary, we see that \eqref{v:VB} is a criterion that finds  cocycles that serve as minimizers in the variational formulas.    For future use we record the outcome of the calculation above in the next lemma. 
 
 \begin{lemma}\label{lm-h(B)}   Let $B\in\cK$ be an integrable stationary cocycle adapted to the potential $V$ as required by  \eqref{v:VB} and let $h(B)$ be the negative of the mean vector of $B$ as defined in \eqref{EB}.  Then $\gpl(h(B))=0$.  
 
 \end{lemma}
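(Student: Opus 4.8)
The plan is to assemble, as a self-contained statement, the computation carried out just above the lemma. Given $B\in\cK$ adapted to $V$, I would first pass to the centered cocycle $F\in\cK_0$ defined by $F(\w,x,y)=-h(B)\cdot(y-x)-B(\w,x,y)$, which lies in $\cK_0$ by \eqref{EB}--\eqref{FF}. In terms of $F$, the adaptedness condition \eqref{v:VB} becomes
\[
0=\max_{z\in\range}\bigl[\,V(\w,z)+h(B)\cdot z+F(\w,0,z)\,\bigr]\qquad\P\text{-a.s.},
\]
and in particular $F(\w,0,z)\le-V(\w,z)-h(B)\cdot z$ for every $z\in\range$, $\P$-a.s.

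Next I would check that this one-sided bound already puts $F$ in the scope of the uniform ergodic theorem. Assumption \eqref{cc-ass1} asks only for an upper envelope $\overline F(\w,z)\ge F(\w,0,z)$ with a controlled averaged modulus; taking $\overline F(\w,z)=\bigl(-V(\w,z)-h(B)\cdot z\bigr)^+$, the local $L^p$-integrability of $V$ with $p>d$ from \eqref{v:V1} gives the averaging condition in \eqref{cc-ass1} exactly as in the upper-bound portion of the proof of Theorem \ref{th:K-var}. Hence Theorem \ref{cc-ergthm} yields $\lim_{n\to\infty}\max_{\abs{x}_1\le n}n^{-1}\abs{F(\w,0,x)}=0$ $\P$-a.s.

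The conclusion then follows by iteration. At each vertex the displayed identity reads $\max_{z\in\range}[\,V(\theta_x\w,z)+h(B)\cdot z+F(\theta_x\w,0,z)\,]=0$ $\P$-a.s.; summing along an admissible $n$-step path from the origin and telescoping $\sum_{k=0}^{n-1}F(\theta_{x_k}\w,0,z_{k+1})=F(\w,0,x_n)$ by additivity and stationarity, one gets
\[
0=\max_{x_{0,n}}\frac1n\Bigl[\,\sum_{k=0}^{n-1}V(\theta_{x_k}\w,z_{k+1})+h(B)\cdot x_n+F(\w,0,x_n)\,\Bigr]\qquad\P\text{-a.s.}
\]
Here the upper bound holds termwise, and the matching lower bound comes from a greedy path that picks a maximizing step at each vertex. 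Since an admissible $n$-step path satisfies $\abs{x_n}_1\le Mn$ with $M=\max_{z\in\range}\abs{z}_1$, the ergodic theorem lets one discard $n^{-1}F(\w,0,x_n)$ with an error bounded by $n^{-1}\max_{\abs{x}_1\le Mn}\abs{F(\w,0,x)}\to0$. What is left is $n^{-1}\Gpl_n(h(B))$, so $n^{-1}\Gpl_n(h(B))\to0$ $\P$-a.s., and Theorem \ref{v:t-lln2}(i) identifies the limit as $\gpl(h(B))$; thus $\gpl(h(B))=0$.

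The one point I would spell out carefully is the second step: in the proof of Theorem \ref{th:K-var} the uniform ergodic theorem is invoked under the two-sided condition $K(F)<\infty$, whereas here only the one-sided bound on $F(\w,0,z)$ is available. This is not a genuine obstacle, because assumption \eqref{cc-ass1} is itself one-sided and the negative part of $F$ is controlled automatically by its mean-zero property via the pointwise ergodic theorem — but it is where the argument differs from the earlier one. Everything else is bookkeeping already displayed above the lemma.
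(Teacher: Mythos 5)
Your proposal is correct and follows essentially the same route the paper uses in the ``Minimizing cocycles'' paragraph immediately preceding the lemma: pass to the centered cocycle $F\in\cK_0$, observe that the one-sided bound $F(\w,0,z)\le -V(\w,z)-h(B)\cdot z$ suffices for Theorem~\ref{cc-ergthm}, iterate the pointwise identity along admissible paths, and invoke the shape theorem. The extra care you take in naming the envelope $\overline F$ explicitly and in supplying the greedy-path lower bound only makes explicit what the paper leaves implicit; the one slightly loose remark is that the negative part of $F$ is controlled ``by its mean-zero property via the pointwise ergodic theorem''---more precisely, the uniform ergodic theorem's hypothesis \eqref{cc-ass1} is deliberately one-sided and the conclusion's two-sidedness comes from the cocycle structure, as proved in the cited reference, but this does not affect the argument.
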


\medskip 
 
{\bf Geodesics.}  Turns out that a cocycle $B$ satisfying  \eqref{v:VB} is involved  not only in optimization on the macroscopic  level but also on the   pathwise level.  
 Suppose the path $(x_k)_{k=0}^n$  with steps $z_{k+1}=x_{k+1}-x_{k}$ follows the maximal increments specified in \eqref{v:VB}, in other words,  satisfies 
\be\label{v:geod} V(\theta_{x_k}\w,z_{k+1})   -B(\w, x_k, x_{k+1})=0\qquad  \forall k=0,1,\dotsc,n-1. \ee
  Then this path is a geodesic from $x_0$ to $x_n$.  Here is the simple argument.  Consider  any path $y_{\bbullet}$ from $y_0=x_0$ to $y_n=x_n$.  Then, by \eqref{v:VB},  the stationarity and additivity of $B$, and \eqref{v:geod},   
\begin{align*}
\sum_{k=0}^{n-1} V(\theta_{y_k}\w, y_{k+1}-y_k) &\le  \sum_{k=0}^{n-1} B(\theta_{y_k}\w, 0,   y_{k+1}-y_k)  =  \sum_{k=0}^{n-1} B(\w, y_k , y_{k+1}) = B(\w, x_0, x_n) \\
&=  \sum_{k=0}^{n-1} B(\w, x_k , x_{k+1})  =  \sum_{k=0}^{n-1} V(\theta_{x_k}\w,z_{k+1}). 
\end{align*}

 \medskip 
 
{\bf Busemann functions.}    Having seen the usefulness of condition \eqref{v:VB}, we must ask  how  cocycles that satisfy \eqref{v:VB} arise.   One way of obtaining such cocycles is through limits of local gradients of passage times,  called {\it Busemann functions}.   

 Suppose  that we are in a setting where admissible paths that connect  two given points have a uniquely determined number of steps.   Let $\Gpp_{x,y}=\Gpp_{x,(m),y}$ denote the point-to-point last-passage value  where $m$ is the unique number of steps from $x$ to $y$.   Fix a direction $\xi\in \ri\Uset$.   Assume that  we have a process $B$ that satisfies the following:  for all sequences $\{v_n\}\subset\Z^d$ such that  $v_n/\abs{v_n}\to\xi$    this almost sure limit holds:  
 \be\label{v:B1}    B(\w,x,y)=\lim_{n\to\infty} [ \Gpp_{x,\,v_n}-\Gpp_{y,\,v_n}]  \quad \forall x,y\in\Z^d  \quad \P\text{-a.s.}  \ee
$B$ is called a Busemann function in direction $\xi$.  It  is a stationary cocycle.  Additivity is immediate from the limit \eqref{v:B1}.  Stationarity comes from the fact that shifting $v_n$ by a fixed amount does not alter its limiting direction $\xi$.  Under additional assumptions (see for example  Theorem 5.1 in \cite{geor-rass-sepp-16}) this cocycle is integrable. 

At this time we simply wish to observe that $B$ satisfies \eqref{v:VB}: 
 \begin{align*}
  \max_{z\in\cR} [ V(\w,z)-B(\w,0,z)] &=  \lim_{n\to\infty}   \max_{z\in\cR} [   V(\w,z)-\Gpp_{0,\,v_n}+\Gpp_{z,\,v_n}] =0 
 \end{align*}
 where the last equality follows from 
 \be\label{v:199}  \Gpp_{0,\,v_n} = \max_{z\in\cR} [   V(\w,z) +\Gpp_{z,\,v_n}] . \ee 
  
 Proof of the limit in \eqref{v:B1} is highly nontrivial.   The route that we will take to finding cocycles that satisfy \eqref{v:VB} will involve the next item below. 
  
   
 \medskip   
   
 {\bf Stationary percolation.}  
 Once again we assume  cocycle $B$ satisfies  \eqref{v:VB}   and develop this identity in a different direction.   Let   $v\in\Z^d$ be fixed.   
\begin{align*}
0&=  \max_{z\in\cR} [ V(\theta_x\w,z)-B(\theta_x\w,0,z)] 
=   \max_{z\in\cR} [ V(\theta_x\w,z)-B(\w,x,x+z)]  \\
&=   \max_{z\in\cR} [ V(\theta_x\w,z)-B(\w,x,v) +B(\w, x+z, v)]  \\
\end{align*} 
from which we write 
\be\label{v:B2}  B(\w,x,v) = \max_{z\in\cR} \,[ V(\theta_x\w,z)  +B(\w, x+z, v)]
\qquad \forall x,v\in\Z^d.  \ee
Iterate   this.   Fix also $u\in\Z^d$ and  write  $x_n=u+ z_1+\dotsm+z_n$ for an admissible path from $u$ with steps $z_i$.  
\be\label{v:B7} \begin{aligned}
B(\w,u,v) &= \max_{z_1\in\cR} [ V(\theta_{x_0}\w,z_1)  +B(\w, x_1, v)] \\
&= \max_{z_1, z_2\in\cR} [ V(\theta_{x_0}\w,z_1) + V(\theta_{x_1}\w,z_2)  +B(\w, x_2, v)] \\
&=\dotsm=
 \max_{z_1,\dotsc,  z_n\in\cR} \Bigl[\, \sum_{k=0}^{n-1} V(\theta_{x_k}\w,z_{k+1})   +B(\w, x_n, v)\Bigr]\\
 &=\max_x \,[  \,G_{u,(n),x}  +B(\w, x, v)].  
\end{aligned}\ee

We can turn this into a boundary value problem.  Assume again that the number of steps on an admissible path is determined uniquely by the endpoints so that we can write 
\[ \Gpp_{x,y}=\begin{cases}  \Gpp_{x,(n),y}  &\text{if $y$ is reachable from $x$ along an admissible path of $n$ steps}\\ -\infty &\text{if $y$ is not reachable from $x$ along any  admissible path.} 
\end{cases} \] 
  Let  $\cH$ and $\partial\cH$ be finite  subsets of $\Z^d$  with the property that any infinite admissible path from $\cH$ intersects $\partial\cH$.   So in a sense $\partial\cH$ is a ``boundary'' of $\cH$.  

For example, suppose we are in the directed case $\range=\{e_1,\dotsc,e_d\}$.   If   $\cH$ is the rectangle   $\cH=\prod_{i=1}^d \{0,1,\dotsc, N_i\}$, then $\partial\cH$ could be its ``northeast'' boundary 
$  \partial\cH=\bigcup_{i=1}^d \{x\in\cH:  x_i=N_i\}. $

\begin{lemma}  Assume that the two endpoints of any admissible path determine uniquely the number of steps in the path.  Assume that the stationary cocycle $B$ satisfies  \eqref{v:VB}.  Fix $v\in\Z^d$ and finite subsets  $\cH$ and $\partial\cH$ of $\Z^d$ such that every long enough admissible path from $\cH$ intersects $\partial\cH$.   Then 
\be\label{v:H1} 
B(\w,u,v) = \max_{x\in\partial\cH}  \,[  \,G_{u,x}   +B(\w, x, v)] 
\qquad\text{for all $u\in\cH$.}    
\ee

\end{lemma}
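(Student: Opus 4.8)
The plan is to read \eqref{v:H1} as the version of \eqref{v:B7} in which the auxiliary point $x$ is allowed to range over the ``boundary'' $\partial\cH$ rather than over the vertices reachable from $u$ in a fixed number of steps, and to prove it as two inequalities. Throughout I work on the almost sure event on which \eqref{v:VB}, and hence \eqref{v:B2} and \eqref{v:B7}, hold. A preliminary observation is needed: there is a finite $N$ such that every admissible path of $N$ steps starting in $\cH$ has already intersected $\partial\cH$. Indeed, fix $u\in\cH$; the admissible paths from $u$ that avoid $\partial\cH$ form a subtree of the tree of all admissible paths from $u$, which is finitely branching because $\card\range<\infty$. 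If that subtree were infinite, König's lemma would produce an infinite admissible path from $u$ avoiding $\partial\cH$, contradicting the hypothesis. So each such subtree has finite depth, and since $\cH$ is finite we may take $N$ to be the maximum of these depths over $u\in\cH$.

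For the lower bound, fix $u\in\cH$ and a vertex $x\in\partial\cH$ reachable from $u$, and let $x_{0,m}$ be any admissible path from $x_0=u$ to $x_m=x$, where $m$ is the (unique) number of steps from $u$ to $x$. Using $V(\theta_{x_k}\w,z_{k+1})\le B(\theta_{x_k}\w,0,z_{k+1})=B(\w,x_k,x_{k+1})$ from \eqref{v:VB} and stationarity, and then telescoping by additivity,
\[
\sum_{k=0}^{m-1}V(\theta_{x_k}\w,z_{k+1})\;\le\;\sum_{k=0}^{m-1}B(\w,x_k,x_{k+1})\;=\;B(\w,u,x).
\]
Taking the maximum over admissible paths from $u$ to $x$ gives $G_{u,x}\le B(\w,u,x)$, hence $G_{u,x}+B(\w,x,v)\le B(\w,u,x)+B(\w,x,v)=B(\w,u,v)$ by additivity, and maximizing over $x\in\partial\cH$ yields ``$\ge$'' in \eqref{v:H1}. (If $u\in\partial\cH$ the term $x=u$ already contributes $G_{u,u}+B(\w,u,v)=B(\w,u,v)$.)

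For the upper bound, apply \eqref{v:B7} with $n=N$ and pick a maximizing path $x^{*}_{0,N}$ with $x^{*}_0=u$, so that $B(\w,u,v)=\sum_{k=0}^{N-1}V(\theta_{x^{*}_k}\w,z^{*}_{k+1})+B(\w,x^{*}_N,v)$. By the previous paragraph this path meets $\partial\cH$; let $T\le N$ be the first such time and $y=x^{*}_T\in\partial\cH$. Splitting the sum at $k=T$ and applying the telescoping bound of the previous paragraph to the sub-path $x^{*}_{T,N}$ gives
\[
\sum_{k=T}^{N-1}V(\theta_{x^{*}_k}\w,z^{*}_{k+1})+B(\w,x^{*}_N,v)\;\le\;B(\w,y,x^{*}_N)+B(\w,x^{*}_N,v)\;=\;B(\w,y,v),
\]
while the head satisfies $\sum_{k=0}^{T-1}V(\theta_{x^{*}_k}\w,z^{*}_{k+1})\le G_{u,(T),y}=G_{u,y}$, the last equality because $T$ is the unique number of admissible steps from $u$ to $y$. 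Therefore $B(\w,u,v)\le G_{u,y}+B(\w,y,v)\le\max_{x\in\partial\cH}[G_{u,x}+B(\w,x,v)]$, which is ``$\le$'' in \eqref{v:H1}, completing the proof.

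The only step beyond routine cocycle bookkeeping is the first one: converting the qualitative hypothesis ``every infinite admissible path from $\cH$ meets $\partial\cH$'' into a uniform finite excursion length $N$. Relatedly, one must be a little careful that splitting the maximizing path at the hitting time of $\partial\cH$ is consistent with the convention that $G_{u,y}$ carries a fixed step count — this is exactly where the standing assumption that the two endpoints determine the number of steps is used, so that $G_{u,(T),y}=G_{u,y}$. Everything else reuses the additivity, stationarity, and the adaptedness inequality $V(\w,z)\le B(\w,0,z)$ already exploited in deriving \eqref{v:B7} and in the geodesic argument around \eqref{v:geod}.
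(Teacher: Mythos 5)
Your proof is correct and follows essentially the same route as the paper's: both use \eqref{v:B7} to express $B(\w,u,v)$ as a maximum over $n$-step paths for $n$ large enough that every such path from $u$ hits $\partial\cH$, fix a maximizing path, and split it where it first meets $\partial\cH$, bounding the head by $G_{u,y}$ and the tail by $B(\w,y,v)$. The only cosmetic differences are your explicit K\"onig's-lemma derivation of the uniform excursion length $N$ (the paper simply invokes the hypothesis together with directedness \eqref{v:dir}) and your direct telescoping of $\sum V\le B$ on the tail where the paper instead re-invokes \eqref{v:B7} with the shifted base point $x_m$; these yield the identical inequality.
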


\begin{proof}   Fix $u\in\cH$.   Equation  \eqref{v:B7}  gives 
\[  B(\w, u, v) \ge  G_{u,x}   +B(\w, x, v)   \]
whenever there is an admissible path from $u$ to $x$.    This tells us that $\ge$ holds in \eqref{v:H1}.  It also shows that if $u\in\cH\cap\partial\cH$ then the maximum in \eqref{v:H1} is assumed at $x=u$ and the equality holds.   (For this last statement we use the convention $G_{u,u}=0$.) 

We prove inequality $\le$ in \eqref{v:H1}.  
  By the directedness assumption \eqref{v:dir}  we can take $n$ in \eqref{v:B7} large enough so that every $n$-path from $u$ intersects $\partial\cH$.    Fix  a maximizing path $u=x_0,x_1,\dotsc,  x_n$ on the second last line of \eqref{v:B7}.  Let $x_m\in\partial\cH$.  Then 
\begin{align*}
B(\w,u,v) &=\sum_{k=0}^{m-1} V(\theta_{x_k}\w,z_{k+1})  + \sum_{k=m}^{n-1} V(\theta_{x_k}\w,z_{k+1})   +B(\w, x_n, v) \\
&\le G_{u,x_m} +  G_{x_m,x_n}  +B(\w, x_n, v)  \le G_{u,x_m} +   B(\w, x_m, v)  \\[2pt] 
&\le \text{right-hand side of \eqref{v:H1}} . 
\end{align*}
In the second-last inequality above we applied \eqref{v:B7} with $x_m$ in place of $u$ and $n-m$ in place of $n$. 
\end{proof}

Our interpretation is that  equation \eqref{v:H1} determines the values  $\{ B(u,v):u\in\cH\}$  from the last-passage percolation  $G_{x,y}$  and  given boundary values  $\{ B(x,v):x\in\partial\cH\}$.  Then we can search for cases where the solution is tractable.  In particular,  we can look for distributional invariance.    We can make this program work in exactly solvable cases. 

 The analogy the reader should have in mind is finding the invariant distribution of a Markov process such as an interacting particle system.    The analogue of boundary values  are the state variables at time zero.  The analogue of the weights $V( \theta_x\w,z)$    are the Poisson clocks or other random variables that govern the evolution of the particles. 



\section{Exponential corner growth model in two dimensions} 
\label{v:s-stat-cgm} 

For the remainder of these notes we restrict  the discussion   to the   two-dimensional corner growth model (CGM), with real weights on the vertices:  $\w=(\w_x)_{x\in\Z^2}\in\Omega=\R^{\Z^2}$. The set of  admissible steps is $\range=\{e_1,e_2\}$, and the  potential is given by the weight at the origin: $V(\w,z)=\w_0$.    The set of possible limiting velocities of paths is the  closed line segment $\cU=[e_1,e_2]=\{(s,1-s): 0\le s\le 1\}$, and its relative interior is the open line segment $\ri\cU=(e_1,e_2)=\{(s,1-s): 0< s< 1\}$. 

In this setting we alter slightly the earlier definition \eqref{v:gpp1}  of the point-to-point last-passage time to include both endpoints of the path.  This makes no difference to large-scale properties.  
 Given an environment $\w$ and two points $x,y\in\Z^2$ with $x\le y$ coordinatewise,
define  
 	\be\label{v:G}  
	  \Gpp_{x,y}=\max_{x_{\brbullet}\,\in\,\Pi_{x,y}}\sum_{k=0}^{\abs{y-x}_1}\w_{x_k}.
	\ee
	$\Pi_{x,y}$ is the set of   paths $x_{\bbullet}=(x_k)_{k=0}^n$  that start at  $x_0=x$,  end at $x_n=y$ with $n=\abs{y-x}_1$,  and have increments $x_{k+1}-x_k\in\{e_1,e_2\}$.   Call such paths {\it admissible} or {\it up-right}.  See Figure \ref{fig:cgm}  for an illustration.   The zero-length path case  is $G_{x,x}=\w_x$.    Our convention is that 
\be\label{v:Gfail} 	 \Gpp_{x,y}=-\infty\qquad   \text{if $x\le y$ fails. } \ee

\begin{figure}[h] 
	\begin{center}
		\begin{tikzpicture}[>=latex, scale=0.65]  

			\definecolor{sussexg}{gray}{0.7}
			\definecolor{sussexp}{gray}{0.4} 

			\draw[<->](0,4.8)--(0,0)--(5.8,0);
			
			\draw[line width = 3.2 pt, color=sussexp](0,0)--(0,1)--(2,1)--(2,3)--(3,3)--(3,4)--(5,4);
			
			\foreach \x in { 0,...,5}{
				\foreach \y in {0,...,4}{
					\fill[color=white] (\x,\y)circle(1.8mm); 
					
					\draw[ fill=sussexg!65](\x,\y)circle(1.2mm);
				}
			}	
			
			\foreach \x in {0,...,5}{\draw(\x, -0.3)node[below]{\small{$\x$}};}
			\foreach \x in {0,...,4}{\draw(-0.3, \x)node[left]{\small{$\x$}};}	
			
		\end{tikzpicture}
	\end{center}
	\caption{ \small  An up-right path from $(0,0)$ to $(5,4)$ on the lattice $\Z^2$. }\label{fig:cgm} 
  \end{figure}

  We work with the exponentially distributed weights, and so make the following assumption: 
\be\label{ass6} \begin{aligned}   
\text{the weights $\w_x$ are independent  rate  1 exponentially distributed random variables.}    
\end{aligned}\ee 
This means that  $\P\{\w_x>t\}=e^{-t}$ for $t\ge 0$.    This is abbreviated as $\w_x\sim$ Exp(1).   

In this section   we construct a stationary version of the exponential CGM  and then calculate a closed-form expression of  the  limiting shape function   
 \be\label{lln9}
\gpp(\xi)= \lim_{N\to\infty} N^{-1} \Gpp_{0,\fl{N\xi}}  
\qquad\text{almost surely for $\xi\in\R_{\ge0}^2$.}  
 \ee
 Existence of the limit  is a special case of the previous  Theorem \ref{v:t-lln2}.  Note though that in contrast with \eqref{v:p2p} the limit is now defined not only for $\xi\in\Uset$ but for all $\xi$ in the closed first quadrant.   (Theorem 2.3 of \cite{mart-04} gives \eqref{lln9}  under a slightly weaker hypothesis than the $2+\e$ moment assumption \eqref{v:V1} assumed for Theorem \ref{v:t-lln2}.)   
General soft arguments show that   $\gpp$ is concave, continuous and homogeneous (which means that $\gpp(c\xi)=c\gpp(\xi)$ for $c\ge 0$).   In the exponential case  the explicit formula for $\gpp$ reveals that this function is strictly concave, except on rays from the origin.

In the remainder of this section we prove two complements to the limit \eqref{lln9}. 
    We prove a version of \eqref{lln9} with uniformity over the lattice.   Based on this we obtain  the    {\it shape theorem},   which gives the limit  of the randomly growing cluster $A_t=\{\zeta\in\R_{\ge0}^2: \Gpp_{0,\,\fl{\zeta}}\le t\}$ as $t\to\infty$ and the cluster is scaled to $t^{-1}A_t$.     As the last theorem of the section we show  that the maximizing path   in \eqref{v:G}  stays within $o(\abs{y-x}_1 )$ of the straight line segment $[x,y]$,  with high probability as $\abs{y-x}_1\to\infty$.

%

\subsection{Stationary exponential corner growth model}
Our first task is to construct a coupling of i.i.d.\ rate 1 exponential weights and  a  stationary integrable cocycle,  for a given value of a parameter $0<\rho<1$, that together satisfy \eqref{v:VB},   essentially by solving the boundary value problem in \eqref{v:H1}.    This construction will be performed on quadrants $u+\Z_{\ge0}^2$ with a specified origin $u\in\Z^2$.  

Fix a parameter $0<\rho<1$ and  an origin $u\in\Z^2$.   Assume  given a    collection  of  mutually independent random variables  
\be\label{IJw}  \{   \w_{x} ,\, I_{u+ie_1}, \, J_{u+je_2}  :  \,x\in u+\Z_{>0}^2, \, i,j\in \Z_{>0}\}  \ee
    with these marginal distributions:
\be\label{w7}\begin{aligned}
\w_{x}\sim \text{ Exp}(1),  \quad I_{u+ie_1}\sim \text{ Exp}(1-\rho)  , \quad \text{ and }  \quad  
J_{u+je_2}\sim \text{ Exp}(\rho)  .  
\end{aligned}\ee 
The interpretation is that $I_x$ is a weight for the edge $(x-e_1,x)$, $J_x$ is a weight for edge $(x-e_2,x)$, and $\w_x$ is a vertex weight.   The edge weights are on the boundary of the quadrant $u+\Z_{\ge0}^2$  and the vertex weights in the bulk.

From the 
given  variables \eqref{IJw} 
we define further   variables as follows, proceeding  inductively to the north and east from the origin $u$: for all   $x\in u+\Z_{> 0}^2$, 
\begin{align}
\label{IJw5.1}  \wc\w_{x-e_1-e_2}&=I_{x-e_2}\wedge J_{x-e_1} 
\\
\label{IJw5.2}  I_x&=\w_x+(I_{x-e_2}-J_{x-e_1})^+ 
\\
\label{IJw5.3} J_x&=\w_x+(I_{x-e_2}-J_{x-e_1})^-. 
\end{align}  
The mapping above from $(\w_x, I_{x-e_2},J_{x-e_1})$ to $(\wc\w_{x-e_1-e_2}, I_x, J_x)$ is illustrated in Figure \ref{fig:ne}.  Note that \eqref{IJw5.2}--\eqref{IJw5.3} imply the symmetric counterpart of \eqref{IJw5.1}
\be\label{IJw5.4} \w_{x}=I_{x}\wedge J_{x} \ee
and the additivity around the unit square: 
\be\label{IJw5.5}   I_{x-e_2}+J_x=J_{x-e_1}+I_x. \ee

 \begin{figure}[h]
 \begin{center}
 
 \begin{tikzpicture}[>=latex, scale=0.7]
\draw(0,0) rectangle (3,3);
\draw(6,0) rectangle (9,3); 
\draw[|->](3.7,1.5)--(5.3, 1.5);
\draw[color=black] (4.5, 1.5)node[above]{\small{\eqref{IJw5.1}--\eqref{IJw5.3}}};
\shade[ball color=black](3,3) circle(2mm) node[above right]{\large$\w_x$};
\draw (1.5, 0)node[below]{\large$I_{x-e_2}$};
\draw(0,1.5)node[left]{\large$J_{x-e_1}$}
;
\draw [line width= 4pt, color=black](0.2,0)--(2.8,0);
\draw [line width= 4pt, color=black](0,.2)--(0,2.8);

\shade[ball color=black](6,0) circle(2mm); 
\draw (6.2,-.3)node{\large$\wc\w_{x-e_1-e_2}$};
\draw [line width= 4pt, color=black](6.2,3)--(8.8,3);
\draw (7.5,3)node[above]{\large$ I_{x}$};
\draw [line width= 4pt, color= black](9,2.8)--(9,.2); 
\draw (9, 1.5)node[ right]{\large$J_{x}$};

\end{tikzpicture}
 
 \end{center}
 \caption{\small Mapping \eqref{IJw5.1}--\eqref{IJw5.3}  on a single  lattice square. The figure illustrates how southwest corners are flipped into northeast corners in the  inductive construction of the increment variables.}
 \label{fig:ne}
 \end{figure}

 Utilizing  \eqref{IJw5.1}--\eqref{IJw5.3}  we  extend  \eqref{IJw} to the larger collection 
 \be\label{IJw9} 
  \{ \w_{x} ,\,   I_{x-e_2}, \, J_{x-e_1},  \,\wc\w_{x-e_1-e_2}  :  x\in u+\Z_{>0}^2\} . 
   \ee
This larger collection  has an $\wc\w_x$ variable for each vertex in the quadrant $u+\Z_{\ge0}^2$, an $I_x$   variable for each horizontal nearest-neighbor edge in the quadrant $u+\Z_{\ge0}^2$,  a $J_x$  variable for each vertical nearest-neighbor  edge in the quadrant $u+\Z_{\ge0}^2$, and the originally given $\w_x$ variables for vertices in the bulk $u+\Z_{>0}^2$.  

As the next theorem will show, process   \eqref{IJw9} is related to another   last-passage process $G^\rho_{u,x}$ whose  origin is fixed  at $u$ and  that utilizes both edge weights on the boundary and bulk weights in the interior of the quadrant.  First put     $G^\rho_{u,u}=0$ and on the boundaries 
  \be\label{Gr1} G^\rho_{u,\,u+me_1}=\sum_{i=1}^m I_{u+ie_1} 
  \quad\text{and}\quad
G^\rho_{u,\,u+ne_2}= \sum_{j=1}^n  J_{u+je_2} .   \ee 
 Then in the bulk 
  for $x=u+(m,n)\in u+\Z_{>0}^2$, 
\be\label{Gr2}
G^\rho_{u,\,x}= \max_{1\le k\le m} \;  \Bigl\{  \;\sum_{i=1}^k I_{u+ie_1}  + G_{u+ke_1+e_2, \,x} \Bigr\}  
\bigvee
 \max_{1\le \ell\le n}\; \Bigl\{  \;\sum_{j=1}^\ell  J_{u+je_2}  + G_{u+\ell e_2+e_1, \,x} \Bigr\} .
\ee
 $G_{a,x}$ inside the braces is the last-passage value defined in  \eqref{v:G}.  
  The superscript $\rho$ in $G^\rho_{u,x}$  distinguishes this last-passage value  from the   one in  \eqref{v:G} that uses only  i.i.d.\ bulk weights.  The first subscript $u$ in $G^\rho_{u,x}$ specifies that the $I$ and $J$ edge weights are placed on the axes $u+\Z_{>0}e_k$, $k=1,2$.   
  
  An equivalent definition of $G^\rho_{u,x}$ would be to give the  boundary conditions \eqref{Gr1} and the inductive equation 
  \be\label{Gr2.2} 
  G^\rho_{u,x} = \w_x+ G^\rho_{u,\,x-e_1}\vee G^\rho_{u,\,x-e_2}\,, 
  \qquad    x \in u+\Z_{>0}^2. 
 \ee 
  

 The next theorem summarizes the properties of process  \eqref{IJw9} and the connection with the process $G^\rho_{u,x}$.    Point (i) of the theorem  uses the following definitions.     A  bi-infinite sequence $\cY=(y_k)_{k\in\Z}$  in   $u+\Z_{\ge0}^2$ is a   {\it down-right path}  if    $y_k-y_{k-1}\in\{e_1,-e_2\}$ for all $k\in\Z$.  $\cY$ decomposes  the vertices of the quadrant into a disjoint union  $u+\Z_{\ge0}^2=\cG_-\cup\cY\cup\cG_+$  where  
 \[  \cG_-=\{x\in u+\Z_{\ge0}^2:  \exists j\in\Z_{>0}  \text{ such that  } x+(j,j)\in \cY\} \]
 is the region  strictly to the south and west of $\cY$  and 
 \[  \cG_+=\{x\in u+\Z_{\ge0}^2:  \exists j\in\Z_{>0}  \text{ such that } x-(j,j)\in \cY\} \]
 is the region  strictly to the north and east of $\cY$.  Note that $\cG_+$ is necessarily unbounded but $\cG_-$ is finite iff all but  finitely many of the points $y_k$ lie on the   axes  $u+\Z_{>0}e_k$, $k=1,2$.    In the  extreme case 
$\cY=\{u+ie_1, u+je_2: 0\le i,j<\infty\}$ consists of the axes, $\cG_-=\varnothing$ and $\cG_+=u+\Z_{>0}^2$. 
 
 For an undirected nearest-neighbor edge $e$ on $u+\Z_{\ge0}^2$,   we denote the weight by 
 \be\label{v:tincr}   \tincr(e)= \begin{cases}  I_x &\text{if $e=\{x-e_1,x\}$}\\ J_x &\text{if $e=\{x-e_2,x\}$.} \end{cases}  \ee

  
 \begin{theorem} \label{v:tIJw1}   Fix $u\in\Z^2$ and $0<\rho<1$ and assume given  independent variables \eqref{IJw} with marginal distributions \eqref{w7}.   Then the variables in \eqref{IJw9} have the following properties. 
 \begin{enumerate}
 \item[{\rm(i)}]   For any down-right path $\cY$  in   $u+\Z_{\ge0}^2$, the random variables 
 \be\label{v:Y3}    \{ \, \wc\w_z,  \, \tincr(\{y_{k-1},y_k\}),  \, \w_x:  z\in\cG_-, \, k\in\Z, \, x\in\cG_+\} 
 \ee
 are mutually independent with  marginal distributions  
\be\label{w9}\begin{aligned}
\w_{x},\, \wc\w_x \sim \text{\rm Exp}(1),  \quad I_{x}\sim \text{\rm Exp}(1-\rho)  , \quad \text{ and }  \quad  J_{x}\sim \text{\rm Exp}(\rho)  .  
\end{aligned}\ee 
 \item[{\rm(ii)}]   The $I_x$ and $J_x$ variables are the increments of the $G^\rho_{u,x}$ last-passage process: 
 \be\label{IJG6}\begin{aligned} 
 I_x=G^\rho_{u,x}-G^\rho_{u,x-e_1} \qquad \text{for $x\in u+(\Z_{>0})\times(\Z_{\ge0})$} , \\
  J_x=G^\rho_{u,x}-G^\rho_{u,x-e_2} \qquad \text{for $x\in u+(\Z_{\ge0})\times(\Z_{>0})$} . \\
\end{aligned}  \ee
 \end{enumerate}
 \end{theorem}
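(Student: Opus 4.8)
Part (ii) is a purely deterministic comparison of the two recursions. Set $\wt I_x=G^\rho_{u,x}-G^\rho_{u,x-e_1}$ and $\wt J_x=G^\rho_{u,x}-G^\rho_{u,x-e_2}$. On the axes \eqref{Gr1} gives $\wt I_{u+ie_1}=I_{u+ie_1}$ and $\wt J_{u+je_2}=J_{u+je_2}$, and in the bulk \eqref{Gr2.2}, together with the telescoping identity $G^\rho_{u,x-e_2}-G^\rho_{u,x-e_1}=\wt I_{x-e_2}-\wt J_{x-e_1}$ (split through the corner $x-e_1-e_2$), yields $\wt I_x=\w_x+(\wt I_{x-e_2}-\wt J_{x-e_1})^+$ and $\wt J_x=\w_x+(\wt I_{x-e_2}-\wt J_{x-e_1})^-$. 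These are exactly the recursion \eqref{IJw5.2}--\eqref{IJw5.3} with the same boundary data, so $\wt I\equiv I$ and $\wt J\equiv J$ by induction on $\abs{x-u}_1$. Part (i) is the substantive statement, and its proof is a corner-flipping induction fueled by a single-square distributional identity, which I describe next.

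\textbf{The single-square Burke-type identity.} The engine is the following lemma: if $(a,b,c)$ are independent with $a\sim\mathrm{Exp}(1)$, $b\sim\mathrm{Exp}(1-\rho)$, $c\sim\mathrm{Exp}(\rho)$, then $\Phi(a,b,c):=(\,b\wedge c,\ a+(b-c)^+,\ a+(b-c)^-\,)$ is again a triple of independent $\mathrm{Exp}(1)$, $\mathrm{Exp}(1-\rho)$, $\mathrm{Exp}(\rho)$ variables. I would prove it by three short observations: $\Phi$ is an involution of $(0,\infty)^3$, hence a bijection (a one-line algebraic check, using $(b-c)^+\wedge(b-c)^-=0$); $\Phi$ is piecewise linear with Jacobian determinant $\pm1$ on each of $\{b\ge c\}$ and $\{b\le c\}$, hence preserves Lebesgue measure; and the product exponential density is $\Phi$-invariant because $(b\wedge c)+(1-\rho)(b-c)^++\rho(b-c)^-=(1-\rho)b+\rho c$, checked in the two cases. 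The construction \eqref{IJw5.1}--\eqref{IJw5.3} is precisely $\Phi$ applied to $(\w_x,I_{x-e_2},J_{x-e_1})$, with output $(\wc\w_{x-e_1-e_2},I_x,J_x)$.

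\textbf{The corner-flip induction for part (i).} I would prove \eqref{v:Y3} by starting from the axis path $\cY_0=\{u+ie_1,\,u+je_2:i,j\ge0\}$ and pushing the down-right path outward one unit square at a time. For $\cY_0$ we have $\cG_-=\varnothing$, the edge weights along $\cY_0$ are the given $I_{u+ie_1}$ and $J_{u+je_2}$, and $\cG_+=u+\Z_{>0}^2$ carries the given $\w_x$; thus \eqref{v:Y3} for $\cY_0$ is exactly the hypothesis \eqref{IJw}--\eqref{w7}, giving the base case. For the inductive step, assume \eqref{v:Y3} for $\cY$ and let $\cY'$ be obtained by flipping a southwest corner at the square with northeast vertex $x$: the vertex $x-e_1-e_2$ moves off $\cY$ into $\cG_-$, the vertex $x$ moves off $\cG_+$ onto $\cY'$, the triple $(\wc\w_{x-e_1-e_2},I_x,J_x)$ replaces $(\w_x,I_{x-e_2},J_{x-e_1})$ in the family \eqref{v:Y3}, and every other member of the family is unchanged. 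By the inductive hypothesis $(\w_x,I_{x-e_2},J_{x-e_1})$ is independent of the rest of the family and has the correct marginals; since the replacement triple equals $\Phi$ of it and hence is a function of it, the single-square identity shows the updated family is again mutually independent with marginals \eqref{w9}. This proves \eqref{v:Y3} for every down-right path that agrees with $\cY_0$ outside a finite set.

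\textbf{General bi-infinite paths; the main obstacle.} For an arbitrary down-right path $\cY$ the region $\cG_-$ may be infinite, so $\cY$ need not be reachable from $\cY_0$ by finitely many flips. I would handle this by the standard reduction: mutual independence of the infinite family \eqref{v:Y3} is equivalent to mutual independence of every finite subfamily, and each of the random variables $\wc\w_z,I_x,J_x$ is a fixed, path-independent local function of the weights \eqref{IJw}. Given a finite subfamily, pick a box $V_N=[u,u+(N,N)]\cap\Z^2$ containing its indices and large enough that the relevant diagonal rays meet $\cY$ inside $V_N$; replace $\cY$ by the down-right path $\cY'$ that agrees with $\cY$ inside $V_N$ and follows $\partial V_N$ out to the axes. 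Then $\cY'$ agrees with $\cY_0$ outside $V_N$ and assigns the same labels ``$\in\cG_-$'', ``$\in\cG_+$'', ``edge on the path'' to the indices of the subfamily as $\cY$ does, so the subfamily is literally the same for $\cY$ and $\cY'$; and $\cY'$ is joined to $\cY_0$ by finitely many corner flips (the standard connectivity of down-right lattice paths under elementary corner moves), so the previous paragraph applies. Exhausting \eqref{v:Y3} by finite subfamilies completes the proof. The one genuine computation is the single-square identity, made routine by the involution viewpoint; the step requiring the most care is the bookkeeping of the flip induction — in particular the finite-subfamily reduction above, which is exactly what makes the statement hold when $\cG_-$ is infinite.
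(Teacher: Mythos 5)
Your proof is correct and follows essentially the same route as the paper: a corner-flipping induction on down-right paths driven by the single-square distributional identity of Lemma \ref{v:lmIJw} (which you establish via the involution and unit-Jacobian argument, one of the methods the paper mentions), plus a deterministic induction on $\abs{x-u}_1$ for part (ii). The one substantive addition is your finite-subfamily reduction for bi-infinite paths with infinite $\cG_-$: the paper's induction ``on $\cY$'' as written only reaches down-right paths obtainable from the axes by finitely many corner flips, i.e.\ those with finite $\cG_-$. Your resolution --- pass to a sufficiently large box $V_N$, replace $\cY$ by a path $\cY'$ that agrees with $\cY$ inside $V_N$ and with the axes outside, note that the random variables attached to a fixed site or edge are the same functions of the initial data regardless of the choice of down-right path, and check that the $\cG_-/\cY/\cG_+$ labels of the finite subfamily's indices agree under $\cY$ and $\cY'$ --- is exactly the right way to close this gap, which the paper leaves implicit. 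Your part (ii) argument (showing the increments of $G^\rho_{u,\cdot}$ satisfy the recursion \eqref{IJw5.2}--\eqref{IJw5.3} with the given boundary data, then invoking uniqueness) is the mirror image of the paper's direct inductive verification that $I_x$ equals that increment; both are complete.
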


 Theorem \ref{v:tIJw1} rests on an inductive argument based on the next lemma,  which describes the joint distribution preserved  by the mapping in Figure \ref{fig:ne}. 
 
 \begin{lemma}\label{v:lmIJw}  Let $0<\rho<1$.  
 Assume given independent variables $W\sim$ {\rm Exp$(1)$},  $I\sim$ {\rm Exp$(1-\rho)$}, and 
 $J\sim$ {\rm Exp$(\rho)$}.   Define 
 \be\label{IJw19}\begin{aligned}
W'&=I\wedge J   \\
I'&=W+(I-J)^+ \\
J'&=W+(I-J)^- .  
\end{aligned}\ee 
Then the triple  $(W', I', J')$  has the same distribution as  $(W, I,J)$. 
 \end{lemma}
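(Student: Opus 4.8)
The plan is to show equality in distribution by computing directly with independent exponentials and recognizing a memorylessness/Markov-chain-like structure. First I would observe that since $W \sim \mathrm{Exp}(1)$ is independent of the pair $(I,J)$, and the map sends $(W,I,J)$ to $(W', I', J')$ with $W' = I\wedge J$ depending only on $(I,J)$ while $I' = W + (I-J)^+$ and $J' = W + (I-J)^-$, I should first understand the joint law of the three ``building blocks'': $I\wedge J$, the sign $\mathbf 1\{I>J\}$, and the nonnegative overshoot $|I-J|$. A standard computation for independent exponentials $I\sim\mathrm{Exp}(1-\rho)$, $J\sim\mathrm{Exp}(\rho)$ gives that $I\wedge J \sim \mathrm{Exp}(1)$ (since the rates add to $1$) and is independent of the pair $(\mathbf 1\{I>J\}, |I-J|)$; moreover $\mathbf 1\{I>J\}$ is Bernoulli with $\P\{I>J\}=\rho$ (the probability that the $\mathrm{Exp}(\rho)$ variable wins the race is proportional to the \emph{other} rate), and conditionally on $\{I>J\}$ the overshoot $I-J \sim \mathrm{Exp}(1-\rho)$ while conditionally on $\{J>I\}$ the overshoot $J-I\sim\mathrm{Exp}(\rho)$, by the memoryless property.

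Next I would feed this decomposition into the definitions \eqref{IJw19}. We have $W' = I\wedge J \sim \mathrm{Exp}(1)$, which matches the target marginal of $W$, and $W'$ is independent of $W$ and of $(\mathbf 1\{I>J\}, |I-J|)$. Now $I' = W + (I-J)^+$: on the event $\{I>J\}$ (probability $\rho$) this equals $W + (\text{an independent }\mathrm{Exp}(1-\rho))$, and on $\{I\le J\}$ (probability $1-\rho$) it equals $W$ alone, i.e. $W$ plus $0$. Symmetrically, $J' = W + (J-I)^+$ equals $W + (\text{an independent }\mathrm{Exp}(\rho))$ on $\{J>I\}$ and $W$ on $\{I\ge J\}$. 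The key algebraic identity making this clean is $I' \wedge J' = W + (I-J)^+ \wedge (I-J)^- = W + 0 = W$, and $I' - J' = (I-J)^+ - (I-J)^- = I-J$; so the pair $(I',J')$ is recovered from $(W, I-J)$ exactly the way $(I,J)$ would be recovered from $(I\wedge J, I-J)$. Thus to show $(W',I',J') \overset{d}= (W,I,J)$ it suffices to show that the triple $(W, \sign(I-J), |I-J|)$ — with $W\sim\mathrm{Exp}(1)$ independent of the race outcome described above — has the same joint law as $(I\wedge J, \sign(I-J), |I-J|)$, which is precisely the decomposition established in the first step, since $W'=I\wedge J$ plays the role of the new ``minimum'', $W$ is a fresh $\mathrm{Exp}(1)$, and the race outcome $(\sign(I-J), |I-J|)$ is literally the same random object on both sides.

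Concretely, I would organize the proof as: (1) prove the lemma-within-the-lemma that for independent $I\sim\mathrm{Exp}(1-\rho)$, $J\sim\mathrm{Exp}(\rho)$, the random variables $I\wedge J$ and $(\sign(I-J),|I-J|)$ are independent, with $I\wedge J\sim\mathrm{Exp}(1)$, $\P\{I>J\}=\rho$, and the conditional laws of $|I-J|$ as above; one can do this either by the direct density computation over $\{I>J\}$ and $\{J>I\}$ separately, or more slickly by noting $(I,J) \overset{d}= (I\wedge J + \mathbf 1\{A\}E_{1-\rho},\, I\wedge J + \mathbf 1\{A^c\}E_\rho)$ where $A$ has probability $\rho$ and $E_{1-\rho}, E_\rho$ are independent exponentials — itself a consequence of memorylessness. (2) Verify the identities $I'\wedge J' = W$ and $I'-J'=I-J$. (3) Conclude: the map $(I,J)\mapsto(I\wedge J,\sign(I-J),|I-J|)$ and the map $(W,I,J)\mapsto(W',I',J')$ both ``reassemble a pair of the required form from an $\mathrm{Exp}(1)$ minimum and the race data''; since the race data is identical and $W\overset{d}=I\wedge J$ independently, we get $(W',I',J')\overset{d}=(W,I,J)$.

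The main obstacle — and really the only nontrivial point — is Step (1), the independence of the minimum from the (signed) difference for two independent exponentials of different rates. It is classical but worth doing carefully, because it is exactly the fact that powers the whole Burke-type stationarity of the model; everything else in the lemma is bookkeeping around the deterministic identities $a = (a-b)^+ \wedge (a-b)^- + \text{(something)}$... more precisely around $\min(x,y)$, $\max(x,y)-\min(x,y)$, and the sign. I would also take care to state the conditional law of $|I-J|$ correctly (rate $1-\rho$ when the $\mathrm{Exp}(1-\rho)$ variable is the larger, rate $\rho$ otherwise) since getting this backwards is the obvious trap, and then check that $I'$ indeed ends up $\mathrm{Exp}(1-\rho)$ and $J'$ ends up $\mathrm{Exp}(\rho)$ rather than swapped.
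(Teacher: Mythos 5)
Your proof is correct and takes a route that is genuinely different from, and arguably more transparent than, what the paper suggests. The paper does not actually spell out a proof: it merely remarks that the lemma ``is proved by calculating a joint transform such as the Laplace transform or characteristic function, or by transforming the joint density,'' and notes separately that the map is an involution. Your argument replaces the three-variable density (or transform) calculation with a structural observation: in the coordinates $(W,\ I\wedge J,\ \sign(I-J),\ |I-J|)$, the map $(W,I,J)\mapsto(W',I',J')$ is precisely the transposition of the first two coordinates, since $W'=I\wedge J$ while the identities $I'\wedge J'=W$ and $I'-J'=I-J$ mean that $(I',J')$ is rebuilt from $(W,\sign(I-J),|I-J|)$ exactly as $(I,J)$ is rebuilt from $(I\wedge J,\sign(I-J),|I-J|)$. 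By the race decomposition for independent exponentials of complementary rates, these first two coordinates are i.i.d.\ $\mathrm{Exp}(1)$, jointly independent of $(\sign(I-J),|I-J|)$, so the transposition preserves the joint law and $(W',I',J')\overset{d}{=}(W,I,J)$. Your argument also makes the paper's involution remark immediate --- a transposition is its own inverse --- whereas the density/transform route must check that separately. The one nontrivial input is precisely the independence of $I\wedge J$ from the race data, which you correctly isolate and for which either of the two justifications you sketch is adequate.

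One small wording slip: you write that ``the probability that the $\mathrm{Exp}(\rho)$ variable wins the race is proportional to the \emph{other} rate.'' In the standard race picture the probability that $J$ is the minimum is proportional to $J$'s \emph{own} rate $\rho$ (normalized by the total rate $1$), giving $\P\{J<I\}=\P\{I>J\}=\rho$; your stated conclusion is correct, but the parenthetical explanation points the wrong way. With that fixed, the proof is complete.
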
 
 
 This lemma is proved by calculating a joint transform such as the Laplace transform or characteristic function,  or by transforming the joint density.    The mapping $(W, I,J)\mapsto(W', I', J')$  is an involution, that is, its own inverse.  
 
 \begin{proof}[Proof of Theorem \ref{v:tIJw1}]
 Part (i).    This is proved inductively on $\cY$.  The base case is $\cY=\{u+ie_1, u+je_2: 0\le i,j<\infty\}$, in which case the claim simply amounts to the initial condition in \eqref{w7}.  
 
 Now assume given $\cY=\{y_k\}$ for which the claim in part (i) holds.  We show that this claim  continues to hold for any $\cY'$ obtained from $\cY$ by ``flipping a southwest corner into a northeast corner''.   So pick any $x\in u+\Z_{>0}^2$ and $m\in\Z$  such that $(y_{m-1}, y_m, y_{m+1})=(x-e_1, x-e_1-e_2 ,x-e_2)$ are points along $\cY$.  Define $\cY'=\{y_k'\}$ by setting 
 \begin{align*}    y_k' = y_k  \qquad \text{for $k\ne m$, \quad and }  \qquad 
 y_m'=y_m+e_1+e_2=x. 
 \end{align*}
 In other words, $\cY$ has a southwest corner at $x-e_1-e_2$,  and $\cY'$ has a northeast corner at $x$. 
 
   Transforming $\cY$ into $\cY'$ changes $\cG_-$ to $\cG_-'=\cG_-\cup\{x-e_1-e_2\}$ and $\cG_+$ to $\cG_+'=\cG_+\setminus\{x\}$.   Thus constructing the variables  \eqref{v:Y3} for $\cY'$ involves transforming the triple $(\w_{x}, I_{x-e_2}, J_{x-e_1})$  into   $(\wc\w_{x-e_1-e_2},  I_x, J_x)$ according to equations  \eqref{IJw5.1}--\eqref{IJw5.3}, and copying the remaining variables from \eqref{v:Y3} for $\cY$.  The claim now follows  for $\cY'$ by  the induction assumption and Lemma \ref{v:lmIJw}.  
 By the induction assumption,   variables $(\w_{x}, I_{x-e_2}, J_{x-e_1})$ have the independent exponential distributions required for the hypothesis 
 of Lemma \ref{v:lmIJw}, and so by the lemma the triple  $(\wc\w_{x-e_1-e_2},  I_x, J_x)$ also has the independent exponential distributions required for \eqref{w9}.  

\medskip 

 Part (ii).    The claim  is true by construction for variables $I_{u+ie_1}$ and $J_{u+je_2}$ on the axes.   Here is the inductive argument for $I_x$, assuming that the claim holds for $I_{x-e_2}$ and $J_{x-e_1}$ and utilizing \eqref{Gr2.2}: 
 \begin{align*}
 I_x&=\w_x+(I_{x-e_2}-J_{x-e_1})^+= \w_x+(G^\rho_{u,\,x-e_2}-G^\rho_{u,\,x-e_1})^+ \\
& =\w_x+ G^\rho_{u,\,x-e_1}\vee G^\rho_{u,\,x-e_2}-G^\rho_{u,\,x-e_1} \\
&= G^\rho_{u,\,x} -G^\rho_{u,\,x-e_1}.  
 \end{align*} 
A similar argument works for $J_x$ under the same inductive assumption.    
 \end{proof}

 Let us observe some immediate and valuable consequences of Theorem \ref{v:tIJw1}. 
 
 By taking   $\cY=\{y_k\}$ as the axes at a new origin $v\in u+\Z_{\ge0}^2$, given by $y_k=v+ke_1$  and $y_{-k}=v+ke_2$ for $k\ge 0$, part (i) of the theorem implies that the process 
 $\{ \w_{v+x}, I_{v+x-e_2}, J_{v+x-e_1}:  x\in \Z_{>0}^2\}$ has the same distribution for all $v\in u+\Z_{\ge0}^2$.   Thus $B(x,y)=G^\rho_{u,y}-G^\rho_{u,x}$ is a stationary cocycle, restricted to the quadrant $x,y\in u+\Z_{\ge0}^2$.   
 
 The variables $\{\wc\w_x: x\in u+\Z_{\ge0}^2\}$ are i.i.d.\ Exp$(1)$ distributed.  
 
 \subsection{Explicit shape functions}  
 
We compute the limit shape functions  for both last-passage percolation processes, the stationary one and the one with i.i.d.\  weights. 
For the stationary process define the function 
\be\label{g-rho}  g^\rho(s,t)=   \frac{s}{1-\rho}+\frac{t}{\rho}.  \ee

\begin{proposition}   Fix $0<\rho<1$.  The stationary corner growth model satisfies these properties: 
$    \E[G^\rho_{0,(m,n)}]  =  g^\rho(m,n)$  for all $m,n\in\Z_{\ge0}$ and the law of large numbers 
\be\label{g-rho-3}   \lim_{N\to\infty} N^{-1}  G^\rho_{0,(\fl{Ns}, \fl{Nt})}  = g^\rho(s,t)  
\qquad \text{ almost surely for each $(s,t)\in\R_{\ge0}^2$. } \ee
\end{proposition}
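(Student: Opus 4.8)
The plan is to express $G^\rho_{0,(m,n)}$ through the increment variables and then read off both assertions from Theorem \ref{v:tIJw1}. First I would telescope the differences $G^\rho_{0,y}-G^\rho_{0,x}$ along the ``L''-shaped up-right path from $0$ that runs along the $e_1$-axis to $(m,0)$ and then straight up to $(m,n)$; by Theorem \ref{v:tIJw1}(ii) the successive horizontal increments equal $I_{(1,0)},\dots,I_{(m,0)}$ and the successive vertical increments equal $J_{(m,1)},\dots,J_{(m,n)}$, so
\[ G^\rho_{0,(m,n)}=\sum_{i=1}^m I_{(i,0)}+\sum_{j=1}^n J_{(m,j)}. \]
The axis weights $I_{(i,0)}$ are i.i.d.\ $\mathrm{Exp}(1-\rho)$ by the initial specification \eqref{w7}; for the bulk column $\{J_{(m,j)}:1\le j\le n\}$ I would apply Theorem \ref{v:tIJw1}(i) to the down-right path that descends the vertical line through $(m,0)$ from $+\infty$ and then turns east along the $e_1$-axis, whose vertical steps carry exactly these weights, concluding that they are mutually independent $\mathrm{Exp}(\rho)$. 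Taking expectations in the display and using $\E I_{(i,0)}=1/(1-\rho)$ and $\E J_{(m,j)}=1/\rho$ gives $\E[G^\rho_{0,(m,n)}]=\tfrac{m}{1-\rho}+\tfrac{n}{\rho}=g^\rho(m,n)$, the first claim.

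For the law of large numbers \eqref{g-rho-3}, fix $(s,t)\in\R_{\ge0}^2$ and put $m=\fl{Ns}$, $n=\fl{Nt}$. The first sum above has $N$-independent summands: $m^{-1}\sum_{i=1}^m I_{(i,0)}\to 1/(1-\rho)$ a.s.\ by the strong law for the single i.i.d.\ sequence $(I_{(i,0)})_{i\ge1}$, hence $N^{-1}\sum_{i=1}^{m}I_{(i,0)}\to s/(1-\rho)$ a.s. The second sum $T_N:=\sum_{j=1}^{n}J_{(m,j)}$ is, for each fixed $N$, a sum of $n$ i.i.d.\ $\mathrm{Exp}(\rho)$ variables (a Gamma$(n,\rho)$ variable), but the family being summed changes with $N$, so the strong law does not apply directly. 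The hard point — really the only nonroutine step — is this moving index, and I would dispatch it with a Chernoff bound: for each $\e>0$ there is $c=c(\e,t,\rho)>0$ with $\P\{|T_N-n/\rho|>\e N\}\le 2e^{-cN}$ for all large $N$. Summability and Borel--Cantelli give $\limsup_N|N^{-1}T_N-t/\rho|\le\e$ a.s.; intersecting over $\e=1/k$ yields $N^{-1}T_N\to t/\rho$ a.s. Adding the two limits gives $N^{-1}G^\rho_{0,(\fl{Ns},\fl{Nt})}\to \tfrac{s}{1-\rho}+\tfrac{t}{\rho}=g^\rho(s,t)$ a.s. (The cases $s=0$ or $t=0$ are immediate from the strong law applied to the axis weights.)

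As an alternative to the Borel--Cantelli step one can exploit that $(m,n)\mapsto G^\rho_{0,(m,n)}$ is coordinatewise nondecreasing — immediate from \eqref{Gr2.2} and positivity of the weights — to run a subsequence-plus-sandwich argument: prove \eqref{g-rho-3} along $N_\ell=\ell^2$ by Chebyshev using the second-moment bound $\Var(G^\rho_{0,(m,n)})\le 2m(1-\rho)^{-2}+2n\rho^{-2}$ (which follows from the two independence statements above together with $\Var(X+Y)\le(\sqrt{\Var X}+\sqrt{\Var Y})^2$) and the fact that $N^{-1}\E[G^\rho_{0,(\fl{Ns},\fl{Nt})}]\to g^\rho(s,t)$, and then interpolate to general $N$ by monotone sandwiching since $N_{\ell+1}/N_\ell\to 1$. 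Either way the substantive input is Theorem \ref{v:tIJw1}, and the remaining work is routine.
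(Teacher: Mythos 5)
Your proof is correct and follows essentially the same route as the paper's: telescope $G^\rho_{0,(m,n)}$ along the south boundary and then up the east column via \eqref{IJG6}, compute the mean termwise from the exponential marginals, and handle the LLN by applying the strong law to the fixed $I$-sequence and a large-deviation estimate to the $N$-dependent column sum. The paper states the second step tersely (``classical law of large numbers and some large deviation estimates, applied separately to the two sums''), and your Chernoff/Borel--Cantelli argument is exactly the intended filling-in of that phrase.
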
 

\begin{proof}
Rewrite in terms of nearest-neighbor increments: 
\be\label{G9}\begin{aligned}
G^\rho_{0,(m,n)}=   \sum_{i=1}^{m} I_{(i,0)} + \sum_{j=1}^{n} J_{(m,j)}. 
\end{aligned}\ee
Then use the translation invariance of the distributions which says that each nearest-neighbor increment has the exponential distribution imposed on the boundary variables in \eqref{w7}:
\be\label{G12}\begin{aligned}
\E[G^\rho_{0,(m,n)}]=   \sum_{i=1}^{m} \E I_{ie_1} + \sum_{j=1}^{n} \E J_{(m,j)} = \frac{m}{1-\rho}+\frac{n}{\rho}. 
\end{aligned}\ee
The limit of the stationary  last-passage process is an application of the classical law of large numbers and some large deviation estimates, applied separately to the two sums: the limit below holds almost surely for any given $(s,t)\in\R_{\ge0}^2$.  
\be\label{G17}\begin{aligned} 
\lim_{N\to\infty} N^{-1}  G^\rho_{0,(\fl{Ns}, \fl{Nt})} &=
\lim_{N\to\infty}  \biggl\{ N^{-1} \sum_{i=1}^{\fl{Ns}} I_{(i,0)} + N^{-1}  \sum_{j=1}^{\fl{Nt}} J_{(\fl{Ns},\,j)}\biggr\} \\
&=  s \E (I_{e_1}) + t \E (J_{e_2}) = \frac{s}{1-\rho}+\frac{t}{\rho}. 
\qedhere\end{aligned}\ee
\end{proof} 

Next we take a limit in  the coupling between the last-passage processes $G_{x,y}$ and $G^\rho_{0,x}$.   Fix $s,t>0$ and use   \eqref{Gr2} for $x= (\fl{Ns}, \fl{Nt})$ to write 
\be\label{Gr7} \begin{aligned}
G^\rho_{0,(\fl{Ns}, \fl{Nt})} &=\sup_{0\le a\le s}  \Bigl\{  \;  \sum_{i=1}^{\fl{Na}} I_{(i,0)} +  G_{(\fl{Na},1),(\fl{Ns},\fl{Nt})}  \Bigr\}   \\
& \qquad \qquad \qquad  \bigvee  
\sup_{0\le b\le t}  \Bigl\{  \;  \sum_{j=1}^{\fl{Nb}} J_{(0,j)} +  G_{(1,\fl{Nb}),(\fl{Ns},\fl{Nt})}  \Bigr\}. 
\end{aligned} \ee

After letting $N\to\infty$, with some estimation on the right-hand side,  utilizing limits \eqref{lln9} and \eqref{g-rho-3},  we have 
\be\label{Gr9} \begin{aligned}
\frac{s}{1-\rho}+\frac{t}{\rho}= \sup_{0\le a\le s}  \Bigl\{  \frac{a}{1-\rho}+ \gpp(s-a,t)  \Bigr\}   
\vee  
\sup_{0\le b\le t}   \Bigl\{  \frac{b}{\rho}+ \gpp(s,t-b)  \Bigr\} . 
\end{aligned}\ee

In the  next theorem we take advantage of  the connection  above to find the   shape function $\gpp$ for the LPP process \eqref{v:G} with i.i.d.\ Exp$(1)$ weights.

 \begin{theorem}\label{thg}   Assume \eqref{ass6}.  Then we have the following law of large numbers.  For each $\xi\in\R_{\ge0}^2$ the limit below holds with probability 1, with  the  shape function $\gpp$ as given.  
 \be\label{lln}
 \lim_{N\to\infty} N^{-1} \Gpp_{0,\fl{N\xi}} =\gpp(\xi)\equiv \bigl(\sqrt \xi_1+\sqrt \xi_2\,\bigr)^2.  
 \ee
  \end{theorem}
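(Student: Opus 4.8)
The plan is to extract the shape function $\gpp$ from the fixed-point relation \eqref{Gr9}, which already encodes all the information we need, and then verify that the candidate $\gpp(\xi)=(\sqrt{\xi_1}+\sqrt{\xi_2})^2$ is the unique concave solution. First I would record the structural facts about $\gpp$ that come for free from general theory: $\gpp$ is concave, continuous, and homogeneous of degree one on $\R_{\ge0}^2$, and by the symmetry of the i.i.d.\ weights under swapping the two coordinate axes, $\gpp(s,t)=\gpp(t,s)$. Homogeneity lets me reduce to a one-variable problem: writing $\gpp(s,t)=(s+t)\,\phi\bigl(\tfrac{s}{s+t}\bigr)$ for a concave function $\phi$ on $[0,1]$, or more conveniently just analyzing \eqref{Gr9} directly as an identity in $(s,t,\rho)$.

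The heart of the argument is the following. For \emph{every} $0<\rho<1$, relation \eqref{Gr9} says that $g^\rho(s,t)=\tfrac{s}{1-\rho}+\tfrac{t}{\rho}$ equals the maximum of two suprema built from $\gpp$. I would first argue the inequality $\gpp(s,t)\le g^\rho(s,t)$ for all $\rho$: taking $a=0$ (resp.\ $b=0$) in \eqref{Gr9} gives $g^\rho(s,t)\ge \gpp(s,t)$ directly. Hence $\gpp(s,t)\le \inf_{0<\rho<1}\bigl(\tfrac{s}{1-\rho}+\tfrac{t}{\rho}\bigr)$, and the elementary calculus minimization (minimize $\tfrac{s}{1-\rho}+\tfrac{t}{\rho}$ over $\rho\in(0,1)$, optimizer $\rho=\tfrac{\sqrt t}{\sqrt s+\sqrt t}$) yields exactly $\gpp(s,t)\le(\sqrt s+\sqrt t)^2$. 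For the matching lower bound I would exploit that \eqref{Gr9} forces an equality, not just an inequality: for each $\rho$ at least one of the two suprema must actually reach the value $g^\rho(s,t)$. Using concavity of $\gpp$ (so that $a\mapsto \tfrac{a}{1-\rho}+\gpp(s-a,t)$ is concave, and likewise in $b$) one can locate the maximizer via the derivative condition, e.g.\ $\partial_1\gpp(s-a,t)=\tfrac{1}{1-\rho}$ when the first supremum is active; combined with homogeneity (Euler's relation $\xi_1\partial_1\gpp+\xi_2\partial_2\gpp=\gpp$) this pins down $\gpp$ along the ray in the direction realizing the supremum. Letting $\rho$ range over $(0,1)$, these rays sweep out the whole quadrant, so one recovers $\gpp(s,t)=(\sqrt s+\sqrt t)^2$ everywhere. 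An alternative, perhaps cleaner, route: guess the answer, plug $\gpp(s,t)=(\sqrt s+\sqrt t)^2$ into the right-hand side of \eqref{Gr9}, and check by direct calculus that the max of the two suprema equals $\tfrac{s}{1-\rho}+\tfrac{t}{\rho}$ for every $\rho$; then argue uniqueness of a concave homogeneous solution to \eqref{Gr9} (which follows because the upper-bound half already forces $\gpp\le(\sqrt s+\sqrt t)^2$ and any concave function lying below this candidate and satisfying \eqref{Gr9} must coincide with it — the equality in \eqref{Gr9} leaves no slack).

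I would then assemble the pieces: the upper bound gives $\gpp(\xi)\le(\sqrt{\xi_1}+\sqrt{\xi_2})^2$ for all $\xi\in\R_{\ge0}^2$, the lower bound from the equality structure of \eqref{Gr9} gives the reverse, and the almost sure convergence $N^{-1}\Gpp_{0,\fl{N\xi}}\to\gpp(\xi)$ is the already-cited law of large numbers \eqref{lln9}. Finally I would note the boundary cases $\xi_1=0$ or $\xi_2=0$ separately: there the path is forced along a single axis and $\Gpp$ is a sum of i.i.d.\ Exp$(1)$'s, giving $\gpp(\xi)=\xi_1$ or $\xi_2$ respectively, consistent with $(\sqrt{\xi_1}+\sqrt{\xi_2})^2$.

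The main obstacle is making rigorous the passage from \eqref{Gr7} to \eqref{Gr9}, i.e.\ justifying the interchange of the limit $N\to\infty$ with the suprema over $a\in[0,s]$ and $b\in[0,t]$ and handling the internal last-passage terms $G_{(\fl{Na},1),(\fl{Ns},\fl{Nt})}$ whose endpoints both move with $N$. This requires a uniform (in the endpoints) law of large numbers for $\Gpp$ together with equicontinuity/monotonicity estimates so that the discrete suprema converge to the continuous ones — precisely the kind of uniform shape estimate the text promises to prove later in this section. Granting that uniform convergence, the rest is the elementary variational calculus sketched above, where the only subtlety is being careful about which of the two suprema in \eqref{Gr9} is active for a given $(s,t,\rho)$ and checking the boundary of the $a$- and $b$-intervals.
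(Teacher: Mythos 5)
Your proposal is correct in substance but takes a genuinely different route from the paper. The paper's proof of Theorem \ref{thg} restricts \eqref{Gr9} to $s=t$, invokes the symmetry $\gpp(s,t)=\gpp(t,s)$ and the choice $\rho\le\tfrac12$ to collapse the two suprema into one, and then identifies the resulting identity as a Legendre transform: for $f(b)=-\gpp(t-b,t)$ one reads off $f^*(x)=t\bigl(x+1+\frac1{x-1}\bigr)$ for $x=1/\rho\ge 2$, and the explicit formula drops out in one step from $f^{**}=f$ together with the derivative bounds $(f^*)'(2{+})=0$, $(f^*)'(\infty{-})=t$. There is no separate upper-bound/lower-bound argument in the paper; the whole function is recovered from a single convex-conjugacy computation. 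Your route is instead the envelope description $\gpp=\inf_\rho g^\rho$: the upper bound $\gpp\le g^\rho$ comes from taking $a=0$ (or $b=0$) in \eqref{Gr9}, the calculus minimum over $\rho$ gives $(\sqrt s+\sqrt t)^2$, and the lower bound is a ``no slack'' uniqueness argument. This makes visible a nice structural fact (the i.i.d.\ shape is the concave lower envelope of the stationary shapes $g^\rho$) that the paper's slicker computation obscures.

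One caveat on your first version of the lower bound: locating the maximizer ``via the derivative condition $\partial_1\gpp(s-a,t)=\frac1{1-\rho}$'' and invoking Euler's relation presupposes that $\gpp$ is differentiable, which concavity and homogeneity do not give a priori. Your alternative route avoids this and is the one to carry out carefully: verify that with $g^*(s,t)=(\sqrt s+\sqrt t)^2$ substituted for $\gpp$, identity \eqref{Gr9} holds, and that when $(s,t)$ is the characteristic direction for $\rho$ each supremum in \eqref{Gr9} is \emph{uniquely} maximized at $a=0$ (resp.\ $b=0$) — a short calculus check. Then if some concave $\gpp\le g^*$ satisfying \eqref{Gr9} had $\gpp(\xi_0)<g^*(\xi_0)$, taking $\rho$ characteristic for $\xi_0$ would make both suprema in \eqref{Gr9} strictly below $g^\rho(\xi_0)$ (continuity plus compactness of $[0,s]$, $[0,t]$), a contradiction. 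Finally, you are right that the passage from \eqref{Gr7} to \eqref{Gr9} (exchanging $N\to\infty$ with the running suprema, with both endpoints of the bulk LPP moving) needs a uniform shape estimate; but the paper also absorbs this into ``with some estimation on the right-hand side'', so this is a shared, not a new, gap.
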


\begin{proof} 

By the general law of large numbers Theorem \ref{v:t-lln2}   for last-passage percolation, we know that the limit in \eqref{lln} exists and that $\gpp$ is finite, concave and continuous. 
Begin with \eqref{Gr9} for $s=t$:  
\begin{align*}
\frac{t}{1-\rho}+\frac{t}{\rho}= \sup_{0\le a\le t}  \Bigl\{  \frac{a}{1-\rho}+ \gpp(t-a,t)  \Bigr\}   
\vee  
\sup_{0\le b\le t}   \Bigl\{  \frac{b}{\rho}+ \gpp(t,t-b)  \Bigr\} . 
\end{align*}
Use the symmetry of $\gpp$ and assume that $0<\rho\le 1/2$:
\begin{align*}
\frac{t}{1-\rho}+\frac{t}{\rho}&= \sup_{0\le a\le t}  \Bigl\{  \frac{a}{1-\rho}+ \gpp(t-a,t)  \Bigr\}   
\vee  
\sup_{0\le b\le t}   \Bigl\{  \frac{b}{\rho}+ \gpp(t-b,t)  \Bigr\} \\
&=\sup_{0\le b\le t}   \Bigl\{  \frac{b}{\rho}+ \gpp(t-b,t)  \Bigr\}.  
\end{align*}
Let 
\be\label{fgpp} f(b)=\begin{cases}  -\gpp(t-b,t), &0\le b\le t\\ \infty,  &b<0\text{ or } b>t. \end{cases} \ee
Then $f$ is convex and lower semicontinuous.    After a change of variable $x=1/\rho\in[2,\infty)$,  the equation above becomes 
\begin{align*}
t\Bigl( x+1+\frac1{x-1}\Bigr)=\sup_{b\in\R} \{ bx-f(b)\}, \qquad x\ge 2.  
\end{align*}
This is an instance of  convex duality,  so the convex conjugate $f^*$ of $f$ satisfies  
\[  f^*(x)= t\Bigl( x+1+\frac1{x-1}\Bigr) \qquad \text{ for $x\ge 2$. } \]   
The derivatives $(f^*)'(2+)=0$ and $(f^*)'(\infty-)=t$ tell us that we can restrict the supremum in the double convex duality as in the second equality below, for $0\le b\le t$.  Then find the supremum by calculus: 
\[  f(b)=f^{**}(b)=\sup_{x\ge 2} \{ xb- f^*(x)\}= 
-\,\frac{b\sqrt{t(t-b)}+bt-t(\sqrt t+\sqrt{t-b}\,)^2}{\sqrt{t(t-b)}}. \]
Taking $b=t-s$ for $s\in[0,t]$ in the definition \eqref{fgpp}  of $f$     gives 
\[   \gpp(s,t)=-f(t-s)=(\sqrt s+\sqrt t\,)^2 \qquad \text{for $0\le s\le t$.}  \]
Symmetry of $\gpp$ completes the proof. 
\end{proof}

Next we use \eqref{Gr9} to identify the {\it characteristic direction}  $\xi(\rho)\in\Uset$  associated with parameter value $\rho\in(0,1)$.   The edge weights $I$ and $J$ are stochastically larger than the bulk weights $\w$.  Hence both boundaries attract the path.  There is a unique direction in which the pulls of the boundaries balance out.   By definition,  $\xi(\rho)$  is the value of $\xi=(s,1-s)$    for which the right-hand side of \eqref{Gr9} with $(s,t)=(s,1-s)$  is maximized at $a=b=0$.  This direction $\xi(\rho)\in\Uset$  is uniquely given  by
\be\label{v:ch7} 
\xi(\rho)=\biggl( \frac{(1-\rho)^2} {(1-\rho)^2+\rho^2}\,, \frac{\rho^2}{(1-\rho)^2+\rho^2}\biggr). 
\ee
It then follows that in this direction  $\xi(\rho)$  the optimal path for $G^\rho_{0, \fl{N\xi(\rho)}}$ in  \eqref{Gr9} takes $o(N)$ steps on the coordinate axes,  as $N\to\infty$. 

An alternative characterization of the characteristic direction $\xi(\rho)$  is by comparison of the stationary and i.i.d.\ limit shapes.  In general   $\gpp(s,t)\le g^\rho(s,t)$ for all $s,t\ge 0$, and 
\be\label{v:ch8}  \gpp(s,t)= g^\rho(s,t) \ \text{ if and  only if } \    (s,t)=c\bigl( (1-\rho)^2, \rho^2 \bigr)\ \text{ for some $c\ge 0$. }
\ee

We can also record the limit of the point-to-line LPP process, defined  
for  $h\in\R^2$   by 
 \[ 
\Gpl_n(h)=  
 \max_{x_{0,n}: \,x_0=0} \Bigl\{ \;\sum_{k=0}^{n-1}\w_{x_k} + h\cdot x_n \Bigr\}.
 \]

 \begin{theorem}\label{thgp2l}   Assume \eqref{ass6}.  Then for every $h\in\R^2$ the limit below holds with probability 1, with  the limit function $\gpl$ as given.  
 \be\label{lln-p2l}
 \lim_{n\to\infty} n^{-1} \Gpl_n(h) =\gpl(h)\equiv  1+\frac{h_1+h_2}2 + \frac 12\sqrt{(h_1-h_2)^2 + 4\,} .  
 \ee
  \end{theorem}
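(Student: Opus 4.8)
The idea is to read off $\gpl$ from the already-computed point-to-point shape $\gpp$ via the convex duality \eqref{v:pl} of Theorem \ref{v:t-lln2}. First I would note that the hypotheses are met: here $d=2$, $\range=\{e_1,e_2\}$ and $V(\w,z)=\w_0$, so \eqref{v:dir} holds because $0\notin\Uset=[e_1,e_2]$, and \eqref{v:V1} holds for every $p<\infty$ since an $\mathrm{Exp}(1)$ variable has moments of all orders. Hence Theorem \ref{v:t-lln2} supplies both the almost sure convergence asserted in \eqref{lln-p2l} and the identity
\[ \gpl(h)=\sup_{\xi\in\Uset}\bigl\{\gpp(\xi)+h\cdot\xi\bigr\}, \]
while Theorem \ref{thg} gives $\gpp(\xi)=(\sqrt{\xi_1}+\sqrt{\xi_2}\,)^2$ on $\Uset$. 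So all that remains is to evaluate this one-parameter supremum.

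Second, I would parametrize $\Uset=\{(s,1-s):0\le s\le1\}$. Writing $a=h_1-h_2$ and using $(\sqrt s+\sqrt{1-s}\,)^2=1+2\sqrt{s(1-s)}$, the quantity to maximize becomes
\[ \phi(s)=1+h_2+as+2\sqrt{s(1-s)},\qquad 0\le s\le1. \]
Since $s\mapsto\sqrt{s(1-s)}$ is strictly concave on $[0,1]$, $\phi$ is strictly concave, so its maximum is attained at the unique critical point.

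Third, I would solve $\phi'(s)=a+\dfrac{1-2s}{\sqrt{s(1-s)}}=0$. Squaring gives $\dfrac{1-4s(1-s)}{s(1-s)}=a^2$, hence $s(1-s)=(a^2+4)^{-1}$, and the sign in $\phi'=0$ forces $1-2s=-a(a^2+4)^{-1/2}$, so $s^\ast=\tfrac12\bigl(1+a(a^2+4)^{-1/2}\bigr)\in(0,1)$ for every $a\in\R$. Plugging $s^\ast$ back in, the two $a$-dependent tail terms combine as $\dfrac{a^2}{2\sqrt{a^2+4}}+\dfrac{2}{\sqrt{a^2+4}}=\dfrac{\sqrt{a^2+4}}{2}$, and together with $h_2+\tfrac a2=\tfrac{h_1+h_2}2$ this yields
\[ \gpl(h)=\phi(s^\ast)=1+\frac{h_1+h_2}{2}+\frac12\sqrt{(h_1-h_2)^2+4}, \]
which is \eqref{lln-p2l}.

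I do not expect a serious obstacle here. The only points deserving care are confirming that the general shape theorem, hence the duality \eqref{v:pl}, genuinely applies to the point-to-line exponential model, and observing that the critical point $s^\ast$ always lies strictly inside $[0,1]$, so the supremum over the segment is never attained at an endpoint and no boundary cases arise. Everything else is the elementary single-variable calculus displayed above; alternatively one could recognize $\gpl$, up to the affine terms, as the Legendre conjugate of $s\mapsto-\gpp(s,1-s)$ and obtain the same formula from the standard conjugacy computation.
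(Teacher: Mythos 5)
Your proposal is correct and follows exactly the route the paper takes: it cites Theorem \ref{v:t-lln2} for the existence of the limit and then obtains the formula from the duality \eqref{v:pl} together with the explicit $\gpp$ of Theorem \ref{thg}. The paper leaves the one-variable optimization to the reader; your calculus (including the check that the critical point lies strictly inside $(0,1)$, so no boundary cases arise) correctly fills in that gap.
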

  
\begin{proof}  The limit is given in Theorem \ref{v:t-lln2}.  The formula for $\gpl$ comes from  the duality \eqref{v:pl} with $\gpp$ given in \eqref{lln}. 
\end{proof} 

\subsection{Shape theorem and the geodesic}  \label{s:sh-g}

Begin with this  strengthening of the law of large numbers  \eqref{lln9}. 

\begin{theorem}\label{t:sh} 
Given $\e>0$, there exists an almost surely finite  random variable  $K$ such that 
\be\label{sh89} \abs{ \Gpp_{0,x}-\gpp(x)}  \le \e\abs{x}_1 
\quad \text{ for all $x\in\Z_{\ge 0}^2$ such that $\abs{x_1}\ge K$.}  
\ee
\end{theorem}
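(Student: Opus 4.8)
The plan is to upgrade the directional law of large numbers of Theorem~\ref{thg} to a uniform estimate by the standard discretization-and-monotonicity scheme. The inputs are only two: (a) the monotonicity $\Gpp_{0,x}\le\Gpp_{0,y}$ whenever $x\le y$ coordinatewise, which holds because an up-right path to $x$ extends to an up-right path to $y$ and the exponential weights are nonnegative; and (b) the homogeneity, concavity and continuity of $\gpp$ on the simplex $\Delta=\{(s,1-s):0\le s\le1\}$, together with the explicit formula $\gpp(s,1-s)=1+2\sqrt{s(1-s)}$, which lies in $[1,2]$. A preliminary remark is that monotonicity promotes the integer-scale limit to a real-scale limit: for fixed $\xi\in\Delta$ and $N\le r\le N+1$ we have $\fl{N\xi}\le\fl{r\xi}\le\fl{(N+1)\xi}$, so $r^{-1}\Gpp_{0,\fl{r\xi}}\to\gpp(\xi)$ as $r\to\infty$ through the reals.

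First I would fix a small $\delta_0=\delta_0(\e)>0$ and finitely many reference directions: a mesh $\xi_1,\dots,\xi_M$ of size $\eta\ll\e\delta_0$ of the compact sub-arc $\{(s,1-s):\delta_0\le s\le1-\delta_0\}$---so that every coordinate of every $\xi_j$ is $\ge\delta_0$---together with the two fixed interior directions $(1-\delta_0,\delta_0)$ and $(\delta_0,1-\delta_0)$. By the real-scale limit and a finite union bound, a.s.\ there is a finite random $R$ with
\[
\bigl|\Gpp_{0,\fl{r\xi_j}}-r\,\gpp(\xi_j)\bigr|\le\e'\,r
\qquad\text{for every }j\text{ and every real }r\ge R,
\]
where $\e'$ is a fixed small multiple of $\e$; and, by the classical one-dimensional law of large numbers applied to $\Gpp_{0,(m,0)}=\sum_{i=0}^m\w_{(i,0)}$ and its $e_2$-analogue, a.s.\ $|\Gpp_{0,(m,0)}-m|\le\e'm$ and $|\Gpp_{0,(0,m)}-m|\le\e'm$ for all $m$ past a finite random threshold.

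Now take $x\in\Z_{\ge0}^2$ with $n:=\abs{x}_1$ large, write $\hat x=x/n=(s,1-s)$, and split into cases. If $s\in[\delta_0,1-\delta_0]$: choose $\xi_j$ nearest $\hat x$ and sandwich $\fl{c^-\xi_j}\le x\le\fl{c^+\xi_j}$ with $c^-=\min_i x_i/(\xi_j)_i$ and $c^+=\max_i(x_i+2)/(\xi_j)_i$; since all coordinates are $\ge\delta_0$ and $\norm{\hat x-\xi_j}_\infty\le\eta$, both $c^\pm$ equal $n(1+O(\eta/\delta_0))+O_{\delta_0}(1)$, so monotonicity, the displayed bound at scales $c^\pm$, homogeneity, and uniform continuity of $\gpp$ yield $\abs{\Gpp_{0,x}-\gpp(x)}\le\e n$ once $\e'$ and $\eta$ are small relative to $\e$ and $\delta_0$. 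If $s>1-\delta_0$ (the case $s<\delta_0$ being symmetric): then $x_1\in(n(1-\delta_0),n]$ and $x_2<n\delta_0$; for the lower bound use $\Gpp_{0,x}\ge\Gpp_{0,(x_1,0)}\ge x_1(1-\e')\ge n(1-\delta_0-\e')$, and for the upper bound bracket $x\le\fl{c^+\xi}$ with the \emph{fixed} direction $\xi=(1-\delta_0,\delta_0)$ and $c^+=\max_i(x_i+2)/\xi_i=n+O_{\delta_0}(1)$, so $\Gpp_{0,x}\le c^+(\gpp(\xi)+\e')$. Since $\gpp(\xi)\le1+2\sqrt{\delta_0}$ while $\gpp(x)=n(1+2\sqrt{s(1-s)})\in[n,n(1+2\sqrt{\delta_0}))$, both one-sided deviations are at most a constant multiple of $(\sqrt{\delta_0}+\e')\,n$, hence $\le\e n$ once $\delta_0$ and $\e'$ are chosen small. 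Taking $K$ to be the largest of the finitely many random thresholds above, enlarged so that all the deterministic ``$n$ large'' requirements are met, completes the proof.

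The step I expect to be the real obstacle is the behaviour at the two ends of the direction simplex. There the clean ``sandwich $x$ between two points of one ray'' device degenerates, because scaling a thin near-axis ray cannot reach off-ray lattice points and the bracketing ratios blow up. The remedy, used above, is to first pick $\delta_0$ small enough that $\gpp$ stays within $\e$ of its boundary value $1$ throughout the near-axis region, then handle the lower bound via the one-dimensional law of large numbers along the coordinate axis and the upper bound via comparison with one fixed interior direction. Everything else---the mesh, the monotone bracketing, the finite union bound---is routine and I would not belabor it.
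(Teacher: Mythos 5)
Your proposal is correct, and the high-level strategy---discretize the set of directions, combine monotonicity of $G$ and $\gpp$ with a finite union bound on the directional law of large numbers, then take $K$ to be the largest of the random thresholds---is the same as the paper's. The genuine difference is in the sandwiching device. The paper (via Lemma~\ref{sh-lm44}) fixes a single scale $N$ with $N(1+\e_1)\le\abs{x}_1<N(1+2\e_1)$ and brackets $x$ between $\fl{N\xi_j}$ and $\fl{N\xi_j'}$, where $\xi_j'=\xi_j+2\e_1(e_1+e_2)$ is a \emph{diagonal shift} of the grid point. The key quantitative input is $\gpp(\xi_j')-\gpp(\xi_j)\le 16\sqrt{\e_1}$, which exploits the H\"older-$\tfrac12$ behaviour of $\gpp$ near the axes; because $\xi_j'$ always has positive coordinates even when $\xi_j$ is an endpoint $e_1$ or $e_2$, the argument runs uniformly over all of $\Uset$ with no case split. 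You instead bracket along a \emph{single ray}, $\fl{c^-\xi_j}\le x\le\fl{c^+\xi_j}$, which degenerates near the axes (as you correctly identify), forcing you to shrink to the sub-simplex $\{\delta_0\le s\le 1-\delta_0\}$ and treat the two near-axis wedges separately, using the one-dimensional LLN along the axis for the lower bound and a comparison with a fixed interior direction for the upper bound. Both approaches are sound; the paper's diagonal-shift trick buys a unified treatment at the price of the slightly delicate Lemma~\ref{sh-lm44}, while yours is the more routine scale-bracketing but needs the boundary wedge to be handled by hand. (One small quantitative note on your near-axis upper bound: with $x_2$ close to $0$ one gets $c^+\approx n/(1-\delta_0)+O_{\delta_0}(1)$ rather than $n+O_{\delta_0}(1)$; the extra multiplicative factor $1/(1-\delta_0)=1+O(\delta_0)$ is harmless and is absorbed into the same $(\sqrt{\delta_0}+\e')\,n$ error, so the conclusion stands.)
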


We postpone the proof of Theorem \ref{t:sh} towards the end of this section  but apply it presently  to prove the shape theorem.  
For real $t\ge 0$  define the randomly growing cluster in $\R_{\ge0}^2$ as 
\[  A_t=\{\zeta\in\R_{\ge0}^2: \Gpp_{0,\,\fl{\zeta}}\le t\}. \]
In other words, to get a continuum object, we attach a unit square to each integer point $x$ such that $\Gpp_{0,x}\le t$. 
The limit shape is defined by  
\[  \cA=\{\zeta\in\R_{\ge0}^2:  \gpp(\zeta)\le 1\}. \]
Since $\gpp(\zeta)\ge \abs{\zeta}_1$, we have  $\cA\subset \{ \zeta\in\R_{\ge0}^2: \abs{\zeta}_1\le 1\}$. 

\begin{theorem}[Shape theorem] \label{t:sh0} 
Given $\e>0$,  there exists an almost surely  finite random variable $T$ such that 
\be\label{sh15}  t(1-\e)\cA\, \subset\,  A_t  \, \subset \, t(1+\e)\cA \qquad\text{for all $t\ge T$.} \ee
\end{theorem}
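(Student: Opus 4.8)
The plan is to derive the shape theorem directly from the uniform law of large numbers in Theorem \ref{t:sh}, treating the two inclusions in \eqref{sh15} separately. The basic dictionary is that $\zeta\in A_t$ means $\Gpp_{0,\fl\zeta}\le t$, while $\zeta\in t\cA$ means $\gpp(\zeta)\le t$ (using homogeneity, $\gpp(t\eta)\le t$ iff $\gpp(\eta)\le 1$). So the whole statement is a comparison between $\Gpp_{0,\fl\zeta}$ and $\gpp(\zeta)$, uniformly over $\zeta$ in a bounded region, which is exactly what \eqref{sh89} provides once we handle the small-$\zeta$ regime where that estimate is vacuous.

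First I would fix $\e>0$, apply Theorem \ref{t:sh} with a suitably chosen $\e'$ (to be calibrated against $\e$ using the Lipschitz/homogeneity properties of $\gpp$ and the bound $\abs{\zeta}_1\le\gpp(\zeta)$), and get the random $K$ so that $\abs{\Gpp_{0,x}-\gpp(x)}\le\e'\abs{x}_1$ for all $x\in\Z_{\ge0}^2$ with $\abs{x}_1\ge K$. For the inclusion $t(1-\e)\cA\subset A_t$: take $\zeta\in t(1-\e)\cA$, so $\gpp(\zeta)\le t(1-\e)$; then $\abs{\fl\zeta-\zeta}_1\le 2$ and continuity of $\gpp$ give $\gpp(\fl\zeta)\le t(1-\e)+C$, and since $\abs{\fl\zeta}_1\le\gpp(\fl\zeta)$ is bounded by a constant times $t$, as long as $\abs{\fl\zeta}_1\ge K$ we get $\Gpp_{0,\fl\zeta}\le\gpp(\fl\zeta)+\e'\abs{\fl\zeta}_1\le t(1-\e)+C+\e' C' t\le t$ provided $\e'$ is small and $t$ large. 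The finitely many lattice points with $\abs{\fl\zeta}_1<K$ are handled by absorbing them into a larger random time $T$, since each such point enters $A_t$ eventually (almost surely $\Gpp_{0,x}<\infty$). The reverse inclusion $A_t\subset t(1+\e)\cA$ is symmetric: if $\zeta\notin t(1+\e)\cA$ then $\gpp(\zeta)>t(1+\e)$, hence $\gpp(\fl\zeta)>t(1+\e)-C$, and for $\abs{\fl\zeta}_1\ge K$ the lower bound in \eqref{sh89} forces $\Gpp_{0,\fl\zeta}\ge\gpp(\fl\zeta)-\e'\abs{\fl\zeta}_1>t$, so $\zeta\notin A_t$; and again $\abs{\fl\zeta}_1<K$ forces $\abs\zeta_1$ bounded, hence $\gpp(\zeta)$ bounded, so such $\zeta$ automatically lie in $t(1+\e)\cA$ for $t$ large.

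The main obstacle, such as it is, is purely bookkeeping: one must ensure the error terms $\e'\abs{x}_1$ stay genuinely proportional to $t$ (not to something larger) on the relevant region, which uses $\abs{x}_1\le\gpp(x)$ together with the fact that $\gpp(\fl\zeta)$ is within an additive constant of $\gpp(\zeta)\le(1\pm\e)t$; and one must collect all the thresholds — the $K$ from Theorem \ref{t:sh}, the requirement $t$ large enough that $C+\e'C't\le\e t$, and the finite exceptional set near the origin — into a single almost surely finite random $T$. None of this is deep; the real content is entirely in Theorem \ref{t:sh}, which we are taking as given here.
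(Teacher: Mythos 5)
Your overall strategy is the same as the paper's: everything reduces to the uniform LLN of Theorem \ref{t:sh}, the two inclusions are handled separately, and the bounded exceptional region $\abs{\fl{\zeta}}_1<K$ is absorbed into a larger a.s.\ finite random time $T$. The first inclusion is fine (in fact simpler than you make it: $\fl{\zeta}\le\zeta$ and monotonicity of $\gpp$ give $\gpp(\fl{\zeta})\le\gpp(\zeta)\le t(1-\e)$ directly, no continuity needed).

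The gap is in the second inclusion. You argue by contrapositive: assume $\gpp(\zeta)>t(1+\e)$ and try to conclude $\Gpp_{0,\fl{\zeta}}>t$. But this hypothesis is a \emph{lower} bound on $\gpp(\zeta)$, so it provides no a priori cap on $\abs{\fl{\zeta}}_1$; the bookkeeping fix you describe (``$\abs{x}_1\le\gpp(x)$ together with $\gpp(\zeta)\le(1\pm\e)t$'') only works in the first inclusion, where $\gpp\le(1-\e)t$ genuinely caps the region. Without such a cap, the error $\e'\abs{\fl{\zeta}}_1$ in \eqref{sh89} can dominate $t\e$, and your chain $\gpp(\fl{\zeta})-\e'\abs{\fl{\zeta}}_1>t$ need not follow from $\gpp(\fl{\zeta})>t(1+\e)-C$. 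A second, smaller issue: your ``constant'' $C$ in $\gpp(\fl{\zeta})\ge\gpp(\zeta)-C$ is not actually bounded; since $\gpp$ is not Lipschitz at the axes, the correct estimate is $\gpp(\fl{\zeta}+e_1+e_2)\le\gpp(\fl{\zeta})+4+2\sqrt{x_1}+2\sqrt{x_2}$, which grows like $\sqrt{\abs{\fl{\zeta}}_1}$. The paper sidesteps both problems by proving the second inclusion \emph{directly}: from $\Gpp_{0,x}\le t$ and $\abs{x}_1\ge K$, the lower bound $\Gpp_{0,x}\ge\gpp(x)-\e\abs{x}_1\ge(1-\e)\abs{x}_1$ yields $\abs{x}_1\le t/(1-\e)$, and this cap makes the additive $O(\sqrt t)$ error from $\gpp(\fl{\zeta}+e_1+e_2)$ negligible. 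Your contrapositive can be repaired — replace the additive error by the relative one via $\abs{\fl{\zeta}}_1\le\gpp(\fl{\zeta})$, so $\Gpp_{0,\fl{\zeta}}\ge(1-\e')\gpp(\fl{\zeta})$, then case-split on whether $\abs{\fl{\zeta}}_1$ is, say, bigger or smaller than $2t$ — but as written the argument does not close.
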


\begin{proof} 
With the random variable $K$ given in Theorem \ref{t:sh}, set 
\[  T=\max_{x\in\Z_{\ge0}^2:  \,\abs{x}_1\le K} \Gpp_{0,x} \vee \gpp(x) . \]
Consider  $t\ge T$.  We prove the two inclusions in turn. 

 Suppose $\zeta\in\R_{\ge0}^2$ satisfies $\gpp(\zeta)\le t(1-\e)$. Let $x=\fl{\zeta}$, the closest integer point to the left and below $\zeta$.   Then also $\abs{x}_1\le\gpp(x)\le t(1-\e)$. To have $\zeta\in A_t$ we need to show that  $x\in A_t$.  If $\abs{x}_1\ge K$, Theorem \ref{t:sh} gives 
\begin{align*}
\Gpp_{0,x}\le \gpp(x)+\e\abs{x}_1 \le t(1-\e) + \e t(1-\e) \le t. 
\end{align*}
While if $\abs{x}_1\le K$, then the definition of $T$ gives 
\begin{align*}
\Gpp_{0,x}\le T  \le t. 
\end{align*}
We have proved the first inclusion in \eqref{sh15}. 

\smallskip

For the second inclusion, suppose $\Gpp_{0,x}\le t$.  Then if $\abs{x}_1\ge K$, 
\begin{align*}
\Gpp_{0,x}\ge \gpp(x)-\e\abs{x}_1  
\ge (1-\e)\abs{x}_1,
\end{align*}
and we conclude that  $\Gpp_{0,x}\le t$ implies 
\[  \abs{x}_1 \le K \vee \frac{t}{1-\e} = \frac{t}{1-\e}.  \] 
($T\ge K$ follows from the definition of $T$.) Then for $\abs{x}_1\ge K$, 
\[ \gpp(x)\le \Gpp_{0,x} + \e\abs{x}_1 \le t+  \frac{\e t}{1-\e}  \]
while for $\abs{x}_1\le K$, $\gpp(x)\le T  \le t$ by the definition of $T$.   Thus integer points $x\in A_t$ satisfy 
\[  \gpp(x) \le t\bigl( 1+  \tfrac{\e }{1-\e}\bigr). \]

Now, given $\zeta\in A_t$, let $x=\fl{\zeta}$.  Then, by the explicit formula  \eqref{lln} for $\gpp$, 
\begin{align*}
\gpp(\zeta)&\le \gpp(x+e_1+e_2)\le \gpp(x)+ 4+2\sqrt{x_1}+2\sqrt{x_2} \\
&\le  t\bigl( 1+  \tfrac{\e }{1-\e}\bigr)+ 4+  4\sqrt{\tfrac{t }{1-\e}} \\
&=t\Bigl(1+  \tfrac{\e }{1-\e} +\tfrac4t + \tfrac4{\sqrt{t(1-\e)}}\,\Bigr)
\le t(1+\e'), 
\end{align*}
when $\e'>0$ is given, and then $\e>0$ is chosen small enough and $T$ large enough. Thus  we can conclude that $A_t\subset  t(1+\e')\cA$ for $t\ge T$. 
\end{proof}

Let $\pi^{x,y}_\dbullet \in\Pi_{x,y}$ denote the (almost surely unique)  maximizing path for $\Gpp_{x,y}$ defined by \eqref{v:G}. That is, the last-passage value satisfies 
\be\label{v:Gpi}  
	  \Gpp_{x,y}= \sum_{i=0}^{\abs{y-x}_1}\w_{\pi^{x,y}_i}.
	\ee
Even though last-passage percolation seeks to maximize rather than minimize path weight, $\pi^{x,y}_\bbullet$ is often called the {\it geodesic} from $x$ to $y$.   We measure the distance 
   of the geodesic from the straight line segment between  $x$ and  $y$ with the quantity 
\be\label{D6}  D_{x,y}=\max_{0\le i\le \abs{y-x}_1} \Bigl\lvert \, \pi^{x,y}_i-  \frac{\abs{y-x}_1-i}{\abs{y-x}_1} \, x \,-\, \frac{i}{\abs{y-x}_1} \,y\,\Bigr\rvert_1 . \ee
 $D_{x,y}$ is larger than $\max_{0\le i\le \abs{y-x}_1} \dist(\pi^{x,y}_i, [x,y])$, the maximal Euclidean distance of a point $\pi^{x,y}_i$ of the geodesic from the line segment $[x,y]$. The reason is that the $\ell^1$ norm  dominates the Euclidean norm and the point $\frac{\abs{y-x}_1-i}{\abs{y-x}_1} \, x + \frac{i}{\abs{y-x}_1} \,y$ is not necessarily the orthogonal  projection of $\pi^{x,y}_i$ onto  the line segment $[x,y]$. 

One can check that the shape function $\gpp(\xi)=\xi_1+\xi_2+2(\xi_1\xi_2)^{1/2}$ has the following property:  if $\xi, \zeta\ne 0$, then 
\[ \gpp(\xi+\zeta)> \gpp(\xi)+\gpp(\zeta)  \]
unless $\xi$ and $\zeta$ lie on the same ray from the origin, that is, $\zeta=c\xi$ for some $c>0$.  This tells us that, on the law of large numbers scale, the optimal path is always the straight line segment.  
In the next theorem we make this rigorous, by showing  that  the  random  geodesic to a point at distance $N$ is within distance $o(N)$ of the straight line segment,  with high probability. 

 \begin{theorem}\label{t:sh2}  We have the limit 
 \be\label{sh88}  \lim_{M\to\infty}\; \max_{x\in\Z_{\ge 0}^2:\, \abs{x}_1=M} \,\frac{D_{0,x}}{\abs{x}_1} =0 \qquad\text{in probability.}\ee
\end{theorem}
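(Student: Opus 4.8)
Here is the plan I would follow.

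\smallskip

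\noindent\textbf{Strategy.} The plan is to show that, with probability tending to one, no geodesic $\pi^{0,x}$ with $\abs{x}_1=M$ passes through a lattice point that is macroscopically far from the segment $[0,x]$. The engine is the additive decomposition that holds whenever $q=\pi^{0,x}_i$ is a point of the geodesic (at level $\abs{q}_1=i$):
\[
\Gpp_{0,x}=\Gpp_{0,q}+\Gpp_{q,x}-\w_q ,
\]
together with the \emph{strict superadditivity} of $\gpp$ recorded just before the statement. I would call $q\le x$ with $\abs{q}_1=i$ \emph{$\e$-far} (from $[0,x]$) if $\bigl\lvert q-\tfrac iM x\bigr\rvert_1>\tfrac\e2 M$; note that $D_{0,x}>\e M$ forces the geodesic to pass through some $\e$-far point. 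A one-line estimate, $\bigl\lvert q-\tfrac iM x\bigr\rvert_1\le\abs{q}_1+\bigl\lvert\tfrac iM x\bigr\rvert_1=2i$ and symmetrically $\le 2(M-i)$, shows that every $\e$-far $q$ has $\delta M\le i\le(1-\delta)M$ with $\delta=\e/4$, so that $\abs{q}_1$ and $\abs{x-q}_1$ are both at least $\delta M$.

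\smallskip

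\noindent\textbf{The soft (deterministic) input.} For an $\e$-far point, $q$ and $x-q$ do not lie on a common ray from the origin --- that would force $q=\tfrac iM x$ and hence distance $0$ --- so by strict superadditivity $\gpp(x)=\gpp\bigl(q+(x-q)\bigr)>\gpp(q)+\gpp(x-q)$. Using homogeneity to rescale to the simplex $\{\xi\in\R_{\ge0}^2:\abs{\xi}_1=1\}$ and a compactness argument, I would extract a constant $\delta_1=\delta_1(\e)>0$ such that
\[
\gpp(q)+\gpp(x-q)\le\gpp(x)-\delta_1 M\qquad\text{for all $x$ with $\abs{x}_1=M$ and all $\e$-far $q\le x$.}
\]
This step is routine.

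\smallskip

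\noindent\textbf{The probabilistic input and the conclusion.} Fix $\e'\in(0,\delta_1/3)$. From Theorem~\ref{t:sh} I get one almost surely finite $K_0$ with $\Gpp_{0,y}\ge\gpp(y)-\e'\abs{y}_1$ and $\Gpp_{0,y}\le\gpp(y)+\e'\abs{y}_1$ for all $\abs{y}_1\ge K_0$; since Theorem~\ref{t:sh} is already uniform over $y$, this controls $\Gpp_{0,x}$ and $\Gpp_{0,q}$ simultaneously for all relevant $x,q$ on the event $\{K_0\le\delta M\}$. For the interior passage times $\Gpp_{q,x}$ I would use the quantitative upper tail bound coming from the stationary coupling behind the inequality $\gpp\le g^{\rho}$: there is $c(\e')>0$ with
\[
\P\bigl(\Gpp_{a,b}\ge\gpp(b-a)+\e'\abs{b-a}_1\bigr)\le e^{-c(\e')\,\abs{b-a}_1}\qquad\text{for all }a\le b,
\]
obtained from $\Gpp_{a,b}\le G^{\rho}_{a,b}$ with $\E\bigl[G^{\rho}_{a,b}\bigr]=g^{\rho}(b-a)$ (optimize $\rho$) followed by Cram\'er's theorem for the sum of independent exponential increments; stationarity of $\P$ makes this uniform in the base point $a$. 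Now, on $\{K_0\le\delta M\}$, if some geodesic $\pi^{0,x}$ ($\abs{x}_1=M$) passed through an $\e$-far $q$, then combining the three bounds and $\w_q\ge0$,
\[
\gpp(x)-\e'M\le\Gpp_{0,x}=\Gpp_{0,q}+\Gpp_{q,x}-\w_q\le\gpp(q)+\gpp(x-q)+2\e'M\le\gpp(x)-(\delta_1-2\e')M,
\]
forcing $\delta_1\le 3\e'$, a contradiction --- unless one of the events $\{\Gpp_{q,x}>\gpp(x-q)+\e'\abs{x-q}_1\}$ occurred. A union bound over the $M+1$ choices of $x$ and the $O(M^2)$ $\e$-far points $q\le x$ (each with $\abs{x-q}_1\ge\delta M$) bounds this probability by $CM^{3}e^{-c(\e')\delta M}\to0$, while $\P(K_0>\delta M)\to0$; hence $\P\bigl(\max_{\abs{x}_1=M}D_{0,x}/M>\e\bigr)\to0$, which is \eqref{sh88}.

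\smallskip

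\noindent\textbf{Main obstacle.} The geometric gap $\delta_1(\e)$ is easy; the real point is that the last union bound ranges over $O(M^2)$ interior base points $q$, so the almost sure form of Theorem~\ref{t:sh} (no control on $K_0$) does not suffice there. What is needed is the quantitative, large-deviation form of the shape estimate --- precisely the bound that the explicit stationary model of Theorem~\ref{v:tIJw1} delivers --- applied in the shifted environments $\theta_a\w$ and made uniform in $a$ by stationarity. This is where the work lies, and it is the step I would write out in full detail.
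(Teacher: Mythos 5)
Your proposal is correct and it takes a genuinely different route from the paper's proof; the differences are worth spelling out.

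The paper's argument discretizes the set of directions by fixing a finite grid $\xi_1,\dotsc,\xi_k$ on the simplex via Lemma \ref{sh-lm44} and introducing the event $B_N$ defined by \eqref{BM1}--\eqref{BM3}. The ``outer'' bounds \eqref{BM1}--\eqref{BM2} come from the a.s.\ uniform shape theorem \ref{t:sh} from the origin. The delicate point is the interior passage time $\Gpp_{z,\fl{N\xi_j'}}$ in \eqref{BM3}, and the paper handles it by observing that for each of the finitely many grid indices $j$ the family $\{\Gpp_{z,\fl{N\xi_j'}}\}_z$ is a translated-and-reflected copy of $\{\Gpp_{0,\cdot}\}$, so Theorem \ref{t:sh} applies to it too; the translation costs only a finite union bound over $j$ and yields $B_N$ with probability tending to one. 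You avoid discretizing directions altogether: Theorem \ref{t:sh} is used only for $\Gpp_{0,q}$ and $\Gpp_{0,x}$ (all from the single fixed origin, so a.s.\ uniform control suffices), and the interior segment $\Gpp_{q,x}$ is handled with a \emph{quantitative} upper-tail estimate so that a union bound over the $O(M^3)$ pairs $(q,x)$ is affordable. You correctly identify that the a.s.\ form of Theorem \ref{t:sh} cannot absorb this union bound and that the stationary model is exactly what provides the needed concentration; that is the genuine extra ingredient your route requires, and it is derivable along the lines you indicate. Your geometric gap $\delta_1(\e)>0$ via compactness and strict superadditivity of $\gpp$ plays the same role as the paper's condition \eqref{shpi98}, and the closing contradiction is the same.

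One small correction to be aware of in the step you flag as requiring detail: the rate in the claimed estimate
\[
\P\bigl(\Gpp_{a,b}\ge\gpp(b-a)+\e'\abs{b-a}_1\bigr)\le e^{-c(\e')\abs{b-a}_1}
\]
is not uniformly linear in $\abs{b-a}_1$ if derived by stochastic domination by the stationary process. When $b-a$ is very skew --- say $b-a=(m,1)$ with $m$ large --- the characteristic parameter is $\rho\approx 1/\sqrt m$, the single vertical boundary weight $J\sim\mathrm{Exp}(\rho)$ has the heavy tail $\P(J>t)=e^{-\rho t}$, and the resulting Cram\'er rate for a deviation $\e'\abs{b-a}_1$ degrades to order $\rho\,\e'\abs{b-a}_1\sim\e'\sqrt{\abs{b-a}_1}$. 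So what this argument actually delivers, uniformly over $a\le b$, is a bound of the form $e^{-c(\e')\sqrt{\abs{b-a}_1}}$. This is still ample for your union bound (since $M^3e^{-c\sqrt{\delta M}}\to0$), so the proof stands, but the exponent should be stated with the square root or one should note that the $\e$-far points $q$ you consider force $x-q$ to stay away from the axes in some quantitative sense before claiming the linear rate.
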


\bigskip

We turn to the proofs of Theorems \ref{t:sh} and \ref{t:sh2}.  Both proofs use an approximation in terms of a grid of points on the line segment  $\Uset=[e_1,e_2]$, given in the next lemma. 

\begin{lemma} \label{sh-lm44} 
Fix  $\delta>0$ and $k\in\Z_{>0}$ such that $\tfrac1k<\delta$. 
Set $\xi_i=(\frac{i}k, \frac{k-i}k)\in\Uset$ for  $0\le i\le k$.   Then for each  $z=(z_1,z_2)\in\R_{\ge0}^2$ such that  \[ 1+\delta\le \abs{z}_1\le 1+2\delta \] there exists $i\in\{0,\dotsc,k\}$ such that  $\xi_i\le z\le \xi_i+2\delta(e_1+e_2)$. 
\end{lemma}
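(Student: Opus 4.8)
The plan is to restate the conclusion as the existence of an integer in an explicit window and to exhibit one, essentially as $\lceil k(1-z_2)\rceil$, after peeling off the boundary regimes $z_1\ge1$ and $z_2\ge1$. Since $\xi_i=(i/k,(k-i)/k)$, the two inclusions $\xi_i\le z$ and $z\le\xi_i+2\delta(e_1+e_2)$ are exactly
\[
  \max\bigl(k(1-z_2),\,k(z_1-2\delta)\bigr)\ \le\ i\ \le\ \min\bigl(kz_1,\,k(1-z_2+2\delta)\bigr),
\]
so it suffices to produce an integer $i\in\{0,\dots,k\}$ in this window. From the hypothesis, $z_1,z_2\ge0$ and $1+\delta\le z_1+z_2\le1+2\delta$, so $z_1\le1+2\delta$, $z_2\le1+2\delta$, $z_1+z_2-1\in[\delta,2\delta]$, and $k\delta>1$. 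If $z_1\ge1$, then $z_2\le(1+2\delta)-z_1\le2\delta$ and $i=k$ (that is, $\xi_i=(1,0)$) lies in the window, since there the four inequalities read $1\le z_1$, $z_1\le1+2\delta$, $0\le z_2$, $z_2\le2\delta$. If $z_1<1$ but $z_2\ge1$, then symmetrically $z_1\le2\delta$ and $i=0$ works.

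In the remaining case $z_1<1$ and $z_2<1$ one also has $z_1>\delta$ and $z_2>\delta$, because $z_1=(z_1+z_2)-z_2>(1+\delta)-1$ and likewise for $z_2$; hence $0<k(1-z_2)<k$ and $0<kz_1<k$. Take $i=\lceil k(1-z_2)\rceil$, so that $i\ge k(1-z_2)>0$. Using $\lceil x\rceil\le x+1$ and $k(z_1+z_2-1)\ge k\delta>1$,
\[
  i\ \le\ k(1-z_2)+1\ \le\ k(1-z_2)+k(z_1+z_2-1)\ =\ kz_1\ <\ k,
\]
so $i\in\{1,\dots,k-1\}$ and $\xi_i\le z$. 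The other two inequalities follow from $z_1+z_2-1\le2\delta$, which gives $k(1-z_2)=kz_1-k(z_1+z_2-1)\ge k(z_1-2\delta)$ and hence $i\ge k(z_1-2\delta)$, and from $2k\delta>1$, which gives $i\le k(1-z_2)+1\le k(1-z_2+2\delta)$. This puts $i$ in the window in every case.

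There is no real obstacle here: the substance is merely that the window has length
\[
  \min\bigl(kz_1,\,k(1-z_2+2\delta)\bigr)-\max\bigl(k(1-z_2),\,k(z_1-2\delta)\bigr)\ =\ k(z_1+z_2-1)\ \ge\ k\delta\ >\ 1,
\]
and so contains an integer. The only thing to watch is that the integer falls in $\{0,\dots,k\}$: in the interior regime the window lies strictly inside $(0,k)$ so this is automatic, whereas if $z_1\ge1$ (resp.\ $z_2\ge1$) the window can protrude past $k$ (resp.\ below $0$) and one simply uses the endpoint $i=k$ (resp.\ $i=0$), which is why those regimes are separated out first.
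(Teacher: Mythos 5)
Your proof is correct, and the core idea coincides with the paper's: in both arguments the pivotal case picks the grid index $i=\lceil k(1-z_2)\rceil$ (the paper expresses this by bracketing $z_2\in\bigl[\tfrac{k-i}{k},\tfrac{k-i+1}{k}\bigr)$) and then verifies the two-sided sandwich using $1+\delta\le z_1+z_2\le 1+2\delta$ together with $\tfrac1k<\delta$. The paper splits only into $z_2\ge 1$ (use $i=0$) and $z_2<1$, since the chosen $i$ automatically lands in $\{1,\dots,k\}$ in the latter case; you split off $z_1\ge 1$ as a third case and use $i=k$ there, which is also valid but not strictly necessary. Your reformulation as ``the window $[\max(k(1-z_2),k(z_1-2\delta)),\ \min(kz_1,k(1-z_2+2\delta))]$ has length $k(z_1+z_2-1)\ge k\delta>1$'' is a clean way to see why an integer exists, and the rest of your write-up is just the bookkeeping needed to keep it inside $\{0,\dots,k\}$; overall this is the same argument organized slightly differently.
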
 

\begin{proof}
{\it Case 1.}  Suppose $1\le z_2\le 1+2\delta$. Then for $i=0$ we have 
\[   \xi_{0,2}=1\le z_2\le 1+2\delta = \xi_{0,2}+2\delta  \]
 and 
\[   \xi_{0,1}=0\le z_1\le 1+2\delta -z_2 \le 2\delta = \xi_{0,1}+2\delta. \]
 
 {\it Case 2.} 
For  $0\le z_2 <1 $    choose $i\in\{1,\dotsc,k\}$ such that 
\[  \xi_{i,2}= \tfrac{k-i}k\le z_2< \tfrac{k-i+1}k. \]  Then 
\begin{align*}
z_1\ge 1+\delta-z_2 > 1+\delta -1+\tfrac{i}k -\tfrac1k > \tfrac{i}k = \xi_{i,1}.  
\end{align*}
From the opposite side 
\begin{align*}
z_1\le 1+2\delta-z_2 \le  2\delta  +\tfrac{i}k =   \xi_{i,1} + 2\delta 
\end{align*}
and $z_2< \tfrac{k-i}k +\tfrac1k \le  \tfrac{k-i}k+2\delta=\xi_{i,2} +2\delta$.
\end{proof} 

\begin{proof}[Proof of Theorem \ref{t:sh}]   Given $\e\in(0,1)$, fix   $\e_1>0$ small enough so that $  17\sqrt{\e_1} <\e$.    Fix an integer  $k>\e_1^{-1}$.  Let $\xi_i=(\frac{i}k, \frac{k-i}k)\in\Uset$  as in Lemma \ref{sh-lm44} and set $\xi_i'=\xi_i+2\e_1(e_1+e_2)$. 
An explicit calculation shows that 
\[ 0<  \gpp(\xi_i')-\gpp(\xi_i)\le 16\sqrt{\e_1} . \]

  Utilizing limit \eqref{lln}, let $N_0$ be an almost surely finite random variable such that, for all  $\zeta\in\{\xi_1,\dotsc,\xi_k, \xi_1',\dotsc,\xi_k'\}$, 
\[  \abs{\,\Gpp_{0, \fl{N\zeta}}-  N\gpp(\zeta)\, } \le N\e_1  \qquad \text{for $N\ge N_0$.} 
 \]  
  Increase $N_0$ if necessary to guarantee that  
  \[  (N+1)(1+\e_1)<N(1+2\e_1) \quad  \text{ for all $N\ge N_0$. } \] 
  
Let $K=(1+\e_1)N_0$.  Given $x\in\Z_{\ge0}^2$ such that $\abs{x}_1\ge K$,  let $N \ge N_0$  be the integer such that \[ N(1+\e_1)\le \abs{x}_1< (N+1)(1+\e_1) <N(1+2\e_1). \]   Then by Lemma \ref{sh-lm44} we can fix $j$ such that 
\[ \fl{N\xi_j} \le N\xi_j \le x\le \fl{N\xi_j'} \le N\xi_j '. 
  \]

By monotonicity of $\Gpp$  and monotonicity and  homogeneity   of $\gpp$, 
\begin{align*}
\Gpp_{0,x}-\gpp(x) &\ge \Gpp_{0, \fl{N\xi_j}}- N\gpp(\xi_j') \\
&\ge \Gpp_{0, \fl{N\xi_j}}- N\gpp(\xi_j)- 16N\sqrt{\e_1} \\ 
&\ge  -N\e_1- 16N\sqrt{\e_1}  \; \ge \; -N\e  \; \ge \; -\e\abs{x}_1. 
\end{align*}
From the other side, 
\begin{align*}
\Gpp_{0,x}-\gpp(x) &\le \Gpp_{0, \fl{N\xi_j' }}- N\gpp(\xi_j) \\
&\le \Gpp_{0, \fl{N\xi_j'}}- N\gpp(\xi_j') + 16N\sqrt{\e_1} \\ 
&\le  N\e_1+ 16N\sqrt{\e_1}  \; \le \; N\e  \; \le \; \e\abs{x}_1. 
\end{align*}
This completes the proof of Theorem \ref{t:sh}.
%
\end{proof}

\begin{proof}[Proof of Theorem \ref{t:sh2}]    Fix $0<\e<1$.  We show that 
\[   \lim_{M\to\infty} \P\biggl\{ \; \max_{x\in\Z_{\ge 0}^2:\, \abs{x}_1=M} \,\frac{D_{0,x}}{\abs{x}_1} \ge \e\biggr\}  =0. \]  
Consider $M$ fixed but large until the end of the argument where we take it to $\infty$.  Throughout the proof, $x$ denotes a point  in $\Z_{\ge 0}^2$  such that $\abs{x}_1=M$ and $z$ denotes a point on  the geodesic  $\pi^{0,x}_\dbullet$ that violates the condition 
$\abs{z-(\abs{z}_1/\abs{x}_1)x}_1< \e$. 

With $0<\e<1$ given, we pick three small positive quantities $\delta_1, \e_1$, and  $\e_2$.  Let $\e_1>0$ satisfy 
\be\label{shpi-e1}  0< \e_1< \e/8. \ee 
Then fix $\delta_1>0$ such that 
\be\label{shpi-d} 0<\delta_1 <  \e-6\e_1.   \ee 
Given this $\delta_1>0$,  let  $\e_2>0$ be such that  
\be\label{shpi98}  \begin{aligned}  
 \gpp(\eta)\ge \gpp(\zeta)+\gpp(\eta-\zeta)+\e_2 \quad &\text{for all $\,0\le\zeta\le \eta$ such that } \\
 &1\le\abs{\eta}_1\le 2 \ \text{ and } \ 
   \Bigl\lvert \, \zeta-\frac{\abs{\zeta}_1}{\abs{\eta}_1}\eta\, \Bigr\rvert_1   \ge \delta_1.  
 \end{aligned}  \ee
Now shrink $\e_1>0$ further to ensure that 
\be\label{shpi-e3}  \e_2 > 3\e_1+16\sqrt{\e_1}.  \ee 
Note that  we can shrink $\e_1$ without violating \eqref{shpi-e1} or \eqref{shpi-d}, while keeping $\e, \delta_1$ and $\e_2$ fixed.

  As in Lemma \ref{sh-lm44} with $\delta=\e_1$,  define $\xi_i=(\frac{i}k, \frac{k-i}k)$   and   $\xi_i'=\xi_i+2\e_1(e_1+e_2)$ for an integer  $k>\e_1^{-1}$.  Recall also 
\be\label{gpp-xi7}   \gpp(\xi_i')-\gpp(\xi_i)\le 16\sqrt{\e_1} . \ee 
 
Let $N$  be the integer such that 
\be\label{shpi111} N(1+\e_1)\le  M=\abs{x}_1< (N+1)(1+\e_1). \ee
  Increase $M$ if necessary to guarantee that  $(N+1)(1+\e_1)<N(1+2\e_1)$.  Then by Lemma \ref{sh-lm44} for each $x$ with $\abs{x}_1=M$ we can fix $j$ such that 
\be\label{shpi113} \fl{N\xi_j} \le N\xi_j \le x\le \fl{N\xi_j'} \le N\xi_j '. 
  \ee

We isolate two lemmas from the main body of the proof.  

\smallskip 

\begin{lemma}\label{shpi-lm3} 
Let $z$ be  a point that satisfies $0\le z\le x$ and 
\be\label{shpi115}    \Bigl\lvert \,z-\frac{\abs{z}_1}{\abs{x}_1}x\, \Bigr\rvert_1  \ge\e\abs{x}_1 . 
\ee
Then 
\be\label{shpi118}
\abs{z}_1\ge \frac\e2\,\abs{x}_1
\quad\text{and}\quad 
\abs{z-x}_1\ge \frac\e2\,\abs{x}_1. 
\ee
For $j$ as in \eqref{shpi113}, 
\be\label{shpi120}
 \Bigl\lvert \,\frac{z}N \, - \,\frac{\abs{z}_1}{N\abs{\xi_j'}_1}  {\xi_j'}\, \Bigr\rvert_1  \ge\delta_1 . 
\ee
\end{lemma}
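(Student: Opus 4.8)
The plan is to reduce both claims to one–dimensional estimates. Write $M=\abs{x}_1$. Because $0\le z\le x$, the vectors $z-\tfrac{\abs{z}_1}{M}x$ and $z-\tfrac{\abs{z}_1}{\abs{\xi_j'}_1}\xi_j'$ each have coordinate sum zero, and any $v=(v_1,v_2)\in\R^2$ with $v_1+v_2=0$ satisfies $\abs{v}_1=2\abs{v_1}$. Hence hypothesis \eqref{shpi115} is equivalent to $\bigl\lvert z_1-\tfrac{\abs{z}_1}{M}x_1\bigr\rvert\ge\tfrac\e2 M$, and \eqref{shpi120} is equivalent to $\bigl\lvert z_1-\tfrac{\abs{z}_1}{1+4\e_1}(\xi_j')_1\bigr\rvert\ge\tfrac{N\delta_1}{2}$ (using $\abs{\xi_j'}_1=1+4\e_1$). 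I will also use the sandwich $N\xi_j\le x\le N\xi_j'$ from \eqref{shpi113} and the bounds $N(1+\e_1)\le M<N(1+2\e_1)$ from \eqref{shpi111} together with the enlargement of $M$ made there.

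For \eqref{shpi118} I would argue by contradiction from the reformulated hypothesis. If $\abs{z}_1<\tfrac\e2 M$, then $z_1$ and $\tfrac{\abs{z}_1}{M}x_1$ both lie in $[0,\abs{z}_1]\subset[0,\tfrac\e2 M)$, so $\bigl\lvert z_1-\tfrac{\abs{z}_1}{M}x_1\bigr\rvert<\tfrac\e2 M$, a contradiction; hence $\abs{z}_1\ge\tfrac\e2 M$. Since $0\le z\le x$ gives $\abs{z-x}_1=M-\abs{z}_1$, if $\abs{z-x}_1<\tfrac\e2 M$ then $x_1-z_1\le\abs{z-x}_1<\tfrac\e2 M$ and $x_1-\tfrac{\abs{z}_1}{M}x_1=(1-\tfrac{\abs{z}_1}{M})x_1<\tfrac\e2 M$, so $\bigl\lvert z_1-\tfrac{\abs{z}_1}{M}x_1\bigr\rvert=\bigl\lvert(x_1-z_1)-(x_1-\tfrac{\abs{z}_1}{M}x_1)\bigr\rvert<\tfrac\e2 M$, again a contradiction; hence $\abs{z-x}_1\ge\tfrac\e2 M$.

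For \eqref{shpi120} the plan is to swap the reference point $\tfrac{\abs{z}_1}{M}x_1$ for $\tfrac{\abs{z}_1}{1+4\e_1}(\xi_j')_1$ and absorb the error. From $N\xi_j\le x\le N\xi_j'$ and $N(1+\e_1)\le M<N(1+2\e_1)$ one checks that both $x_1/M$ and $(\xi_j')_1/(1+4\e_1)$ lie in the interval $\bigl[\tfrac{(\xi_j)_1}{1+2\e_1},\tfrac{(\xi_j')_1}{1+\e_1}\bigr]$, whose length is at most $3\e_1$ (this uses $0\le(\xi_j)_1\le1$ and $(\xi_j')_1=(\xi_j)_1+2\e_1$). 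Hence $\bigl\lvert\tfrac{x_1}{M}-\tfrac{(\xi_j')_1}{1+4\e_1}\bigr\rvert\le3\e_1$, so by the triangle inequality, $\abs{z}_1\le M$, and the reformulated hypothesis,
\[
\Bigl\lvert z_1-\tfrac{\abs{z}_1}{1+4\e_1}(\xi_j')_1\Bigr\rvert
\;\ge\;\Bigl\lvert z_1-\tfrac{\abs{z}_1}{M}x_1\Bigr\rvert-\abs{z}_1\Bigl\lvert\tfrac{x_1}{M}-\tfrac{(\xi_j')_1}{1+4\e_1}\Bigr\rvert
\;\ge\;\tfrac\e2 M-3\e_1 M\;=\;\tfrac M2(\e-6\e_1).
\]
Since $M\ge N$ and $\delta_1<\e-6\e_1$ by \eqref{shpi-d}, this is at least $\tfrac N2(\e-6\e_1)\ge\tfrac{N\delta_1}{2}$, which is the reformulated \eqref{shpi120}.

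The only point needing care is the interval–length estimate: the coefficient of $\e_1$ in the bound on $\bigl\lvert\tfrac{x_1}{M}-\tfrac{(\xi_j')_1}{1+4\e_1}\bigr\rvert$ must come out no larger than $3$, since \eqref{shpi-d} provides exactly the slack $\e-6\e_1$. Concretely, with $p=(\xi_j)_1\in[0,1]$ the interval length equals $\tfrac{p\e_1+2\e_1+4\e_1^2}{(1+\e_1)(1+2\e_1)}$, whose numerator is at most $3\e_1+4\e_1^2$ and whose denominator exceeds $1+3\e_1$, so it does not exceed $(3\e_1+4\e_1^2)/(1+3\e_1)\le3\e_1$; the remaining steps are routine bookkeeping with the zero–coordinate–sum reduction and the triangle inequality.
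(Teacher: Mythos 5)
Your proof is correct and follows essentially the same route as the paper's: both arguments come down to showing that $x/\abs{x}_1$ and $\xi_j'/\abs{\xi_j'}_1$ differ by at most $3\e_1$ per coordinate (equivalently $6\e_1$ in $\ell^1$) and then applying the triangle inequality, with the slack $\delta_1<\e-6\e_1$ from \eqref{shpi-d} closing the gap. The one pleasant streamlining in your version is the reduction to a single coordinate via the observation that the relevant difference vectors have zero coordinate sum, so $\abs{v}_1=2\abs{v_1}$; this replaces the paper's two coordinate bounds on $x/\abs{x}_1-\xi_j'/\abs{\xi_j'}_1$ and its use of $\abs{z}_1\le N$ (derived from the first part of \eqref{shpi118}) with the trivial $\abs{z}_1\le M$, but the underlying estimates and the role of the constants $\e_1,\delta_1$ are the same.
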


\begin{proof} [Proof of Lemma \ref{shpi-lm3}]  
By the triangle inequality and $\abs{z}_1\le  \abs{x}_1$, 
\begin{align*}
 \Bigl\lvert \,z-\frac{\abs{z}_1}{\abs{x}_1}x\, \Bigr\rvert_1 \; \le \;  
 2\abs{z}_1 
\end{align*}
and 
\begin{align*}
 \Bigl\lvert \,z-\frac{\abs{z}_1}{\abs{x}_1}x\, \Bigr\rvert_1 \; &\le \;  
\Bigl\lvert \,\frac{\abs{z}_1}{\abs{x}_1}(z-x) +\frac{\abs{x}_1-\abs{z}_1}{\abs{x}_1}z\, \Bigr\rvert_1
\;\le\;  \abs{z-x}_1 + \abs{x}_1-\abs{z}_1 \\
&\le\;  2\abs{z-x}_1. 
\end{align*}
Inequalities   \eqref{shpi118} have been verified.

For   \eqref{shpi120}  we   bound  the distance $\abs{{x}/{\abs{x}_1}- {\xi_{j}'}/{\abs{\xi_{j}'}}}$.   Let $m\in\{1,2\}$ be the  coordinate  index.   First an upper bound on $\frac{x_m}{\abs{x}_1}$.  
\begin{align*}
\frac{x_m}{\abs{x}_1} \le \frac{N\xi_{j,m}'}{N(1+\e_1)} = \frac{\xi_{j,m}'}{\abs{\xi_{j}'}_1}\cdot\frac{1+4\e_1}{1+\e_1}  \le   \frac{\xi_{j,m}'}{\abs{\xi_{j}'}} +3\e_1. 
\end{align*}
Then a lower bound. 
\begin{align*}
\frac{x_m}{\abs{x}_1} \ge \frac{N(\xi_{j,m}'-2\e_1)}{N(1+2\e_1)} \ge  \frac{\xi_{j,m}'}{\abs{\xi_{j}'}_1}- 2\e_1. 
\end{align*}
From the last two displays
\be\label{shpi130}    \Bigl\lvert \,\frac{x}{\abs{x}_1} \, - \,\frac{\xi_{j}'}{\abs{\xi_{j}'}_1}\, \Bigr\rvert_1 
\le 6\e_1. \ee
On the other hand, from \eqref{shpi115} and  $\abs{x}_1\ge  N(1+\e_1)$ we obtain 
\[    \Bigl\lvert \,\frac{z}N -\frac{\abs{z}_1}N \cdot \frac{x}{\abs{x}_1}\, \Bigr\rvert_1  \ge\e(1+\e_1). \]
To bound $\abs{z}_1$ from above,  by  $0\le z\le x$ and \eqref{shpi118} ,
\begin{align*}
\abs{z}_1=\abs{x}_1 - \abs{x-z}_1 \le \Bigl(1-\frac\e2\Bigr) \abs{x}_1 \le N. 
\end{align*}

Finally we combine the bounds above with the triangle inequality: 
\begin{align*}
 \Bigl\lvert \,\frac{z}N \, - \,\frac{\abs{z}_1}{N\abs{\xi_j'}_1}  {\xi_j'}\, \Bigr\rvert_1  
&\ge
  \Bigl\lvert \,\frac{z}N -\frac{\abs{z}_1}N \cdot \frac{x}{\abs{x}_1}\, \Bigr\rvert_1 
\;  - \;  \frac{\abs{z}_1}N \cdot   \Bigl\lvert \,\frac{x}{\abs{x}_1} \, - \,\frac{\xi_{j}'}{\abs{\xi_{j}'}_1}\, \Bigr\rvert_1  \\
  &\ge  \e(1+\e_1) - 6\e_1 \ge \delta_1.  
\end{align*} 
Claim \eqref{shpi120} has been verified and Lemma \ref{shpi-lm3} proved. 
\end{proof} 

For $N\in\Z_{>0}$ let $B_{N} $ be the event on which the following statements hold:
\begin{align} 
\label{BM1}  \Gpp_{0, \fl{N\xi_i}} \ge N\gpp(\xi_i) -N\e_1
 \qquad\text{for $i\in[k]$, } 
\end{align}  
and whenever lattice points $0\le z\le x$ and index  $j$  satisfy 
\eqref{shpi111},  \eqref{shpi113},  and \eqref{shpi115},   
  we have 
\be \label{BM2}   \Gpp_{0,z}\le \gpp(z)+N\e_1  \ee
and 
\be \label{BM3}  \Gpp_{z, \,\fl{N\xi_j'}} \le \gpp(N\xi_j'-z) +N\e_1. \ee

\begin{lemma}\label{shpi-lm5} 
$\P(B_N)\to 1$ as $N\to\infty$. 
\end{lemma}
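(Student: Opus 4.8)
The plan is to treat the three requirements \eqref{BM1}, \eqref{BM2}, \eqref{BM3} defining $B_N$ separately, reducing the first to the law of large numbers \eqref{lln} at finitely many points and the other two to the uniform upper bound of Theorem \ref{t:sh}. For \eqref{BM1}: since $N^{-1}\Gpp_{0,\fl{N\xi_i}}\to\gpp(\xi_i)$ almost surely for each of the finitely many $i\in[k]$, the probability that $\Gpp_{0,\fl{N\xi_i}}\ge N\gpp(\xi_i)-N\e_1$ holds simultaneously for all $i\in[k]$ tends to $1$.

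For \eqref{BM2}, every relevant $z$ obeys $0\le z\le x$ with $\abs{x}_1=M<(N+1)(1+\e_1)<N(1+2\e_1)$, so it suffices to show $\P\bigl(\exists\,z\in\Z_{\ge0}^2:\ \abs{z}_1<N(1+2\e_1),\ \Gpp_{0,z}>\gpp(z)+N\e_1\bigr)\to0$. I would apply Theorem \ref{t:sh} with $\e_1/(1+2\e_1)$ in place of $\e$: outside an almost surely finite random $K$ one has $\Gpp_{0,z}\le\gpp(z)+\tfrac{\e_1}{1+2\e_1}\abs{z}_1\le\gpp(z)+N\e_1$ whenever $K\le\abs{z}_1<N(1+2\e_1)$. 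The remaining range $\abs{z}_1<K$ is handled by fixing a deterministic $K_0$, using $\P(K>K_0)\to0$ as $K_0\to\infty$, and observing that $\max_{\abs{z}_1\le K_0}(\Gpp_{0,z}-\gpp(z))$ is an almost surely finite random variable, hence eventually below $N\e_1$.

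For \eqref{BM3} the last-passage value $\Gpp_{z,\fl{N\xi_j'}}$ is not anchored at the origin, which is the one genuine obstacle; I would remove it by reflecting the environment through the far endpoint. Fix $j$ and put $w=\fl{N\xi_j'}$; by \eqref{shpi113} every relevant $z$ satisfies $0\le z\le w$, hence $w-z\ge0$ and $\abs{w-z}_1\le\abs{w}_1\le N(1+4\e_1)$. Let $\w'_y=\w_{w-y}$, which is again an i.i.d.\ rate-$1$ exponential field, and reverse admissible paths to obtain $\Gpp_{z,w}=\Gpp'_{0,\,w-z}$, where $\Gpp'$ is last passage in the field $\w'$. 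Since $\w'\deq\w$, substituting $v=w-z$ gives
\[
\P\bigl(\exists\,z:\ 0\le z\le w,\ \Gpp_{z,w}>\gpp(w-z)+N\e_1\bigr)
=\P\bigl(\exists\,v:\ 0\le v\le w,\ \Gpp_{0,v}>\gpp(v)+N\e_1\bigr),
\]
and the right-hand side tends to $0$ exactly as in the treatment of \eqref{BM2}, this time applying Theorem \ref{t:sh} with $\e_1/(1+4\e_1)$ in place of $\e$. A union bound over the at most $k+1$ values of $j$ controls \eqref{BM3}, and intersecting the three high-probability events yields $\P(B_N)\to1$.

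The step needing the most care is \eqref{BM3}: one must check that $y\mapsto w-y$ maps up-right paths from $z$ to $w$ bijectively and weight-preservingly onto up-right paths from $0$ to $w-z$, and that $\w'$ is genuinely i.i.d., so that the uniform bound of Theorem \ref{t:sh} transfers to the reflected process uniformly in $z$; everything else is routine bookkeeping with the constants $\e_1,\e_2,\delta_1$ fixed earlier.
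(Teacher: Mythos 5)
Your proof is correct and follows essentially the same strategy as the paper: treat the three conditions separately, control \eqref{BM1} and \eqref{BM2} by the uniform shape theorem (Theorem \ref{t:sh}), and reduce \eqref{BM3} to an origin-anchored last-passage process by reflecting the i.i.d.\ environment through $\fl{N\xi_j'}$, observing that the reflected field is again i.i.d.\ so the shape estimate and its random $K$ transfer in distribution. The only minor difference is that the paper exploits \eqref{shpi115} via \eqref{shpi118} to make the relevant norms $\abs{z}_1$ and $\abs{\fl{N\xi_j'}-z}_1$ of order $N$, so that the random $K$ of Theorem \ref{t:sh} is eventually exceeded almost surely, whereas you handle all $z$ with bounded norm by inserting a deterministic cutoff $K_0$ and treating the small-norm region separately; both routes are valid.
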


\begin{proof} [Proof of Lemma \ref{shpi-lm5}]  
Both \eqref{BM1} and \eqref{BM2} are  satisfed with high probability by Theorem \ref{t:sh}.     For \eqref{BM1}  this is immediate.   For \eqref{BM2} we need a few  additional observations.  
By the assumptions on $z$ and $x$ and by  \eqref{shpi118},  
\[   \abs{z}_1\ge \frac\e2\,\abs{x}_1 \ge  \frac{N}2 \e(1+\e_1),  \]
and so $\abs{z}_1$ grows large as $N$ grows. Consequently   \eqref{sh89} applies and  gives 
\[   \Gpp_{0,z}\le \gpp(z)+\e'\abs{z}_1 \] 
for large enough $N$,  for all $z$ that satisfy \eqref{shpi115}, for any fixed $\e'>0$. 
Then take $\e'<\e_1/(1+2\e_1)$ so that 
$\e'\abs{z}_1\le  \e'\abs{x}_1\le N\e'(1+2\e_1)\le N\e_1$. 

To satisfy \eqref{BM3}, consider   $N$ and $j$ fixed, and let $x$ and  $z$ vary.  Translate and reflect the lattice to transform the process $\{\Gpp_{z, \,\fl{N\xi_j'}}\}_{z\le \fl{N\xi_j'}}$ into the distributionally identical process $\{\Gpp_{0, \,\fl{N\xi_j'}-z}\}_{z\le \fl{N\xi_j'}}$.  We bound the norm of the endpoint above and below. 
\begin{align*}
\abs{\,\fl{N\xi_j'}-z\,}_1 \le \abs{x-z}_1\,+\, \abs{\,\fl{N\xi_j'}-x\,}_1
\le  \abs{x}_1 + 4N\e_1 \le N (1+6\e_1) .
\end{align*} 
 By the definition of $B_N$,  the relevant points $z$  satisfy condition \eqref{shpi115}, and hence    by \eqref{shpi118}, 
\begin{align*}
\abs{\,\fl{N\xi_j'}-z\,}_1 \ge \abs{x-z}_1\,-\, \abs{\,\fl{N\xi_j'}-x\,}_1
\ge \frac\e2\,\abs{x}_1 - 4N\e_1 \ge N (\tfrac12\e(1+\e_1)-4\e_1) .
\end{align*} 
As $N$ grows this last lower bound grows large, uniformly for the points $z$ that satisfy   \eqref{shpi115}.  Hence for large enough $N$  \eqref{sh89} applies and for these points $z$ we have 
\begin{align*}
\Gpp_{0, \,\fl{N\xi_j'}-z} \le \gpp(N\xi_j'-z)  +\e' \abs{\,\fl{N\xi_j'}-z\,}_1 \; \le \; \gpp(N\xi_j'-z)   +N\e_1
\end{align*}
once $\e'$ is chosen small enough.   Since we transformed the original process, we do not get \eqref{BM3} almost surely, but only with high probability. 
\end{proof} 

\smallskip

We conclude the proof of   Theorem \ref{t:sh2}.
Suppose that event $B_N$ of \eqref{BM1}--\eqref{BM3} occurs and  the geodesic $\pi^{0,x}_\bbullet$ goes through  a point  $z$   that satisfies \eqref{shpi115}.   This gives  $\Gpp_{0,x} \le  \Gpp_{0,z}+ \Gpp_{z,x}$ (an inequality because the weight at $z$ is counted twice).  
The following string of inequalities  uses in turn \eqref{gpp-xi7}, \eqref{BM1}, monotonicity, \eqref{BM2}--\eqref{BM3}, and finally   \eqref{shpi98} applied to $\eta=\xi_j'$ and $\zeta=z/N$:  
\begin{align*}
N\gpp(\xi_j')-N(\e_1+16\sqrt{\e_1}\,) &\le  N\gpp(\xi_j)-N\e_1 \; \le \; \Gpp_{0,\fl{N\xi_j}} \le \Gpp_{0,x}  \\[4pt]
&\le  \Gpp_{0,z}+ \Gpp_{z,x} \; \le \; \Gpp_{0,z}+ \Gpp_{z,\fl{N\xi_j'}}  \\[3pt] 
&\le N\Bigl[ \,\gpp\Bigl(\frac{z}N\Bigr) + \gpp\Bigl(\xi_j'-\frac{z}N\Bigr)\Bigr] +2N\e_1\\[3pt] 
&\le  N\gpp(\xi_j')-N\e_2 +2N\e_1. 
\end{align*}
By \eqref{shpi-e3} we have a contradiction.  Hence on the event $B_N$ no geodesic to a point $x$ that satisfies \eqref{shpi111} can go through a point  $z$   that satisfies \eqref{shpi115}.   We  conclude that 
\[  \P\biggl\{   \frac{D_{0,x}}{\abs{x}_1} \ge \e \  \text{ for some $x$ that satisfies \eqref{shpi111}}\biggr\}   \le \P(B_N^c)\to 0.  \]
 Theorem \ref{t:sh2} is proved.
\end{proof}

\medskip 
 
 \section{Busemann functions for the exponential corner growth model} \label{sec:bus} 
 
  In this section we prove the existence of the Busemann functions and show that they provide minimizers for the variational formulas.   Section \ref{sec:bus1}  states the results and  Section \ref{sec:bus2}  contains the proof of the main existence theorem.   

 \subsection{Existence and properties of Busemann functions}
 \label{sec:bus1}

We extend  the constructions discussed in Section \ref{v:s-stat-cgm}  to the full lattice $\Z^2$.   As before a down-right path is a bi-infinite sequence $\cY=(y_k)_{k\in\Z}$  in   $\Z^2$ such that    $y_k-y_{k-1}\in\{e_1,-e_2\}$ for all $k\in\Z$.  The  lattice decomposes  into a disjoint union  $\Z^2=\cG_-\cup\cY\cup\cG_+$  where   the two regions are 
 \[  \cG_-=\{x\in \Z^2:  \exists j\in\Z_{>0}  \text{ such that  } x+j(e_1+e_2)\in \cY\} \]
   and 
 \[  \cG_+=\{x\in \Z^2:  \exists j\in\Z_{>0}  \text{ such that } x-j(e_1+e_2)\in \cY\} . \]

It will be convenient to formalize the properties  identified earlier in Theorem \ref{v:tIJw1} in the following definition.
 
 \begin{definition} \label{v:d-exp-a} 
Let $0<\alpha<1$.   Let us say that a process 
  \be\label{IJw800} 
  \{ \eta_{x} ,\,   I_{x}, \, J_{x},  \,\wc\eta_{x}  :  x\in \Z^2\} 
   \ee
 is an {\it exponential-$\alpha$  last-passage percolation system}  if the following properties  {\rm (a)--(b)} hold. \\[-7pt] 
 \begin{enumerate}[{\rm(a)}]\itemsep=7pt 
 \item  The process is stationary with marginal distributions  
 \be\label{w19}\begin{aligned}
\eta_{x},\, \wc\eta_x \sim \text{\rm Exp}(1),  \quad I_{x}\sim \text{\rm Exp}(1-\alpha)  , \quad \text{ and }  \quad  J_{x}\sim \text{\rm Exp}(\alpha)  .  
\end{aligned}\ee 
  For any down-right path $\cY=(y_k)_{k\in\Z}$  in   $\Z^2$, the random variables 
 \be\label{v:Y4}   
 \{\wc\eta_z:  z\in\cG_-\}, \quad  \{  \tincr(\{y_{k-1},y_k\}): k\in\Z\}, \quad\text{and}\quad 
 \{  \eta_x: x\in\cG_+\} 
 \ee
 are all mutually independent, where the undirected edge variables $\tincr(e)$ are defined as before in \eqref{v:tincr} as 
  \be\label{v:tincr6}   \tincr(e)= \begin{cases}  I_x &\text{if $e=\{x-e_1,x\}$}\\ J_x &\text{if $e=\{x-e_2,x\}$.} \end{cases}  \ee

 \item  The following equations  are in force at all $x\in\Z^2$: 
 \begin{align}
\label{IJw5.1.7}  \wc\eta_{x-e_1-e_2}&=I_{x-e_2}\wedge J_{x-e_1} 
\\
\label{IJw5.2.7}  I_x&=\eta_x+(I_{x-e_2}-J_{x-e_1})^+ 
\\
\label{IJw5.3.7} J_x&=\eta_x+(I_{x-e_2}-J_{x-e_1})^-. 
\end{align}

 \end{enumerate}
\end{definition}

Equations  \eqref{IJw5.1.7}--\eqref{IJw5.3.7} above  are the same as those in  \eqref{IJw5.1}--\eqref{IJw5.3}.  The indexing in \eqref{IJw5.1.7}--\eqref{IJw5.3.7}  is chosen to highlight the relationship of the variables on the edges and diagonal corners of a unit square with vertices  $\{ x-e_1-e_2, x-e_1, x, x-e_2\}$.  

\medskip

With the definition of the exponential-$\alpha$  LPP system we can state fairly succinctly the existence of Busemann functions in the next theorem.  The setting is  the following.  $\OSP$ is a probability space whose generic sample point is denoted by $\w$.  There is a group of measurable bijections $(\theta_x)_{x\in\Z^2}$ on $\Omega$ that preserve $\P$.   $\Yw=(\Yw_x)_{x\in\Z^2}$  are  i.i.d.\ Exp(1) weights  defined on $\Omega$ that  satisfy $Y_x(\w)=Y_0(\theta_x\w)$.    The LPP process is defined in terms of the  weights $\Yw$:
 	\be\label{v:GY}  
	  \Gpp_{x,y}=\max_{x_{\brbullet}\,\in\,\Pi_{x,y}}\sum_{k=0}^{\abs{y-x}_1}\Yw_{x_k}.
	\ee
The canonical choice $\Omega=\R_+^{\Z^2}$ with translations $(\theta_x\w)_y=\w_{x+y}$, an i.i.d.\ product measure $\P$ and the coordinate process $\Yw_x(\w)=\w_x$ is one possibility.  The additional generality is meaningful for the uniqueness   in Theorem \ref{t:unique}. 

 \begin{theorem}\label{t:buse}  
 For each  $0<\alpha<1$  there exist a stationary   cocycle $B^\alpha$ and a family of random weights  $\{\Xw^\alpha_x\}_{x\in\Z^2}$  on $\OSP$   with the following properties.  
 \begin{enumerate}\itemsep=3pt
 \item[{\rm(i)}]   For each $0<\alpha<1$,  process 
  \[ \{  \Xw^\alpha_x, \, B^\alpha_{x-e_1,x}, \, B^\alpha_{x-e_2,x}, \,\Yw_x:  x\in\Z^2\}  \] 
 is an exponential-$\alpha$ last-passage system as described in Definition \ref{v:d-exp-a}. 

 \item[{\rm(ii)}]     There exists a single event $\Omega_2$ of full probability such that for all $\w\in\Omega_2$,   all $x\in\Z^2$ and all $\lambda<\rho$ in $(0,1)$ we have the inequalities 
 \be\label{v:853.1} 
B^\lambda_{x,x+e_1}(\w) \le  B^\rho_{x,x+e_1}(\w) \quad\text{and}\quad 
B^\lambda_{x,x+e_2}(\w) \ge  B^\rho_{x,x+e_2}(\w).   
\ee
Furthermore,  for all $\w\in\Omega_2$ and $x,y\in\Z^2$,  the function $\lambda\mapsto B^\lambda_{x,y}(\w)$ is right-continuous with left limits. 

 \item[{\rm(iii)}]     For each fixed $0<\alpha<1$ there exists an event $\Omega^{(\alpha)}_2$ of full probability such that the following holds: for each $\w\in\Omega^{(\alpha)}_2$  and any  sequence $v_n\in\Z^2$ such that $\abs{v_n}_1\to\infty$ and 
 \be\label{v:853.3}     \lim_{n\to\infty}  \frac{v_n}{\abs{v_n}_1} \, = \,\xi(\alpha)\,=\, 
\biggl(  \frac{(1-\alpha)^2}{(1-\alpha)^2+\alpha^2} \,,   \frac{\alpha^2}{(1-\alpha)^2+\alpha^2}\biggr) ,  
 \ee
we have the limits  
 \be\label{v:855}   
 B^\alpha_{x,y}(\w)  =\lim_{n\to\infty} [ G_{x, v_n}(\w)-G_{y,v_n}(\w)]  \qquad\forall x,y\in\Z^2. \ee
 The LPP process $G_{x,y}$ is now defined by \eqref{v:GY}.  
 Furthermore,  for all $\w\in\Omega^{(\alpha)}_2$ and $x,y\in\Z^2$,  
 \be\label{v:855.7}   \lim_{\lambda\to\alpha}B^\lambda_{x,y}(\w)=B^\alpha_{x,y}(\w). \ee 
 
  \end{enumerate} 
 
 \end{theorem}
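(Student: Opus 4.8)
The strategy is to obtain the cocycles $B^\alpha$ as weak limits of the explicit finite stationary systems of Theorem~\ref{v:tIJw1}, built on quadrants whose corner recedes to the southwest, and then to identify the limit with the Busemann function by comparing the i.i.d.\ last‑passage process against this stationary process run in the characteristic direction $\xi(\alpha)$.

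First I would set up the finite approximations. For each $L\in\Z_{>0}$ consider the quadrant $Q_L=-L(e_1+e_2)+\Z_{\ge0}^2$: on its interior put the given i.i.d.\ weights $\Yw$, and on its two boundary half‑axes attach, for all rational $\alpha\in(0,1)$ simultaneously, independent edge weights $I^{\alpha,L}\sim\mathrm{Exp}(1-\alpha)$ and $J^{\alpha,L}\sim\mathrm{Exp}(\alpha)$, coupled through a single uniform field so that each $I^{\alpha,L}$ is nondecreasing and each $J^{\alpha,L}$ nonincreasing in $\alpha$. Running the recursion \eqref{IJw5.1}--\eqref{IJw5.3} produces increment variables $(I^{\alpha,L}_x,J^{\alpha,L}_x,\wc\w^{\alpha,L}_x)$ on all of $Q_L$; Theorem~\ref{v:tIJw1} gives their joint law, stationary on $Q_L$ with Burke‑type independence along every down‑right path lying inside $Q_L$, and a monotonicity check on \eqref{IJw5.1}--\eqref{IJw5.3} shows $I^{\alpha,L}_x$ nondecreasing and $J^{\alpha,L}_x$ nonincreasing in $\alpha$. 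All marginals being fixed exponentials, these joint laws are tight in any finite window and over the countably many rational $\alpha$, so along a subsequence $L\to\infty$ they converge weakly; realize a limit point on a space $\OSP$ carrying $\Yw$ i.i.d.\ $\mathrm{Exp}(1)$, a cocycle $B^\alpha$ (with $B^\alpha_{x-e_1,x}$, $B^\alpha_{x-e_2,x}$ the limiting $I$‑ and $J$‑increments) and weights $\Xw^\alpha$. Since \eqref{IJw5.1.7}--\eqref{IJw5.3.7} are continuous, the Burke independence for a down‑right path holds as soon as $L$ is large enough to contain the path, and translation‑invariance on any window holds exactly for $L$ large, the limit is an exponential‑$\alpha$ system in the sense of Definition~\ref{v:d-exp-a}, and the monotonicity \eqref{v:853.1} passes to the limit. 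I would then extend to irrational $\alpha$ as right limits along rationals $\lambda_n\downarrow\alpha$ (monotonicity makes the limits exist), record right‑continuity with left limits, and check the extended family still consists of exponential‑$\lambda$ systems, using that the exponential family depends continuously on its rate and that the Burke independence survives the monotone limit. This gives parts (i) and (ii).

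For part (iii), fix $\alpha$ and write $\gpp$ for the i.i.d.\ shape \eqref{lln} and $g^\alpha(s,t)=s/(1-\alpha)+t/\alpha$ for the stationary shape. The comparison I would use: let $\cD_n$ be a down‑right lattice path through $v_n$ separating $0$ from infinity to the northeast, and endow $\cD_n$ with the boundary increments $B^\alpha$ restricted to $\cD_n$; by the boundary‑value identity \eqref{v:H1} (read toward the northeast), the stationary last‑passage values $\wh G^{\alpha,n}$ so defined satisfy $\wh G^{\alpha,n}_{x,v_n}-\wh G^{\alpha,n}_{y,v_n}=B^\alpha_{x,y}$ for $x,y$ southwest of $\cD_n$, while keeping only the boundary weights between the two extreme exit points of the maximizing path returns $\Gpp_{x,v_n}-\Gpp_{y,v_n}$. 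Hence $\Gpp_{x,v_n}-\Gpp_{y,v_n}$ and $B^\alpha_{x,y}$ differ by an amount controlled by how far along $\cD_n$ the stationary geodesic runs before leaving the boundary. Because $v_n/\abs{v_n}_1\to\xi(\alpha)$ and the two shapes coincide exactly on the ray through $\xi(\alpha)$ by \eqref{v:ch8}, this exit distance is $o(\abs{v_n}_1)$: a macroscopic exit would force $\wh G^{\alpha,n}_{0,v_n}$ to exceed its mean $g^\alpha(v_n)$ by an order‑$n$ amount, contradicting \eqref{g-rho-3} together with strict concavity of $g^\alpha$ off the characteristic ray. This yields $\Gpp_{x,v_n}-\Gpp_{y,v_n}\to B^\alpha_{x,y}$ in probability; to upgrade to the almost sure statement valid for all sequences one uses a monotonicity of the increment $\Gpp_{x,v}-\Gpp_{y,v}$ (for $y-x\in\{e_1,e_2\}$) in the endpoint $v$ along down‑right paths, of the type developed in Appendix~\ref{app:coupl}, so that it suffices to prove a.s.\ convergence along two fixed coordinate sequences straddling $\xi(\alpha)$ (each handled by the weak limit plus a multidimensional ergodic theorem) and then squeeze. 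Finally $\lim_{\lambda\to\alpha}B^\lambda=B^\alpha$ follows because the Busemann characterization just established forces both one‑sided limits $B^{\alpha\pm}$ to equal the same limit of $\Gpp_{x,v_n}-\Gpp_{y,v_n}$, hence to coincide.

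\textbf{Main obstacle.} The delicate step is the $o(\abs{v_n}_1)$ bound on the exit point of the stationary geodesic in the characteristic direction, and the passage from convergence in probability to the almost sure statement uniform over all admissible sequences $v_n$; everything else is bookkeeping with weak limits, the recursion \eqref{IJw5.1}--\eqref{IJw5.3}, and monotonicity. The exit‑point bound is exactly where the explicit formulas \eqref{g-rho}, \eqref{lln}, \eqref{v:ch8} and the strict concavity of the limit shape are indispensable, which is why this route is special to the exactly solvable exponential model.
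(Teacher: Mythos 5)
Your proposal takes a genuinely different route from the paper. The paper's proof \emph{defines} $B^\alpha$ directly as the a.s.\ limit of $G_{x,v_n}-G_{y,v_n}$ along $v_n=\fl{n\xi(\alpha)}$, and the entire mechanism for existence of that limit is the increment-monotonicity Lemma~\ref{lm-til} together with a realization of the given $\{G_{x,y}\}$ as the checked process $\wc G$ inside auxiliary stationary systems with parameters $\lambda<\alpha<\rho$: the increments get squeezed between $I^\lambda$ and $I^\rho$ variables whose distributions converge to $\mathrm{Exp}(1-\alpha)$ as $\lambda\nearrow\alpha$, $\rho\searrow\alpha$, forcing $\varlimsup=\varliminf$ almost surely (Lemmas~\ref{v:lm-88}--\ref{v:lm-vn-a}). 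You instead build the stationary cocycle first as a weak limit of finite-quadrant systems, and then try to identify it with the Busemann function afterwards. That reversal is where two genuine gaps appear.

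First, the weak-limit construction does not give $B^\alpha$ and $\Xw^\alpha$ on the \emph{given} probability space $\OSP$. A weak limit along $L\to\infty$ produces a joint law, realized on some new space with a new i.i.d.\ field $\Yw'$; the theorem requires $B^\alpha$ to be a family of random variables on the original $\OSP$, i.e., a measurable function of the given $\w$. You would need part (iii) (the Busemann characterization $B^\alpha_{x,y}=\lim(G_{x,v_n}-G_{y,v_n})$, with $G$ built from the given $\Yw$) to pull $B^\alpha$ back, but your part (iii) argument already uses $B^\alpha$ as boundary data for a stationary system coupled to the bulk weights $\Yw$, which presupposes they live on the same space. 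The circularity is breakable with care (prove the Busemann convergence on the auxiliary space, transfer it by distributional equality of i.i.d.\ LPP, then \emph{redefine} $B^\alpha$ on $\OSP$ via that limit), but as written the logic is not sound.

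Second, the exit-point argument for part (iii) does not establish what you need. Showing that the stationary geodesic's exit point is $o(\abs{v_n}_1)$ controls the total passage time difference $\wh G^{\alpha,n}_{0,v_n}-\Gpp_{0,v_n}$ at order $o(\abs{v_n}_1)$, but says nothing directly about the local increment $(\Gpp_{x,v_n}-\Gpp_{y,v_n})-B^\alpha_{x,y}$, which must converge to zero exactly. The tool that actually closes this gap in the paper is Lemma~\ref{lm-til}: the nearest-neighbor increments $\Gpp_{a,v}-\Gpp_{a+e_1,v}$ are monotone in the endpoint $v$ along down-right paths, so one can bracket $\Gpp_{a,v_n}-\Gpp_{a+e_1,v_n}$ between stationary increments $I^\lambda_{a+e_1}$ and $I^\rho_{a+e_1}$ by moving the endpoint to directions straddling $\xi(\alpha)$ (Lemma~\ref{v:lm-89}); this is what turns the distributional squeeze into an a.s.\ statement. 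Your appeal to ``monotonicity of the type developed in Appendix~\ref{app:coupl}'' points to the wrong lemma (the Appendix~\ref{app:coupl} couplings serve the fluctuation exponent proofs; the increment monotonicity you need is Lemma~\ref{lm-til} in Section~\ref{sec:bus2}), and the claim that a.s.\ convergence then follows ``along two fixed coordinate sequences\,\dots\,by the weak limit plus a multidimensional ergodic theorem'' is not an argument. There is no ergodic-theorem step in the paper's proof of \eqref{v:855}; the mechanism is purely the monotone squeeze combined with the distributional control from Lemma~\ref{v:lm-vn-a}. If you want to salvage your route you should replace the exit-point heuristic with an explicit invocation of Lemma~\ref{lm-til} and the two-parameter comparison.

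The remaining parts of your plan --- the monotone coupling of boundary weights in $\alpha$, the fact that \eqref{IJw5.1}--\eqref{IJw5.3} preserve this monotonicity, and the derivation of the cadlag structure for fixed $\omega$ --- are sound and essentially parallel to what the paper does in Lemma~\ref{v:lm-D} and the final assembly (the paper also starts from a countable dense $D\subset(0,1)$, obtains monotonicity on a single full-measure event, and then defines $B^\alpha$ for irrational $\alpha$ as right limits along $D$).
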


 \medskip 


Note that the process $\lambda\mapsto B^\lambda_{x,y}(\w)$ is globally cadlag (part (ii)) and at each fixed $\lambda$  almost surely continuous \eqref{v:855.7}. 

Part (i) of  Theorem \ref{t:buse}   together with \eqref{IJw5.1.7} implies 
\be\label{914}     \Yw_x=B^\alpha_{x,\,x+e_1} \wedge B^\alpha_{x,\,x+e_2} . 
\ee
In other words, cocycle $B^\alpha$ is adapted to the potential of the corner growth model in the sense of \eqref{v:VB}.   In the opposite lattice direction we have the identity
\be\label{915}     \Xw^\alpha_x=B^\alpha_{x-e_1,\,x} \wedge B^\alpha_{x-e_2,\,x} , 
\ee
which follows from \eqref{IJw5.2.7}--\eqref{IJw5.3.7}.  
    ({\it Note that in this construction the given $\Yw$-weights are now playing the role of the $\wc\w$-weights of Theorem \ref{v:tIJw1}, and the constructed weights $\Xw^\alpha$ play the role of the $\w$-weights of Theorem \ref{v:tIJw1}.}) 

 From the explicit formula for the shape function (Theorem \ref{thg}),
\[  \gpp(s,t)=s+t+2\sqrt{st}\]  and from this we derive 
\[  \nabla\gpp(s,t)=\bigl( \,1+\sqrt{t/s}\,,  1+\sqrt{s/t}\,\bigr) . \]
By the   exponential distributions of $B^\alpha$-increments given in part (i) of Theorem \ref{t:buse}  we obtain   the relation 
\be\label{EB8}     \bigl( \, \E[B^\alpha_{0,e_1}]\,, \E[B^\alpha_{0,e_2}]\,\bigr)
=\Bigl(\, \frac1{1-\alpha}\,, \frac1\alpha\,\Bigr) = \nabla\gpp(\xi(\alpha)). \ee  
This is natural since the limit \eqref{v:855} represents $B^\alpha$ as a microscopic gradient of passage time. 

The next theorem gives a uniqueness statement. 

  \begin{theorem}\label{t:unique}  Assume the setting of Theorem \ref{t:buse} with given i.i.d.\ {\rm Exp$(1)$} weights $\{Y_x\}$ on a probability space $\OSP$.  Fix $0<\rho<1$. 
  
  {\rm (i)}      Suppose that, indexed by the nearest-neighbor edges  in the  quadrant with lower left corner at  $u=(u_1,u_2)\in\Z^2$,   there are random variables $(A_{x,\, x+e_1}, A_{x,\,x+e_2})_{x\,\in\, u+\Z_{\ge0}^2}$ on $\OSP$ with  properties {\rm(a)} and {\rm(b)}:  
   \begin{enumerate}\itemsep=3pt
 \item[{\rm(a)}]   $A_{x,\, x+e_1}\sim$ {\rm Exp}$(1-\rho)$ and  $A_{x,\,x+e_2}\sim$ {\rm Exp}$(\rho)$ for all $x\in u+\Z_{\ge0}^2$.   For any $v=(v_1,v_2)\in  u+\Z_{>0}^2$,  the random variables $(A_{v-ie_1,\,v-(i-1)e_1}, A_{v-je_2,\,v-(j-1)e_2})_{1\le i\le v_1-u_1,\, 1\le j\le v_2-u_2 }$ are independent.  
 
 \item[{\rm(b)}]    For each  $x\in u+\Z_{\ge0}^2$ we have $\P$-almost surely 
additivity around the  unit square  with lower left corner at $x$: 
 \[    A_{x,\, x+e_1}+  A_{x+e_1,\,x+e_1+e_2}= A_{x,\,x+e_2}+A_{x+e_2,\, x+e_1+e_2}, 
 \]
 and recovery: 
 \[  \Yw_x=A_{x,\, x+e_1}\wedge  A_{x,\,x+e_2}.\\[4pt]  \] 
 
   \end{enumerate} 
 
 \noindent   
 Then   $A_{x,\, x+e_r}=B^\rho_{x,\, x+e_r}$  for all $x\in u+\Z_{\ge0}^2$ and $r\in\{1,2\}$, $\P$-almost surely. 
   
   \smallskip
   
{\rm (ii)}  In particular, suppose that on $\OSP$ there are random variables $(U_x, A_{x-e_1,x}, A_{x-e_2,x})_{x\in\Z^2}$ such that $\{U_x,  A_{x-e_1,x},  A_{x-e_2,x}, \Yw_x:  x\in\Z^2\} $ 
 is an exponential-$\rho$ last-passage system as described in Definition \ref{v:d-exp-a}. 
 Then  $U_x= \Xw^\rho_x$, $A_{x-e_1,x}=B^\rho_{x-e_1,x}$ and $A_{x-e_2,x}=B^\rho_{x-e_2,x}$ for all $x$, $\P$-almost surely. 
 
  \end{theorem}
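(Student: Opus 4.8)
\medskip

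My plan is to derive part (ii) from part (i) by restriction to quadrants, and to prove part (i) by showing that any cocycle $A$ with properties (a)--(b) must coincide with the Busemann limit \eqref{v:855}. \textbf{Reduction of (ii) to (i).} Given the exponential-$\rho$ system $\{U_x,A_{x-e_1,x},A_{x-e_2,x},Y_x\}$ on $\Z^2$, restrict the edge variables to a quadrant $u+\Z_{\ge0}^2$ for each $u\in\Z^2$. The algebraic properties in (b) hold there: $Y_x=A_{x,x+e_1}\wedge A_{x,x+e_2}$ is \eqref{IJw5.1.7}, and additivity around unit squares is \eqref{IJw5.5}, which follows from \eqref{IJw5.2.7}--\eqref{IJw5.3.7}. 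The marginals in (a) are \eqref{w19}, and the independence statement in (a) follows by applying the down-right-path independence \eqref{v:Y4} to the ``elbow'' path through $v$ that runs rightward along $\{w:w_2=v_2\}$ up to $v$ and then downward along $\{w:w_1=v_1\}$: the edge weights of this path, truncated to the quadrant, are exactly the family $(A_{v-ie_1,\,v-(i-1)e_1},A_{v-je_2,\,v-(j-1)e_2})$. Granting part (i), we get $A_{x,x+e_r}=B^\rho_{x,x+e_r}$ on every quadrant $u+\Z_{\ge0}^2$, hence (countably many quadrants cover every edge) for all $x\in\Z^2$, and then $U_x=A_{x-e_1,x}\wedge A_{x-e_2,x}=B^\rho_{x-e_1,x}\wedge B^\rho_{x-e_2,x}=\Xw^\rho_x$ by \eqref{915}.

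\textbf{Step 1: $A$ is the increment cocycle of a stationary last-passage process.} Put $\Xw^A_x:=A_{x-e_1,x}\wedge A_{x-e_2,x}$ for $x\in u+\Z_{>0}^2$. Using additivity around the unit square at $x-e_1-e_2$ to rewrite $A_{x-e_1-e_2,x-e_2}-A_{x-e_1-e_2,x-e_1}=A_{x-e_1,x}-A_{x-e_2,x}$, a one-line computation shows that \eqref{IJw5.2.7}--\eqref{IJw5.3.7} hold with $\eta_x=\Xw^A_x$. The induction in the proof of Theorem~\ref{v:tIJw1}(ii) then applies verbatim and gives $A_{x-e_r,x}=G^{A}_{u,x}-G^{A}_{u,x-e_r}$, where $G^{A}_{u,\cdot}$ is the last-passage process on $u+\Z_{\ge0}^2$ with boundary edge weights the $A$-increments along the two axes through $u$ and bulk vertex weights $\Xw^A$. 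Since the restriction of $A$ to any sub-quadrant $x+\Z_{\ge0}^2$ again satisfies (a)--(b), this holds rooted at every $x\in u+\Z_{\ge0}^2$; in particular the cocycle value $A_{x,v}$ equals the stationary last-passage value $G^{A}_{x,v}$ for $x\le v$. All of this applies equally to $B^\rho$, with $\Xw^\rho$ in the bulk, by Theorem~\ref{t:buse}(i).

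\textbf{Step 2: comparison with the bulk last-passage process along the characteristic direction.} For $x\le v$, recovery gives $Y_z\le A_{z,z+e_1}\wedge A_{z,z+e_2}$; summing along any up-right path from $x$ to $v$ and telescoping by additivity, $\sum_k Y_{z_k}\le A_{x,v}$, so $G_{x,v}\le Y_v+A_{x,v}$, i.e.\ $A_{x,v}\ge G_{x,v}-Y_v$, and likewise for $B^\rho$. For a bound in the other direction I would use the reversibility of the stationary last-passage process (see Appendix~\ref{app:coupl}): $G^{A}_{x,v}$ equals the value of the reversed process rooted at $v$, whose boundary weights are the $A$-increments along the two reverse axes through $v$ and whose bulk weights are the i.i.d.\ weights $Y$, whence
\[
A_{x,v}=\max_{k\ge1}\Bigl[\sum_{i=1}^k A_{v-ie_1,\,v-(i-1)e_1}+G_{x,\,v-ke_1-e_2}\Bigr]\ \bigvee\ \max_{\ell\ge1}\Bigl[\sum_{j=1}^\ell A_{v-je_2,\,v-(j-1)e_2}+G_{x,\,v-\ell e_2-e_1}\Bigr]\ \bigvee\ (\text{reverse-axis terms}),
\]
with the same identity for $B^\rho$. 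Fix a sequence $v_n$ with $v_n/\abs{v_n}_1\to\xi(\rho)$. In the characteristic direction the maximizing $k$ or $\ell$ above is $o(\abs{v_n}_1)$ (remark after \eqref{v:ch7}, together with the shape theorem), so by the laws of large numbers for the boundary sums and for $G_{x,\cdot}$ — whose shape matches $g^\rho$ in direction $\xi(\rho)$ by \eqref{v:ch8} — one gets $A_{x,v_n}=G_{x,v_n}+o(\abs{v_n}_1)$ and $B^\rho_{x,v_n}=G_{x,v_n}+o(\abs{v_n}_1)$ almost surely. Since $A_{x,v_n}-A_{y,v_n}=A_{x,y}$ is constant and $G_{x,v_n}-G_{y,v_n}\to B^\rho_{x,y}$ by \eqref{v:855}, the theorem follows once the two $o(\abs{v_n}_1)$ errors are shown to have vanishing difference.

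\textbf{The main obstacle} is precisely this last point. The sublinear bounds are not enough by themselves, and one must show $[A_{x,v_n}-G_{x,v_n}]-[A_{y,v_n}-G_{y,v_n}]\to 0$, i.e.\ that the reverse last-passage geodesics from $x$ and from $y$ to $v_n$ (for the $A$-system) coalesce before meeting the reverse boundary through $v_n$, with probability tending to one. This coalescence in the characteristic direction is exactly the phenomenon that powers the construction of $B^\rho$ in Section~\ref{sec:bus2}; carrying out that argument for a general exponential-$\rho$ system, rather than for the specific one built there, is where the real work lies, and I expect it to be the only non-routine part — everything else is cocycle bookkeeping together with the already-established limit shapes and the shape theorem. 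The extra generality of the ambient space $\OSP$ enters only in allowing the hypotheses (a)--(b) to be imposed on an arbitrary such system.
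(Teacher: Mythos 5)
Your Step~1 and the reduction of (ii) to (i) are correct and match the paper's framework. But your Step~2 and the passage you honestly flag as ``the main obstacle'' go down a road the paper does not take, and that road has a genuine hole: you try to prove $A_{x,v_n}=G_{x,v_n}+o(\abs{v_n}_1)$ along a sequence pointing \emph{at} the characteristic direction $\xi(\rho)$, and then hope a coalescence argument closes the gap between the two sublinear error terms. The paper does not use coalescence anywhere in Section~\ref{sec:bus2}, and no such argument is available here because the cocycle $A$ is a priori just some measurable object on $\OSP$ satisfying (a)--(b) --- you have no geodesic structure for it that you control.

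The paper's actual mechanism is a squeeze from \emph{off}-characteristic directions. Because (a)--(b) let you rebuild the boundary-LPP processes of \eqref{Gr26} and rerun Lemmas~\ref{v:lm86}--\ref{v:lm-89} with $I_x=A_{x-e_1,x}$, $J_x=A_{x-e_2,x}$, $\sigmawc_x=\Yw_x$, Lemma~\ref{v:lm-89} applies: take $\lambda_1<\rho<\lambda_2$ and sequences $v_n,w_n$ with $v_n/\abs{v_n}_1\to\xi(\lambda_2)$, $w_n/\abs{w_n}_1\to\xi(\lambda_1)$ (both off-characteristic for $\rho$, with $\xi_1(\lambda_2)<\xi_1(\rho)<\xi_1(\lambda_1)$). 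The monotonicity inequalities \eqref{v:832}--\eqref{v:833} of that lemma, combined with the Busemann limits \eqref{v:855} for $\Yw$ in directions $\xi(\lambda_1)$ and $\xi(\lambda_2)$, give
\[
B^{\lambda_1}_{a,a+e_1}\le A_{a,a+e_1}\le B^{\lambda_2}_{a,a+e_1}\qquad\text{a.s.},
\]
and the reverse ordering for the $e_2$-edge. Then the already-established continuity $\lim_{\lambda\to\rho}B^{\lambda}_{x,y}=B^{\rho}_{x,y}$ from Theorem~\ref{t:buse}(iii) (equation \eqref{v:855.7}) squeezes $A_{a,a+e_i}=B^{\rho}_{a,a+e_i}$. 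This completely sidesteps the ``do the two $o(\abs{v_n}_1)$ errors cancel'' question, which you correctly identify as unresolved in your approach. The moral: the right move here is not to chase the characteristic direction directly (where limits are delicate), but to exploit the planar monotonicity of Lemma~\ref{lm-til} / Lemma~\ref{v:lm-89} in off-characteristic directions, where the boundary dominates and the asymptotics are easy, and then use the monotone-in-$\lambda$ structure and continuity of the Busemann family from Theorem~\ref{t:buse}.
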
 
  
The second part of assumption (a) above says that the edge variables $A$ are independent on the north and east boundaries of any rectangle $[u,v]$. Part (ii)  about the uniqueness of  an exponential-$\rho$ last-passage system  that includes the given weights $\Yw$ is a consequence of   part (i)  of the theorem because assumptions  (a) and (b) are true for   an exponential-$\rho$ last-passage system.  
 
Before turning to the   proofs of   Theorems \ref{t:buse} and \ref{t:unique} in Section \ref{sec:bus2}, we revisit  previous themes.

\medskip

{\bf Busemann function as a stationary last-passage percolation process.}   
Fix $0<\rho<1$.   The Busemann function  $B^\rho$ can be viewed as an increment-stationary LPP process in two ways, depending on whether we use the weights $\Yw$ or $\Xw^\rho$.  Fix a reference point   $v\in\Z^2$.    We    construct two LPP processes with boundary  given by $B^\rho$.   $\{G^{NE}_{u,v}\}_{u\le v}$ has  boundary conditions on the north and east and admissible paths from   $v$  proceed down-left using  steps $\{-e_1, -e_2\}$,   while  $\{G^{SW}_{v,w}\}_{w\ge v}$ has  boundary conditions on the south and west and admissible paths are of the same up-right kind as before.  The definitions are 
 \be\label{NE:101}     G^{NE}_{v-ke_r,v}=B^\rho_{v-ke_r,v}  
 \qquad\text{for  $r\in\{1,2\}$ and $k\in\Z_{\ge0}$ }  \ee
 and in the bulk 
 \be\label{NE:102}   G^{NE}_{u,v}=\Yw_u+G^{NE}_{u+e_1,v}\vee G^{NE}_{u+e_2,v}
  \qquad\text{for $u\le v-e_1-e_2$}
  \ee
  and then 
   \[     G^{SW}_{v,\,v+ke_r}=B^\rho_{v,\,v+ke_r}  
 \qquad\text{for  $r\in\{1,2\}$ and $k\in\Z_{\ge0}$ }  \]
 and in the bulk 
  \[  G^{SW}_{v, w}=\Xw^\rho_w+G^{SW}_{v,\,w-e_1}\vee G^{SW}_{v,\,w-e_2}
  \qquad\text{for $w\ge v+e_1+e_2$.}
  \]
Inductive arguments using \eqref{914} and \eqref{915}  and the additivity of $B^\rho$  prove the next  theorem.

\begin{theorem}\label{t:GB} 
For all $u\le v$ and all $w\ge v$,   $G^{NE}_{u,v}=B^\rho_{u,v}$ and $G^{SW}_{v,w}=B^\rho_{v,w}$. 
\end{theorem}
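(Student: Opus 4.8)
The plan is to prove the two identities $G^{NE}_{u,v}=B^\rho_{u,v}$ and $G^{SW}_{v,w}=B^\rho_{v,w}$ separately, each by induction on the $\ell^1$-distance to the reference point $v$, using the characterizing recursion that defines $G^{NE}$ and $G^{SW}$ together with the adaptedness identities \eqref{914}, \eqref{915} and the additivity of the cocycle $B^\rho$. I will spell out the argument for $G^{NE}$; the one for $G^{SW}$ is the mirror image, obtained by reflecting the lattice through $v$ and swapping the roles of the weights $\Yw$ and $\Xw^\rho$ via \eqref{915} in place of \eqref{914}.

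First I would dispose of the base case: for $u$ on the north or east boundary relative to $v$, i.e.\ $u=v-ke_r$ with $r\in\{1,2\}$ and $k\ge 0$, the defining boundary condition \eqref{NE:101} says $G^{NE}_{v-ke_r,v}=B^\rho_{v-ke_r,v}$ outright, so there is nothing to prove. This covers in particular $u=v$ (with $k=0$), where both sides are $0$. Next, for the inductive step, fix $u\le v-e_1-e_2$ and assume $G^{NE}_{u',v}=B^\rho_{u',v}$ for all $u'$ with $u'\le v$ and $\abs{v-u'}_1<\abs{v-u}_1$; in particular the hypothesis applies to $u+e_1$ and $u+e_2$, both of which still satisfy $\le v$. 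Using the bulk recursion \eqref{NE:102}, the induction hypothesis, then additivity of $B^\rho$ to write $B^\rho_{u+e_r,v}=B^\rho_{u,v}-B^\rho_{u,u+e_r}$, and finally \eqref{914} in the form $\Yw_u=B^\rho_{u,u+e_1}\wedge B^\rho_{u,u+e_2}$, I would compute
\begin{align*}
G^{NE}_{u,v}
&=\Yw_u+G^{NE}_{u+e_1,v}\vee G^{NE}_{u+e_2,v}
=\Yw_u+B^\rho_{u+e_1,v}\vee B^\rho_{u+e_2,v}\\
&=\Yw_u+\bigl(B^\rho_{u,v}-B^\rho_{u,u+e_1}\bigr)\vee\bigl(B^\rho_{u,v}-B^\rho_{u,u+e_2}\bigr)\\
&=\Yw_u+B^\rho_{u,v}-\bigl(B^\rho_{u,u+e_1}\wedge B^\rho_{u,u+e_2}\bigr)
=\Yw_u+B^\rho_{u,v}-\Yw_u=B^\rho_{u,v}.
\end{align*}
This closes the induction and proves $G^{NE}_{u,v}=B^\rho_{u,v}$ for all $u\le v$.

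For $G^{SW}_{v,w}$ one runs the symmetric induction on $\abs{w-v}_1$: the boundary case $w=v+ke_r$ is immediate from the definition $G^{SW}_{v,v+ke_r}=B^\rho_{v,v+ke_r}$, and for $w\ge v+e_1+e_2$ the bulk recursion $G^{SW}_{v,w}=\Xw^\rho_w+G^{SW}_{v,w-e_1}\vee G^{SW}_{v,w-e_2}$ combined with the induction hypothesis, the additivity $B^\rho_{v,w-e_r}=B^\rho_{v,w}-B^\rho_{w-e_r,w}$, and the identity \eqref{915} $\Xw^\rho_w=B^\rho_{w-e_1,w}\wedge B^\rho_{w-e_2,w}$ yields $G^{SW}_{v,w}=B^\rho_{v,w}$ by the same algebra with a maximum turning into a subtracted minimum. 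I do not anticipate a genuine obstacle here — the only point that needs a moment's care is bookkeeping the induction so that the two neighbors invoked in each step still lie in the region where the hypothesis has been established (which is automatic since moving toward $v$ strictly decreases the $\ell^1$-distance and keeps one on the correct side of $v$), and noting that the "corner" indices lying simultaneously on the boundary and reachable by the bulk recursion are consistent because at such a point the max/min collapses to the single relevant neighbor, matching the boundary value. Everything else is the routine algebra of $a + (b-x)\vee(b-y) = b + a - (x\wedge y)$.
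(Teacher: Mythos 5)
Your proof is correct and follows exactly the route the paper indicates (``Inductive arguments using \eqref{914} and \eqref{915} and the additivity of $B^\rho$ prove the next theorem''), merely spelling out the base cases and the algebra $a+(b-x)\vee(b-y)=a+b-(x\wedge y)$ that the paper leaves implicit. No discrepancy.
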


The LPP processes constructed here have the same distribution as $\Gpp^\rho$ constructed in \eqref{Gr1}--\eqref{Gr2} (modulo a lattice reversal for $G^{NE}$):   with $v$ fixed,  
\be\label{GB45} 
 \{B^\rho_{v-x,\,v}\}_{x\in\Z_{\ge0}^2}   \deq \{\Gpp^\rho_{v,\, v+x}\}_{x\in\Z_{\ge0}^2} 
\quad\text{and}\quad 
\{B^\rho_{v,\, v+x}\}_{x\in\Z_{\ge0}^2}   \deq \{\Gpp^\rho_{v, \,v+x}\}_{x\in\Z_{\ge0}^2} .
\ee
This follows because the processes $\{\Gpp^{NE}_{v-x,\,v}\}_{x\in\Z_{\ge0}^2}$,    $\{\Gpp^{SW}_{v,\, v+x}\}_{x\in\Z_{\ge0}^2} $ and 
$ \{\Gpp^\rho_{v, \,v+x}\}_{x\in\Z_{\ge0}^2}$ are constructed with identical procedures from distributionally identical inputs.

\medskip

{\bf Busemann functions as extremals in the variational formulas.} 
Recall the variational formula for the point-to-point limit shape function, specialized to the two-dimensional corner growth model.  
	\begin{align}  \label{v:901}
	\gpp(\xi)&\;=\;\inf_{B\in\cK}\, \P\text{-}\esssup\;  \max_{i=1,2}\, \{\Yw_0-   B(0, e_i)-  h(B)\cdot \xi\}. 
	\end{align}
Above $h(B)$ is the negative of the mean vector:
\be\label{903} 
h(B)=-\bigl( \, \E[B(0,e_1)], \E[B(0,e_2)]\,\bigr). 
\ee
The next theorem shows that cocycle $B^\alpha$ from Theorem \ref{t:buse} is a minimizer in \eqref{v:901} for the characteristic direction $\xi(\alpha)$.   

\begin{theorem}  \label{t:exp-var}  Continue with the setting of Theorem \ref{t:buse}.  The following hold  for each $0<\alpha<1$. 

\begin{enumerate}
 \item[{\rm(i)}]   The  characteristic direction $\xi(\alpha)$ and the vector $h(B^\alpha)$ are dual to each other in the sense that 
 \be\label{904}   \gpl(h(B^\alpha)) = \gpp(\xi(\alpha))+ h(B^\alpha)\cdot\xi(\alpha) . \ee
 
 \item[{\rm(ii)}]   We have the identities  
 \be  \label{v:907}	\begin{aligned} 
		\gpp(\xi(\alpha))	&\;=\;     -  h(B^\alpha)\cdot \xi(\alpha) \\
		&\;=\;  \max_{i=1,2}\, \bigl[ \,\Yw_0(\w)-   B^\alpha_{0, e_i}(\w)  -  h(B^\alpha)\cdot \xi(\alpha)\,\bigr]  
	  \qquad\text{$\P$-almost surely.} 
	\end{aligned} \ee
  In other words,  $B^\alpha$ minimizes in the variational formula \eqref{v:901} in the characteristic direction $\xi(\alpha)$, without the essential supremum.
 \end{enumerate} 
 
\end{theorem}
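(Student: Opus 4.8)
The plan is to assemble pieces already established: the adaptedness relation \eqref{914}, the mean-vector identity \eqref{EB8}, Lemma \ref{lm-h(B)}, and the explicit shape formulas from Theorems \ref{thg} and \ref{thgp2l}. First I would record that $B^\alpha\in\cK$: by Theorem \ref{t:buse}(i) the increments $B^\alpha_{0,e_1},B^\alpha_{0,e_2}$ are exponentially distributed, hence integrable, and additivity propagates this to all $B^\alpha_{x,y}$. Moreover \eqref{914} says $\Yw_0=B^\alpha_{0,e_1}\wedge B^\alpha_{0,e_2}$, so $\max_{i=1,2}[\Yw_0-B^\alpha_{0,e_i}]=\Yw_0-(B^\alpha_{0,e_1}\wedge B^\alpha_{0,e_2})=0$ $\P$-a.s., i.e.\ $B^\alpha$ is adapted to the potential $V(\w,z)=\Yw_0$ in the sense of \eqref{v:VB}. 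Lemma \ref{lm-h(B)} then applies and gives $\gpl(h(B^\alpha))=0$.

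Next I would compute $h(B^\alpha)$ and both sides of \eqref{904} explicitly. By \eqref{903} and \eqref{EB8}, $h(B^\alpha)=-\bigl(\tfrac1{1-\alpha},\tfrac1\alpha\bigr)$. Writing $D=(1-\alpha)^2+\alpha^2$, the formula \eqref{v:ch7} for $\xi(\alpha)$ gives $h(B^\alpha)\cdot\xi(\alpha)=-\tfrac{1-\alpha}{D}-\tfrac{\alpha}{D}=-\tfrac1D$, while the explicit shape function $\gpp(\xi)=(\sqrt{\xi_1}+\sqrt{\xi_2})^2$ of Theorem \ref{thg} gives $\gpp(\xi(\alpha))=\tfrac{((1-\alpha)+\alpha)^2}{D}=\tfrac1D$. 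Hence $\gpp(\xi(\alpha))+h(B^\alpha)\cdot\xi(\alpha)=0=\gpl(h(B^\alpha))$, which is exactly \eqref{904}; this proves part (i). As a consistency check one may alternatively substitute $h(B^\alpha)$ into the point-to-line formula of Theorem \ref{thgp2l} and verify directly that it evaluates to $0$.

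For part (ii): since $h(B^\alpha)$ and $\xi(\alpha)$ are dual by part (i) and $\gpl(h(B^\alpha))=0$, the relation $\gpp(\xi)=\gpl(h)-h\cdot\xi$ valid for dual pairs gives the first equality $\gpp(\xi(\alpha))=-h(B^\alpha)\cdot\xi(\alpha)$ in \eqref{v:907}. For the second equality, \eqref{914} gives $\P$-a.s.\ $\max_{i=1,2}[\Yw_0-B^\alpha_{0,e_i}-h(B^\alpha)\cdot\xi(\alpha)]=-h(B^\alpha)\cdot\xi(\alpha)=\gpp(\xi(\alpha))$, a deterministic constant. Since $B^\alpha\in\cK$, the variational formula \eqref{v:901} gives $\gpp(\xi(\alpha))\le\P\text{-}\esssup_\w\max_{i=1,2}[\Yw_0-B^\alpha_{0,e_i}-h(B^\alpha)\cdot\xi(\alpha)]$; but the right side just equals $\gpp(\xi(\alpha))$, so the infimum in \eqref{v:901} is attained at $B=B^\alpha$ and, the bracketed quantity being a.s.\ constant, the essential supremum is redundant. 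This is precisely \eqref{v:907}. Equivalently, one can simply specialize to $B=B^\alpha$ the general computation in the "minimizing cocycles" paragraph following \eqref{v:VB}.

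This argument presents no serious obstacle; it is essentially bookkeeping. The two points to handle with care are (a) confirming that the integrability hypothesis of Lemma \ref{lm-h(B)} is supplied by the exponential marginals in Theorem \ref{t:buse}(i), and (b) the two arithmetic identities $h(B^\alpha)\cdot\xi(\alpha)=-1/D$ and $\gpp(\xi(\alpha))=1/D$, which is where the "characteristic direction" $\xi(\alpha)$ defined through the balancing of the boundary pulls is seen to coincide with the direction dual to the mean vector $-h(B^\alpha)$; everything else is direct substitution into the already-proved variational formula \eqref{v:901}.
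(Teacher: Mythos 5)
Your proposal is correct, and part (ii) follows the paper's proof essentially verbatim. Part (i), however, takes a genuinely different route. The paper derives duality \eqref{904} abstractly: from \eqref{EB8} one has $-h(B^\alpha)=\nabla\gpp(\xi(\alpha))$, and strict concavity of $\gpp$ then forces $\xi(\alpha)$ to be the unique maximizer of $\xi\mapsto\gpp(\xi)+h(B^\alpha)\cdot\xi$, which is duality. This argument never invokes Lemma \ref{lm-h(B)} and never touches the explicit numerical values. You instead prove duality by showing both sides of \eqref{904} vanish: $\gpl(h(B^\alpha))=0$ via adaptedness \eqref{914} and Lemma \ref{lm-h(B)}, and $\gpp(\xi(\alpha))+h(B^\alpha)\cdot\xi(\alpha)=1/D-1/D=0$ by direct substitution of the formulas from Theorem \ref{thg} and \eqref{v:ch7}, with $D=(1-\alpha)^2+\alpha^2$. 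Both are valid. The paper's version is slightly more informative (it identifies $\xi(\alpha)$ as the \emph{unique} maximizer, which is the structurally important point behind the exponent heuristics later) and it shows explicitly where concavity is used; yours is more elementary and self-verifying, since the arithmetic is an independent consistency check. Your attention to the two "points to handle with care" — the integrability hypothesis of Lemma \ref{lm-h(B)} being supplied by the exponential marginals, and the two arithmetic identities — is exactly the right level of scrutiny, and both check out.
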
 

\begin{proof}  Part (i).  From \eqref{EB8}  we have 
\[  -h(B^\alpha)   = \nabla\gpp(\xi(\alpha)) \] 
and hence by the strict concavity of $\gpp$,  $\xi(\alpha)$ is the unique maximizer in the duality 
\[   \gpl(h(B^\alpha)) = \sup_{\xi\in\Uset} [ \,\gpp(\xi)+ h(B^\alpha)\cdot\xi \,]  . \] 

Part (ii).    By the duality \eqref{904},  by   $\gpl(h(B^\alpha)) =0$ from  Lemma \ref{lm-h(B)},  and by the  adaptedness \eqref{914}, 
\begin{align*}
\gpp(\xi(\alpha)) &= \gpl(h(B^\alpha))   - h(B^\alpha)\cdot\xi(\alpha) =  - h(B^\alpha)\cdot\xi(\alpha) \\
&= \Yw_0-B^\alpha_{0,e_1} \wedge B^\alpha_{0,e_2}  - h(B^\alpha)\cdot\xi(\alpha) \\
&=\max_{i=1,2}\, \bigl[ \,\Yw_0-   B^\alpha_{0, e_i}  -  h(B^\alpha)\cdot \xi(\alpha)\,\bigr] . 
\qedhere \end{align*}
\end{proof}

\medskip 
 
\subsection{Proof of the existence of Busemann functions} \label{sec:bus2}
  This section proves  the existence and properties of the limiting Busemann functions.    The proof is an adaptation of the one given in 
\cite{geor-rass-sepp-yilm-15}  for the exactly solvable log-gamma (or inverse gamma) polymer model.    

 Before specializing to the processes we study, 
  we state and prove   general inequalities  for planar last-passage increments.    Early appearances of these types of inequalities  in  first-passage percolation  can be found in \cite{alm-98, alm-wier-99}.  
Let  real weights $\{\wt Y_x\}_{x\in\Z^2}$ be given.  Define  last-passage times   
\be\label{Gtil7} \wt \Gpp_{x,y}=\max_{x_{0,n}}\sum_{k=0}^{n}\wt Y_{x_k}\ee
where the maximum is over up-right paths from $x_0=x$ to $x_n=y$ with $n=|y-x|_1$ and the admissible steps are as before  $x_k-x_{k-1}\in\{e_1,e_2\}$.  The convention is $\wt \Gpp_{v,v}=0$.  
For  $x\le v-e_1$ and $y\le v-e_2$  denote   increments  by 
	\[	\wt I_{x,v} = \wt \Gpp_{x,v} - \wt \Gpp_{x+e_1,v} \qquad  \text{ and } \qquad  \wt J_{y,v} = \wt \Gpp_{y,v} - \wt \Gpp_{y+e_2,v}\,. \]
For the precise statement of the lemma below it is important that the sum in \eqref{Gtil7} includes the first weight $\wt Y_x$, but  the increments  $\wt I$ and $\wt J$ are not sensitive to whether the last weight  $\wt Y_y$ is included or excluded. 
	
\begin{lemma}\label{lm-til}
	For  $x\le v-e_1$ and $y\le v-e_2$ 
		\be
		\label{til-8}
			\wt I_{x,v+e_2} \ge \wt I_{x,v} \ge \wt I_{x,v+e_1} 
			\qquad  \text{ and } \qquad 
			\wt J_{y, v+e_2} \le \wt J_{y,v} \le \wt J_{y, v+e_1}\,.
		\ee
\end{lemma}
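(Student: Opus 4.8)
The plan is to reduce all four inequalities in \eqref{til-8} to a single four-point inequality for $\wt\Gpp$ together with the obvious lattice symmetry. Rewriting increments as four-point sums, $\wt I_{x,v}\ge\wt I_{x,v+e_1}$ is equivalent to
\[\wt\Gpp_{x,v}+\wt\Gpp_{x+e_1,v+e_1}\;\ge\;\wt\Gpp_{x+e_1,v}+\wt\Gpp_{x,v+e_1},\]
and $\wt I_{x,v+e_2}\ge\wt I_{x,v}$ is equivalent to
\[\wt\Gpp_{x,v+e_2}+\wt\Gpp_{x+e_1,v}\;\ge\;\wt\Gpp_{x,v}+\wt\Gpp_{x+e_1,v+e_2}.\]
The two $\wt J$ inequalities are the images of these two under the reflection that transposes the two coordinate directions (equivalently, transposes the lattice and the weight field $\wt Y$): this map sends up-right paths to up-right paths, hence preserves the collection of all $\wt\Gpp$'s as $\wt Y$ ranges over all real weight fields, while it interchanges $\wt I$ with $\wt J$ and $v+e_1$ with $v+e_2$. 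So it suffices to prove the two displayed $\wt I$ inequalities.

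Both follow from a path-surgery estimate which I would isolate first: \emph{if $u,w,u',w'\in\Z^2$ are such that every up-right path from $u$ to $w$ meets every up-right path from $u'$ to $w'$, then $\wt\Gpp_{u,w}+\wt\Gpp_{u',w'}\le\wt\Gpp_{u,w'}+\wt\Gpp_{u',w}$.} To see this, pick maximizing up-right paths $\pi$ (from $u$ to $w$) and $\pi'$ (from $u'$ to $w'$) and a common site $p$; then $u\le p\le w$ and $u'\le p\le w'$, so the path $\sigma$ going from $u$ to $p$ along $\pi$ and then from $p$ to $w'$ along $\pi'$, and the path $\tau$ going from $u'$ to $p$ along $\pi'$ and then from $p$ to $w$ along $\pi$, are admissible up-right paths from $u$ to $w'$ and from $u'$ to $w$. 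The two new paths together visit exactly the same vertices with the same multiplicities as the two old paths together ($p$ twice, every other relevant vertex once), so their total weight is conserved: $(\text{weight of }\sigma)+(\text{weight of }\tau)=(\text{weight of }\pi)+(\text{weight of }\pi')=\wt\Gpp_{u,w}+\wt\Gpp_{u',w'}$, while the weights of $\sigma$ and $\tau$ are bounded by $\wt\Gpp_{u,w'}$ and $\wt\Gpp_{u',w}$. Adding gives the estimate. The first displayed inequality is then the case $(u,w)=(x+e_1,v)$, $(u',w')=(x,v+e_1)$, and the second is the case $(u,w)=(x,v)$, $(u',w')=(x+e_1,v+e_2)$.

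It remains to verify the intersection hypothesis in these two cases. I would parametrize an up-right path by the anti-diagonal index $k=p_1+p_2$: on each such index it occupies a unique site, whose first coordinate is a nondecreasing function of $k$ with increments in $\{0,1\}$. In each of the two cases the two paths share the anti-diagonal indices from $x_1+x_2+1$ to $v_1+v_2$; a direct check shows that on the first shared index one path's site is weakly to the left of the other's and on the last shared index the relation is reversed (in case one the $x{+}e_1\to v$ path is rightmost at its start and leftmost at its end; in case two the $x\to v$ path is leftmost at its start and rightmost at its end). Since the gap between the two first coordinates changes by $0$ or $\pm1$ along anti-diagonals, a discrete intermediate value argument forces the two sites to agree on some shared anti-diagonal, which is the required common site.

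The only real work is this last bookkeeping with anti-diagonals; the surgery and the transposition reduction are routine. One should also record that the hypotheses $x\le v-e_1$ and $y\le v-e_2$ make all eight last-passage times appearing in the argument finite, which is immediate from coordinatewise comparisons.
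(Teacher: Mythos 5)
Your proof is correct, but it takes a genuinely different route from the one in the notes. The notes argue by induction from the north and east boundaries of the rectangle: the base cases are verified by direct computation, and then the inequality is pushed one lattice site at a time using the one-step recursion for the increments, $\wt I_{u,v+e_2}=\wt Y_u+(\wt I_{u+e_2,v+e_2}-\wt J_{u+e_1,v+e_2})^+$ and its counterpart. You instead rewrite the increment inequalities as four-point supermodularity inequalities for $\wt\Gpp$ and derive those from a path-crossing / surgery estimate: maximizing up-right paths between appropriately ordered endpoint pairs must intersect (your discrete intermediate-value argument on anti-diagonals), and swapping tails at a common site conserves total weight while producing admissible paths between the swapped endpoint pairs. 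This is the crossing-paths mechanism classical in FPP and LPP (in the spirit of the Alm and Alm--Wierman references cited next to \eqref{Gtil7}); it uses only the maximization-over-paths definition and no local recursion, so it is in a sense more elementary and more robust, at the cost of more bookkeeping in the intermediate-value verification. The inductive proof in the notes is shorter once the corner recursion is available and matches the computational style of the surrounding section. The one step worth stating explicitly in your version, which you did flag, is that the first-coordinate difference between two up-right paths changes by $0$ or $\pm1$ per anti-diagonal, which is precisely what makes the discrete intermediate-value step legitimate, including in the degenerate cases where one of the four paths has length zero or one.
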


\begin{proof}
	Let $v=(m,n)$. 
	The proof goes   by an induction argument that starts from the north and east boundaries. 
	On the north, for  $x = (k,n)$ for some $k < m$, 
		\begin{align*}
			\wt I_{(k,n), (m, n+1)} 
			&= \wt \Gpp_{(k,n), (m, n+1)} - \wt \Gpp_{(k +1,n), (m, n+1)} \\
			&= \wt Y_{k,n} + \wt \Gpp_{(k+1,n), (m, n+1)} \vee \wt \Gpp_{(k,n+1), (m, n+1)} - \wt \Gpp_{(k +1,n), (m, n+1)}\\
			&\ge \wt Y_{k,n}= \wt \Gpp_{(k,n), (m, n)} - \wt \Gpp_{(k+1,n), (m, n)} = \wt I_{(k,n), (m,n)}\,.
		\end{align*}
A similar argument (or the above inequality applied to transposed lattice points $(a',b')=(b,a)$) gives, for  $y= (m,\ell)$ for some $\ell < n$,  
\[  \wt J_{(m,\ell),(m+1,n)} \ge  \wt J_{(m,\ell),(m,n)} 	.\]  	
	
We also have the equalities, first for   $y= (m,\ell)$ for some $\ell < n$
		\begin{align*}
			\wt J_{(m,\ell),(m,n+1)}&=\wt \Gpp_{(m,\ell),(m,n+1)} - \wt \Gpp_{(m,\ell+1),(m,n+1)}\\
			&=\wt Y_{m,\ell}  = \wt \Gpp_{(m,\ell),(m,n)}-\wt \Gpp_{(m,\ell+1),(m,n)}  = \wt J_{(m,\ell),(m,n)}\, 
		\end{align*}
and similarly  also 
\[    \wt I_{(k,n), (m+1, n)} =  \wt I_{(k,n), (m, n)} . \]  
		
	These inequalities start the induction. Now let   $u\le v- e_1- e_2$.
	Assume by induction  that \eqref{til-8} holds for $x=u+e_2$ and $y=u+e_1$.   We prove the first inequalities of \eqref{til-8} for $x=u$. 
		\begin{align*}
			\wt I_{u,v+e_2}
			&=\wt \Gpp_{u,v+e_2}-\wt \Gpp_{u+e_1,v+e_2}
			= \wt Y_u + (\wt \Gpp_{u+e_2,v+e_2}-\wt \Gpp_{u+e_1,v+e_2})^+ \\
			&=\wt Y_u+(\wt I_{u+e_2,v+e_2} - \wt J_{u+e_1,v+e_2})^+  \\
			&\ge  \wt Y_u+(\wt I_{u+e_2,v} - \wt J_{u+e_1,v})^+ = \wt I_{u,v}\,.  
		\end{align*} 
		The last equality comes by repeating  the first three equalities with $v$ instead of $v+e_2$. 
		
	Replacing the pair $(v+e_2, v)$ with $(v, v+e_1)$ in the argument above gives  $\wt I_{u,v}\ge \wt I_{u,v+e_1}$.   A symmetric argument works for the $\wt J$ inequalities. 	
\end{proof}

We introduce the following general notational device which we illustrate in the context of \eqref{Gtil7}.  If  $\Lambda$ is a subset of admissible paths from $x$ to $y$, then 
\be\label{Gtil9}   \wt\Gpp_{x,y}(\Lambda)=\max_{x_{0,n}\in\Lambda} \sum_{k=0}^n \wt Y_{x_k}  \ee
is the last-passage value obtained when the maximum is restricted to paths $x_{0,n}$  in $\Lambda$.

\medskip 
 
 Return to the context of the CGM with exponential weights. 
Fix an origin $u\in\Z^2$ and a parameter $0<\rho<1$  for the moment.    
Assume given independent variables 
\be\label{IJw6.7}  \{   \sigma_{x} ,\, I_{u+ie_1}, \, J_{u+je_2}  :  \,x\in u+\Z_{>0}^2, \, i,j\in \Z_{>0}\}   \ee 
as in \eqref{IJw}, with marginal distributions \eqref{w7}.  (We use  $\sigma$  for these auxiliary Exp$(1)$ weights   instead of the $\w$  of  \eqref{IJw} to avoid confusion with the sample point $\w$ of  Theorem \ref{t:buse}.)  
Let the variables 
 \be\label{IJw9.3} 
  \{ \sigma_{x} ,\,   I_{x-e_2}, \, J_{x-e_1},  \,\sigmawc_{x-e_1-e_2}  :  x\in u+\Z_{>0}^2\}  
   \ee 
 be defined via equations \eqref{IJw5.1}--\eqref{IJw5.3} from   initial variables \eqref{IJw6.7}, so that   the properties   given in Theorem \ref{v:tIJw1} are satisfied, with $\sigma$ replacing $\w$. 
  
  Utilizing the edge weights $I_x$ and $J_x$ and the vertex weights $\sigmawc_x$ we define several last-passage percolation processes.  First a process with i.i.d.\ weights: 
 \be\label{Gch} 
  \wc \Gpp_{x,y}=\max_{x_{\brbullet}\,\in\,\Pi_{x,y}}\sum_{k=0}^{\abs{y-x}_1}\sigmawc_{x_k} \qquad\text{for $y\ge x\ge u$.} 
 \ee
Its increments are defined by  
 \[  \wc I_{x,v} =\wc\Gpp_{x,v}-\wc\Gpp_{x+e_1,v} \quad\text{for $u\le x\le v-e_1$} 
 \quad\text{and}\quad 
 \wc J_{y,v} =\wc\Gpp_{y,v}-\wc\Gpp_{y+e_2,v} \quad\text{for $u\le y\le v-e_2$} . \]
 
Introduce   an auxiliary increment-stationary  last-passage process  $ \wc\Gpp^{N\!E}_{x,v}$ for $u\le x\le v$,  with boundary edge weights on the north and east borders.  
\be\label{Gr26}\begin{aligned} 
&\text{On the northwest corner set   $ \wc\Gpp^{N\!E}_{v,v}=0$;} \\
&\text{on the north and east boundaries emanating from $v$ in the negative directions, }\\
 &\quad \wc\Gpp^{N\!E}_{v-ke_1,v}=\sum_{i=0}^{k-1} I_{v-ie_1} 
  \quad\text{and}\quad
\wc\Gpp^{N\!E}_{v-\ell e_2,v}= \sum_{j=0}^{\ell-1}  J_{v-je_2}\qquad \text{for $k,\ell\ge 1$} ;   \\
&\text{in the bulk for $u\le x\le v-e_1-e_2$ , }\\
&\quad \wc\Gpp^{N\!E}_{x,v} = \sigmawc_x + \wc\Gpp^{N\!E}_{x+e_1,v}\vee \wc\Gpp^{N\!E}_{x+e_2,v} \\
&\quad
= \max_{1\le k\le (v-x)\cdot e_1} \;  \Bigl\{    \wc\Gpp_{x, v-ke_1-e_2}+\sum_{i=0}^{k-1} I_{v-ie_1}  \Bigr\}  
\bigvee
 \max_{1\le \ell\le (v-x)\cdot e_2}\; \Bigl\{  \wc\Gpp_{x, v-e_1-\ell e_2}+ \sum_{j=0}^{\ell-1}  J_{v-je_2}    \Bigr\} . 
\end{aligned}  \ee

%

In the next lemma we check that the increments of the $\wc\Gpp^{N\!E}$ process are in fact the  $I$ and $J$ variables already given in \eqref{IJw9.3}.  

\begin{lemma} \label{v:lm86}  For $u\le x\le v-e_1$ and $u\le y\le v-e_2$, 
\be\label{INE3} \begin{aligned} 
I_{x+e_1}=\wc\Gpp^{N\!E}_{x,v}   -\wc\Gpp^{N\!E}_{x+e_1,v}   
\qquad\text{and}\qquad 
J_{y+e_2}=\wc\Gpp^{N\!E}_{y,v}   -\wc\Gpp^{N\!E}_{y+e_2,v}. 
\end{aligned} \ee
\end{lemma}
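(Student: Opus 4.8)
The plan is to prove \eqref{INE3} by induction over anti-diagonals, in the same spirit as part (ii) of Theorem \ref{v:tIJw1}, but with the two coordinate axes reflected through the corner $v$. First I would order the lattice points $x$ of the rectangle $[u,v]$ by the value $\abs{v-x}_1$. The base case is the set of points on the north and east boundary rays emanating from $v$: for $x=v-ke_1$ with $k\ge 1$ one has $x+e_1=v-(k-1)e_1$, so the boundary prescription in \eqref{Gr26} gives $\wc\Gpp^{N\!E}_{x,v}-\wc\Gpp^{N\!E}_{x+e_1,v}=I_{v-(k-1)e_1}=I_{x+e_1}$, and symmetrically $\wc\Gpp^{N\!E}_{y,v}-\wc\Gpp^{N\!E}_{y+e_2,v}=J_{y+e_2}$ for $y=v-\ell e_2$ on the east ray. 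On a north-ray point only the $I$-identity of \eqref{INE3} is claimed, since then $x+e_2\notin[u,v]$ (analogously on the east ray), and every $x\le v$ with $x\ne v$ lies on one of these two rays or satisfies $x\le v-e_1-e_2$; so the remaining cases are genuine bulk points and the induction is well founded.

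For the inductive step I would fix a bulk point $u\le x\le v-e_1-e_2$ and abbreviate $a=\wc\Gpp^{N\!E}_{x+e_1,v}$, $b=\wc\Gpp^{N\!E}_{x+e_2,v}$, $c=\wc\Gpp^{N\!E}_{x+e_1+e_2,v}$. Applying the induction hypothesis at $x+e_1$ and at $x+e_2$ (both of strictly smaller $\abs{v-\cdot}_1$) gives $b-c=I_{x+e_1+e_2}$ and $a-c=J_{x+e_1+e_2}$, hence $b-a=I_{x+e_1+e_2}-J_{x+e_1+e_2}$; subtracting \eqref{IJw5.3} from \eqref{IJw5.2} (equivalently, the unit-square additivity \eqref{IJw5.5}) rewrites this as $b-a=I_{x+e_1}-J_{x+e_2}$. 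Then I would combine the bulk recursion $\wc\Gpp^{N\!E}_{x,v}=\sigmawc_x+a\vee b$ of \eqref{Gr26} with the recovery identity $\sigmawc_x=I_{x+e_1}\wedge J_{x+e_2}$ obtained by reindexing \eqref{IJw5.1}, using the elementary facts $a\vee b-a=(b-a)^+$, $a\vee b-b=(a-b)^+$, and $(p\wedge q)+(p-q)^+=p$, to get
\[
\wc\Gpp^{N\!E}_{x,v}-\wc\Gpp^{N\!E}_{x+e_1,v}=\sigmawc_x+(b-a)^+=(I_{x+e_1}\wedge J_{x+e_2})+(I_{x+e_1}-J_{x+e_2})^+=I_{x+e_1},
\]
and likewise $\wc\Gpp^{N\!E}_{x,v}-\wc\Gpp^{N\!E}_{x+e_2,v}=\sigmawc_x+(a-b)^+=J_{x+e_2}$, which closes the induction.

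I do not expect a genuine analytic obstacle; the substance is already contained in Theorem \ref{v:tIJw1}. The one thing to get right is the index bookkeeping: because $\wc\Gpp^{N\!E}$ carries its boundary data on the north and east borders and runs its recursion toward $v$ along $\{e_1,e_2\}$-steps, the bulk weights $\sigmawc_x$ here occupy the slot that the $\sigma$- (that is, $\w$-) weights occupy in Theorem \ref{v:tIJw1}, and the $I,J$ increments of the $\Gpp^\rho$ process must be matched to the $\wc\Gpp^{N\!E}$ increments through these reflected indices. I would also record explicitly that only the recursive description of $\wc\Gpp^{N\!E}$ in \eqref{Gr26} is used, not its max-over-paths formula.
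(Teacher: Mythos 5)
Your proof is correct and follows essentially the same route as the paper: induction along anti-diagonals from the north and east boundary rays, applying the bulk recursion at $x$, using the induction hypothesis at the two edges meeting $x+e_1+e_2$, the unit-square additivity \eqref{IJw5.5} to shift the $(\,\cdot\,)^+$ term, and the recovery identity $\sigmawc_x=I_{x+e_1}\wedge J_{x+e_2}$ to close the algebra. The only cosmetic difference is your explicit bookkeeping of the base case and the shorthand $a,b,c$; the substance and the key identities are identical to the paper's proof.
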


\begin{proof} The claim is true for $x=v-ke_1$ and $y=v-\ell e_2$ by definition \eqref{Gr26}.   Here is the induction step for the edge $(x,x+e_1)$, assuming that \eqref{INE3} has been proved for edges $(x+e_2, x+e_1+e_2)$ and $(x+e_1, x+e_1+e_2)$. 
\begin{align*}
\wc\Gpp^{N\!E}_{x,v}   -\wc\Gpp^{N\!E}_{x+e_1,v}   
&= \sigmawc_x+ \bigl( \wc\Gpp^{N\!E}_{x+e_2,v}   -\wc\Gpp^{N\!E}_{x+e_1,v} \bigr)^+
= \sigmawc_x+ \bigl( I_{x+e_1+e_2}   -J_{x+e_1+e_2} \bigr)^+\\
&= I_{x+e_1}   \wedge J_{x+e_2}+ \bigl( I_{x+e_1}   -J_{x+e_2} \bigr)^+ = I_{x+e_1}. 
\end{align*}
The second last equality used \eqref{IJw5.1} and the cocycle property 
$I_{x+e_1}+J_{x+e_1+e_2} = J_{x+e_2}+I_{x+e_1+e_2} $.  A similar argument extends \eqref{INE3} to the edge $(x,x+e_2)$. 
\end{proof} 

Next we consider restricted $\wc\Gpp^{N\!E}$ last-passage values utilizing the notation introduced in \eqref{Gtil9}.  In particular, we consider last-passage values of the kind 
\be\label{G-restr}  \wc\Gpp^{N\!E}_{x,\,v}(v-e_1\in x_{\bbullet})
= \max_{1\le k\le (v-x)\cdot e_1} \;  \Bigl\{    \wc\Gpp_{x, v-ke_1-e_2}+\sum_{i=0}^{k-1} I_{v-ie_1}  \Bigr\}  
\ee
where the condition $v-e_1\in x_{\bbullet}$ means that the path goes through the point $v-e_1$,  which is equivalent to saying that the last step of the path goes from $v-e_1$ to $v$. 
We show that the asymptotics of the restricted $\wc\Gpp^{N\!E}$ are the expected ones and calculate the limits.

 \begin{lemma}\label{v:lm-88}  
  Fix a point $a\in u+\Z_{\ge0}^2$ and reals $0<s,t<\infty$. Let $v_n\in u+\Z_{\ge0}^2$ be such that  $\abs{v_n}_1\to\infty$ and   $v_n/\abs{v_n}_1\to(s,t)/(s+t)$ as $n\to\infty$.  
  Then we have the following almost sure  limits. 
		\be\label{v:800}\begin{aligned} 
			\abs{v_n}_1^{-1}\,\wc\Gpp^{N\!E}_{a,\,v_n}(v_n-e_1\in x_{\bbullet}) \; \underset{n\to\infty}{ \longrightarrow }\;
	&(s+t)^{-1}\sup_{0 \le \tau \le s}\Bigl\{ \frac{\tau}{1-\rho}  + \gpp(s-\tau,t) \Bigr\}  \\[6pt]
			=\;& 
\begin{cases}     \dfrac{\gpp(s,t)}{s+t}  = \dfrac{\bigl(\sqrt s+\sqrt t\,\bigr)^2}{s+t}, &\dfrac{s}{t}\le \biggl(\dfrac{1-\rho}\rho\biggr)^2\\[10pt] 
\dfrac1{s+t}\biggl(\dfrac{s}{1-\rho}+\dfrac{t}{\rho}\biggr)\,, &\dfrac{s}{t}\ge \biggl(\dfrac{1-\rho}\rho\biggr)^2  
\end{cases} 
\end{aligned} 		\ee

and 
 		\be\label{v:801}\begin{aligned} 
			\abs{v_n}_1^{-1}\,\wc\Gpp^{N\!E}_{a,\,v_n}(v_n-e_2\in x_{\bbullet}) \; \underset{n\to\infty}{\longrightarrow} \;
			&(s+t)^{-1}\sup_{0 \le \tau \le t}\Bigl\{ \frac\tau\rho + \gpp(s, t-\tau) \Bigr\}\\[6pt] 
			=\;& 
\begin{cases}     \dfrac{\gpp(s,t)}{s+t}  = \dfrac{\bigl(\sqrt s+\sqrt t\,\bigr)^2}{s+t}, &\dfrac{s}{t}\ge \biggl(\dfrac{1-\rho}\rho\biggr)^2\\[10pt] 
\dfrac1{s+t}\biggl(\dfrac{s}{1-\rho}+\dfrac{t}{\rho}\biggr)\,, &\dfrac{s}{t}\le \biggl(\dfrac{1-\rho}\rho\biggr)^2. 
\end{cases} 
\end{aligned} 	 		\ee
 \end{lemma}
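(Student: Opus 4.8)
The plan is to pass to the limit directly in the explicit restricted last-passage formula \eqref{G-restr}. Write $M_n=\abs{v_n}_1$, so that $v_n/M_n\to(\bar s,\bar t):=(s,t)/(s+t)$, set $K_n=(v_n-a)\cdot e_1$ and $S_n(k)=\sum_{i=0}^{k-1}I_{v_n-ie_1}$, and recall
\[
\wc\Gpp^{N\!E}_{a,\,v_n}(v_n-e_1\in x_{\bbullet})=\max_{1\le k\le K_n}\bigl\{\wc\Gpp_{a,\,v_n-ke_1-e_2}+S_n(k)\bigr\}.
\]
The right side has two random ingredients whose asymptotics are already in hand. The interior term $\wc\Gpp_{a,\,\cdot}$ is the last-passage time of the i.i.d.\ Exp$(1)$ field $\{\sigmawc_x\}$ (Theorem \ref{v:tIJw1}), so the uniform shape theorem (Theorem \ref{t:sh}, equation \eqref{sh89}) gives for each $\e>0$ an a.s.\ finite $K$ with $\abs{\wc\Gpp_{a,a+w}-\gpp(w)}\le\e\abs{w}_1$ whenever $\abs{w}_1\ge K$, while $\wc\Gpp_{a,a+w}$ and $\gpp(w)$ are $O(1)$ when $\abs{w}_1\le K$. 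The boundary term $S_n(k)$ is a sum of $k$ i.i.d.\ Exp$(1-\rho)$ variables — by Theorem \ref{v:tIJw1}(i) the $I$-weights along the horizontal line at height $(v_n)_2$ form an i.i.d.\ family — equivalently $S_n(k)=G^\rho_{u,v_n}-G^\rho_{u,v_n-ke_1}$, so \eqref{g-rho-3} applies.

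For the lower bound, fix $\tau\in[0,\bar s)$, keep only the single term $k_n=\fl{\tau M_n}$, and use $v_n/M_n\to(\bar s,\bar t)$, $k_n/M_n\to\tau$, and the homogeneity and continuity of $\gpp$ to get
\[
\varliminf_{n\to\infty}M_n^{-1}\,\wc\Gpp^{N\!E}_{a,\,v_n}(v_n-e_1\in x_{\bbullet})\;\ge\;\frac{\tau}{1-\rho}+\gpp(\bar s-\tau,\bar t);
\]
taking the supremum over $\tau$ and using continuity up to $\tau=\bar s$ gives ``$\ge$'' in the first line of \eqref{v:800}. For the upper bound, insert the estimate $\wc\Gpp_{a,\,v_n-ke_1-e_2}\le\gpp(v_n-a-ke_1-e_2)+\e M_n+O(1)$ and $S_n(k)\le(\tfrac1{1-\rho}+\e)k$ — the latter uniformly in $1\le k\le K_n$, since the running maximum of the negative-drift walk $k\mapsto S_n(k)-(\tfrac1{1-\rho}+\e)k$ is a.s.\ finite and hence $o(M_n)$ — so that every term is at most $M_n\sup_{0\le\tau\le\bar s}\{\tau/(1-\rho)+\gpp(\bar s-\tau,\bar t)\}+o(M_n)$, uniformly in $k$; continuity of the bracket in $\tau$ converts the discrete maximum over $k$ into the continuous supremum. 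Finally the homogeneity of $\gpp$ permits the substitution $\tau=(s+t)\tau'$, turning $\sup_{0\le\tau'\le\bar s}\{\tau'/(1-\rho)+\gpp(\bar s-\tau',\bar t)\}$ into $(s+t)^{-1}\sup_{0\le\tau\le s}\{\tau/(1-\rho)+\gpp(s-\tau,t)\}$, which is the expression in \eqref{v:800}.

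It then remains to solve the deterministic problem $\varphi(\tau)=\tfrac{\tau}{1-\rho}+(\sqrt{s-\tau}+\sqrt t\,)^2$ on $[0,s]$. Here $\varphi$ has strictly decreasing derivative $\varphi'(\tau)=\tfrac{\rho}{1-\rho}-\sqrt{t/(s-\tau)}$, hence is concave. Thus if $\varphi'(0)=\tfrac{\rho}{1-\rho}-\sqrt{t/s}\le0$, equivalently $s/t\le\bigl(\tfrac{1-\rho}{\rho}\bigr)^2$, the maximum is at $\tau=0$ with value $\gpp(s,t)=(\sqrt s+\sqrt t\,)^2$; otherwise the maximizer is the interior point $\tau^*=s-t(1-\rho)^2/\rho^2$, where $\sqrt{s-\tau^*}=\sqrt t\,(1-\rho)/\rho$, and a short computation gives $\varphi(\tau^*)=\tfrac{s}{1-\rho}+\tfrac{t}{\rho}$. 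This is exactly the case split in \eqref{v:800}; the second limit \eqref{v:801} follows by the same argument with the two coordinates and the parameters $\rho$ and $1-\rho$ interchanged.

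The main obstacle is the rigorous interchange of limit and supremum over $\tau$: one must make the shape-theorem error uniform over the number of boundary steps $k$ (which \eqref{sh89} supplies, being a genuinely uniform statement over the quadrant) and, simultaneously, control the boundary sums $S_n(k)$ by a law of large numbers that is uniform in $k$ and valid along the given sequence $v_n$ rather than merely for a single fixed sum — this is where a little care with the running maximum of the boundary walk, or with a uniform version of \eqref{g-rho-3}, is needed. Everything after the normalization is routine calculus.
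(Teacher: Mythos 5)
Your argument has the same overall skeleton as the paper's: read off the restricted last-passage value from \eqref{G-restr}, get the lower bound by keeping a single term $k_n\approx\tau M_n$ and invoking the shape theorem for the interior, get a matching upper bound, and then do the calculus on $\varphi(\tau)=\tfrac{\tau}{1-\rho}+(\sqrt{s-\tau}+\sqrt t\,)^2$ — which you carry out correctly and which reproduces the paper's case split. The one place you diverge technically is how the upper bound is made uniform over $1\le k\le K_n$. The paper discretizes: it introduces a grid $q^n_0<q^n_1<\dotsb<q^n_M$ with $M=\fl{\e^{-1}}$, pins the exit point to one of the $M$ intervals, and uses nonnegativity of the weights so that only $M$ applications of the LLN are needed (one per grid block), with the block-width error disappearing as $\e\searrow0$. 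You instead bound the boundary walk globally, asserting that $\max_{1\le k\le K_n}\{S_n(k)-(\tfrac1{1-\rho}+\e)k\}$ is $o(M_n)$ because the walk has negative drift. That conclusion is true, but the justification "a.s. finite and hence $o(M_n)$" is too quick: the walk (and hence the running max) is a \emph{different} random variable for each $n$, since the $I$-row changes with $v_n$. A.s.\ finiteness for each $n$ does not by itself give $\limsup_n R_n/M_n=0$. What saves you is that the running max of a negative-drift walk with i.i.d.\ exponential increments has exponential tails uniformly in the walk length, so $\sum_n\P(R_n>\delta M_n)<\infty$ and Borel--Cantelli finishes it — a sentence you should add. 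With that, your running-max device is a legitimate and in fact slightly slicker alternative to the grid; the grid's advantage is that it sidesteps any tail estimate and needs only the pointwise LLN along the blocks, which you both ultimately justify via the stationary identity $S_n(k)=G^\rho_{u,v_n}-G^\rho_{u,v_n-ke_1}$ and \eqref{g-rho-3}.
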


\begin{proof}
	We prove the limit in  \eqref{v:800}. Finding the supremum is calculus.    The proof of \eqref{v:801} being entirely analogous. Fix $\e>0$, 
	  let  $M=\fl{\e^{-1}}$,  and 
		\[
			q^n_j = j\Bigl\lfloor\frac{\e\abs{v_n}_1 s}{s+t}\Bigr\rfloor \, \text{ for }0\le j \le M-1, 
				\text{ and } q^n_{M}= (v_n-a)\cdot e_1.
		\]
		For large enough $n$ it is the case that  $q^n_{M} -C\e\abs{v_n}_1< q^n_{M-1}<q^n_{M}$.  

Suppose    a maximal path for $\wc\Gpp^{N\!E}_{a,\,v_n}(v_n-e_1\in x_{\bbullet})$ enters 
	 the north boundary from  the bulk at the point 
  $v_n-(\ell,0)$  with $q^n_{j} < \ell\le q^n_{j+1}$.  By nonnegativity of the weights, 
		\begin{align*}
			&\wc\Gpp^{N\!E}_{a,\,v_n}(v_n-e_1\in x_{\bbullet}) 
= \wc\Gpp_{a, \,v_n-(\ell,1)}   + \sum_{i=0}^{\ell-1}   I_{v_n-ie_1}
\\
			&\qquad 
	\le \wc\Gpp_{a, \,v_n-(q^n_j,1)}   +  \frac{q^n_j}{1-\rho} 
 + \sum_{i=0}^{q^n_{j+1}-1}  \bigl( I_{v_n-ie_1} - \tfrac1{1-\rho}\bigr)  	
 + \frac{ q^n_{j+1}-q^n_j}{1-\rho} . 
		\end{align*}
 Collect the bounds for all the intervals $(q^n_j, q^n_{j+1}]$:
	\be	\begin{aligned}
			\wc\Gpp^{N\!E}_{a,\,v_n}(v_n-e_1\in x_{\bbullet})  
				&\;\le\;\max_{0\le j \le M-1 }\!\!\Big\{ 		
		\wc\Gpp_{a, \,v_n-(q^n_j,1)}   +  \frac{q^n_j}{1-\rho} \\
&\qquad \qquad \qquad 
 + \sum_{i=0}^{q^n_{j+1}-1}  \bigl( I_{v_n-ie_1} - \tfrac1{1-\rho}\bigr)  	
 + \frac{ q^n_{j+1}-q^n_j}{1-\rho}  \Bigr\}. 
	\end{aligned}\label{G:bnd} \ee 		
Divide through by $\abs{v_n}_1$ and let $n\to\infty$.  
On the right-hand side above,  Theorem \ref{t:sh}  and the homogeneity of $\gpp$ give 
\begin{align*}
\frac{\wc\Gpp_{a, \,v_n-(q^n_j,1)}}{\abs{v_n}_1} = \gpp\biggl( \frac{v_n}{\abs{v_n}_1} 
-\frac{q^n_j}{\abs{v_n}_1} e_1 + \frac{O(1)}{\abs{v_n}_1}\biggr) +o(1) 
\; \longrightarrow \;  \frac{\gpp(s-sj\e, t)}{s+t}. 
\end{align*}
The mean zero i.i.d.\ sum satisfies 
\[ \frac{1}{\abs{v_n}_1} \sum_{i=0}^{q^n_{j+1}-1}  \bigl( I_{v_n-ie_1} - \tfrac1{1-\rho}\bigr) 
\; \longrightarrow \; 0.  \]
We get  the upper bound
\begin{align*}
\varlimsup_{n\to\infty}   \abs{v_n}_1^{-1} \wc\Gpp^{N\!E}_{0,\,v_n}(v_n-e_1\in x_{\bbullet})  
   & \le 
 (s+t)^{-1} \max_{0\le j \le M-1 }\Big[\gpp(s-sj\e, t) +\frac{sj\e}{1-\rho} + C\e  \Big]\\
 &\le (s+t)^{-1}\sup_{0 \le \tau \le s}\Bigl[ \,\frac{\tau}{1-\rho}  + \gpp(s-\tau,t) \Bigr]+ C\e.  
 \end{align*} 
 Let $\e\searrow 0$ to complete the proof of the upper bound.

  To get the  matching lower bound let  the supremum 
  \[  \ddd\sup_{\tau\in [0,s]} \{ \tfrac\tau{1-\rho} +  \gpp(s-\tau , t)\}\]  be attained at $\tau^* \in [0,s]$.  
	With $m_n=\abs{v_n}_1/(s+t)$ we have 
 		\begin{align*}
	\wc\Gpp^{N\!E}_{a,\,v_n}(v_n-e_1\in x_{\bbullet}) &\ge 	\wc\Gpp_{a,v_n-(\fl{m_n\tau^*}\vee1,1)}\; +\; \sum_{i=0}^{(\fl{m_n\tau^*}-1)^+}   I_{v_n-ie_1}. 
		\end{align*}
Let $n\to\infty$ to get 
		\begin{align*}
\varliminf_{n\to\infty}  		\abs{v_n}_1^{-1}\wc\Gpp^{N\!E}_{0,\,v_n}(v_n-e_1\in x_{\bbullet})
		\ge  (s+t)^{-1}\Bigl[\gpp(s-\tau^*,t) + \frac{\tau^*}{1-\rho} \,\Bigr] . 
		\end{align*}
This completes  the proof of  the limit in  \eqref{v:800}.
\end{proof}

As a consequence we record  the expected asymptotics for the unrestricted process with edge weights on the north and east:
\be\label{v:802}\begin{aligned}
	&\lim_{n\to\infty} \abs{v_n}_1^{-1}\,\wc\Gpp^{N\!E}_{a,\,v_n}	
	=	\lim_{n\to\infty} \abs{v_n}_1^{-1}\,\wc\Gpp^{N\!E}_{a,\,v_n}(v_n-e_1\in x_{\bbullet})  \bigvee  \wc\Gpp^{N\!E}_{a,\,v_n}(v_n-e_2\in x_{\bbullet})  \\
	&\qquad 
	=(s+t)^{-1}\sup_{0 \le \tau \le s}\Bigl\{\, \frac{\tau}{1-\rho}  + \gpp(s-\tau,t) \Bigr\}  
\bigvee	 \sup_{0 \le \tau \le t}\Bigl\{ \,\frac\tau\rho + \gpp(s, t-\tau) \Bigr\} \\
	&\qquad 
	=(s+t)^{-1}\sup_{0 \le \tau \le s}\Bigl\{ \,\frac{\tau}{1-\rho}  + \bigl(\sqrt{s-\tau} +\sqrt t\,\bigr)^2 \Bigr\}  
\bigvee	 \sup_{0 \le \tau \le t}\Bigl\{ \, \frac\tau\rho + \bigl(\sqrt{s} +\sqrt{t-\tau}\,\bigr)^2 \Bigr\} \\
&\qquad = \frac1{s+t}\biggl( \frac{s}{1-\rho}+\frac{t}{\rho}\biggr). 
\end{aligned} \ee
	
In the next lemma we derive bounds on the limiting local gradients of the last-passage values $\wc \Gpp$ defined in \eqref{Gch} in  terms of the i.i.d.\ $\sigmawc$ weights.  Recall   definition \eqref{v:ch7}  of the characteristic direction 
$\xi(\rho)=\bigl( \frac{(1-\rho)^2} {(1-\rho)^2+\rho^2}\,, \frac{\rho^2}{(1-\rho)^2+\rho^2}\bigr)$.  

 \begin{lemma}\label{v:lm-89}  
  Consider two sequences $\{v_n\}$ and $\{w_n\}$ in $a+\Z_{\ge 0}^2$  such that 
  $\abs{v_n}_1\wedge \abs{w_n}_1\to\infty$ and 
   \[     \lim_{n\to\infty}  \frac{v_n}{\abs{v_n}_1} = (s, 1-s)  
   \quad\text{and}\quad   \lim_{n\to\infty}  \frac{w_n}{\abs{w_n}_1} = (t, 1-t) .  \]
 Assume that   
 \[  s <  \xi_1(\rho)=\frac{(1-\rho)^2}{(1-\rho)^2+\rho^2}   < t.  \]
 Then we have the following statements.  
 
 \begin{enumerate}  
\item[{\rm(i)}] Almost surely 
 \be\label{v:805}   \wc\Gpp^{N\!E}_{a,v_n } (v_n-e_2\in x_{\bbullet}) = \wc\Gpp^{N\!E}_{a,v_n} 
  \qquad\text{and}\qquad  
 \wc\Gpp^{N\!E}_{a,w_n } (w_n-e_1\in x_{\bbullet}) = \wc\Gpp^{N\!E}_{a,w_n} \ee 
 for all large enough $n$.  \\[-2pt]
 
 \item[{\rm(ii)}]   The following inequalities hold almost surely:  
 \be\label{v:832} 
 \varlimsup_{n\to\infty}  [ \wc\Gpp_{a,w_n}-\wc\Gpp_{a+e_1,w_n}]   \le   I_{a+e_1}
 \le  \varliminf_{n\to\infty}  [ \wc\Gpp_{a,v_n}-\wc\Gpp_{a+e_1,v_n}] 
  \ee
  and 
  \be\label{v:833} 
 \varlimsup_{n\to\infty}  [ \wc\Gpp_{a,v_n}-\wc\Gpp_{a+e_2,v_n}]   \le   J_{a+e_2}
 \le  \varliminf_{n\to\infty}  [ \wc\Gpp_{a,w_n}-\wc\Gpp_{a+e_2,w_n}]  . 
  \ee
\end{enumerate}  
 \end{lemma}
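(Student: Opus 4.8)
The plan is to read off part (i) from the asymptotics of Lemma~\ref{v:lm-88} together with the strict inequality $\gpp(s,t)<g^\rho(s,t)$ off the characteristic ray, and then to combine part (i) with the increment identity of Lemma~\ref{v:lm86} and the monotonicity of Lemma~\ref{lm-til} to obtain part (ii). For part (i), note $\wc\Gpp^{N\!E}_{a,v_n}=\wc\Gpp^{N\!E}_{a,v_n}(v_n-e_1\in x_{\bbullet})\vee\wc\Gpp^{N\!E}_{a,v_n}(v_n-e_2\in x_{\bbullet})$; since $v_n/\abs{v_n}_1\to(s,1-s)$ with $s<\xi_1(\rho)$, equivalently $s/(1-s)<\bigl((1-\rho)/\rho\bigr)^2$, specializing Lemma~\ref{v:lm-88} to $t=1-s$ gives $\abs{v_n}_1^{-1}\wc\Gpp^{N\!E}_{a,v_n}(v_n-e_1\in x_{\bbullet})\to\gpp(s,1-s)$ while $\abs{v_n}_1^{-1}\wc\Gpp^{N\!E}_{a,v_n}(v_n-e_2\in x_{\bbullet})\to g^\rho(s,1-s)$. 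By \eqref{v:ch8} the second limit is strictly larger, since $(s,1-s)$ is not a scalar multiple of $\bigl((1-\rho)^2,\rho^2\bigr)$ when $s\ne\xi_1(\rho)$, so $\wc\Gpp^{N\!E}_{a,v_n}=\wc\Gpp^{N\!E}_{a,v_n}(v_n-e_2\in x_{\bbullet})$ for all large $n$, a.s.; the same holds with $a$ replaced by any fixed point of $a+\Z_{\ge0}^2$, and the $w_n$ statement follows by interchanging \eqref{v:800} and \eqref{v:801}.

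For part (ii) it suffices to prove the first inequality of \eqref{v:832}, $\varlimsup_n[\wc\Gpp_{a,w_n}-\wc\Gpp_{a+e_1,w_n}]\le I_{a+e_1}$. By Lemma~\ref{v:lm86}, $I_{a+e_1}=\wc\Gpp^{N\!E}_{a,w_n}-\wc\Gpp^{N\!E}_{a+e_1,w_n}$ for every $w_n$, and by part (i) applied to the starts $a$ and $a+e_1$, for all large $n$ (a.s.) both $\wc\Gpp^{N\!E}_{a,w_n}$ and $\wc\Gpp^{N\!E}_{a+e_1,w_n}$ equal their restricted values through the north boundary. With $\phi(k)=\sum_{i=0}^{k-1}I_{w_n-ie_1}$ this reads
\[ I_{a+e_1}=\max_{k}\bigl\{\wc\Gpp_{a,\,w_n-ke_1-e_2}+\phi(k)\bigr\}-\max_{k}\bigl\{\wc\Gpp_{a+e_1,\,w_n-ke_1-e_2}+\phi(k)\bigr\}, \]
the first maximum over $1\le k\le(w_n-a)\cdot e_1$ and the second over the subrange $1\le k\le(w_n-a-e_1)\cdot e_1$. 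Evaluating the first maximum at the maximizing index $k_*$ of the second gives $I_{a+e_1}\ge\wc\Gpp_{a,\,w_n-k_*e_1-e_2}-\wc\Gpp_{a+e_1,\,w_n-k_*e_1-e_2}$; since $k_*\ge1$, the monotonicity of last-passage increments (Lemma~\ref{lm-til}) lets me lower the first coordinate of this target to $w_n-e_1-e_2$ without increasing the right-hand side, so $I_{a+e_1}\ge\wc\Gpp_{a,\,w_n-e_1-e_2}-\wc\Gpp_{a+e_1,\,w_n-e_1-e_2}$ for all large $n$, a.s. Because this holds for every sequence converging to the direction $(t,1-t)$, $t>\xi_1(\rho)$, applying it to $\{w_n+e_1+e_2\}$ gives $I_{a+e_1}\ge\wc\Gpp_{a,w_n}-\wc\Gpp_{a+e_1,w_n}$ for all large $n$, which is the claim.

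The second inequality of \eqref{v:832} is similar: $v_n$ has east entry by part (i), so the two $\wc\Gpp^{N\!E}$ values are maxima over the common range $1\le\ell\le(v_n-a)\cdot e_2$ with common term $\sum_{j=0}^{\ell-1}J_{v_n-je_2}$; then $\max F-\max G\le\max(F-G)$ together with Lemma~\ref{lm-til} give $I_{a+e_1}\le\wc\Gpp_{a,\,v_n-e_1-e_2}-\wc\Gpp_{a+e_1,\,v_n-e_1-e_2}$, and the same relabeling yields $I_{a+e_1}\le\varliminf_n[\wc\Gpp_{a,v_n}-\wc\Gpp_{a+e_1,v_n}]$. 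Inequalities \eqref{v:833} are obtained verbatim, with $J_{a+e_2}$ and the $e_2$-increments in place of $I_{a+e_1}$ and the $e_1$-increments and the steep and flat sequences exchanged.

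The step I expect to be the main obstacle is relating the \emph{macroscopic} location of the boundary entry point $w_n-k_*e_1$ — which, as the variational problem behind Lemma~\ref{v:lm-88} shows, sits at a positive fraction of $\abs{w_n}_1$ — to the \emph{local} gradients $\wc\Gpp_{a,w_n}-\wc\Gpp_{a+e_1,w_n}$. The feature that makes it go through is that the north (resp.\ east) index ranges for the processes started at $a$ and at $a+e_1$ coincide up to a single unit, so the comparison collapses to a bounded $e_1+e_2$ shift of the target, which Lemma~\ref{lm-til} signs and relabeling of the sequence absorbs. Checking that part (i) for the two starts and the shifted sequences, the ergodic limits of Lemma~\ref{v:lm-88}, and Lemma~\ref{lm-til} all hold on one event of full probability is routine.
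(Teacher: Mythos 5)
Your part (i) is essentially the paper's argument, just phrased without the contradiction scaffolding: both proofs rest on the strict separation of the north-entry and east-entry limits furnished by Lemma~\ref{v:lm-88} together with \eqref{v:ch8}, so the ``correct'' restriction strictly dominates for all large $n$.

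Your part (ii) is correct but takes a genuinely different route. The paper starts from the i.i.d.\ increment $\wc\Gpp_{a,w_n}-\wc\Gpp_{a+e_1,w_n}$, applies Lemma~\ref{lm-til} \emph{once} to pass to the boundary-weighted process $\wc\Gpp^N$ at the target $w_n+e_2$, recognizes $\wc\Gpp^N_{a,w_n+e_2}$ as the north-restricted $\wc\Gpp^{N\!E}_{a,w_n+e_1+e_2}(w_n+e_2\in x_\bbullet)$, removes the restriction by part~(i), and closes with Lemma~\ref{v:lm86}; the whole chain \eqref{v:836} flows from the $\wc\Gpp$-quantity to $I_{a+e_1}$ in one direction. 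You instead start at $I_{a+e_1}$, unravel it via Lemma~\ref{v:lm86} and part~(i) into a difference of two maxima, compare the two maxima by evaluating the larger one at the maximizer $k_*$ of the smaller (exploiting that the $a+e_1$-index range is contained in the $a$-index range, which is exactly right), then invoke Lemma~\ref{lm-til} to push the entry point from $k_*$ down to $k=1$, and finally absorb the residual $e_1+e_2$ offset by relabeling the sequence. The same three ingredients appear in each proof, but the ordering is reversed and you need the extra maximizer step; the payoff is that the max-vs-max comparison makes explicit why the macroscopic location of the entry point is irrelevant, which is the point you single out as the crux. One small wording issue: you say Lemma~\ref{lm-til} lets you ``lower'' the target from $w_n-k_*e_1-e_2$ to $w_n-e_1-e_2$, but since $k_*\ge1$ this moves the $e_1$-coordinate \emph{up}; it is the increment that decreases, which is what you need, so the math is sound even if the description reads oddly. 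Also, ``it suffices to prove the first inequality of \eqref{v:832}'' overstates the reduction --- you do go on to treat the $v_n$ inequality separately (using the identical-range observation, rather than the subrange trick), and \eqref{v:833} by symmetry, so the proof is in fact complete; just rephrase so it doesn't promise more than the first paragraph delivers.
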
 
 
 \begin{proof}     Part (i).    We prove the second statement of \eqref{v:805}.   The maximizing path to $w_n$ comes through either $w_n-e_1$ or $w_n-e_2$.  So to get a contradiction we can assume that $\P(A)>0$ for the event $A$ on which   $\wc\Gpp^{N\!E}_{a,w_n } (w_n-e_2\in x_{\bbullet}) = \wc\Gpp^{N\!E}_{a,w_n}$  happens for infinitely many $n$.   On the event $A$ we can take limits \eqref{v:801} and \eqref{v:802}  to get 
 \begin{align*}
  \sup_{0 \le \tau \le 1-t}\Bigl\{ \,\frac\tau\rho + \bigl(\sqrt{t} +\sqrt{1-t-\tau}\,\bigr)^2 \Bigr\}
 = \frac{t}{1-\rho}+\frac{1-t}{\rho}. 
 \end{align*} 
But by \eqref{v:801},  $\frac{t}{1-t}> \bigl(\frac{1-\rho}{\rho}\bigr)^2$ implies that the supremum on the left equals $\bigl(\sqrt t+\sqrt{1-t}\,\bigr)^2$  which is strictly less than the right-hand side by \eqref{v:ch8}.    Thus $\P(A)>0$ is not possible.  
 
 \medskip 
 
  Part (ii).     We prove the statements for $w_n$.   By a combination of developments from above we derive the   sequence of inequalities and equalities in \eqref{v:836} below.  The steps are justified one by one after the inequalities.  In the first step,  $\wc\Gpp^N_{a, w_n+e_2}$ denotes a last-passage process in the rectangle $\{x:a\le x\le w_n+e_2\}$  that uses $\sigmawc$ weights on the horizontal lines below the top one, and the $I$ weights on the top horizontal line $x\cdot e_2=w_n\cdot e_2+1$ (north boundary of the rectangle), with an irrelevant  zero weight assigned at the top right corner $w_n+e_2$. 
  The third  step  below  is valid almost surely for large $n$.  This  proves the first inequality of \eqref{v:832}.    
\be\label{v:836}   \begin{aligned}
\wc\Gpp_{a,w_n}-\wc\Gpp_{a+e_1,w_n} &\le \wc\Gpp^N_{a,\,w_n+e_2}-\wc\Gpp^N_{a+e_1,\,w_n+e_2}  \\
&= \wc\Gpp^{N\!E}_{a,\,w_n+e_1+e_2} (w_n+e_2\in x_{\bbullet}) -\wc\Gpp^{N\!E}_{a+e_1,\,w_n+e_1+ e_2} (w_n+e_2\in x_{\bbullet}) \\
&= \wc\Gpp^{N\!E}_{a,\,w_n+e_1+e_2}   -\wc\Gpp^{N\!E}_{a+e_1,\,w_n+e_1+ e_2}   \\
&= I_{a+e_1}.  \end{aligned}\ee

 The first inequality in \eqref{v:836} above  is a special case  of  the first inequality of \eqref{til-8},  applied to the situation where the weights in \eqref{Gtil7}  are given by 
  \[  \wt Y_x =\begin{cases}  0&\text{if $x=w_n+e_2$,} \\  I_{x+e_1} &\text{if $x=w_n+e_2-ie_1$ for some $i\ge 1$,} \\   \sigmawc_x &\text{if } x \le w_n 
  \end{cases}   
  \]  

In the  first equality in \eqref{v:836} we move the upper right corner from $w_n+e_2$ one step to the right to $w_n+e_1+e_2$ so that we can include the boundary weights both on the north and east boundaries.   This is exactly the definition of $ \wc\Gpp^{N\!E}$ in \eqref{Gr26} and \eqref{Gr26}.    To preserve the equality we force the paths to go through $w_n+e_2$.  

The second equality in \eqref{v:836} is valid almost surely for large enough $n$, by the already proved \eqref{v:805}.     The last equality comes from  \eqref{INE3}. 

Similarly we reason for the $e_2$ increment:  
\be\label{v:838}   \begin{aligned}
\wc\Gpp_{a,w_n}-\wc\Gpp_{a+e_2,w_n} &\ge \wc\Gpp^N_{a,\,w_n+e_2}-\wc\Gpp^N_{a+e_2,\,w_n+e_2}  \\
&= \wc\Gpp^{N\!E}_{a,\,w_n+e_1+e_2} (w_n+e_2\in x_{\bbullet}) -\wc\Gpp^{N\!E}_{a+e_2,\,w_n+e_1+ e_2} (w_n+e_2\in x_{\bbullet}) \\
&= \wc\Gpp^{N\!E}_{a,\,w_n+e_1+e_2}   -\wc\Gpp^{N\!E}_{a+e_2,\,w_n+e_1+ e_2}   \\
&= J_{a+e_2}.  \end{aligned}\ee
This proves the last inequality of \eqref{v:833}. 
 \end{proof}   


Next we use the estimates above to build an exponential-$\alpha$ last-passage system from   limits of local gradients of last-passage values.  
  Denote the Exp$(\lambda)$ cumulative distribution function by 
\[  F_\lambda(s)=\begin{cases} 0,  &s<0\\ 1-e^{-\lambda s}, &s\ge 0. \end{cases} \]

 \begin{lemma}\label{v:lm-vn-a}  Let i.i.d.\ {\rm Exp(1)} weights $\Yw=(\Yw_x)_{x\in\Z^2}$ be given and define the point-to-point last-passage process $\{\Gpp_{x,y}\}$ by \eqref{v:GY}.  Fix  $0<\alpha<1$ and a sequence $v_n\in\Z^2$ such that $\abs{v_n}_1\to\infty$ and 
 \be\label{v:843}     \lim_{n\to\infty}  \frac{v_n}{\abs{v_n}_1} \, = \,\xi(\alpha)\,=\, 
\biggl(  \frac{(1-\alpha)^2}{(1-\alpha)^2+\alpha^2} \,,   \frac{\alpha^2}{(1-\alpha)^2+\alpha^2}\biggr) \, \in \,\ri\Uset  . 
 \ee
 
 \begin{enumerate}\itemsep=3pt
\item[{\rm(i)}]  The  limits 
 \be\label{v:845}  
 B^\alpha_{x,y} =\lim_{n\to\infty} [ G_{x, v_n}-G_{y,v_n}]  \ee
 exist  $\P$-almost surely for all $x,y\in\Z^2$ and satisfy additivity $B^\alpha_{x,y}+B^\alpha_{y,z}=B^\alpha_{x,z}$.  
 
 \item[{\rm(ii)}]    Define 
 \be\label{v:X}  
  \Xw^\alpha_x= B^\alpha_{x-e_1,x}\wedge  B^\alpha_{x-e_2,x}\qquad \text{for $x\in\Z^2$.}  
 \ee
 Then the process 
 \[ \{  \Xw^\alpha_x, \,B^\alpha_{x-e_1,x}, \,B^\alpha_{x-e_2,x}, \,\Yw_x:  x\in\Z^2\}  \] 
 is an exponential-$\alpha$ last-passage system as described in Definition \ref{v:d-exp-a}. 
 \end{enumerate}  
 \end{lemma}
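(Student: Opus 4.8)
The plan is to obtain the limit \eqref{v:845} by trapping the i.i.d.\ last-passage increments between the stationary increments of exponential systems with parameters slightly below and above $\alpha$, and then to extract all the structure in (ii) by letting those parameters tend to $\alpha$. For (i): since the i.i.d.\ $\mathrm{Exp}(1)$ weights $\sigmawc$ of Lemma~\ref{v:lm-89} have the same law as the given $\Yw$, I would first, on an enlargement of $\OSP$, couple $\Yw$ with the stationary increment families $(I^\rho_x,J^\rho_x)$ of Lemma~\ref{v:lm-89} — one for each rational $\rho\in(0,1)$ and each origin $u=(-m,-m)$, $m\in\Z_{>0}$ — so that $\sigmawc=\Yw$ on $u+\Z_{\ge0}^2$ and hence $\wc G=G$ there. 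Fixing $\alpha$ and rationals $\rho_1<\alpha<\rho_2$, strict monotonicity of $\xi_1(\cdot)$ gives $\xi_1(\rho_1)>\xi_1(\alpha)>\xi_1(\rho_2)$, so the given sequence $v_n$ (direction $\xi(\alpha)$) satisfies the ``$v_n$'' hypothesis of Lemma~\ref{v:lm-89} at parameter $\rho_1$ and the ``$w_n$'' hypothesis at parameter $\rho_2$ (use an arbitrary auxiliary sequence in the other slot each time). Since $\wc G=G$, \eqref{v:832}--\eqref{v:833} then give, a.s.\ for every $a\ge u$, the sandwich
\[
I^{\rho_1}_{a+e_1}\le\varliminf_n[G_{a,v_n}-G_{a+e_1,v_n}]\le\varlimsup_n[G_{a,v_n}-G_{a+e_1,v_n}]\le I^{\rho_2}_{a+e_1}
\]
and a symmetric one $J^{\rho_2}_{a+e_2}\le\cdots\le J^{\rho_1}_{a+e_2}$ for the $e_2$-difference. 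Everything is dominated by the integrable $I^{\rho_2}_{a+e_1}$, resp.\ $J^{\rho_1}_{a+e_2}$; taking expectations and sending $\rho_1\uparrow\alpha$, $\rho_2\downarrow\alpha$ forces $\E[\varlimsup]\le\tfrac1{1-\alpha}\le\E[\varliminf]$ (and $\le\tfrac1\alpha\le$ in the $e_2$ case), hence $\varlimsup=\varliminf$ a.s. So the limits in \eqref{v:845} exist for $y-x\in\{e_1,e_2\}$, with $\E B^\alpha_{x,x+e_1}=\tfrac1{1-\alpha}$, $\E B^\alpha_{x,x+e_2}=\tfrac1\alpha$; intersecting the countably many null sets (over $a$, over $m$, over the rational pairs) and telescoping $G_{x,v_n}-G_{y,v_n}$ along a nearest-neighbor lattice path yields \eqref{v:845} for all $x,y$, with additivity because the approximants telescope exactly.

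For the equations in (ii), put $\eta_x:=\Xw^\alpha_x$ as in \eqref{v:X}, $I_x:=B^\alpha_{x-e_1,x}$, $J_x:=B^\alpha_{x-e_2,x}$. Additivity gives $I_{x-e_2}-J_{x-e_1}=B^\alpha_{x-e_1-e_2,x-e_2}-B^\alpha_{x-e_1-e_2,x-e_1}=B^\alpha_{x-e_1,x-e_2}=I_x-J_x$, so \eqref{IJw5.1.7}--\eqref{IJw5.3.7} collapse to the elementary identities $(a\wedge b)+(a-b)^+=a$ and $(a\wedge b)+(a-b)^-=b$ with $a=I_x$, $b=J_x$, and hold automatically. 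The recovery identity $\Yw_y=B^\alpha_{y,y+e_1}\wedge B^\alpha_{y,y+e_2}$ — which is \eqref{IJw5.1.7}, equivalently \eqref{914} — I would get by subtracting $G_{y+e_i,v_n}$ from the recursion $G_{y,v_n}=\Yw_y+G_{y+e_1,v_n}\vee G_{y+e_2,v_n}$ (valid for $v_n$ in the direction \eqref{v:843}, hence far to the north-east) and letting $n\to\infty$: the two limits are $\Yw_y+(B^\alpha_{y+e_2,y+e_1})^+$ and $\Yw_y+(B^\alpha_{y+e_2,y+e_1})^-$, whose minimum is $\Yw_y$. Stationarity of the whole process (e.g.\ $B^\alpha_{x+z,y+z}(\w)=B^\alpha_{x,y}(\theta_z\w)$) follows from $G_{x,v}(\w)=G_{x-z,v-z}(\theta_z\w)$ and the invariance of the limiting direction under shifting $v_n$ by a constant.

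The remaining part of (ii) is the down-right independence together with the exponential marginals \eqref{w19}. Here the plan is: fix a down-right path $\cY$ and a finite subfamily $\cF$ of $\bigl(\{B^\alpha\text{-edge weights on }\cY\},\{\Xw^\alpha_z:z\in\cG_-\},\{\Yw_x:x\in\cG_+\}\bigr)$; it lies in a bounded region, hence in a quadrant $u+\Z_{\ge0}^2$ inside which $\cY$ completes, by a vertical and a horizontal ray, to a down-right path $\cY'$ with $\cG'_\pm=\cG_\pm$ on that region. For each rational $\rho$, Theorem~\ref{v:tIJw1}(i) makes the triple $\bigl(\{\Yw_z:z\in\cG'_-\},\{I^\rho,J^\rho\text{ on }\cY'\},\{\sigma^\rho_x:x\in\cG'_+\}\bigr)$ mutually independent, with marginals $\mathrm{Exp}(1)$, $\mathrm{Exp}(1-\rho)$/$\mathrm{Exp}(\rho)$, $\mathrm{Exp}(1)$, where $\sigma^\rho_x=I^\rho_x\wedge J^\rho_x$ by \eqref{IJw5.4}. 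The sandwich of the first paragraph, used now as a one-sided bound plus matching of means (from either side, since $\tfrac1{1-\rho}\to\tfrac1{1-\alpha}$ and $\tfrac1\rho\to\tfrac1\alpha$), gives $I^\rho_x\to B^\alpha_{x-e_1,x}$ and $J^\rho_x\to B^\alpha_{x-e_2,x}$ in $L^1$ as $\rho\to\alpha$, hence $\sigma^\rho_x\to\Xw^\alpha_x$ in $L^1$ by \eqref{v:X}. Passing to the limit $\rho\to\alpha$ (through a subsequence for a.s.\ convergence of the finitely many coordinates in $\cF$) preserves mutual independence and sends the marginals weakly to $\mathrm{Exp}(1)$, $\mathrm{Exp}(1-\alpha)$/$\mathrm{Exp}(\alpha)$, $\mathrm{Exp}(1)$; as $\cY$ and $\cF$ are arbitrary this is precisely \eqref{w19}--\eqref{v:Y4}, so the process $\{\Xw^\alpha_x,B^\alpha_{x-e_1,x},B^\alpha_{x-e_2,x},\Yw_x\}$ is an exponential-$\alpha$ last-passage system in the sense of Definition~\ref{v:d-exp-a}.

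The genuinely delicate points — and where I expect the real work to lie — are (a) the bookkeeping of the coupling $\sigmawc=\Yw$, so that $\wc G=G$ while the single field $\Yw$ is simultaneously coupled (with the correct orientation) to all the stationary-$\rho$ systems, and (b) upgrading the a.s.\ sandwich to $L^1$, hence in-probability, convergence of $I^\rho,J^\rho$ to the Busemann increments \emph{uniformly over all lattice points}. Once these are in place, the passage $\rho\to\alpha$ transports the independence and the exact marginals from Theorem~\ref{v:tIJw1}(i) to $B^\alpha$ with little further effort, and everything else above is either an expectation estimate or an algebraic identity.
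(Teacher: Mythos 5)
Your proof follows the same basic architecture as the paper's — a sandwich of the increments $G_{a,v_n}-G_{a+e_r,v_n}$ between stationary increments at parameters $\rho_1<\alpha<\rho_2$ via Lemma~\ref{v:lm-89}, equality of $\liminf$ and $\limsup$ forced by matching limiting means (rather than, as the paper does, matching distribution functions, though both work equally well), then passage of marginals and independence from the $\rho$-systems to the limit — but you implement the comparison by a \emph{literal} coupling, $\sigmawc^{[\rho]}=\Yw$ on an enlargement of $\OSP$, whereas the paper deliberately works \emph{distributionally}: it builds the auxiliary systems $(\sigma, I^\lambda, J^\lambda, \sigmawc^{[\lambda]})$ and $(\sigma, I^\rho, J^\rho, \sigmawc^{[\rho]})$ on a \emph{separate} probability space, defines the auxiliary Busemann limits $\wc B^{[\lambda]}, \wc B^{[\rho]}$ there (where the a.s.\ sandwich of Lemma~\ref{v:lm-89} is immediate), and transports distributional bounds back to $B^\alpha$ via the distributional equality $\{\wc G^{[\lambda]}_{x,y}\}\deq\{\wc G^{[\rho]}_{x,y}\}\deq\{G_{x,y}\}$. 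This is the substantive difference, and it is exactly the point you flag in your last paragraph as delicate. The literal coupling does exist abstractly (sample $\Yw$, then draw $(\sigma, I^\rho, J^\rho)$ from their regular conditional distribution given $\sigmawc^{[\rho]}=\Yw$), but you never construct it, and it is genuinely awkward: the constraint $\sigmawc^{[\rho]}=\Yw$ destroys the independence of $(\sigma, I^\rho, J^\rho)$ \emph{conditionally}, so one must be careful that all one needs is the correct \emph{unconditional} joint law — the coupling is legal but takes real bookkeeping, as you anticipate. The distributional route buys you freedom from all that; nothing is lost because $\wb B,\ul B,B^\alpha$ are a.s.\ functionals of the $\Yw$ field alone, so only their laws are needed. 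Two smaller remarks: the phrase \emph{uniformly over all lattice points} in your flagged point (b) overstates what is required — since the collection $\cF$ is finite, pointwise $L^1$ convergence at each of finitely many sites suffices, and that is exactly what the expectation sandwich gives; and your reduction of \eqref{IJw5.2.7}--\eqref{IJw5.3.7} to the elementary identities via $I_{x-e_2}-J_{x-e_1}=I_x-J_x$ and the recovery computation via the recursion $G_{y,v_n}=\Yw_y+G_{y+e_1,v_n}\vee G_{y+e_2,v_n}$ are both correct and match the paper's verifications.
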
 
 
 \begin{proof}  Part (i).   Fix $a\in\Z^2$ and let 
 \be\label{845.81}   \wb B=\varlimsup_{n\to\infty} [ \Gpp_{a, v_n}-\Gpp_{a+e_1,v_n}]
   \quad\text{and}\quad 
 \underline B=\varliminf_{n\to\infty} [ \Gpp_{a, v_n}-\Gpp_{a+e_1,v_n}]. \ee
 
  To get control of the distributions of $\wb B$ and $\ul B$, we realize the process $\{\Gpp_{a,y}\}$ on another probability space as an instance of $\{\wc\Gpp_{a,y}\}$.  Then we can apply bounds \eqref{v:832}--\eqref{v:833}. 
  
 Let $0<\lambda<\alpha<\rho<1$.    This implies 
 \be\label{v:846.4}  \frac{(1-\rho)^2}{(1-\rho)^2+\rho^2}< \frac{(1-\alpha)^2}{(1-\alpha)^2+\alpha^2}<\frac{(1-\lambda)^2}{(1-\lambda)^2+\lambda^2}. \ee 
 
 Take  any lattice point  $u\le a$ as an origin. 
 Suppose on some arbitrary probability space we have mutually independent variables 
 $\sigma=(\sigma_x)_{x\in\Z^2}$,  $(I^\lambda_{u+ie_1})_{i\ge 1}$,  $(J^\lambda_{u+je_2})_{j\ge 1}$,  $(I^\rho_{u+ie_1})_{i\ge 1}$,  and $(J^\rho_{u+je_2})_{j\ge 1}$ with marginal distributions 
 \be\label{845.01}  \begin{aligned}  \sigma_x\sim \text{Exp}(1), \quad &I^\lambda_{u+ie_1}\sim \text{Exp}(1-\lambda), \quad   J^\lambda_{u+je_2}\sim \text{Exp}(\lambda ), \\
 &   I^\rho_{u+ie_1}\sim \text{Exp}(1-\rho), 
\quad \text{and}\quad  J^\rho_{u+je_2}\sim \text{Exp}(\rho). 
\end{aligned}  \ee
 In other words,  for parameters $\lambda$ and $\rho$ we have initial conditions of the kind described in \eqref{IJw} and \eqref{w7}.   Iterating with equations  \eqref{IJw5.1}--\eqref{IJw5.3}, on the quadrant $u+\Z_{\ge0}^2$,  construct two  processes  of the type \eqref{IJw9}: one denoted by 
 \be\label{v:847}    
  \{ \sigma_{x} ,\,   I^\lambda_{x-e_2}, \, J^\lambda_{x-e_1},  \,\wc\sigma^{[\lambda]}_{x-e_1-e_2}  :  x\in u+\Z_{>0}^2\}   
\ee
with parameter $\lambda$, and the other denoted by 
 \be\label{v:848}  
  \{ \sigma_{x} ,\,   I^\rho_{x-e_2}, \, J^\rho_{x-e_1},  \,\wc\sigma^{[\rho]}_{x-e_1-e_2}  :  x\in u+\Z_{>0}^2\}   
\ee
with parameter  $\rho$.  By construction these processes have the properties given in Theorem \ref{v:tIJw1}, process  \eqref{v:847} with parameter $\lambda$   and  with configuration $\wc\sigma^{[\lambda]}$ replacing $\wc\w$, and process \eqref{v:848} with parameter $\rho$ (as stated in Theorem \ref{v:tIJw1})   but  with  $\wc\sigma^{[\rho]}$ replacing $\wc\w$.   

We stipulated   that the initial edge weights in \eqref{845.01}  on the axes $\{u+ie_k: i\ge1, k=1,2\}$ for the $\lambda$ and $\rho$ systems were independent. In fact 
 the coupling between processes  \eqref{v:847} and \eqref{v:848} is immaterial because  the two processes will not be used jointly.

The key point is that  both $\{\wc\sigma^{[\lambda]}\}_{x\,\in\, u+\Z_{\ge0}^2}$ and $\{\wc\sigma^{[\rho]}\}_{x\,\in\, u+\Z_{\ge0}^2}$ are i.i.d.\ Exp(1) variables.  The superscripts $[\lambda]$ and $[\rho]$ simply remind us that these variables were constructed from edge variables with parameters $\lambda$ and $\rho$, respectively.   
   As  in \eqref{Gch}, define  last-passage processes $\wc\Gpp^{[\lambda]}$ and  $\wc\Gpp^{[\rho]}$ that use  i.i.d.\ Exp(1) weights $\wc\sigma^{[\lambda]}$ and $\wc\sigma^{[\rho]}$, respectively: 
 \be\label{Gch17} 
  \wc \Gpp^{[\lambda]}_{x,y}=\max_{x_{\brbullet}\,\in\,\Pi_{x,y}}\sum_{k=0}^{\abs{y-x}_1}\sigmawc^{[\lambda]}_{x_k} 
  \quad\text{and}\quad 
 \wc \Gpp^{[\rho]}_{x,y}=\max_{x_{\brbullet}\,\in\,\Pi_{x,y}}\sum_{k=0}^{\abs{y-x}_1}\sigmawc^{[\rho]}_{x_k}   
  \qquad\text{for $y\ge x\ge u$.} 
 \ee
The  distributional equality of last-passage  processes 
\be\label{845.38} 
\{ \wc\Gpp^{[\lambda]}_{x,y}: y\ge x\ge u\} \; \deq \; \{ \wc\Gpp^{[\rho]}_{x,y}: y\ge x\ge u\} \; \deq \; \{ \Gpp_{x,y}: y\ge x\ge u\}
\ee
follows   because they are defined the same way from i.i.d.\ Exp(1) weights. 


We derive bounds for the distribution functions of $\wb B$ and $\underline B$. 
\begin{align*}
\P\{ \wb B \le s\}&= \P\bigl\{ \, \varlimsup_{n\to\infty} ( G_{a, v_n}-G_{a+e_1,v_n}) \le s \bigr\}
= \P\bigl\{ \, \varlimsup_{n\to\infty} ( \wc\Gpp^{[\rho]}_{a, v_n}-\wc\Gpp^{[\rho]}_{a+e_1,v_n})  \le s \bigr\}  \\
& \ge \P\{ I^\rho_{a+e_1} \le s\} = F_{1-\rho}(s)  
\end{align*}
and 
\begin{align*}
\P\{ \underline B \le s\}&= \P\bigl\{ \, \varliminf_{n\to\infty} ( G_{a, v_n}-G_{a+e_1,v_n}) \le s \bigr\}
= \P\bigl\{ \, \varliminf_{n\to\infty} ( \wc\Gpp^{[\lambda]}_{a, v_n}-\wc\Gpp^{[\lambda]}_{a+e_1,v_n})  \le s \bigr\}  \\
& \le \P\{ I^\lambda_{a+e_1} \le s\} = F_{1-\lambda}(s). 
\end{align*}
Above we first replaced the weights $\w$ with $\wc\sigma^{[\lambda]}$ and $\wc\sigma^{[\rho]}$, respectively.   Then we applied \eqref{v:832},  as justified by \eqref{v:846.4}.  Last we used the  known distributions of the $I$ increment variables.     Since $\wb B\ge\underline B$ always, we have deduced that 
\[  F_{1-\rho}(s) \le  \P\{ \wb B \le s\} \le \P\{ \underline B \le s\} \le F_{1-\lambda}(s)
\qquad \text{for all $\lambda, \rho$ such that $\lambda<\alpha<\rho$. }   
\]
Letting $\lambda\nearrow\alpha$ and $\rho\searrow\alpha$ allows us to conclude that $\wb B=\underline B\sim$ Exp$(1-\alpha)$.    This proves the limit in \eqref{v:845} for $(x,y)=(a,a+e_1)$.    Proof of the limit for $(x,y)=(a,a+e_2)$  proceeds analogously.   
Since $a\in\Z^2$ was  arbitrary, we have the limit in \eqref{v:845} for all nearest-neighbor pairs $x,y$.  

An arbitrary increment $y-x$ can be decomposed into a sum of  nearest-neighbor increments, and then the limit   follows  for all pairs $x,y$ by the additivity on the right-hand side of  \eqref{v:845}.  Along the way one also derives the   additivity $B^\alpha_{x,y}+B^\alpha_{y,z}=B^\alpha_{x,z}$.  

\medskip 

Part (ii).   We need to verify the properties in Definition \ref{v:d-exp-a}.  We begin with the joint distribution of $B^\alpha$-increments along a down-right path.  

Consider   the joint distribution of $k+\ell$ nearest-neighbor increments  $B^\alpha_{x_i,x_i+e_1}$ and $B^\alpha_{y_j,y_j+e_2}$ for $1\le i\le k$ and $1\le j\le \ell$.  Fix an origin $u$ such that all $x_i$ and $y_j$ lie in the quadrant $u+\Z_{\ge0}^2$.   
By the distributional equality \eqref{845.38},    part (i)  gives the existence of the almost  sure limits 
\[  \wc B^{[\lambda]}_{x,y} =\lim_{n\to\infty} ( \Gpp^{[\lambda]}_{x, v_n}-\Gpp^{[\lambda]}_{y,v_n})  
\quad\text{and}\quad 
 \wc B^{[\rho]}_{x,y} =\lim_{n\to\infty} ( \Gpp^{[\rho]}_{x, v_n}-\Gpp^{[\rho]}_{y,v_n}) 
 \quad \forall x,y\in u+\Z_{\ge0}^2.  
 \]

  Then, first by distributional equality of processes,  
\begin{align*}
&\P\bigl\{  B^\alpha_{x_i,\,x_i+e_1}\le s_i, \, B^\alpha_{y_j,\,y_j+e_2}>t_j\ \ \forall i\in[k], j\in[\ell]\,\bigr\} \\
&=
\P\bigl\{  \wc B^{[\lambda]}_{x_i,\,x_i+e_1}\le s_i, \, \wc B^{[\lambda]}_{y_j,\,y_j+e_2}>t_j\ \ \forall i\in[k], j\in[\ell]\,\bigr\}  \\
&\le  \P\bigl\{  I^\lambda_{x_i+e_1}\le s_i, \, J^\lambda_{y_j+e_2}>t_j\ \ \forall i\in[k], j\in[\ell]\,\bigr\} 
\end{align*}
The last step came from the inequalities \eqref{v:832} and \eqref{v:833} (the case of $v_n$ of those inequalities is the right one to look at).   Similarly, using the remaining two inequalities of \eqref{v:832} and \eqref{v:833}, we deduce 
\begin{align*}
&\P\bigl\{  B^\alpha_{x_i,\,x_i+e_1}\le s_i, \, B^\alpha_{y_j,\,y_j+e_2}>t_j\ \ \forall i\in[k], j\in[\ell]\,\bigr\} \\
&=
\P\bigl\{  \wc B^{[\rho]}_{x_i,\,x_i+e_1}\le s_i, \, \wc B^{[\rho]}_{y_j,\,y_j+e_2}>t_j\ \ \forall i\in[k], j\in[\ell]\,\bigr\}  \\
&\ge  \P\bigl\{  I^\rho_{x_i+e_1}\le s_i, \, J^\rho_{y_j+e_2}>t_j\ \ \forall i\in[k], j\in[\ell]\,\bigr\} . 
\end{align*}
Assume in particular now that the edges $\{x_i,x_i+e_1\}$ and $\{y_j,y_j+e_2\}$ lie on a given down-right path.   Then,   part (i) of Theorem \ref{v:tIJw1} applied to the processes \eqref{v:847} and \eqref{v:848} turns the bounds above into   
\begin{align*}
\prod_{i\in[k]}  F_{1-\rho}(s_i) \cdot   \prod_{j\in[\ell]} (1- F_\rho(t_j)) 
& \; \le \; \P\bigl\{  B^\alpha_{x_i,\,x_i+e_1}\le s_i, \, B^\alpha_{y_j,\,y_j+e_2}>t_j\ \ \forall i\in[k], j\in[\ell]\,\bigr\} \\
&\qquad\qquad \le \prod_{i\in[k]}  F_{1-\lambda}(s_i) \cdot   \prod_{j\in[\ell]} (1- F_\lambda(t_j))  . 
\end{align*} 
Letting again $\lambda\nearrow\alpha$ and $\rho\searrow\alpha$ shows  that along a down-right path, the variables $B^\alpha_{x,\,x+e_1}$ and $B^\alpha_{y,\,y+e_2}$ are independent with distributions Exp$(1-\alpha)$ and Exp$(\alpha)$, respectively, as required by part (a) of Definition \ref{v:d-exp-a}. 

Fix a down-right path $\cY$ in $\Z^2$.  
We verify the distributional properties on $\cG_-$, $\cY$ and $\cG_+$ inside an arbitrarily large rectangle.  

Consider a large  rectangle $\cD=\{x\in\Z^2: (M_0, N_0)\le x\le (M_1, N_1)\}$ whose lower left  and upper right corners are $(M_0, N_0)$ and $(M_1, N_1)$.  The $e_1$-edge variables $B^\alpha_{ (i, N_1), (i+1, N_1)}$ for $M_0\le i\le M_1-1$ on the north boundary, the  $e_2$-edge variables $B^\alpha_{ (M_1, j), (M_1, j+1)}$ for $N_0\le j\le N_1-1$ on the east boundary, and the bulk variables  $\Yw_x$ for $(M_0, N_0)\le x\le (M_1-1, N_1-1)$ are mutually independent.   (The $\Yw$-variables are independent of the $B^\alpha$-variables to their north and east because limit \eqref{v:845} constructs a $B^\alpha$-variable in terms of $\Yw$-weights to its north and east.)  

 In other words,  the $B^\alpha$-increments on the north and east boundaries and the $\Yw$-weights in the bulk of the rectangle satisfy the properties of the $I,J$ boundary weights and $\wc\w$-bulk weights in Theorem \ref{v:tIJw1}.  Next we show by a south and westward  induction that the joint distribution of $(\Xw^\alpha, B, \Yw)$ is the  correct one.  


We claim that the variables satisfy the equations 
\be\label{v:850} \begin{aligned}
\Xw^\alpha_{x+e_1+e_2}&=B^\alpha_{x+e_2,\,x+e_1+e_2} \wedge B^\alpha_{x+e_1,\,x+e_1+e_2} \\
B^\alpha_{x,\,x+e_1}&=\Yw_x+(B^\alpha_{x+e_2,\,x+e_1+e_2} -  B^\alpha_{x+e_1,\,x+e_1+e_2})^+ \\
B^\alpha_{x,\,x+e_2}&=\Yw_x+(B^\alpha_{x+e_2,\,x+e_1+e_2} -  B^\alpha_{x+e_1,\,x+e_1+e_2})^-. 
\end{aligned} \ee
In contrast with the iteration \eqref{IJw5.1}--\eqref{IJw5.3}, the equations above proceed to the south and  west.

The first equation in \eqref{v:850}  is definition \eqref{v:X}. The second and third come from the limits \eqref{v:845}. For example, 
\begin{align*}
B^\alpha_{x,\,x+e_1} &=\lim_{n\to\infty} [ G_{x, v_n}-G_{x+e_1,\,v_n}] 
=\lim_{n\to\infty} [ \Yw_x + G_{x+e_1,\,v_n}\vee G_{x+e_2,\, v_n}-G_{x+e_1,\,v_n}] \\
&=\Yw_x + \lim_{n\to\infty} [G_{x+e_2,\, v_n}-G_{x+e_1,\,v_n}]^+  \\
&=\Yw_x + \lim_{n\to\infty} \bigl[ (G_{x+e_2,\, v_n}-G_{x+e_1+e_2,\, v_n}) - (G_{x+e_1,\,v_n}-G_{x+e_1+e_2,\, v_n}) \bigr]^+  \\
&=\Yw_x+[B^\alpha_{x+e_2,\,x+e_1+e_2} -  B^\alpha_{x+e_1,\,x+e_1+e_2}]^+. 
\end{align*}

By Lemma \ref{v:lmIJw} and induction,  for any down-right path from the upper left corner  $(M_0,N_1)$ to the lower right corner  $(M_1, N_0)$, inside the rectangle $\cD$, the   $B^\alpha$-increments on the path, the $\Yw$-weights below and to the left of the path, and the $\Xw^\alpha$ weights above and  to the right of the path,  are all independent with the correct marginal distributions stipulated in \eqref{w19}.      Part (a) of Definition \ref{v:d-exp-a} has been verified. 

It remains to check that equations \eqref{IJw5.1}--\eqref{IJw5.3} are satisfied. 
\begin{align*}
&B^\alpha_{x, x+e_1}\wedge B^\alpha_{x,x+e_2} = 
\lim_{n\to\infty} [ \Gpp_{x, v_n}-\Gpp_{x+e_1, v_n}] \wedge [ \Gpp_{x, v_n}-\Gpp_{x+e_2, v_n}]\\
&= \lim_{n\to\infty} [ \Yw_x+ \Gpp_{x+e_1, v_n}\vee\Gpp_{x+e_2, v_n} -\Gpp_{x+e_1, v_n}] \wedge [\Yw_x+ \Gpp_{x+e_1, v_n}\vee\Gpp_{x+e_2, v_n}-\Gpp_{x+e_2, v_n}]\\
&=\Yw_x + [\Gpp_{x+e_2, v_n} -\Gpp_{x+e_1, v_n}]^+ \wedge [\Gpp_{x+e_1, v_n} -\Gpp_{x+e_2, v_n}]^+ \\
&=\Yw_x. 
\end{align*}
This verifies \eqref{IJw5.1} (at $x$ instead of at  $x-e_1-e_2$). 

 By definition \eqref{v:X} and the additivity of $B^\alpha$, 
\begin{align*}
 B^\alpha_{x-e_1,\,x}&=  B^\alpha_{x-e_1,\,x}\wedge  B^\alpha_{x-e_2, \,x} +
(B^\alpha_{x-e_1, \,x}-B^\alpha_{x-e_2, \,x})^+\\
&= \Xw^\alpha_x+ (B^\alpha_{x-e_1-e_2, \, x-e_2}-B^\alpha_{x-e_1-e_2, \,x-e_1})^+ . 
\end{align*}
This is \eqref{IJw5.2}.  Equation \eqref{IJw5.3} is verified in a similar manner. 
 \end{proof}

 \begin{lemma}\label{v:lm-D}  Fix a countable dense  subset $D\subset(0,1)$. Then there exists an event $\Omega_0$ of full probability such that the following holds  for each  $\w\in\Omega_0$. 
 \begin{enumerate}
\item[{\rm(i)}]  For each    $\rho\in D$ the process $\{B^\rho_{x,y}(\w)\}_{x,y\in\Z^2}$ is well-defined by the limits in \eqref{v:845} for the specific sequence $v_n=\fl{n\xi(\rho)}$.     These processes satisfy the following inequalities: 
\be\label{v:843.1}  \begin{aligned} 
B^\lambda_{x,x+e_1}(\w) &\le  B^\rho_{x,x+e_1}(\w) \\
\text{and } \quad 
B^\lambda_{x,x+e_2}(\w) &\ge  B^\rho_{x,x+e_2}(\w) \qquad\text{for all $x\in\Z^2$ and $\lambda<\rho$ in $D$.} 
\end{aligned} \ee

\item[{\rm(ii)}]  For each    $\alpha\in(0,1)$,  and any sequence $u_n$ in $\Z^2$  such that $\abs{u_n}_1\to\infty$ and 
 \be\label{v:843.3}     \lim_{n\to\infty}  \frac{u_n}{\abs{u_n}_1} \, = \,  \xi(\alpha) \,=\, 
\biggl(  \frac{(1-\alpha)^2}{(1-\alpha)^2+\alpha^2} \,,   \frac{\alpha^2}{(1-\alpha)^2+\alpha^2}\biggr)  , 
 \ee
 we have these bounds: 
 \be\label{v:845.3}   \begin{aligned} 
 \sup_{\lambda\in D: \lambda<\alpha} B^\lambda_{x,x+e_1} &\le \varliminf_{n\to\infty} [ G_{x, u_n}-G_{x+e_1,u_n}]   \\
 &\le \varlimsup_{n\to\infty} [ G_{x, u_n}-G_{x+e_1,u_n}] 
 \le   \inf_{\rho\in D: \rho>\alpha} B^\rho_{x,x+e_1} \end{aligned}  \ee
 and 
 \be\label{v:845.5}   \begin{aligned} 
\sup_{\rho\in D: \rho>\alpha} B^\alpha_{x,x+e_2} &\le \varliminf_{n\to\infty} [ G_{x, u_n}-G_{x+e_2,u_n}]  \\
 &\le \varlimsup_{n\to\infty} [ G_{x, u_n}-G_{x+e_2,u_n}] 
 \le   \inf_{\lambda\in D: \lambda<\alpha} B^\alpha_{x,x+e_2} . \end{aligned}   \ee
 \end{enumerate} 
  \end{lemma}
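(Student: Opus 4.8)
The plan is to derive both parts from two ingredients only: the sequential Busemann limit of Lemma~\ref{v:lm-vn-a}(i), and the \emph{deterministic} monotonicity of planar last-passage increments in Lemma~\ref{lm-til}. Two elementary facts are used repeatedly. First, the map $\gamma\mapsto\xi_1(\gamma)=(1-\gamma)^2/((1-\gamma)^2+\gamma^2)$ is strictly decreasing on $(0,1)$ (this is exactly \eqref{v:846.4}), so $\lambda<\rho$ forces $\xi_1(\lambda)>\xi_1(\rho)$ and $\xi_2(\lambda)<\xi_2(\rho)$. Second, by Lemma~\ref{lm-til} the $e_1$-increment $\Gpp_{x,v}-\Gpp_{x+e_1,v}$ is nonincreasing in the first coordinate of $v$ and nondecreasing in the second, while the $e_2$-increment $\Gpp_{x,v}-\Gpp_{x+e_2,v}$ is nondecreasing in the first coordinate of $v$ and nonincreasing in the second. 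Consequently, if $v'$ has first coordinate no smaller and second coordinate no larger than those of $v$, then a monotone lattice staircase of $+e_1$- and $-e_2$-steps joins $v$ to $v'$, and chaining the one-step comparisons along it gives $\Gpp_{x,v'}-\Gpp_{x+e_1,v'}\le\Gpp_{x,v}-\Gpp_{x+e_1,v}$ and $\Gpp_{x,v'}-\Gpp_{x+e_2,v'}\ge\Gpp_{x,v}-\Gpp_{x+e_2,v}$, provided every staircase point $w$ satisfies $x+e_1\le w$ (resp.\ $x+e_2\le w$) so that the increments are finite.

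For part~(i), apply Lemma~\ref{v:lm-vn-a}(i) once for each $\rho\in D$ with the sequence $v_n=\fl{n\xi(\rho)}$, and let $\Omega_0$ be the intersection over the countable set $D$ of the resulting full-probability events; on $\Omega_0$ all the processes $\{B^\rho_{x,y}\}_{x,y\in\Z^2}$, $\rho\in D$, are simultaneously well-defined by \eqref{v:845} and additive. Fix $\lambda<\rho$ in $D$ and $x\in\Z^2$. Since $\lfloor\cdot\rfloor$ is monotone, $\xi_1(\lambda)>\xi_1(\rho)$ and $\xi_2(\lambda)<\xi_2(\rho)$ imply that $\fl{n\xi(\lambda)}$ has first coordinate $\ge$ and second coordinate $\le$ those of $\fl{n\xi(\rho)}$ for every $n$. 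Hence, for all $n$ large enough that the connecting staircase from $\fl{n\xi(\rho)}$ to $\fl{n\xi(\lambda)}$ stays in the region where the relevant increments are finite, the staircase comparison above yields $\Gpp_{x,\fl{n\xi(\lambda)}}-\Gpp_{x+e_1,\fl{n\xi(\lambda)}}\le\Gpp_{x,\fl{n\xi(\rho)}}-\Gpp_{x+e_1,\fl{n\xi(\rho)}}$ and $\Gpp_{x,\fl{n\xi(\lambda)}}-\Gpp_{x+e_2,\fl{n\xi(\lambda)}}\ge\Gpp_{x,\fl{n\xi(\rho)}}-\Gpp_{x+e_2,\fl{n\xi(\rho)}}$. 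Letting $n\to\infty$ on $\Omega_0$ gives \eqref{v:843.1}.

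For part~(ii), fix $\w\in\Omega_0$, a parameter $\alpha\in(0,1)$, and a sequence $u_n\in\Z^2$ with $\abs{u_n}_1\to\infty$ and $u_n/\abs{u_n}_1\to\xi(\alpha)$; since $\xi(\alpha)\in\ri\Uset$ has strictly positive coordinates, $u_n\in\Z_{>0}^2$ for large $n$, so $m_n:=\abs{u_n}_1$ is a positive integer with $m_n\to\infty$. Fix $\rho\in D\cap(\alpha,1)$. Comparing rays at the common $\ell^1$-scale $m_n$, we have $u_n/m_n\to\xi(\alpha)$ and $\fl{m_n\xi(\rho)}/m_n\to\xi(\rho)$ with $\xi_1(\alpha)>\xi_1(\rho)$ and $\xi_2(\alpha)<\xi_2(\rho)$, so for $n$ large $u_n$ has first coordinate $\ge$ and second coordinate $\le$ those of $\fl{m_n\xi(\rho)}$; the staircase comparison (from $\fl{m_n\xi(\rho)}$ to $u_n$) then gives, for all large $n$, $\Gpp_{x,u_n}-\Gpp_{x+e_1,u_n}\le\Gpp_{x,\fl{m_n\xi(\rho)}}-\Gpp_{x+e_1,\fl{m_n\xi(\rho)}}$ and $\Gpp_{x,u_n}-\Gpp_{x+e_2,u_n}\ge\Gpp_{x,\fl{m_n\xi(\rho)}}-\Gpp_{x+e_2,\fl{m_n\xi(\rho)}}$. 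Because $m_n\to\infty$ through the integers and $\w\in\Omega_0$, the right-hand sides converge to $B^\rho_{x,x+e_1}(\w)$ and $B^\rho_{x,x+e_2}(\w)$ respectively (the full integer sequence $\fl{k\xi(\rho)}$ converges by Lemma~\ref{v:lm-vn-a}(i), hence so does the subsequence indexed by $(m_n)$). Taking $\varlimsup_n$ and $\varliminf_n$ and then infimum/supremum over $\rho\in D\cap(\alpha,1)$ supplies the upper bound in \eqref{v:845.3} and the lower bound in \eqref{v:845.5}. The symmetric argument with $\lambda\in D\cap(0,\alpha)$, where $\xi_1(\lambda)>\xi_1(\alpha)$ and $\xi_2(\lambda)<\xi_2(\alpha)$ so that for large $n$ the point $u_n$ has first coordinate $\le$ and second coordinate $\ge$ those of $\fl{m_n\xi(\lambda)}$, supplies the lower bound in \eqref{v:845.3} and the upper bound in \eqref{v:845.5}; the middle inequalities $\varliminf\le\varlimsup$ are automatic.

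The only obstacle I anticipate is bookkeeping rather than mathematics: one must (a) rescale the reference rays by $m_n=\abs{u_n}_1$ so that $\fl{m_n\xi(\gamma)}$ sits at essentially the same $\ell^1$-distance as $u_n$ and is therefore coordinatewise comparable to it once $n$ is large; (b) check that the connecting monotone staircases remain in the region where the $G$-increments are finite, which holds for all large $n$ uniformly in $\w$ precisely because the directions $\xi(\alpha),\xi(\gamma)$ have strictly positive coordinates; and (c) observe that convergence of the increments along $\fl{k\xi(\rho)}$ as $k\to\infty$ transfers to the integer subsequence $(m_n)$ on the same full-probability event $\Omega_0$, so no further null sets are introduced. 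Everything else is a direct transcription of Lemmas~\ref{lm-til} and \ref{v:lm-vn-a}.
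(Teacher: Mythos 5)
Your proof is correct and follows essentially the same route as the paper: define $\Omega_0$ by intersecting the full-probability events from Lemma~\ref{v:lm-vn-a}(i) over the countable set $D$, then use the deterministic monotonicity of increments in Lemma~\ref{lm-til} to compare $u_n$ (or $\fl{n\xi(\lambda)}$) with $\fl{\abs{u_n}_1\xi(\rho)}$ coordinatewise and pass to the limit. The paper handles parts (i) and (ii) in a single argument by bracketing $u_n$ between $\fl{b_n\xi(\lambda)}$ and $\fl{b_n\xi(\rho)}$ for $\lambda<\alpha<\rho$ in $D$; you prove part (i) separately and then apply the same comparison for part (ii), but the ingredients and the outcome are identical.
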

  
\begin{proof}  Define $\Omega_0$ to be the event on which  the limits in \eqref{v:845}  hold for each $\rho\in D$,  for the specific sequence $v_n=\fl{n\xi(\rho)}$.    Then fix  $\w\in\Omega_0$.      Let $0<\lambda<\alpha<\rho<1$ be such that $\lambda, \rho\in D$.    Let $u_n$ be any sequence in $\Z^2$  such that $\abs{u_n}_1\to\infty$ and 
\eqref{v:843.3}   holds.    Let $b_n=\abs{u_n}_1$.      Then \eqref{v:846.4} implies that for large $n$,  
\be\label{850.8}  \begin{aligned}
\fl{b_n\xi(\rho)}\cdot e_1 < u_n\cdot e_1<   \fl{b_n\xi(\lambda)}\cdot e_1 
\quad\text{and}\quad
 \fl{b_n\xi(\lambda)}\cdot e_2< u_n\cdot e_2<   \fl{b_n\xi(\rho)}\cdot e_2 .
\end{aligned}\ee
Then Lemma \ref{lm-til} gives  the bounds 
 \begin{align*}
 G_{x, \fl{b_n\xi(\lambda)} }-G_{x+e_1, \fl{b_n\xi(\lambda)}}\le G_{x, u_n}-G_{x+e_1,u_n} 
 \le G_{x, \fl{b_n\xi(\rho)}}-G_{x+e_1, \fl{b_n\xi(\rho)}}
 \end{align*} 
 and 
  \begin{align*}
G_{x, \fl{b_n\xi(\rho)}}-G_{x+e_2, \fl{b_n\xi(\rho)}} \le G_{x, u_n}-G_{x+e_2,u_n} 
 \le   G_{x, \fl{b_n\xi(\lambda)} }-G_{x+e_2, \fl{b_n\xi(\lambda)}}.  
 \end{align*} 
 All the inequalities claimed  follow by taking $n\to\infty$.  
 \end{proof}

\medskip 

%
%
%
%
%

\begin{proof}[Proof of Theorem \ref{t:buse}]  We start  with a countable dense  subset $D\subset(0,1)$, the processes $B^\lambda$ for $\lambda\in D$ defined by the limits 
\be\label{860.01}  B^\lambda_{x,y}=  \lim_{n\to\infty}[ G_{x, \fl{n\xi(\lambda)} }-G_{y, \fl{n\xi(\lambda)}} \,] 
\ee
on  the event $\Omega_0$ of full probability given in Lemma \ref{v:lm-D}.    For each $\lambda\in D$ we have  the additivity 
\be\label{860.1}   B^\lambda_{x,y}+B^\lambda_{y,z}=B^\lambda_{x,z}
\ee
and  from Lemma \ref{v:lm-vn-a} we know that,  with 
 $ \Xw^\lambda_x= B^\lambda_{x-e_1,x}\wedge  B^\lambda_{x-e_2,x}$,   
   the process 
 $ \{  \Xw^\lambda_x, B^\lambda_{x-e_1,x}, B^\lambda_{x-e_2,x}, \Yw_x:  x\in\Z^2\}  $ 
 is an exponential-$\lambda$ last-passage system as described in Definition \ref{v:d-exp-a}.

  Let $\Omega_1$ be the subset of $\Omega_0$ on which 
\be\label{v:860}    \begin{aligned} 
 \sup_{\lambda\in D: \lambda<\gamma} B^\lambda_{x,x+e_1} &= B^\gamma_{x,x+e_1}  = \inf_{\rho\in D: \rho>\gamma} B^\rho_{x,x+e_1}   \\
\quad\text{and}\quad  
\sup_{\rho\in D: \rho>\gamma} B^\rho_{x,x+e_2} &= B^\gamma_{x,x+e_2}  =  \inf_{\lambda\in D: \lambda<\gamma} B^\lambda_{x,x+e_2}\qquad\text{for all $\gamma\in D$} . 
\end{aligned}   \ee
Event $\Omega_1$  has full probability because of the monotonicity and control of distributions: for example, for the first equality in \eqref{v:860} reason as follows:     by \eqref{v:843.1}
\[  \sup_{\lambda\in D: \lambda<\gamma} B^\lambda_{x,x+e_1} = \lim_{D\ni\lambda\nearrow\gamma}  B^\lambda_{x,x+e_1} \le  B^\gamma_{x,x+e_1},  \] 
but  by Lemma \ref{v:lm-vn-a}(b) and the convergence of distributions,  both $\lim_{D\ni\lambda\nearrow\gamma}  B^\lambda_{x,x+e_1}$ and  $B^\gamma_{x,x+e_1}$ have Exp$(1-\gamma)$ distribution.  Hence they agree almost surely.  From \eqref{v:860},    the monotonicity in \eqref{v:843.1}, and additivity to extend from nearest-neighbor increments, we get continuity of the process on $D$: 
\be\label{v:860.16}
\lim_{D\ni\lambda\,\to\,\gamma}  B^\lambda_{x,y} =  B^\gamma_{x,y}
\qquad \text{ for all $\gamma\in D$, on the event    $\Omega_1$.} 
\ee
  By discarding another zero probability event we can assume that $\Omega_1$ is invariant under translations.  

In order to prove that $B^\gamma$ is a stationary cocycle (Definition \ref{def:cK}) for $\gamma\in D$, it remains to check the stationarity $B^\gamma_{x,y}(\theta_z\w)=B^\gamma_{z+x,\,z+y}(\w)$.    We apply bounds \eqref{v:845.3}--\eqref{v:845.5} to the sequence $u_n=\fl{n\xi(\gamma)}+z$ that satisfies \eqref{v:843.3} with limit  $\xi(\gamma)$. Together with \eqref{v:860} these bounds give (with an extension by additivity) almost sure equalities 
\begin{align*}
B^\gamma_{x,y}=  \lim_{n\to\infty}[ G_{x, \,\fl{n\xi(\gamma)}+z }-G_{y,\, \fl{n\xi(\gamma)}+z} ] 
\end{align*}
for any fixed $z$ and all $x,y$.  Consequently 
\begin{align*}
B^\gamma_{x+z,y+z}&=  \lim_{n\to\infty}[ G_{x+z, \,\fl{n\xi(\gamma)}+z }-G_{y+z, \,\fl{n\xi(\gamma)}+z} ] 
=\lim_{n\to\infty}[ G_{x, \,\fl{n\xi(\gamma)} }-G_{y, \,\fl{n\xi(\gamma)}} ] \circ\theta_z  \\
&=B^\gamma_{x,y}\circ\theta_z.  
\end{align*}

We have now checked that $B^\gamma$ is a cocycle for each $\gamma\in D$. 

\medskip 

We take the step to general $\alpha\in(0,1)$.  
For each $\alpha\in(0,1)$ define  processes $\Xw^\alpha$ and  $B^\alpha$ by taking right limits from values in $D$:  set for each $\w\in\Omega_1$ and $x,y\in\Z^2$
\be\label{861}  (\Xw^\alpha_x (\w), B^\alpha_{x,y}(\w)) =\lim_{D\ni\lambda\searrow\alpha}  ( \Xw^\lambda_x(\w), B^\lambda_{x,y}(\w)). 
\ee
These limits exist for nearest-neighbor pairs $x,y$ by the monotonicity in \eqref{v:843.1}, and extend to all pairs $x,y$ by additivity on the right.   The limit for   $\Xw^\alpha_x (\w)$ comes along as a function by virtue of the definition  $\Xw^\lambda_x (\w) =   B^\lambda_{x-e_1,x}(\w) \wedge B^\lambda_{x-e_2,x}(\w)$ which is then also preserved to the limit. 
 
Extend these functions in some arbitrary way outside $\Omega_1$.  By the arguments given above, we have not altered these functions on $\Omega_1$  if $\alpha$ happens to lie in $D$. 
The properties of both Definitions \ref{def:cK} and \ref{v:d-exp-a} are preserved by the limits:  $B^\alpha$ is a stationary cocycle and  $ \{  \Xw^\alpha_x, B^\alpha_{x-e_1,x}, B^\alpha_{x-e_2,x}, \Yw_x:  x\in\Z^2\}  $ 
 is an exponential-$\alpha$ last-passage system.   We have verified part (i) of the theorem. 
 
 \medskip 
 
 Inequalities \eqref{v:853.1} are valid on the   event $\Omega_1$ simultaneously for all $\lambda, \rho\in D$ and preserved by the limit in \eqref{861}.  \eqref{v:860.16}, \eqref{861} and the monotonicity and additivity together give the cadlag paths $\lambda\mapsto B^\lambda_{x,y}(\w)$ for $\w\in\Omega_1$.   Part (ii) is proved. 
 
 \medskip 
 
 For part (iii), fix $0<\alpha<1$ and  let $\Omega^{(\alpha)}_2$ be the intersection of the event $\Omega_1$ above (which is contained in the event $\Omega_0$ of Lemma \ref{v:lm-D}) with the event on which 
 \be\label{864} 
 \sup_{\lambda\in D: \lambda<\alpha} B^\lambda_{x,x+e_1} =  \inf_{\rho\in D: \rho>\alpha} B^\rho_{x,x+e_1}  
\quad\text{ and }\quad 
\sup_{\rho\in D: \rho>\alpha} B^\alpha_{x,x+e_2} =  \inf_{\lambda\in D: \lambda<\alpha} B^\alpha_{x,x+e_2} . 
 \ee
The equalities above hold with probability $1$ by the argument used already above.  First, by monotonicity inequality  $\le$  holds in both  equalities above.   Then the  suprema and infima are limits, and the left- and right-hand sides of the equalities above are  equal in distribution.   Hence the left- and right-hand sides agree almost surely.  As a consequence we get limit \eqref{v:855.7}.

The coincidence of the lower and upper bounds in \eqref{v:845.3}--\eqref{v:845.5} imply that the claimed limit in \eqref{v:855}  holds for nearest-neighbor pairs $x,y$.   Extend to all $x,y$ by additivity. 

\medskip

This completes the proof of Theorem \ref{t:buse}. 
\end{proof}   


\begin{proof}[Proof of Theorem \ref{t:unique}]   As already observed, part (ii) of the theorem follows from part (i).    Assumptions (a) and (b) of part (i) of Theorem \ref{t:unique} enable us to  repeat the development of  Lemmas  \ref{v:lm86}--\ref{v:lm-89} for  the LPP processes constructed  as in \eqref{Gr26}--\eqref{Gr26} but this time with the inputs  $I_x=A_{x-e_1,x}$, $J_x=A_{x-e_2,x}$ and $\wc\sigma_x=\Yw_x$.   



The conclusion of part (i) comes from an application of Lemma \ref{v:lm-89}.  
 Given $\rho$, let $0<\lambda_1<\rho<\lambda_2<1$. Then the characteristic directions satisfy $\xi_1(\lambda_1)>\xi_1(\rho)>\xi_1(\lambda_2)$.  Let $\{v_n\}$ and $\{w_n\}$ be sequences in $a+\Z_{\ge 0}^2$  such that 
  $\abs{v_n}_1\wedge \abs{w_n}_1\to\infty$ and 
   \[     \lim_{n\to\infty}  \frac{v_n}{\abs{v_n}_1} = \xi(\lambda_2) 
   \quad\text{and}\quad   \lim_{n\to\infty}  \frac{w_n}{\abs{w_n}_1} = \xi(\lambda_1) .  \]
 The hypotheses of Lemma \ref{v:lm-89}  are satisfied.  Since   we have the Busemann limits   in \eqref{v:855} for the weights $\wc\sigma=\Yw$,  conclusions \eqref{v:832}--\eqref{v:833} give
 \[  B^{\lambda_1}_{a, \,a+e_1} \le A_{a,\,a+e_1} \le B^{\lambda_2}_{a, \, a+e_1}
 \quad\text{$\P$-a.s.} \]
 with the opposite inequalities for the edge $(a,a+e_2)$.  Let $\lambda_1, \lambda_2\to\rho$ and apply limit \eqref{v:855.7} to conclude that $A_{a,a+e_i} = B^{\rho}_{a, a+e_i}$ a.s.  Then from the properties of an exponential system,  
 $U_x=A_{x-e_1,x}\wedge A_{x-e_2,x}=B^\rho_{x-e_1,x}\wedge B^\rho_{x-e_2,x}=\Xw^\rho_x$. 
\end{proof}

\subsection{Midpoint problem} 

Let again $\pi^{x,y}_\dbullet$ denote the (almost surely unique)  maximizing path or geodesic  for $\Gpp_{x,y}$ defined by \eqref{v:GY}.   The {\it midpoint problem} asks whether the probability that the geodesic goes through some  particular point roughly midway between $x$ and $y$ converges to zero,  as the distance  between $x$ and $y$ grows.    We prove this in the next theorem for the case where $y-x$ has a limiting direction.   Since Theorem \ref{t:sh2} already showed that the geodesic stays within distance $o(\abs{x}_1)$ of the ray from $x$ to $y$, we consider   intermediate points in this narrow range. 

In undirected first-passage percolation the midpoint problem was solved under a differentiability assumption on the limit shape by \cite{damr-hans-17} and without any such unverifiable assumptions by \cite{ahlb-hoff}. 

%

\begin{theorem} \label{th:midpt}    Let $u_n\le z_n\le v_n$ be three  sequences on $\Z^2$ that satisfy the following conditions:  $u_n$ and $v_n$ can be random but   $z_n$ is not  {\rm(}that is, $u_n$ and $v_n$ can be measurable  functions of $\w$ but $z_n$ does not depend on $\w${\rm)},   $\abs{v_n-z_n}_1\to\infty$ and  $\abs{z_n-u_n}_1\to\infty$,  and 
\[ \lim_{n\to\infty} \frac{v_n-z_n}{\abs{v_n-z_n}_1} = \xi(\alpha) 
\quad\text{and}\quad 
\lim_{n\to\infty} \frac{z_n-u_n}{\abs{z_n-u_n}_1} = \xi(\alpha). \]
Then  $\ddd\lim_{n\to\infty} \P\{ z_n\in \pi^{u_n,v_n} \}=0$. 

\end{theorem}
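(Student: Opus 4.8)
The plan is to reduce the event $\{z_n\in\pi^{u_n,v_n}\}$ to the statement that $u_n$ lies in the subtree hanging below $z_n$ in the direction-$\xi(\alpha)$ Busemann geodesic tree, and then to exploit the fact that this subtree is almost surely finite. First, since an up-right path meets the anti-diagonal through $z_n$ in exactly one vertex, $z_n\in\pi^{u_n,v_n}$ holds iff $\Gpp_{u_n,v_n}=\Gpp_{u_n,z_n}+\Gpp_{z_n,v_n}-\Yw_{z_n}$, and iff $u_n$ belongs to the subtree $\cS^{v_n}_{z_n}$ of the tree $\mathcal T^{v_n}$ of all geodesics terminating at $v_n$ (this tree is a.s.\ well defined because geodesics are a.s.\ unique; the parent of $x$ in $\mathcal T^{v_n}$ is the neighbour $x+e_i$ maximizing $\Gpp_{x+e_i,v_n}$, i.e.\ the one on which $\pi^{x,v_n}$ starts). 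Write $\mathcal T^\alpha$ for the Busemann tree, in which the parent of $x$ is the neighbour realizing $\Yw_x=B^\alpha_{x,x+e_1}\wedge B^\alpha_{x,x+e_2}$ (cf.\ \eqref{914}); since $B^\alpha$ is a stationary cocycle, the law of the subtree $\cS^\alpha_{z_n}$ rooted at $z_n$ is independent of $z_n$. Note that for $n$ large the points $u_n,z_n,v_n$ are distinct, as $\abs{z_n-u_n}_1,\abs{v_n-z_n}_1\to\infty$.

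Next I would show that, for each fixed $R$, $\P\bigl(\mathcal T^{v_n}\text{ and }\mathcal T^\alpha\text{ disagree inside }z_n+[-R,R]^2\bigr)\to0$ as $n\to\infty$. The parent of $x$ in $\mathcal T^{v_n}$ is determined by the sign of $\Gpp_{x+e_1,v_n}-\Gpp_{x+e_2,v_n}$, which by the Busemann limits converges to $B^\alpha_{x,x+e_1}-B^\alpha_{x,x+e_2}$, an a.s.\ nonzero quantity; so for large $n$ the two parents agree. Since here the base point $z_n$ and the (possibly random) target $v_n$ both move with $n$, I would establish this by a monotonicity sandwich: the comparison \eqref{v:853.1} in the parameter and the last-passage monotonicity of Lemma \ref{lm-til} in the endpoint catch the parent maps of $\mathcal T^{v_n}$ near $z_n$ between those of $\mathcal T^{\lambda_1}$ and $\mathcal T^{\lambda_2}$ for deterministic $\lambda_1<\alpha<\lambda_2$, for which Theorem \ref{t:buse}(iii) and Lemma \ref{v:lm-D}(ii) apply directly; then \eqref{v:855.7} lets $\lambda_i\to\alpha$. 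The hypotheses on the limiting directions of $v_n-z_n$ and $z_n-u_n$, together with $\xi(\lambda_1)\neq\xi(\alpha)\neq\xi(\lambda_2)$, are exactly what makes this work.

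On the event that $\mathcal T^{v_n}$ and $\mathcal T^\alpha$ agree on $z_n+[-R,R]^2$ and $\cS^\alpha_{z_n}\subset z_n+[-(R-1),R-1]^2$, a short deterministic argument — the parent pointers coincide throughout the window and $\cS^\alpha_{z_n}$ is compactly contained in it — gives $\cS^{v_n}_{z_n}=\cS^\alpha_{z_n}$, hence $u_n\notin\cS^{v_n}_{z_n}$ once $\abs{u_n-z_n}_1>2R$. Combining with the previous step, $\P(z_n\in\pi^{u_n,v_n})\le\P\bigl(\mathcal T^{v_n},\mathcal T^\alpha\text{ disagree on the window}\bigr)+\P\bigl(\cS^\alpha_0\not\subset[-(R-1),R-1]^2\bigr)$, so sending $n\to\infty$ and then $R\to\infty$ proves the theorem, \emph{provided $\cS^\alpha_0$ is almost surely finite}.

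The main obstacle is exactly this finiteness. From the marginals in Definition \ref{v:d-exp-a}(a) one computes that each vertex of $\mathcal T^\alpha$ has $(1-\alpha)+\alpha=1$ children in the mean, so $\mathcal T^\alpha$ is critical and one expects every rooted subtree to die out; but the offspring at neighbouring sites are not independent, so this needs an argument, and it amounts to ruling out a bi-infinite geodesic with one end in the characteristic direction $\xi(\alpha)$. One route: an infinite $\cS^\alpha_0$ contains, by K\"onig's lemma, an infinite down-left ray $(x_{-k})$ with $x_0=0$; since $B^\alpha$-geodesics are $\Gpp$-geodesics, $\Gpp_{x_{-k},0}=B^\alpha_{x_{-k},0}$ along the ray, and dividing by $k$ and applying the cocycle ergodic theorem (Theorem \ref{cc-ergthm}) with \eqref{EB8} together with the law of large numbers (Theorem \ref{thg}) forces $\gpp(\zeta)=\nabla\gpp(\xi(\alpha))\cdot\zeta$ for the limiting direction $\zeta$ of $-x_{-k}$, hence $\zeta=\xi(\alpha)$ by strict concavity of $\gpp$; the remaining step — excluding, with positive probability, such an infinite $\xi(\alpha)$-directed down-left ray, i.e.\ that a far point sits on its own Busemann geodesic — is a transversal-delocalization statement, for which a transversal fluctuation estimate (the $t^{2/3}$ bound proved later, or any cruder bound forcing the geodesic's transversal fluctuations to diverge) is enough. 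This delocalization input is the only genuinely quantitative ingredient.
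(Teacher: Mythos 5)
Your approach is conceptually clean and genuinely different from the paper's, but it has a real gap at exactly the point you flag: the almost-sure finiteness of the backward Busemann subtree $\cS^\alpha_0$.

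The paper's proof never touches this. It fixes a finite $m$, bounds the event $\{z_n\in\pi^{u_n,v_n}\}$ by $m$ inequalities comparing passage-time increments across the antidiagonal through $z_n$, shifts the origin to $z_n$ (legitimate since $z_n$ is deterministic), refreshes the weights on the antidiagonal by an independent copy, and passes $n\to\infty$ using the Busemann limits in both the NE and SW directions. Because these two Busemann functions depend on disjoint sets of i.i.d.\ weights, the resulting process $S_k=\sum_{i=1}^k[\wt B^\rho_{(i-1,-i+1),(i,-i)}-B^\rho_{(i,-i),(i-1,-i+1)}]$ is a symmetric random walk. The probability that $S_\bbullet$ stays above $-\Yw'_{(k,-k)}+\Yw'_0$ for $k\in[m]$ then goes to $0$ as $m\to\infty$ by Donsker. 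That argument is self-contained: it uses only the Busemann limits already proved and the recurrence of symmetric random walk, and it deliberately avoids any geometric statement about geodesic trees.

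Your route, by contrast, needs coalescence of the Busemann geodesics (equivalently, non-existence of a bi-infinite geodesic with a forward end in direction $\xi(\alpha)$), and this is where the proposal breaks down. The claim that ``a transversal fluctuation estimate (the $t^{2/3}$ bound ... or any cruder bound) is enough'' is not correct as stated. Theorem \ref{thGr2.5} controls fluctuations of the finite, point-to-point geodesic in the \emph{stationary} model; it says nothing directly about whether some site's forward Busemann geodesic can pass through $0$ for sites arbitrarily far in the $-\xi(\alpha)$ direction. Passing from fluctuation bounds to non-existence of bi-infinite geodesics (or even of $\xi(\alpha)$-directed ones) requires additional arguments --- the coalescence/tree-structure results of Licea--Newman/Ferrari--Pimentel type, or the modern coupling arguments --- none of which appear in this paper. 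The rest of your plan (the reduction $z_n\in\pi^{u_n,v_n}\Leftrightarrow u_n\in\cS^{v_n}_{z_n}$, the local monotonicity sandwich that makes $\mathcal{T}^{v_n}$ converge to $\mathcal{T}^\alpha$ near $z_n$, handling random $u_n,v_n$) looks plausible, though the local-convergence step also needs care since $z_n$ is not fixed --- you must shift to $z_n$ first, at the cost of comparing to the shifted Busemann tree. But the finiteness of $\cS^\alpha_0$ is the genuinely missing ingredient, and it is not a corollary of anything the proposal invokes. If you supply it (e.g.\ by citing coalescence of Busemann geodesics from \cite{ferr-pime-05} or proving it), your argument would give a correct and structurally illuminating alternative proof; the paper's random-walk approach is the one that sidesteps this need entirely.
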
 

 \begin{proof} 
 Let $^o\Gpp_{x,y}$ denote the last-passage value without the first weight, that is, 
 	\be\label{v:GYo}  \nn 
	  \Gppo_{x,y}=\max_{x_{\brbullet}\,\in\,\Pi_{x,y}}\sum_{k=1}^{\abs{y-x}_1}\Yw_{x_k}
	\ee
where a path from $x$ to $y$ is given by $x_\brbullet=\{x=x_0,x_1,\dotsc, x_{\abs{y-x}_1}=y\}$.  

Let $R$ denote reflection of the lattice across the origin that acts on weights by  $(R\Yw)_x=\Yw_{-x}$.  Then define $\wt B^\alpha_{x,y}=B^\alpha_{-x,-y}\circ R$.  By Theorem \ref{t:buse},  there  exists an event $\Omega^{(\alpha)}_3$ of full probability on which  the following holds: for  any  sequence $a_n\in\Z^2$ (random or not)  such that $\abs{a_n}_1\to\infty$ and 
 \be\label{v:853.37}   \nn  \lim_{n\to\infty}  \frac{a_n}{\abs{a_n}_1} \, = \,-\xi(\alpha)\,=\, 
-\biggl(  \frac{(1-\alpha)^2}{(1-\alpha)^2+\alpha^2} \,,   \frac{\alpha^2}{(1-\alpha)^2+\alpha^2}\biggr) ,  
  \ee
we have the limits  
 \be\label{v:855.77}    \nn
 \wt B^\alpha_{x,y}  =\lim_{n\to\infty} ( G_{a_n, \,x}-G_{a_n, \,y} \,) \qquad\forall x,y\in\Z^2. \ee
In other words, $\wt B^\alpha$ is a Busemann function in the southwest direction.  For fixed $x,y$ the distributional relationship is $\wt B^\alpha_{x,y}\deq B^\alpha_{y,x}$.

The point $z_n$ lies on the geodesic $\pi^{u_n,v_n}$ if and only if for all $k\in\Z$, 
\be\label{43.99}\nn \begin{aligned}
\Gpp_{u_n, z_n}+\Gppo_{z_n,v_n}&\ge \Gpp_{u_n, z_n+(k,-k)}+\Gppo_{z_n+(k,-k),v_n}\\
\text{if and only if} \quad \Gpp_{u_n, z_n}-\Gpp_{u_n, z_n+(k,-k)} &\ge  \Gppo_{z_n+(k,-k),v_n}-\Gppo_{z_n,v_n}.
\end{aligned}
\ee
In order to replace $\Gppo$ with $\Gpp$ so that we can directly apply Theorem \ref{t:buse}, we shift the origin to $z_n$ and augment the probability space with an independent  collection of i.i.d.\ Exp(1) weights $\{\Yw'_{(k,-k)}\}_{k\in\Z}$.  Let $\Gpp'$ denote last-passage values that replace the original weights $\{\Yw_{(k,-k)}\}_{k\in\Z}$ on the antidiagonal through the origin with the new  independent weights $\{\Yw'_{(k,-k)}\}_{k\in\Z}$.    

In the second step below,  shifting the lattice index  by $z_n$ does not change the joint distribution of the weights.  This is the reason for the assumption that  $z_n$ not be random.    Fix a positive integer $m$.  
 \begin{align*}
&\P\{ z_n\in \pi^{u_n,v_n}\} \\
&\le \P\bigl\{ \Gpp_{u_n, z_n}-\Gpp_{u_n, z_n+(k,-k)} \ge  \Gppo_{z_n+(k,-k),v_n}-\Gppo_{z_n,v_n} \ \ \forall k\in[m] \bigr\}    \\
&= \P\bigl\{ \Gpp_{u_n-z_n,0}-\Gpp_{u_n- z_n, (k,-k)} \ge  \Gppo_{(k,-k),v_n-z_n}-\Gppo_{0,v_n-z_n} \ \ \forall k\in[m] \bigr\}    \\
&= \P\bigl\{ \Gpp_{u_n-z_n,0}-\Gpp_{u_n- z_n, (k,-k)} \ge  \Gpp'_{(k,-k),v_n-z_n}-\Gpp'_{0,v_n-z_n} 
 -\Yw'_{(k,-k)}+\Yw'_{0} \ \ \forall k\in[m] \bigr\}    \\
&=\P\biggl\{ \; \sum_{i=1}^k [\Gpp_{u_n-z_n,(i-1, -i+1)}-\Gpp_{u_n-z_n,(i,-i)}]  \\
&\qquad\qquad \qquad\qquad
\ge   \sum_{i=1}^k [ \Gpp'_{(i,-i),v_n-z_n}-\Gpp'_{(i-1, -i+1),v_n-z_n} ] -\Yw'_{(k,-k)}+\Yw'_{0}  \ \ \forall k\in[m]  \biggr\} \\
&\underset{n\to\infty}\longrightarrow \P\biggl\{ \; \sum_{i=1}^k \wt B^\rho_{(i-1, -i+1),(i,-i)} 
\ge   \sum_{i=1}^k  B^\rho_{(i,-i),(i-1, -i+1)}  -\Yw'_{(k,-k)}+\Yw'_{0}  \ \ \forall k\in[m]  \biggr\}\\
&= \P\bigl\{ S_k 
\ge     -\Yw'_{(k,-k)}+\Yw'_{0}  \ \ \forall k\in[m]  \bigr\}\\
&\le \P\bigl\{ S_k  \ge     -\Yw'_{(k,-k)}  \ \ \forall k\in[m]  \bigr\}. 
\end{align*}

After the limit  above, $B^\rho$ is the Busemann function for the weights $\{\Yw'_{(k,-k)}, \Yw_{(k,-k)+x}: k\in\Z, x\in\Z_{>0}^2\}$  (on and to the right of the antidiagonal), while  $\wt B^\rho$ is the southwest Busemann function for the weights $\{\Yw_{(k,-k)-x}: k\in\Z, x\in\Z_{\ge0}^2\}$.  These two Busemann functions are independent because they are functions of distinct collections of i.i.d.\ weights.   The convergence of the probability above follows from the almost sure  limits of the Busemann functions because the limiting random variables have continuous  distributions.

  Above we defined the symmetric random walk 
\be\label{Sk8} S_k= \sum_{i=1}^k [ \,\wt B^\rho_{(i-1, -i+1),(i,-i)} - B^\rho_{(i,-i),(i-1, -i+1)}\,] . \ee
Random walk  $S_\brbullet$ is not independent of the weights  $ \Yw'_{(k,-k)}$ on the antidiagonal. 
We continue to bound the last probability as follows. 
\begin{align*}
&\P\bigl\{ S_k  \ge     -\Yw'_{(k,-k)}  \ \ \forall k\in[m]  \bigr\}\\
&\le   \P\bigl\{ \;\max_{1\le k\le m} \Yw'_{(k,-k)}   \ge     m^{1/4}    \bigr\}  +  \P\bigl\{ S_k  \ge     - m^{1/4}  \ \ \forall k\in[m]  \bigr\}\\
&\le m e^{-m^{1/4}}     +  \P\biggl\{ \;\inf_{0\le s\le 1}  \frac{S_{\fl{sm}}}{\sqrt m}   \ge     -m^{-1/4}     \biggr\}. 
\end{align*}
The last line converges to zero as $m\to\infty$ because $\inf_{0\le s\le 1} m^{-1/2}S_{\fl{sm}}$ converges in distribution to   $\inf_{0\le s\le 1} \sigma B_s$ where $\sigma^2$ is the variance  of a term in \eqref{Sk8}  and $B_\bbullet$ is standard  Brownian motion. 
\end{proof}

%


 \medskip 
 
  \section{Fluctuation exponents} 
 \label{sec:exp} 
 
  Before specializing to the two-dimensional   corner growth model with exponential weights, we discuss the general setting in the context of the Kardar-Parisi-Zhang (KPZ)  class of models, in arbitrary dimension. 
 
 \subsection{Heuristics in the KPZ class} \label{s:exp-gen}  Stochastic models in the KPZ 
 class are expected to have common values of two {\it fluctuation exponents} $\loexp$ and $\trexp$  that depend only on the dimension $d\ge 2$.  In the context of the last-passage percolation process $\Gpp_{0, \fl{N\vvec}}$  on $\Z^d$ discussed in Section \ref{s:LPPgen},  in a fixed direction $\vvec\in\R_{>0}^d$,   these exponents  are defined informally as follows:
 \begin{enumerate}
\item[(a)]   the fluctuations of the last-passage value $\Gpp_{0,\fl{N\vvec}}$ are expected to have order of magnitude $N^\loexp$; 
\item[(b)]  the  optimal path from $0$ to $\fl{N\vvec}$ is  expected to fluctuate  on the scale  $N^\trexp$ around the straight line segment from $0$ to ${N\vvec}$. 
 \end{enumerate}
%

Certain properties are expected to hold for these exponents.  We state two below and give heuristic justification. 

The {\it KPZ scaling relation} states that 
\be\label{kpz4} \loexp=2\trexp-1. 
\ee
It is expected to hold in broad generality in all dimensions.   A version of it has been proved by Chatterjee \cite{chat-13-aom} for first-passage percolation in two dimensions.  The proof was simplified and generalized by Auffinger and Damron \cite{auff-damr-14}  who also extended the result to positive temperature polymer models \cite{auff-damr-13-alea}. 

 The direction $\loexp\ge2\trexp-1$ is the easier of the two inequalities.  We give a nonrigorous  argument for this inequality as a consequence of the limiting shape function $\gpp$ having quadratic curvature.   The idea is simple: if the path wanders too much, it picks up inadmissibly deviant values of $\gpp$, by the assumed curvature of $\gpp$.   
  
Consider  the setting of the $d$-dimensional CGM with admissible steps $\range=\{e_1, \dotsc, e_d\}$ and an i.i.d.\ environment $\w=(\w_x)_{x\in\Z^d}$ with $\E(\abs{\w_x}^{d+\e})<\infty$ for some $\e>0$.   Then the continuous, concave, homogeneous shape function 
  \be\label{lln39}
\gpp(\uvec)= \lim_{N\to\infty} N^{-1} \Gpp_{0,\fl{N\uvec}}  
\ee
exists as an almost sure limit for all
  $\uvec\in\R_{\ge0}^d$.    
 
   Assume that for some fixed $\vvec\in\R_{>0}^d$, 
 \be\label{chi8}  \Gpp_{0,\fl{N\vvec}}=N\gpp(\vvec)+ \Theta_P(N^\loexp) \ee
 where $Y_N=\Theta_P(N^\loexp)$ means  that  $N^{-\loexp}\,Y_N$   is tight but does not  converge to zero.  
Suppose that  the second derivative of $\gpp$   at $\vvec$ is strictly negative definite:   for any vector $\wvec$, 
\[    D^2_\wvec\,\gpp(\vvec)=\frac{\partial^2}{\partial\theta^2}  \gpp(\vvec+\theta\wvec) \Big\vert_{\theta=0}= \wvec^T D^2\gpp(\vvec)\,\wvec<0. \]
 (The shape function is concave so a nonvanishing second derivative is negative.) 

If maximizing paths are to fluctuate on the scale $N^\trexp$, then let us assume that  the maximizing path for $\Gpp_{0,\,2N\vvec}$ goes through  the point $N\vvec+ N^\trexp \wvec_N$ for some nondegenerate, tight random vector $\wvec_N$.  (We drop integer parts to lighten up the notation.  In any case the argument presented below is not rigorous.)    Then 
\begin{align*}
&2N\gpp(\vvec) + \Theta_P(N^\loexp)  =   \Gpp_{0,\,2N\vvec}   = \Gpp_{0,\,N\vvec+ N^\trexp \wvec_N} +   \Gpp_{N\vvec+ N^\trexp \wvec_N,\,2N\vvec} \\[3pt]
&\qquad \qquad = N\gpp(\vvec+ N^{\trexp-1} \wvec_N)+ N\gpp(\vvec- N^{\trexp-1} \wvec_N) +  \Theta_P(N^\loexp) \\[3pt]
&\qquad \qquad \approx 2N\gpp(\vvec) +  N^{2\trexp-1}  D^2_{\wvec_N}\,\gpp(\vvec) +  \Theta_P(N^\loexp). 
\end{align*}
Cancelling the constant $2N\gpp(\vvec)$ leaves 
\[   \Theta_P(N^\loexp) \approx    N^{2\trexp-1}  D^2_{\wvec_N}\,\gpp(\vvec) +  \Theta_P(N^\loexp).\]
This forces  $2\trexp-1\le \loexp$.  

\medskip 

In two dimensions it is also expected that 
\be\label{kpz8} \loexp=\trexp/2. 
\ee
We give a heuristic justification for this relation under some natural assumptions.   Assume we have a Busemann function in   direction $\uvec$:
\be\label{heur78}   B^\uvec_{x,y}=\lim_{M\to\infty}  \bigl(  \Gpp_{x, M\uvec} - \Gpp_{y, M\uvec}\bigr). \ee
Assume that 
 in some direction $\vvec$, 
\be\label{heur79}    B^\uvec_{0, m\vvec} =  \nabla \gpp(\uvec)\cdot m\vvec  + \Theta_P( m^{1/2}) . \ee
This says that in direction $\vvec$,  $B^\uvec$ has fluctuation exponent  $\tfrac12$, in other words, diffusive fluctuations.   (These assumptions are satisfied in the exponential CGM, for example for $\vvec=e_1$ or $e_2$,  as verified by Theorem \ref{t:buse} and \eqref{EB8}.) 

 A natural {\it planar} assumption is that geodesics (maximizing paths) from different points in a particular direction eventually coalesce. Since maximizing paths fluctuate on the scale $N^\trexp$, it is reasonable to assume that if the initial points $x,y$  in \eqref{heur78} are separated by a distance of order $N^\trexp$, then  once $M$ is much larger than $N$,  the maximizing paths from $x$ and $y$ to $M\uvec$ have merged at $a_NN\uvec$ for some (tight random) constant $a_N$.  Then the limit value has been reached:
 \[   B^\uvec_{0, N^\trexp\vvec} = \Gpp_{0, \,a_NN\uvec}- \Gpp_{N^\trexp\vvec, \,a_NN\uvec}.\]
 Apply  \eqref{chi8} and \eqref{heur79} to the two sides above : 
 \begin{align*}   \nabla \gpp(\uvec)\cdot N^\trexp\vvec  + \Theta_P( N^{\trexp/2})
 &= N \gpp(a_N\uvec)- N \gpp(a_N\uvec-N^{\trexp-1}\vvec)  + \Theta_P( N^{\loexp})  \\
 &\approx  \nabla\gpp(a_N\uvec)\cdot  N^{\trexp}\vvec + \Theta_P( N^{\loexp}). 
 \end{align*}
 By homogeneity $\nabla \gpp(\uvec)=\nabla\gpp(a_N\uvec)$ and hence the above gives $\Theta_P( N^{\trexp/2}) \approx  \Theta_P( N^{\loexp})$.   This gives a heuristic justification of \eqref{kpz8}.

 \subsection{Exponents for the  stationary exponential corner growth model} 
 
 We derive the KPZ exponents $\loexp=\tfrac13$ and $\trexp=\tfrac23$ for the  stationary exponential CGM in 1+1 dimensions.  
 Recall the setting from Section \ref{v:s-stat-cgm}. 
The  parameter $0<\rho<1$ of the boundary weights is fixed.  We are   given  mutually independent random variables  
\be\label{e:IJw}  \{   \w_{x} ,\, I_{ie_1}, \, J_{je_2}  :  \,x\in \Z_{>0}^2, \, i,j\in \Z_{>0}\}  \ee
    with  marginal distributions 
\be\label{e:w7}\begin{aligned}
\w_{x}\sim \text{ Exp}(1),  \quad I_{ie_1}\sim \text{ Exp}(1-\rho)  , \quad \text{ and }  \quad  
J_{je_2}\sim \text{ Exp}(\rho)  .  
\end{aligned}\ee 
The last-passage process $G^\rho_{0,x}$  is defined for $x\ge 0$  by    $G^\rho_{0,0}=0$, 
  \be\label{e:Gr1} G^\rho_{0,\,me_1}=\sum_{i=1}^m I_{ie_1} 
  \quad\text{and}\quad
G^\rho_{0,\,ne_2}= \sum_{j=1}^n  J_{je_2} ,   \ee 
and then 
  for $(m,n)\in \Z_{>0}^2$, 
\be\label{e:Gr2}
G^\rho_{0,\,(m,n)}= \max_{1\le k\le m} \;  \Bigl\{  \;\sum_{i=1}^k I_{ie_1}  + G_{ke_1+e_2, \,(m,n)} \Bigr\}  
\bigvee
 \max_{1\le \ell\le n}\; \Bigl\{  \;\sum_{j=1}^\ell  J_{je_2}  + G_{\ell e_2+e_1, \,(m,n)} \Bigr\} .  
\ee
 For $a\in\Z_{>0}^2$ the quantity  $G_{a,(m,n)}$ inside the braces is the last-passage value defined in  \eqref{v:G} for i.i.d.\ Exp$(1)$ weights $\w$.

 We do not have a  closed form expression for $\Vvv[G^\rho_{0,(m,n)}]$ but we can access it well enough to show that it obeys the fluctuation exponent $1/3$ characteristic of the KPZ class.   However, there in an extra twist.   Notice in \eqref{e:w7} that the boundary weights  $\w_{ie_1}$ and $\w_{je_2}$ are larger on average than the bulk weights $\{\w_x\}_{x\in\Z_{>0}^2}$.  This implies that the boundaries are attractive to the maximizing path.  It turns out that only when we take the point $x$ to infinity in the  {characteristic direction} $((1-\rho)^2, \rho^2)$,   the pull of the boundaries balances out and  $G^\rho_{0,x}$ obeys KPZ fluctuations.  Otherwise the boundaries swamp the effects of the percolation and $G^\rho_{0,x}$ obeys the classical central limit theorem.  

Let $N$ be a scaling parameter that increases to $\infty$.  We consider the point-to-point last-passage percolation from $0$ to a point $(m,n)=(m(N), n(N))$ that is  taken to infinity as $N\to\infty$.     Let  $\kappa_N$ denote  the deviation of $(m,n)$ from the characteristic direction:  
\be\label{ch5} 
\kappa_N=\abs{\,m-N(1-\rho)^2\,}\vee\abs{\,n-N\rho^2\,}   .  
\ee

\begin{theorem}\label{thGr2}     Assume weight distributions \eqref{e:w7}   and let $a_0$ be a fixed positive constant.     Then there exists a constant  $0<C=C(\rho, a_0)<\infty$ such that 
\be\label{e:Gr11}   C^{-1}N^{2/3}\le  \Vvv[G^\rho_{0,(m,n)}]\le CN^{2/3}
\ee
holds for all  $(m,n)\in\Z_{>0}^2$ and $N\ge 1$ for which $\kappa_N$ in \eqref{ch5} satisfies 
 $\kappa_N\le a_0N^{2/3}$.  
\end{theorem}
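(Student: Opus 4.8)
The plan is to establish the variance bound $C^{-1}N^{2/3}\le \Vvv[G^\rho_{0,(m,n)}]\le CN^{2/3}$ via the coupling/differentiation method of Cator--Groeneboom and Balázs--Cator--Seppäläinen, exploiting the explicit stationary structure from Theorem \ref{v:tIJw1}. The first step is to set up the basic identity connecting the variance of $G^\rho_{0,(m,n)}$ to the exit point of the geodesic from the axes. Write $\tau=\tau^{(m,n)}$ for the signed exit point: $\tau>0$ if the geodesic to $(m,n)$ uses horizontal boundary edges $I_{e_1},\dots,I_{\tau e_1}$, and $\tau<0$ if it uses vertical boundary edges. Using the decomposition $G^\rho_{0,(m,n)}=\sum_{i=1}^m I_{(i,0)}+\sum_{j=1}^n J_{(m,j)}$ from \eqref{G9}, the additivity \eqref{IJw5.5} of the increment variables around unit squares, and the fact that the $I$-variables along a horizontal line and the $J$-variables along a vertical line are i.i.d.\ with the boundary marginals (a consequence of Theorem \ref{v:tIJw1}(i) applied to a down-right path that is first vertical then horizontal), one derives the \emph{variance identity}
\[
\Vvv[G^\rho_{0,(m,n)}] = \frac{m}{(1-\rho)^2} + \frac{n}{\rho^2} - 2\,\E\!\Bigl[\sum_{i=1}^{\tau^+}\frac{1}{(1-\rho)^2} - \ldots\Bigr],
\]
or more cleanly, differentiating the mean $\E[G^\rho_{0,(m,n)}]=m/(1-\rho)+n/\rho$ in the parameter $\rho$ and using a change-of-measure / Palm-type argument, obtain an exact formula of the form $\Vvv[G^\rho_{0,(m,n)}] = -\,\partial_\rho\bigl(\tfrac{m}{1-\rho}+\tfrac n\rho\bigr)\cdot(\text{something}) + \mathrm{const}\cdot\E|\tau|$-type expression. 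The clean outcome (as in \cite{bala-cato-sepp}) is that $\Vvv[G^\rho_{0,(m,n)}]$ is comparable to $\E|\tau^{(m,n)}|$ up to the choice of centering; the heart of the matter becomes to show $C^{-1}N^{2/3}\le \E|\tau^{(m,n)}| \le CN^{2/3}$ when $\kappa_N\le a_0 N^{2/3}$.

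The second step is the \emph{upper bound on the exit point}, which gives the upper bound on the variance. Here one compares the stationary process started from the characteristic-direction boundary parameter with one in which the boundary is perturbed: by monotonicity (Lemma \ref{lm-til}-type comparisons of last-passage increments) one shows that if the exit point $\tau$ is atypically large, say $\tau \ge r N^{2/3}$ with $r$ large, then the contribution $\sum_{i=1}^{\tau} I_{(i,0)}$ forces $G^\rho_{0,(m,n)}$ to exceed its mean by an amount of order $rN^{1/3}\cdot$ (boundary drift discrepancy), and simultaneously the i.i.d.\ bulk last-passage value $G_{(\tau,1),(m,n)}$ must be \emph{smaller} than its typical value $\gpp(m-\tau,n)$ by the quadratic curvature of $\gpp$ — a deviation of order $r^2 N^{1/3}$. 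Using the moderate-deviation upper tail estimates for the i.i.d.\ exponential LPP $G_{0,\fl{N\xi}}$ (these are the exponential-CGM moderate deviation bounds available in the literature, e.g.\ via \cite{joha} or the estimates in the appendix referenced as Appendix \ref{app:coupl}), one sums over dyadic ranges of $r$ to conclude $\P(|\tau|\ge r N^{2/3})\le C e^{-cr^3}$, hence $\E[\tau^2]\le C N^{4/3}$ and $\E|\tau|\le CN^{2/3}$. Feeding this into the variance identity gives $\Vvv[G^\rho_{0,(m,n)}]\le CN^{2/3}$. The role of the hypothesis $\kappa_N\le a_0 N^{2/3}$ is exactly to guarantee that the deterministic drift mismatch between the direction of $(m,n)$ and the characteristic direction $\xi(\rho)$ does not itself produce an order-$N^{2/3}$ shift that would swamp these bounds; this is where $a_0$ enters the constant $C$.

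The third step is the \emph{lower bound}, which is the more delicate direction and the main obstacle. One wants $\Vvv[G^\rho_{0,(m,n)}]\ge C^{-1}N^{2/3}$, equivalently (via the identity) $\E|\tau^{(m,n)}|\ge C^{-1}N^{2/3}$, i.e.\ the geodesic must travel a macroscopically nontrivial distance along a boundary. The strategy, following \cite{cato-groe-06, bala-cato-sepp}, is a contradiction argument: if the variance were $o(N^{2/3})$ along a subsequence, then by the variance identity the exit point would satisfy $\E|\tau|=o(N^{2/3})$, so with high probability $|\tau|\le \e N^{2/3}$ for small $\e$. Then $G^\rho_{0,(m,n)}\approx (\text{boundary sum of length }\le \e N^{2/3}) + G_{\text{i.i.d.}}$ from a point within $\e N^{2/3}$ of the origin to $(m,n)$; but the i.i.d.\ exponential LPP to a point at distance $\asymp N$ has variance of order $N^{2/3}$ \emph{bounded below} (this is the known lower bound on fluctuations of $G_{0,\fl{N\xi}}$ for i.i.d.\ exponential weights, again from the solvable-model literature, or obtainable by a separate coupling argument), and the short boundary sum has variance only $O(\e N^{2/3})$ and cannot cancel it. This gives $\Vvv[G^\rho_{0,(m,n)}]\ge C^{-1}N^{2/3}$, a contradiction for $\e$ small. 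The technical care needed is (i) making the ``$\approx$'' rigorous — the geodesic from the exit point on the axis onwards is an i.i.d.\ LPP but conditioned on the value of $\tau$, so one needs an independence/monotonicity decoupling, handled by sandwiching between LPP values to deterministic near-origin points; and (ii) having on hand a genuine \emph{lower} bound $\Vvv[G^{\mathrm{iid}}_{0,\fl{N\xi}}]\ge cN^{2/3}$ uniformly for $\xi$ near $\xi(\rho)$ — this is the input I expect to be the crux, and if it is not quotable directly it must itself be proved by the same coupling machinery (comparing the i.i.d.\ model to two stationary models with parameters straddling the one dual to $\xi$, and using that the two stationary models have different means so their common i.i.d.\ core must fluctuate). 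Once both bounds are in hand, assembling them with the variance identity and tracking the dependence of constants on $\rho$ and $a_0$ completes the proof of \eqref{e:Gr11}.
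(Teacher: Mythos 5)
The high-level scaffolding you set up — variance identity linking $\Vvv[G^\rho_{0,(m,n)}]$ to the exit point, then upper and lower bounds on the exit point — is exactly the paper's, and you correctly identify that the hypothesis $\kappa_N\le a_0 N^{2/3}$ is there to keep the drift mismatch from swamping the fluctuations. But the two halves of your argument diverge from the paper in ways that matter, and the lower bound half has a genuine gap.

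\textbf{Upper bound.} You propose to invoke moderate-deviation upper tail estimates for the i.i.d.\ exponential LPP (citing Johansson) and conclude an exponential tail $\P(|\tau|\ge rN^{2/3})\le Ce^{-cr^3}$. The paper never uses any Tracy--Widom or moderate-deviation input: the upper bound proof (Proposition \ref{tpr6}) is entirely self-contained. It couples two stationary boundary parameters $\rho<\lambda=\rho+r\ell/N$, observes that on $\{\tau_1\ge\ell\}$ the boundary gap $S^\lambda_{1,\ell}-S^\rho_{1,\ell}$ must be no larger than $G^\lambda-G^\rho$, applies Chebyshev to both sides (the boundary difference has diffusive variance $\asymp\ell$, while $\Vvv[G^\lambda-G^\rho]$ is controlled via Lemma \ref{lm66} in terms of $\Vvv[G^\rho]$ itself), and then bootstraps: the unknown $\E[\tau_1]$ appears on both sides of the inequality, and solving gives $\E[\tau_1]\le CN^{2/3}$. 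The resulting tail bound is only polynomial, $\P\{\tau_1\ge\ell\}\le C(N^2/\ell^3+N^{8/3}/\ell^4)$, which is exactly what's needed and no more. Your route works if you accept Johansson as a black box, but it proves more (exponential tails) with heavier machinery, and abandons the self-contained character of the argument.

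\textbf{Lower bound.} Here is the real gap. Your contradiction argument — if $\Vvv=o(N^{2/3})$ then $\E|\tau|=o(N^{2/3})$, so $G^\rho$ is essentially i.i.d.\ LPP from near the origin, but i.i.d.\ exponential LPP has variance \emph{bounded below} by $cN^{2/3}$ — relies on an input you yourself flag as ``the crux'' and are not sure is quotable. It isn't, not in the spirit of this development: the standard coupling-based route to the i.i.d.\ variance lower bound passes \emph{through} the stationary lower bound, so your argument is in danger of being circular. (The i.i.d.\ lower bound can be derived from Tracy--Widom convergence, but again that's external machinery.) The paper's proof is structurally different and avoids this entirely. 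It uses a change-of-measure argument (Theorem \ref{t-st-lb}): tilt the boundary parameter to $\lambda=\rho+rN^{-1/3}$, use the upper-bound machinery (Lemma \ref{lm-ANr}) to show that under $\P^\lambda$ the exit point is in a window $[(1-\rho)rN^{2/3},\,4\rho^{-1}rN^{2/3}]$ and the restricted LPP value $G(\Lambda_N)$ exceeds $\E^\rho[G]+\tfrac12 c_6 r^2 N^{1/3}$ with probability $\ge 1-Cr^{-3}$, and then transfer this back to $\P^\rho$ via Cauchy--Schwarz and an explicit Radon--Nikodym factor $f_N=d\widehat\P/d\P^\rho$ (a product over $\fl{4\rho^{-1}rN^{2/3}}$ boundary weights) whose $L^2(\P^\rho)$ norm is bounded by $e^{2r^3\rho^{-3}}$ \emph{uniformly in $N$}. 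This directly yields $\P^\rho\{G\ge\E^\rho[G]+sN^{1/3}\}\ge e^{-a_1 s^{3/2}}$, hence the variance lower bound, with no appeal to any i.i.d.\ fluctuation lower bound. If you want to fix your proposal, replace the contradiction/decoupling step with this tilting argument: the key computation is that the RN derivative of an Exp$(\lambda)$ boundary segment against Exp$(\rho)$ has $L^2$ norm that stays bounded precisely because the segment length $\asymp N^{2/3}$ and the tilt $\lambda-\rho\asymp N^{-1/3}$ conspire.

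One smaller point: your variance identity sketch is vague. The paper proves (Theorem \ref{thGr3}) the exact formula $\Vvv[G^\rho_{0,(m,n)}]=-\tfrac{m}{(1-\rho)^2}+\tfrac{n}{\rho^2}+\tfrac{2}{1-\rho}\E[S_{1,\tau_1}]$, via a covariance decomposition over the four sides of $[0,(m,n)]$ (using independence of the north and east increment fields from Theorem \ref{v:tIJw1}) followed by a careful $\partial/\partial\alpha$ of $\E^{\alpha,\rho}$ realized through the inverse-CDF parametrization. The exactness of this identity, together with its transpose, is what lets the bounds on $\E[S_{1,\tau_1}]$ translate cleanly into the two-sided variance bound; a comparability statement would require extra work to justify.
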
 

The proof gives the following dependence  of the bounds on $a_0$: 
 \be\label{e:Gr11.5}   c_1(\rho)  e^{-C_2(\rho)a_0^3} N^{2/3}\le  \Vvv[G^\rho_{0,(m,n)}]\le C_3(\rho)(1+a_0) N^{2/3}
 \ee
 where $c_1(\rho)$, $C_2(\rho)$ and $C_3(\rho)$ are functions of $\rho$ alone. 
 
As a fairly immediate corollary we obtain the behavior in off-characteristic  directions.   For concreteness, we state the result for the case where the horizontal direction is abnormally large.  

\begin{corollary}  \label{cGr}   Assume weight distributions \eqref{e:w7}.    Suppose $m,n\to\infty$.  Define parameter 
$N$ by $n=N\rho^2$, and assume that 
\[  N^{-\alpha}\bigl(m-N(1-\rho)^2\bigr) \to c_1 >0
\quad\text{as $m,n\to\infty$}   
\]
for some $\alpha>2/3$.  Then as $m,n\to\infty$, 
\[ N^{-\alpha/2} \bigl\{ G^\rho_{0,(m,n)} 
-\E [G^\rho_{0,(m,n)}]  \bigr\}\] converges in distribution
to a centered normal distribution  with variance 
$c_1(1-\rho)^{-2}$. 
\end{corollary}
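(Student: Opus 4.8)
\textbf{Proof proposal for Corollary \ref{cGr}.}
The plan is to exploit the fact that when the horizontal coordinate $m$ exceeds the characteristic value by an amount of order $N^\alpha$ with $\alpha>2/3$, the maximizing path for $G^\rho_{0,(m,n)}$ spends order $N^\alpha$ steps on the horizontal axis, so that the last-passage value decomposes into a sum of order $N^\alpha$ i.i.d.\ boundary weights $I_{ie_1}$ plus a fluctuation of strictly smaller order, which is then negligible after scaling by $N^{\alpha/2}$. Concretely, I would first record the exact decomposition coming from \eqref{e:Gr2}: writing the value as the maximum over the exit point $k$ (respectively $\ell$) of the boundary,
\[
  G^\rho_{0,(m,n)} = \max_{0\le k\le m}\Bigl\{ \sum_{i=1}^k I_{ie_1} + G^o_{(k,1),(m,n)} \Bigr\} \;\bigvee\; \max_{1\le \ell\le n}\Bigl\{ \sum_{j=1}^\ell J_{je_2} + G^o_{(1,\ell),(m,n)} \Bigr\},
\]
where $G^o$ is the i.i.d.\ Exp$(1)$ last-passage value omitting the first weight. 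I would show that with probability tending to $1$ the optimal exit is on the horizontal axis at a location $k^*$ with $k^* = m - N(1-\rho)^2 + o_P(N^\alpha)$; more usefully, I would localize $k^*$ to within $O(N^{2/3+\delta})$ of that value, which suffices.

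The localization step is the heart of the argument and the step I expect to be the main obstacle. The idea: for a candidate exit point $k$, the expected value of $\sum_{i=1}^k I_{ie_1} + G^o_{(k,1),(m,n)}$ is, to leading order, $\frac{k}{1-\rho} + \gpp(m-k,n) = \frac{k}{1-\rho} + (\sqrt{m-k}+\sqrt n)^2$, which as a function of $k$ is maximized (by the computation leading to \eqref{v:ch7}) precisely at $k$ such that $(m-k,n)$ is proportional to $((1-\rho)^2,\rho^2)$, i.e.\ $m-k \approx N(1-\rho)^2$. The second derivative of this deterministic profile in $k$ is of order $N^{-1}$ (quadratic curvature of $\gpp$), so moving $k$ by $t$ away from the optimum costs $\Theta(t^2/N)$ in mean. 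Meanwhile the fluctuations of $\sum_{i=1}^k I_{ie_1}$ over the relevant range of $k$ are of order $N^{\alpha/2}$ and the fluctuations of $G^o_{(k,1),(m,n)}$ are of order $N^{1/3}$ by Theorem \ref{thGr2} (applied to the i.i.d.\ model, or by the known upper bound on the i.i.d.\ variance). Hence deviations of $k^*$ from the optimum by more than $C N^{(\alpha+1)/2}\vee N^{2/3}$ are ruled out with high probability by a union bound over dyadic scales — here I would use the uniform moderate-deviation/variance bounds for $G^o$ that underlie the proof of Theorem \ref{thGr2} and for which references are given in the text (the coupling arguments of \cite{bala-cato-sepp}, or equivalently \cite{cato-groe-06}). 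The delicate point is getting a union bound over exit points that is summable; this is standard in this circle of ideas (it is exactly the type of estimate appearing in the proof of Theorem \ref{thGr2}) but requires care with the fluctuation tail bounds.

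Once $k^*$ is localized to a window of length $\ell_N = o(N^\alpha)$ around $k_0 := m - \lfloor N(1-\rho)^2\rfloor$, I would write
\[
  G^\rho_{0,(m,n)} = \sum_{i=1}^{k_0} I_{ie_1} + \Bigl( \max_{|k-k_0|\le \ell_N}\Bigl\{ \sum_{i=k_0+1}^{k} I_{ie_1}\cdot\mathbf 1_{k>k_0} - \sum_{i=k+1}^{k_0} I_{ie_1}\cdot \mathbf 1_{k<k_0} + G^o_{(k,1),(m,n)} \Bigr\} \Bigr),
\]
and argue that the parenthesized remainder $R_N$ satisfies $R_N = o_P(N^{\alpha/2})$: its mean is $O(N^{1/3}) + O(\ell_N^2/N)$ and its fluctuations are $O(N^{\alpha/2}\sqrt{\ell_N/N^\alpha}) + O(N^{1/3})$, both $o(N^{\alpha/2})$ provided $\ell_N/N^\alpha\to0$. (One gets the needed control on $\ell_N$ by iterating the localization, or simply by taking $\ell_N = N^{(\alpha+1)/2+\delta}$ with $\delta$ small, which is $o(N^\alpha)$ since $\alpha>1$ would be needed for a problem — but $\alpha\le 1$ always, so one checks $(\alpha+1)/2 < \alpha \iff \alpha>1$ fails; instead use $\ell_N = N^{2/3+\delta}$, which works since $2/3<\alpha$). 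The contribution of the $J$-boundary branch is negligible by the same kind of estimate since its optimum sits far below the $I$-branch optimum by \eqref{v:ch8}. Finally, $N^{-\alpha/2}\bigl(\sum_{i=1}^{k_0} I_{ie_1} - k_0 \E I_{e_1}\bigr)$ converges to a centered Gaussian: $k_0 \sim c_1 N^\alpha$ and the $I_{ie_1}$ are i.i.d.\ with variance $(1-\rho)^{-2}$, so by the classical CLT the limit is $\mathcal N(0, c_1 (1-\rho)^{-2})$. Adding back the negligible $R_N - \E R_N$ and noting $\E G^\rho_{0,(m,n)} = k_0\E I_{e_1} + \E R_N$ (exactly, since we subtract the true mean) completes the proof via Slutsky.
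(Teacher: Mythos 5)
Your approach is genuinely different from the paper's — and substantially heavier. The paper's proof exploits the increment structure of the stationary process directly: since $n=\lfloor N\rho^2\rfloor$, one has the exact telescoping identity
\[
G^\rho_{0,(m,n)} \;=\; G^\rho_{0,(\lfloor N(1-\rho)^2\rfloor,\,n)} \;+\; \sum_{i=\lfloor N(1-\rho)^2\rfloor+1}^{m} I_{(i,n)},
\]
where $I_{(i,n)}=G^\rho_{0,(i,n)}-G^\rho_{0,(i-1,n)}$ are the horizontal increments along the row $y=n$, which by Theorem \ref{v:tIJw1}(i) (Burke's theorem along a down-right path) are i.i.d.\ $\mathrm{Exp}(1-\rho)$. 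After centering, the first term has variance $O(N^{2/3})$ by Theorem \ref{thGr2} (the endpoint is characteristic), hence vanishes under division by $N^{\alpha/2}$ because $\alpha>2/3$; the second term is a sum of $\approx c_1N^\alpha$ i.i.d.\ mean-zero summands with variance $(1-\rho)^{-2}$, so the classical CLT and Slutsky finish the proof in two lines. Your route, by contrast, localizes the exit point $k^*$ of the maximizing path from the south boundary and peels off $\sum_{i\le k_0}I_{ie_1}$; this should work in principle, but it requires the uniform moderate-deviation machinery (dyadic union bounds over exit points) that you yourself flag as "the main obstacle," and the remainder analysis as written is not yet tight: you state the mean of $R_N$ is $O(N^{1/3})+O(\ell_N^2/N)$, but $\E R_N$ is actually of order $N$ — what you need to control is the centered deviation $R_N-\E R_N$, and that requires a bit more care with both the $I$-walk over the window $[k_0-\ell_N,k_0+\ell_N]$ and the bulk LPP fluctuations, each before and after the maximum over $k$. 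The takeaway is that the paper avoids locating the geodesic altogether: the telescoping identity across the top boundary renders the CLT contribution visible without any optimization, which is why the stationary version of the model is the right object to work with here.
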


\begin{proof}   Recall that overline means centering of a random variable. 
\begin{align*}
N^{-\alpha/2}  \,\overline G^\rho_{0,(m,n)}  = N^{-\alpha/2} \, \overline G^\rho_{0,(\fl{N(1-\rho)^2}  ,\,\fl{N\rho^2})  }  
\ + \ N^{-\alpha/2} \!\!\!\!\!\! \sum_{i=\fl{N(1-\rho)^2}+1}^m \bar\Iincr_{(i,n)}
\end{align*}
The mean square of the first term on the right is of order $N^{-\alpha}\cdot N^{2/3}$ and hence in the limit vanishes in $L^2$ and in probability.  The second term is a sum of approximately $c_1N^\alpha$  mean zero i.i.d.\ terms with variance $\E[\,\bar\Iincr^2_{(i,n)}]=(1-\rho)^{-2}$.  This sum gives the normal limit, by the standard central limit theorem. 
\end{proof}

As discussed in Section \ref{s:sh-g},  
for a given endpoint $(m,n)$ the last-passage problem \eqref{e:Gr2}  has  an almost surely  unique  maximizing path or geodesic  $\pi^{0,(m,n)}_\dbullet=(\pi^{0,(m,n)}_k)_{k=0}^{m+n}$   from $\pi^{0,(m,n)}_0=0$ to $\pi^{0,(m,n)}_{m+n}=(m,n)$ that satisfies  $G^\rho_{0,(m,n)}= \sum_{k=1}^{m+n} \w_{\pi^{0,(m,n)}_k}$, where we utilized the notational convention on the axes  that $\w_{ke_1}=I_{ke_1}$ and $\w_{\ell e_2}=J_{\ell e_2}$. 

  The theorem below quantifies the fluctuations of the maximizing path $\pi^{0,(m,n)}_\dbullet$ around a fixed point  on the  straight line from $0$ to $(m,n)$ when $(m,n)$ points in the characteristic direction of the parameter $\rho$.  As before, the endpoint $(m,n)$    is of order $N$.   Qualitatively speaking,  the theorem says that $\pi^{0,(m,n)}_\dbullet$ hits  a square centered on the diagonal  with high probability if  the square is large enough on the scale $N^{2/3}$, and $\pi^{0,(m,n)}_\dbullet$  misses the square  with at least a fixed positive  probability  if the square  is small enough  on the scale $N^{2/3}$.
  
 Let 
\[  A_r(x)=\{ y\in\R^2:  x_i\le y_i\le x_i+ r\text{ for }i=1,2\} \]
 denote the closed square    with side length $r$ and lower left corner at $x=(x_1,x_2)\in\R^2$.      $\P^\rho_{0,(m,n)}$ denotes the probability measure of the increment-stationary point-to-point  LPP process in the rectangle $[0,(m,n)]$. 

\begin{theorem}\label{thGr2.5}   
Fix $0<\rho<1$.  Assume weight distributions \eqref{e:w7}   and   let 
\[  (m,n)=(\fl{N(1-\rho)^2}, \fl{N\rho^2}). \]   
   
   {\rm (a)}    
     For each $0<\delta<1$  we have this upper bound, with a constant $C=C(\rho)$: 
\be\label{e:Gr13}    
\sup_{0\le t\le 1-\delta} \P^\rho_{0,(m,n)}\{ \text{$\pi^{0,(m,n)}_\dbullet$ does not  intersect $A_{rN^{2/3}}(tm,tn)$} \} \le \frac{C}{r^{3}} 
\quad \text{for $r\ge 1$ and $N\ge \frac1{\delta}$.}  
\ee

{\rm (b)}   Fix $0<t<1$.    There exist  $\delta, \e>0$ that depend on $\rho$  and $N_0=N_0(\rho, t)<\infty$ such that 
\be\label{e:Gr14}    
\P^\rho_{0,(m,n)}\{ \text{$\pi^{0,(m,n)}_\dbullet$ does not  intersect $A_{\delta N^{2/3}}(tm,tn)$} \} \ge \e  
\qquad \text{for $N\ge N_0$.}  
\ee

\end{theorem}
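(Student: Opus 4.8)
The plan is to reduce both statements to the variance bounds of Theorem \ref{thGr2} together with the exit-point (boundary-path) analysis that underlies them, exploiting the fact that passage through the box $A_{rN^{2/3}}(tm,tn)$ is controlled by the exit point of the restarted stationary process. First I would recall that, in the stationary model, the geodesic $\pi^{0,(m,n)}$ has a well-defined \emph{exit point} $Z$ from the axes (the last site on the $e_1$- or $e_2$-axis that the geodesic uses), and the KPZ estimates behind Theorem \ref{thGr2} give $|Z|_1 = \Theta_P(N^{2/3})$ in the characteristic direction, with tail bounds $\P\{|Z|_1 \ge rN^{2/3}\} \le C r^{-3}$ and a matching lower bound $\P\{|Z|_1 \ge \delta N^{2/3}\} \ge \e$ for small $\delta$. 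These are exactly the kind of estimates proved in Appendix \ref{app:coupl} / in the course of proving \eqref{e:Gr11}, so I would cite them rather than reprove them.

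For part (a): fix $t \in [0,1-\delta]$. Decompose the geodesic at the point $\zeta_t = (tm,tn)$ region: I would use the standard comparison that says the transversal displacement of the geodesic at a fraction $t$ of the way to $(m,n)$ is, up to constants, dominated by the exit point of the appropriate sub-problem. Concretely, the geodesic from $0$ to $(m,n)$, restricted to the rectangle $[0,\zeta_t]$ with $\zeta_t$ replaced by a point on the antidiagonal through $(tm,tn)$, behaves like a stationary LPP geodesic to a point of order $tN$; its transversal fluctuation at the endpoint has the same $N^{2/3}$ order. Then the event that $\pi^{0,(m,n)}_\dbullet$ misses $A_{rN^{2/3}}(tm,tn)$ entirely forces the transversal coordinate of the geodesic at the relevant level to deviate by at least $\tfrac{r}{2}N^{2/3}$ from the diagonal, an event of probability $\le C r^{-3}$ by the cubic tail bound, uniformly in $t \le 1-\delta$ (the $\delta$ ensures the sub-problem still has size $\gtrsim \delta N$ so the $N^{2/3}$ scale is genuine). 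The supremum over $t$ is handled by noting the bound is uniform; one does not need a union bound over $t$ because each $t$ is estimated separately.

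For part (b): fix $t \in (0,1)$. Here I would argue that with at least fixed probability $\e$ the geodesic's transversal coordinate at level $t$ is \emph{at least} $2\delta N^{2/3}$ away from the diagonal, so that the small box $A_{\delta N^{2/3}}(tm,tn)$ is missed. This is the lower-bound half of the fluctuation estimate: the geodesic genuinely wanders, and the probability that its displacement at a macroscopic fraction $t$ of the path exceeds a small multiple of $N^{2/3}$ is bounded below. The cleanest route is to use the characterization of the geodesic's location via the sign and magnitude of the boundary exit point of the restarted process at $(tm,tn)$: by restarting the stationary LPP at an antidiagonal line through $(tm,tn)$ (which is legitimate since the increments along an antidiagonal form an i.i.d.-type family by Theorem \ref{v:tIJw1}), the remaining geodesic has its own exit point $Z'$ with $\P\{|Z'|_1 \ge \delta N^{2/3}\} \ge \e$ for $\delta$ small enough and $N \ge N_0(\rho,t)$; on that event the geodesic leaves the $\delta N^{2/3}$-box. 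The $N_0$ dependence on $t$ comes from needing the restarted problem to have size $\gtrsim t N$ and $\gtrsim (1-t)N$.

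The main obstacle, and the step I expect to require the most care, is the passage from ``exit point of a stationary sub-problem'' to ``transversal location of the geodesic at level $t$'': one must legitimately \emph{restart} the stationary process along an antidiagonal through $(tm,tn)$ so that the pre-$t$ and post-$t$ portions decouple into two stationary-type LPP problems, using the independence structure in Theorem \ref{v:tIJw1} (the Burke-type property), and then verify that both sub-problems still point in (close enough to) the characteristic direction so that Theorem \ref{thGr2}'s estimates, and the accompanying exit-point tail bounds, apply with constants uniform in $t \le 1-\delta$ for part (a) and for fixed $t$ in part (b). Once that reduction is in place, parts (a) and (b) are immediate consequences of the upper and lower exit-point tail estimates respectively.
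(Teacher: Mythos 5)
Your strategy is the right one and matches the paper's: restart the stationary LPP near $(tm,tn)$ via the Burke-type invariance of Theorem~\ref{v:tIJw1}, relate box avoidance or intersection to the exit point of the restarted process, and close with the exit-point tail bounds. But two of the places where you are vague are exactly where the technical content lies, and your antidiagonal variant of the restart would not go through as stated.

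The paper restarts at a \emph{corner} just southwest of the box --- $v_0=(\fl{tm},\fl{tn})$ in part (a), and $v_0=(\ce{tm}-1,\ce{tn}-1)$ strictly southwest in part (b) --- not at an antidiagonal. This matters for two reasons. Lemma~\ref{app-lm1} is the engine: it shows that the restriction of the $0\to(m,n)$ geodesic to $v_0+\Z_{\ge0}^2$ is \emph{exactly} the geodesic of the restarted LPP $G^{(0)}_{v_0,(m,n)}$, whose boundary data on the axes at $v_0$ again has the correct exponential laws by stationarity, so the exit point $\exitt_r^{(v_0)}$ of the restarted problem is exactly the coordinate at which the original geodesic enters the quadrant $v_0+\Z_{>0}^2$. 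That identification is what translates ``misses the box'' into ``$\exitt_1^{(v_0)}\ge rN^{2/3}$ or $\exitt_2^{(v_0)}\ge rN^{2/3}$'' (part (a)) and ``hits the box'' into ``one of $\exitt_r^{(v_0)}$ is positive and $\le\delta N^{2/3}$'' (part (b)). Moreover, at a corner exactly one of $\exitt_1^{(v_0)},\exitt_2^{(v_0)}$ is positive a.s., which is the partition that makes part (b) close: $\P\{\text{hit}\}\le 1-\P\{\exitt_1^{(v_0)}>\delta N^{2/3}\}-\P\{\exitt_2^{(v_0)}>\delta N^{2/3}\}$. An antidiagonal restart gives you neither the geodesic-restriction identity of Lemma~\ref{app-lm1} nor the one-sided exit structure, and your ``transversal displacement'' phrasing would then need to be re-derived from scratch. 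The strict placement of $v_0$ in part (b) is also not cosmetic: if $v_0$ sat exactly at the box corner, the geodesic could run along a boundary of the box and enlarge $\exitt_r^{(v_0)}$, breaking the implication from ``hits the box'' to ``exit point small.''

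Second, the lower tail bound $\P\{\exitt_1>\delta N^{2/3}\}\ge\e$ for \emph{small} $\delta$ that your part (b) rests on is not already available: Theorem~\ref{t-tau-lb} gives a lower bound only in the large-$r$ regime, which is the wrong end. The paper derives the small-$\delta$ bound inside the proof of part (b) by a Paley--Zygmund/Cauchy--Schwarz argument, combining a first-moment lower bound $\E^\rho_{0,(m_1,n_1)}[\exitt_1]\ge C^{-1}N^{2/3}$ (from Lemma~\ref{lm70} and the variance identity \eqref{VGr} plus the lower variance bound) with the second-moment upper bound of \eqref{t9} for $q=2$. You should make that moment argument explicit rather than citing unspecified ``exit-point tail bounds.''

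Finally, in part (a) your appeal to Proposition~\ref{tpr6} for the restarted process requires checking that the rescaled parameter $N'=(1-t)N$ still satisfies the proposition's hypotheses uniformly in $t\le 1-\delta$, which is where the constraint $N\ge 1/\delta$ in \eqref{e:Gr13} comes from; this is easy but should be said.
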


Estimate \eqref{e:Gr13} gives the upper bound $\trexp\le\tfrac23$ and  \eqref{e:Gr14} the lower bound $\trexp\ge\tfrac23$ for the path exponent $\trexp$. 
Since $\pi^{0,(m,n)}_0=0$,  for   $t=0$ the upper bound \eqref{e:Gr13} holds trivially while the lower bound \eqref{e:Gr14} fails trivially.  




\begin{remark}[Exponents for LPP with i.i.d.\ exponential weights]  After these theorems, the natural next question concerns the exponents for the last-passage value and maximizing path of the LPP process  $G_{0,y}$ of \eqref{v:G} with i.i.d.\ Exp(1) weights $\w$.  Presently the coupling approach of these notes gives the exponent $\loexp=\tfrac13$ and the upper bound $\trexp\le\tfrac23$ for the i.i.d.\ exponential case. Results to this effect can be found in \cite{bala-cato-sepp} for the exponential CGM, and in   \cite{more-sepp-valk-14, sepp-12-aop-corr} for positive temperature polymer models.  
\hfill$\triangle$\end{remark}

\begin{remark}[Fluctuation exponent for the Busemann function]  Through the distributional equality \eqref{GB45},  Theorem \ref{thGr2} gives the following corollary for the fluctuations of $B^\alpha$ in the characteristic direction $\xi(\alpha)$:  for each $0<\alpha<1$ there exists a finite constant $C=C(\alpha)$ such that 
\be\label{e:B11}   C^{-1}N^{2/3}\le  \Vvv[B^\alpha_{0, \,\fl{N\xi(\alpha)}}\,]\le CN^{2/3}
\ee
 for $N$ large enough so that $\fl{N\xi(\alpha)}\ne 0$. 
\hfill$\triangle$\end{remark}

We turn to proving  Theorems \ref{thGr2} and \ref{thGr2.5}.   The remainder of this  section is divided into four parts: a variance formula, an upper bound proof for Theorem \ref{thGr2},  a lower bound proof for Theorem \ref{thGr2}, and the proof of  Theorem \ref{thGr2.5}.  

\subsection{Variance formula}  
The first step towards the proof of Theorem \ref{thGr2} is an explicit  formula that ties together $\Vvv[G^\rho_{0,(m,n)}]$ and the amount of weight the maximizing path $\pi^{0,(m,n)}_\dbullet$  collects on the boundary.  
For $r=1,2$, define the exit time (or exit point) of the maximizing  path from the $e_r$ axis by 
\be\label{d-tt}  \exitt_{r}= \max\{ k\ge 0:  \pi^{0,(m,n)}_k\cdot e_{3-r}=0\}, \qquad r=1,2.  \ee 
If the first step of  the path $\pi^{0,(m,n)}_\dbullet$ from the origin is $e_r$, then $1\le \exitt_r\le (m,n)\cdot e_r$ and $\exitt_{3-r}=0$.  In other words,  almost surely  exactly one of $\exitt_1$ and $\exitt_2$ is positive (but which one is positive varies with the realization of the weights $\w$).  

Further, introduce the sums of weights along the axes:  
\[  
S_{1,k}=\sum_{i=1}^{k} I_{ie_1} \quad\text{and}\quad 
S_{2,\ell}=\sum_{j=1}^{\ell} J_{je_2}. 
 \]
Then  $S_{r, \exitt_r}$ is   the amount of weight that the maximizing path collects on the $e_r$-axis.  Again, for each weight configuration  $\w$,  exactly one of $S_{1, \exitt_1}$ and $S_{2, \exitt_2}$ is positive and the  other one zero. 

\medskip 

{\it Notational comment.}   The joint distribution of the random variables associated with the  increment-stationary point-to-point LPP process is determined by these parameters: the  planar rectangle $[0,(m,n)]$ and the parameter $\rho\in(0,1)$ of the boundary edge weights in \eqref{e:IJw}--\eqref{e:w7}.   The probability measure for this situation will be denoted by $\P^\rho_{0,(m,n)}$.   When processes with different values of $\rho$ are coupled in the same rectangle,  the  variables themselves  will  be adorned with superscripts, as in   $\exitt^\rho_r$ and $S^\rho_{r,k}$.   When the parameters $\rho$ or $(m,n)$ are understood from  the context or  already present in the notation for the random variables, as for example  in $G^\rho_{0,(m,n)}$ in \eqref{e:Gr11}, we omit the parameters from $\P$.  
\hfill$\triangle$
   
 \medskip 

Next we state the variance formula for the last-passage value in the increment-stationary CGM. Formulas of this type for exactly solvable models go back to the  work of Ferrari and Fontes \cite{ferr-font-94a} on the asymmetric simple exclusion process.  In the particle system context the formulas relate the variance of the current to a expectation of a second-class particle.  

\begin{theorem} \label{thGr3}  Assume weight distributions \eqref{e:w7}.    Then for $(m,n)\in\Z_{>0}^2$, 
\begin{align}\label{VGr}
\Vvv[G^\rho_{0,(m,n)}] &=  -\,\frac{m}{(1-\rho)^2}+\frac{n}{\rho^2} +\frac{2}{1-\rho} \, \E^\rho_{0,(m,n)}[S_{1,\exitt_1}] \\
\label{VGr0.1}
&=  \frac{m}{(1-\rho)^2}-\frac{n}{\rho^2} +\frac{2}{\rho} \, \E^\rho_{0,(m,n)}[S_{2,\exitt_2}].
\end{align} 
\end{theorem}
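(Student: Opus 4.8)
The plan is to exploit the explicit structure of the stationary process: the last-passage value $G^\rho_{0,(m,n)}$ decomposes as a sum of boundary increments along any down-right path joining the axes to the point $(m,n)$, and these increments are the independent Exp$(1-\rho)$ and Exp$(\rho)$ variables guaranteed by Theorem \ref{v:tIJw1}(i). Concretely, write $G^\rho_{0,(m,n)} = S_{1,m} + \sum_{j=1}^n J_{(m,j)}$ using \eqref{G9}, so that $\Vvv[G^\rho_{0,(m,n)}] = \Vvv[S_{1,m}] + \Vvv[\sum_j J_{(m,j)}] + 2\Cov(S_{1,m},\sum_j J_{(m,j)})$. The first two variances are $m/(1-\rho)^2$ and $n/\rho^2$ respectively, because along the down-right path consisting of the $e_1$-axis followed by the vertical line $\{x\cdot e_1=m\}$ the increments $I_{(i,0)}$ ($1\le i\le m$) and $J_{(m,j)}$ ($1\le j\le n$) are mutually independent. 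So everything comes down to computing the covariance term.

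To evaluate $\Cov(S_{1,m},\sum_j J_{(m,j)})$ I would use the standard variational/perturbation trick for exactly solvable models (this is the Ferrari--Fontes style argument). Introduce the rate parameter of the horizontal boundary weights as a variable $\lambda$, keeping the vertical ones at rate $\rho$ — but the cleanest route in this homogeneous-weight setting is to differentiate in a single parameter that simultaneously tilts the distribution. Alternatively, and more in the spirit of the notes, one observes that $G^\rho_{0,(m,n)} = \max\big(S_{1,\exitt_1} + (\text{bulk LPP from the exit point}),\ S_{2,\exitt_2}+\dots\big)$, and that on the a.s.\ unique maximizing path the boundary contribution equals $S_{1,\exitt_1}$ or $S_{2,\exitt_2}$. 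The key identity is obtained by writing $G^\rho_{0,(m,n)} - S_{1,m} = \sum_{j=1}^n J_{(m,j)}$ and taking covariance of both sides with $S_{1,m}$: using independence of the $J_{(m,j)}$ from the $I_{(i,0)}$ one gets $\Cov(G^\rho_{0,(m,n)}, S_{1,m}) = \Vvv[S_{1,m}] = m/(1-\rho)^2$. Combined with the analogous computation relating $G$ to the exit point, the covariance $\Cov(S_{1,m},\sum_j J_{(m,j)})$ can be re-expressed through $\E^\rho_{0,(m,n)}[S_{1,\exitt_1}]$ by a telescoping argument: the difference $S_{1,m}-S_{1,\exitt_1}$ is the weight the path would have to forgo, and a memoryless-property computation (each $I_{(i,0)}$ beyond the exit point is independent of the maximizer's decision to leave) yields $\E[S_{1,m}\mid \text{structure}] - \E[S_{1,\exitt_1}] = \frac{1}{1-\rho}\E[m-\exitt_1]$, feeding into the covariance. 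Carrying the bookkeeping through produces \eqref{VGr}; then \eqref{VGr0.1} follows either by the symmetric argument (swapping the roles of the two axes and of $\rho\leftrightarrow 1-\rho$, $m\leftrightarrow n$) or, more simply, by subtracting: since exactly one of $\exitt_1,\exitt_2$ is positive, and using the a.s.\ identity $S_{1,\exitt_1}+S_{2,\exitt_2}$ together with $\E[S_{1,m}]=\frac{m}{1-\rho}$, the two right-hand sides are forced to be equal.

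The main obstacle is the covariance-to-exit-point conversion: one must justify carefully that $\E^\rho_{0,(m,n)}\big[(m-\exitt_1)\text{-type quantities}\big]$ is exactly what appears when differentiating, and that the memoryless property of the exponential boundary weights lets the ``tail'' increments $I_{(\exitt_1+1,0)},\dots,I_{(m,0)}$ decouple from the event $\{\exitt_1=k\}$ in the right way. This is where the exponential distribution is essential and where a naive argument can go wrong — the event $\{\exitt_1 = k\}$ depends on $I_{(i,0)}$ for $i\le k$ and on the bulk weights, but conditionally on $\{\exitt_1=k\}$ the variables $I_{(k+1,0)},\dots,I_{(m,0)}$ remain i.i.d.\ Exp$(1-\rho)$, which is precisely the fact that makes the telescoping exact. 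I would isolate this as a lemma (conditional independence of the post-exit boundary increments) and then the rest is algebra. Existence and a.s.\ uniqueness of the maximizing path, and hence well-definedness of $\exitt_r$ and $S_{r,\exitt_r}$, follow from the continuous distribution of the weights as already noted in Section \ref{s:sh-g}.
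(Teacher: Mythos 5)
The fatal step is your assertion that the $J_{(m,j)}$ are independent of the $I_{(i,0)}$, from which you conclude $\Cov(G^\rho_{0,(m,n)}, S_{1,m}) = \Vvv[S_{1,m}]$. Theorem \ref{v:tIJw1}(i) only delivers independence of the edge weights $\tincr(\cdot)$ along a \emph{down-right} path. The south boundary followed by the east boundary of $[0,(m,n)]$ is an up-right path, so it is not covered; the $J_{(m,j)}$ on the east edges are genuine functions of the $I_{(i,0)}$ and of the bulk weights, and their cross-covariance is nonzero. Had it been zero, you would get $\Vvv[G^\rho_{0,(m,n)}] = m/(1-\rho)^2 + n/\rho^2$, the naive i.i.d.\ answer, which contradicts the conclusion of the theorem (and, in the characteristic direction, the $N^{2/3}$ variance scaling proved in Theorem \ref{thGr2}). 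The follow-on ``memoryless telescoping'' is also not sound as stated: $\exitt_1$ is not a stopping time for the filtration generated by the $I_{(i,0)}$ alone --- it depends on the bulk $\w$ and on all the boundary weights --- so conditioning on $\{\exitt_1=k\}$ does change the conditional law of $I_{(k+1,0)},\dots,I_{(m,0)}$.

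The idea you gesture at early on, tilting the horizontal boundary rate and differentiating, is the one that actually works and is what the paper does. The paper writes $G^\rho_{0,(m,n)}=\west+\north=\south+\east$, expands $\Vvv(\west+\north)$, and substitutes $\west=\south+\east-\north$; the genuine down-right path of the rectangle is the north-plus-east boundary, which makes $\east$ and $\north$ independent, so everything reduces to computing $\Cvv(\south,\north)$. This covariance is evaluated by introducing a separate rate $\alpha$ on the south boundary, proving $\Cvv^{\alpha,\rho}(\south,\north) = -\frac{\partial}{\partial\alpha}\E^{\alpha,\rho}[\north]$, and differentiating under the integral after realizing $I_{(i,0)} = F_\alpha^{-1}(U_i)$ with fixed uniforms: the exit time $\exitt_1^\alpha$ is a.s.\ locally constant in $\alpha$, so the only surviving $\alpha$-dependence of the optimal path weight comes from the boundary segment, giving $\frac{\partial}{\partial\alpha}\E^{\alpha,\rho}[\north] = -\alpha^{-1}\E^{\alpha,\rho}[S_{1,\exitt_1}]$. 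Setting $\alpha=1-\rho$ yields $\Cvv(\south,\north)=\frac{1}{1-\rho}\E^\rho[S_{1,\exitt_1}]$ and hence \eqref{VGr}; \eqref{VGr0.1} is the transposed version. So the covariance you should be targeting is $\Cvv(\south,\north)$, not $\Cvv(\south,\east)$, and the parametric derivative --- not a memoryless conditional-independence argument --- is what converts it into the exit-point expectation.
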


\begin{proof}   We prove  \eqref{VGr}. The second line \eqref{VGr0.1} follows for example by transposition.   
Begin with a covariance manipulation.   For this proof , abbreviate the $G^\rho$-increments on the sides of the rectangle $[0,(m,n)]$ as follows: 
\begin{align*}
\north =G^\rho_{0,(m,n)}-G^\rho_{0,(0,n)},\quad \south =G^\rho_{0,(m,0)},
\quad \east =G^\rho_{0,(m,n)}-G^\rho_{0,(m,0)},\quad \west =G^\rho_{0,(0,n)}. 
\end{align*}
Then 
\be\begin{aligned}
\Vvv\bigl[G^\rho_{0,(m,n)}\bigr]&=\Vvv(\west +\north )=\Vvv(\west )+\Vvv(\north )+2\,\Cvv(\west ,\north )\\
&=\Vvv(\west )+\Vvv(\north )+2\,\Cvv(\south +\east -\north ,\north )\\
 &= \Vvv(\west )-\Vvv(\north )+2\,\Cvv(\south ,\north )\\
 &=  \frac{n}{\rho^2}  -\frac{m}{(1-\rho)^2}+2\,\Cvv(\south ,\north ).  
\end{aligned}\label{aux3}\ee
The two last equalities came from Theorem \ref{v:tIJw1}(i) applied to the down-right path that gives the north and east boundaries of the rectangle $[0,(m,n)]$: first   the independence of $\east $ and $\north $ and then the marginal distributions of $\west$ and $\north$ as sums of i.i.d.\ exponentials.  

To prove \eqref{VGr} it remains to compute  $\Cvv(\south ,\north )$. In order  to differentiate with respect to the parameter   of the weights
$I_{(i,0)}$  on
the $x$-axis (term $\south $)  while fixing everything else, consider   a system
with two independent parameters $\alpha$ and  $\rho$  on the boundaries   and with initial weight 
distributions  (for $i,j\in\Z_{>0}$)  
\be\label{e:w7.4}\begin{aligned}
\w_{(i,j)}\sim \text{ Exp}(1),  \quad I_{ie_1}\sim \text{ Exp}(\alpha)  , \quad \text{ and }  \quad  
J_{je_2}\sim \text{ Exp}(\rho)  .  
\end{aligned}\ee 
Use the superscript $\alpha,\rho$ on $\Gpp^{\alpha,\rho}_{0,(m,n)}$,  $\P^{\alpha,\rho}$, and $\E^{\alpha,\rho}$ for the LPP process with these weights.
If $\alpha\ne1-\rho$ this is not a stationary LPP process,  so we do not know the distribution of $\north$.  We can still derive  the identity 
\be  \Cvv^{\alpha,\rho}(\south ,\north ) =-\,\frac{\partial}{\partial\alpha} \E^{\alpha,\rho}(\north ).  \label{cov3}\ee

The sum  $  \south =\sum_{i=1}^m I_{ie_1} $  has the gamma density
 \[  f_{m,\alpha}(s) =\frac{\alpha^m}{(m-1)!} s^{m-1}e^{-\alpha s}.  \]    By changing variables in the integrals,  with test functions  $\varphi$ and $\psi$, 
\begin{align*}
&\E^{\alpha,\rho} [\,\varphi(I_{e_1},\dotsc, I_{me_1}) \,\psi(\south)\,] 
=\int\limits_{\R_{>0}^m}   \varphi(x_1,\dotsc,x_m) \,\psi(\,\textstyle\sum_1^m x_i) \,\alpha^m \, e^{-\alpha\textstyle\sum_1^m x_i} \,dx_{1,m} \\
&
= \int_0^\infty  ds\; \tfrac{\alpha^m}{(m-1)!} s^{m-1}e^{-\alpha s} \, \psi(s) 
\int\limits_{\R_{>0}^{m-1}}   \ind_{\bigl\{{\textstyle\sum}_1^{m-1} x_i<s\bigr\}}  \,\varphi\bigl(x_1,\dotsc,x_{m-1}, s-{\textstyle\sum}_1^{m-1} x_i\bigr) \,  \, \tfrac{(m-1)!} {s^{m-1}}\,dx_{1,m-1}
\end{align*}
we see that the conditional distribution of $(I_{e_1},\dotsc, I_{me_1})$, given $\south$, is independent of the parameter $\alpha$.  

 We verify \eqref{cov3}. 
\begin{align*}
-\,\frac{\partial}{\partial\alpha} \E^{\alpha,\rho}(\north ) 
&=   -\,\frac{\partial}{\partial\alpha}  \int_0^\infty  \E^{\alpha,\rho}(\north \,\vert\, \south=s) \, f_{m,\alpha}(s) \,ds \\
&=\int_0^\infty  \E^{\alpha,\rho}(\north \, \vert \, \south=s) \, \Bigl(- \frac{\partial}{\partial\alpha} f_{m,\alpha}(s) \Bigr) \,ds \\
&=\int_0^\infty  \E^{\alpha,\rho}(\north \, \vert \, \south=s) \,  \Bigl( s-\frac{m}\alpha    \Bigr)    f_{m,\alpha}(s) \,ds   \\
&= \E^{\alpha,\rho}[\north\south]   - \E^{\alpha,\rho}[\north]  \cdot  \frac{m}\alpha
= \Cvv^{\alpha,\rho}(\south ,\north ) . 
\end{align*} 

On the other hand, we calculate the derivative in \eqref{cov3} by putting the $\alpha$-dependence directly on the weights, by realizing them as functions of  uniform random variables.   Let $U_1,\dotsc, U_m$ be i.i.d.\ Unif$(0,1)$ random variables.   The inverse function of the Exp$(\alpha)$ distribution function $F_\alpha(x)=1-e^{-\alpha x}$ is $F_\alpha^{-1}(u)=-\alpha^{-1}\log(1-u)$ and this satisfies $\frac{\partial}{\partial\alpha}F_\alpha^{-1}(u)=-\alpha^{-1} F_\alpha^{-1}(u)$.   For the Exp$(\alpha)$ weights on the $x$-axis take $I_{ie_1}=F_\alpha^{-1}(U_i)$. 
Let  $\wt\E$ denote  the expectation over $U_1,\dotsc,U_m$ and the other weights $J_{je_2}$ and $\w_{(i,j)}$ from \eqref{e:w7.4}.  $\wt\E$  does not depend on $\alpha$. $\exitt^\alpha_1$ is the exit point of the maximizing path from the $x$-axis when the edge weights on the $x$-axis are $I_{ie_1}=F_\alpha^{-1}(U_i)$.  
\be\label{cov78} \begin{aligned}
\frac{\partial}{\partial\alpha} \E^{\alpha,\rho}(\north ) 
&=\frac{\partial}{\partial\alpha} \E^{\alpha,\rho}[ G_{0,(m,n)}-G_{0,(0,n)} ]
=\frac{\partial}{\partial\alpha} \E^{\alpha,\rho}[ G_{0,(m,n)}] \\
&  =\frac{\partial}{\partial\alpha} \wt\E\Bigl[  \,\sum_{i=1}^{\exitt^\alpha_1} F_\alpha^{-1}(U_i) + \Gpp_{(\exitt^\alpha_1,1), (m,n)}  \Bigr]  
=  \wt\E\Bigl[  \, \sum_{i=1}^{\exitt^\alpha_1}  \frac{\partial}{\partial\alpha} F_\alpha^{-1}(U_i)
  \Bigr]   \\
&   =  -\alpha^{-1} \wt\E\Bigl[  \,\sum_{i=1}^{\exitt^\alpha_1} F_\alpha^{-1}(U_i)   \Bigr]    = -\alpha^{-1}  \E^{\alpha,\rho}[ S_{1,\exitt_1}].
\end{aligned}\ee 
An intuitive   justification of the  fourth equality above is that, with probability one,  if $\alpha'$ is close enough to $\alpha$, then $\exitt^{\alpha'}_1=\exitt^\alpha_1$. 
A detailed   justification of \eqref{cov78} follows.   

Let $\e>0$ be small.    Since $F_{\alpha-\e}^{-1}(U_i)>F_{\alpha}^{-1}(U_i)$, replacing $\alpha$ with $\alpha-\e$ increases weights on the $x$-axis without altering other weights.  Consequently $\exitt^{\alpha-\e}_1\ge \exitt^{\alpha}_1$ and $\Gpp^{\alpha-\e,\rho}_{0,(m,n)}\ge \Gpp^{\alpha,\rho}_{0,(m,n)}$.   Now consider the differences between the expectations above at $\alpha-\e$ and $\alpha$.  
\begin{align}
\nn &\wt\E\Bigl[  \,\sum_{i=1}^{\exitt^{\alpha-\e}_1} F_{\alpha-\e}^{-1}(U_i) + \Gpp_{(\exitt^{\alpha-\e}_1,1), (m,n)}  \Bigr]
-
\wt\E\Bigl[  \,\sum_{i=1}^{\exitt^\alpha_1} F_\alpha^{-1}(U_i) + \Gpp_{(\exitt^\alpha_1,1), (m,n)}   \Bigr] \\
\label{line4} &\qquad =  \wt\E\Bigl[  \,\sum_{i=1}^{\exitt^\alpha_1} \bigl(   F_{\alpha-\e}^{-1}(U_i)- F_\alpha^{-1}(U_i)\bigr)   \Bigr] \\
\label{line5}  &\qquad \qquad  + \wt\E\Bigl[  \,\sum_{i=\exitt^\alpha_1+1}^{\exitt^{\alpha-\e}_1} F_{\alpha-\e}^{-1}(U_i) +   \Gpp_{(\exitt^{\alpha-\e}_1,1), (m,n)}  - \Gpp_{(\exitt^\alpha_1,1), (m,n)} \Bigr]. 
\end{align}
The  expectation on line \eqref{line4} equals 
\[   \frac{\e}{\alpha-\e}\,\wt\E\Bigl[  \,\sum_{i=1}^{\exitt^\alpha_1} F_\alpha^{-1}(U_i)   \Bigr]   
= \bigl(\e+O(\e^2)\bigr)  \alpha^{-1}  \E^{\alpha,\rho}[ S_{1,\exitt_1}].  \]
We show that the  expectation on line \eqref{line5} is of order $o(\e)$.   The integrand inside the expectation  vanishes if $\exitt^{\alpha-\e}_1=\exitt^{\alpha}_1$. Hence we may multiply the integrand by the  indicator of the complementary event and then  bound it  as follows:
\begin{align*}
&\biggl(\; \sum_{i=\exitt^\alpha_1+1}^{\exitt^{\alpha-\e}_1} F_{\alpha-\e}^{-1}(U_i) +   \Gpp_{(\exitt^{\alpha-\e}_1,1), (m,n)}  - \Gpp_{(\exitt^\alpha_1,1), (m,n)}\biggr)
\cdot \ind_{\{\exitt^{\alpha-\e}_1\,>\,\exitt^{\alpha}_1\}} \\
&\le 
\biggl(\; \sum_{i=1}^{\exitt^{\alpha-\e}_1} F_{\alpha-\e}^{-1}(U_i) +   \Gpp_{(\exitt^{\alpha-\e}_1,1), (m,n)}  - \sum_{i=1}^{\exitt^\alpha_1} F_{\alpha}^{-1}(U_i) -  \Gpp_{(\exitt^\alpha_1,1), (m,n)}\biggr)
\cdot \ind_{\{\exitt^{\alpha-\e}_1\,>\,\exitt^{\alpha}_1\}} \\
&= 
\bigl(  \Gpp^{\alpha-\e,\rho}_{0,(m,n)}  - \Gpp^{\alpha,\rho}_{0,(m,n)} \bigr)
\cdot \ind_{\{\exitt^{\alpha-\e}_1\,>\,\exitt^{\alpha}_1\}} 
\; \le \;  \bigl(  S^{\alpha-\e}_{1,m}  - S^{\alpha}_{1,m}  \bigr)
\cdot \ind_{\{\exitt^{\alpha-\e}_1\,>\,\exitt^{\alpha}_1\}}.  
\end{align*} 
The last inequality comes from the observation that, since the weights away from the south boundary do not change as $\alpha$ changes,  the   increase $\Gpp^{\alpha-\e,\rho}_{0,(m,n)}  - \Gpp^{\alpha,\rho}_{0,(m,n)}$ is  bounded by the total increase $S^{\alpha-\e}_{1,m}  - S^{\alpha}_{1,m}$ in the weight on the south boundary.   We bound the  expectation on line \eqref{line5}:
\begin{align*}
& \wt\E\Bigl[  \,\sum_{i=\exitt^\alpha_1+1}^{\exitt^{\alpha-\e}_1} F_{\alpha-\e}^{-1}(U_i) +   \Gpp_{(\exitt^{\alpha-\e}_1,1), (m,n)}  - \Gpp_{(\exitt^\alpha_1,1), (m,n)} \Bigr]\\[4pt]
&\qquad \le   \wt\E\bigl[  S^{\alpha-\e}_{1,m}  - S^{\alpha}_{1,m} \,,\, 
 \exitt^{\alpha-\e}_1\,>\,\exitt^{\alpha}_1 \,\bigr]  
\le  \bigl\{  \, \wt\E\bigl[  (S^{\alpha-\e}_{1,m}  - S^{\alpha}_{1,m})^2 \,\bigr] \,\bigr\}^{1/2}  \,
\bigl(\,\wt\P\{ \exitt^{\alpha-\e}_1\,>\,\exitt^{\alpha}_1\}\, \bigr)^{1/2}   \\[4pt] 
&\qquad=  \frac{\e}{\alpha-\e}\, \bigl\{  \, \wt\E\bigl[  ( S^{\alpha}_{1,m})^2 \,\bigr] \,\bigr\}^{1/2}  \,
\bigl(\,\wt\P\{ \exitt^{\alpha-\e}_1\,>\,\exitt^{\alpha}_1\}\, \bigr)^{1/2}  
=  C(m,\alpha) \, \frac{\e}{\alpha-\e}\,  \bigl(\,\wt\P\{ \exitt^{\alpha-\e}_1\,>\,\exitt^{\alpha}_1\}\, \bigr)^{1/2} 
\end{align*}
 for a constant $C(m,\alpha)$.   It remains to argue that 
 \be\label{cov89} \nn 
 \lim_{\e\to0}   \wt\P\{ \exitt^{\alpha-\e}_1\,>\,\exitt^{\alpha}_1\} =0.  
 \ee
 This  comes from  dominated convergence as follows.  For almost every fixed  weights, there exists $\delta>0$ such that the maximizing  path $x^{\alpha, \rho*}_\bbullet$  for $\Gpp^{\alpha,\rho}_{0,(m,n)}$ has weight at least $\delta$ above the weight of the next best path.  Thus, once $\e$ is small enough so that $S^{\alpha-\e}_{1,m} <  S^{\alpha}_{1,m} +\delta$,  path $x^{\alpha, \rho*}_\bbullet$ is maximal also for $\Gpp^{\alpha-\e,\rho}_{0,(m,n)}$ and then $\exitt^{\alpha-\e}_1=\exitt^{\alpha}_1$.  Thus $\ddd\lim_{\e\to0} \ind_{\{\exitt^{\alpha-\e}_1\,>\,\exitt^{\alpha}_1\}}=0$ almost surely.   We have verified  that the  expectation on line \eqref{line5} is of order $o(\e)$. 

Now return to \eqref{line4}--\eqref{line5} to collect the steps to compute the derivative: 
\begin{align*}
&\frac{\partial}{\partial\alpha} \wt\E\Bigl[  \,\sum_{i=1}^{\exitt^\alpha_1} F_\alpha^{-1}(U_i) + \Gpp_{(\exitt^\alpha_1,1), (m,n)}  \Bigr]  \\
&\qquad =-\,\lim_{\e\to0}  \frac1\e\biggl\{  
\wt\E\Bigl[  \,\sum_{i=1}^{\exitt^{\alpha-\e}_1} F_{\alpha-\e}^{-1}(U_i) + \Gpp_{(\exitt^{\alpha-\e}_1,1), (m,n)}  \Bigr]
-
\wt\E\Bigl[  \,\sum_{i=1}^{\exitt^\alpha_1} F_\alpha^{-1}(U_i) + \Gpp_{(\exitt^\alpha_1,1), (m,n)}   \Bigr] 
\biggr\}  \\
&\qquad =-\,\lim_{\e\to0}  \frac1\e\Bigl\{   \bigl(\e+O(\e^2)\bigr)  \alpha^{-1}  \E^{\alpha,\rho}[ S_{1,\exitt_1}]  +   o(\e) \Bigr\} 
\;=\; -\alpha^{-1}  \E^{\alpha,\rho}[ S_{1,\exitt_1}] . 
\end{align*}
Calculation \eqref{cov78} has been justified.  

To finish the proof, return to treat the last term on the last line of \eqref{aux3}: 
\begin{align*}
 \Cvv(\south ,\north )&=  \Cvv^{1-\rho,\rho}(\south ,\north ) 
=- \frac{\partial}{\partial\alpha} \E^{\alpha,\rho}(\north ) \big\vert_{\alpha=1-\rho}\\
&=  \frac1{1-\rho} \E^{1-\rho,\rho}[ S_{1,\exitt_1}] =   \frac1{1-\rho} \E^\rho[ S_{1,\exitt_1}].
\end{align*} 
This completes the proof of identity  \eqref{VGr}. 
 \end{proof}  


\subsection{Upper bound for the passage time exponent}\label{s-Grho-ub}    This upper bound proof is an improved version of the proof originally given  in \cite{bala-cato-sepp}. Article \cite{bala-cato-sepp} was a corner growth model adaptation of   the arguments of the earlier work \cite{cato-groe-06} on increasing sequences among planar Poisson points.  

We couple the boundary variables for two different parameters  $0<\rho<\lambda<1$ as follows:  
\be\label{coup7} 
I^\lambda_{ie_1} = \frac{1-\rho}{1-\lambda} I^\rho_{ie_1}>I^\rho_{ie_1} \qquad\text{and}\qquad   J^\lambda_{je_2} = \frac{\rho}{\lambda} J^\rho_{je_2}<J^\rho_{je_2}. 
\ee
From this  follows for example  that   $\exitt^\lambda_1\ge \exitt^\rho_1$ and   $\exitt^\lambda_2\le \exitt^\rho_2$, and also 
\be\label{coup8} S^\lambda_{1,\ell}-S^\rho_{1,\ell}\le S^\lambda_{1,k}-S^\rho_{1,k} \qquad \text{for $0\le \ell \le k$. }\ee

We begin with auxiliary lemmas.  

\begin{lemma} \label{lm66}   Let $0<\e<1$.   Then there exists a constant $C=C(\e)$ such that, for $\e\le \rho<\lambda\le 1-\e$,  
\[  \Vvv[G^\lambda_{0,(m,n)}] \le    \Vvv[G^\rho_{0,(m,n)}]
 + Cm(\lambda-\rho). 
 \]
\end{lemma}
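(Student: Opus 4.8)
The plan is to exploit the variance formula \eqref{VGr} from Theorem \ref{thGr3} together with the coupling \eqref{coup7}. Write both variances using \eqref{VGr}:
\[
\Vvv[G^\lambda_{0,(m,n)}] = -\frac{m}{(1-\lambda)^2}+\frac{n}{\lambda^2}+\frac{2}{1-\lambda}\,\E^\lambda[S^\lambda_{1,\exitt^\lambda_1}],
\qquad
\Vvv[G^\rho_{0,(m,n)}] = -\frac{m}{(1-\rho)^2}+\frac{n}{\rho^2}+\frac{2}{1-\rho}\,\E^\rho[S^\rho_{1,\exitt^\rho_1}].
\]
Subtracting, the deterministic terms $-m/(1-\cdot)^2 + n/(\cdot)^2$ have derivative in the parameter that is bounded on $[\e,1-\e]$, so their difference contributes $O(m(\lambda-\rho)) + O(n(\lambda-\rho))$; and since under the coupling the endpoint is effectively comparable to $N$ in the characteristic regime, one can absorb the $n$-part, but more simply one keeps the argument general and handles the $n$-dependence by the same kind of mean-value estimate (or one notes that in the intended application $n \le CN \le Cm$ up to constants, but I would phrase the lemma's conclusion as stated with $Cm(\lambda-\rho)$ and carry $n$ along if needed — actually the cleanest route is to treat the deterministic part directly since $|\frac{d}{d\rho}(n/\rho^2)| \le C n$ and in the relevant range $n$ is comparable to $m$). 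The real content is therefore to bound the difference of the expectation terms $\frac{1}{1-\lambda}\E^\lambda[S^\lambda_{1,\exitt^\lambda_1}] - \frac{1}{1-\rho}\E^\rho[S^\rho_{1,\exitt^\rho_1}]$.

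The key step is to control $\E^\lambda[S^\lambda_{1,\exitt^\lambda_1}] - \E^\rho[S^\rho_{1,\exitt^\rho_1}]$ using the monotone coupling. Under \eqref{coup7} we have $\exitt^\lambda_1 \ge \exitt^\rho_1$ and $S^\lambda_{1,k} = \frac{1-\rho}{1-\lambda}S^\rho_{1,k}$ for every $k$. Hence
\[
S^\lambda_{1,\exitt^\lambda_1} - S^\rho_{1,\exitt^\rho_1} = \Bigl(\tfrac{1-\rho}{1-\lambda}-1\Bigr)S^\rho_{1,\exitt^\lambda_1} + \bigl(S^\rho_{1,\exitt^\lambda_1} - S^\rho_{1,\exitt^\rho_1}\bigr) = \tfrac{\lambda-\rho}{1-\lambda}\,S^\rho_{1,\exitt^\lambda_1} + \bigl(S^\rho_{1,\exitt^\lambda_1} - S^\rho_{1,\exitt^\rho_1}\bigr),
\]
and the second (nonnegative) term is at most $S^\rho_{1,m} - S^\rho_{1,\exitt^\rho_1} \le S^\rho_{1,m}$ in expectation only when $\exitt^\lambda_1 > \exitt^\rho_1$, which I would pair with a Cauchy--Schwarz / second-moment bound as in the proof of \eqref{cov78}; but more robustly, $S^\rho_{1,\exitt^\lambda_1} - S^\rho_{1,\exitt^\rho_1} \le S^\lambda_{1,\exitt^\lambda_1} - S^\rho_{1,\exitt^\lambda_1} + (\text{something controllable})$... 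Let me instead use the cleaner route: combine the two variance formulas so that the cross terms telescope. Writing $\Phi(\rho) := \E^\rho[S^\rho_{1,\exitt^\rho_1}]$, the quantity $\Vvv[G^\rho] $ is, by \eqref{VGr}, an affine-in-$\Phi(\rho)$ expression, and I would show $\Phi$ has a bounded ``derivative'' in the discrete sense $\Phi(\lambda) - \Phi(\rho) \le C m(\lambda-\rho)$ by writing $\E^\lambda[S^\lambda_{1,\exitt^\lambda_1}] \le \E^\rho[\,\tfrac{1-\rho}{1-\lambda}S^\rho_{1,\exitt^\lambda_1}\,]$ — reinterpreting $S^\lambda$ back in terms of $S^\rho$ — and then $S^\rho_{1,\exitt^\lambda_1} \le S^\rho_{1,m}$, with $\E^\rho[S^\rho_{1,m}] = m/(1-\rho) \le m/\e$. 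This gives $\frac{1}{1-\lambda}\E^\lambda[S^\lambda_{1,\exitt^\lambda_1}] \le \frac{1-\rho}{(1-\lambda)^2}\cdot\frac{m}{1-\rho} = \frac{m}{(1-\lambda)^2}$, which is too lossy on its own — so one must subtract off the corresponding bound for $\rho$ and track only the \emph{difference}, where the $m/(1-\rho)$-size terms cancel and what remains is $O(m(\lambda-\rho))$.

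Concretely, the step I expect to be the main obstacle is obtaining a matching cancellation so that the naive $O(m)$ bounds collapse to $O(m(\lambda-\rho))$. I would do this by the substitution $\exitt^\lambda_1 \ge \exitt^\rho_1$ combined with the identity $S^\lambda_{1,k} - S^\lambda_{1,\exitt^\lambda_1} = \frac{1-\rho}{1-\lambda}(S^\rho_{1,k}-S^\rho_{1,\exitt^\lambda_1})$ and \eqref{coup8}, writing
\[
S^\lambda_{1,\exitt^\lambda_1} - S^\rho_{1,\exitt^\rho_1}
= \underbrace{\bigl(S^\lambda_{1,\exitt^\lambda_1} - S^\rho_{1,\exitt^\lambda_1}\bigr)}_{=\,\frac{\lambda-\rho}{1-\lambda}S^\rho_{1,\exitt^\lambda_1}\le\frac{\lambda-\rho}{1-\lambda}S^\rho_{1,m}}
\;+\;\underbrace{\bigl(S^\rho_{1,\exitt^\lambda_1} - S^\rho_{1,\exitt^\rho_1}\bigr)}_{\ge 0},
\]
take expectations, note $\E^\rho[S^\rho_{1,m}] = m/(1-\rho)$, and then — this is the delicate part — show the expectation of the second term is also $O(m(\lambda-\rho))$: bound it by $\E[(S^\rho_{1,m}-S^\rho_{1,\exitt^\rho_1})\ind_{\exitt^\lambda_1>\exitt^\rho_1}]$, apply Cauchy--Schwarz, use $\E[(S^\rho_{1,m})^2]\le Cm^2$, and control $\P(\exitt^\lambda_1 > \exitt^\rho_1)$. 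If the latter probability estimate is not available at this point in the paper, the fallback is to instead differentiate in the parameter as in the proof of Theorem \ref{thGr3} (equation \eqref{cov78}), obtaining $\frac{d}{d\alpha}\Vvv[G^\alpha_{0,(m,n)}]$ as an explicit integrable expression bounded by $Cm$ uniformly on $[\e,1-\e]$, and then integrate from $\rho$ to $\lambda$. This differentiation approach is the one I would actually carry out, since it sidesteps the exit-point probability estimate entirely and directly yields $\Vvv[G^\lambda] - \Vvv[G^\rho] = \int_\rho^\lambda \frac{d}{d\alpha}\Vvv[G^\alpha]\,d\alpha \le Cm(\lambda-\rho)$, using the variance formula to express the derivative and the bounds $0\le\E[S_{1,\exitt_1}]\le m/(1-\rho)$.
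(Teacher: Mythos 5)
You started from the variance formula \eqref{VGr} in terms of $S_{1,\exitt_1}$, but under the monotone coupling \eqref{coup7} the quantity $S^\lambda_{1,\exitt^\lambda_1}$ is bounded \emph{below}, not above, in terms of $S^\rho_{1,\exitt^\rho_1}$: raising the parameter from $\rho$ to $\lambda$ makes the $e_1$-boundary more attractive, so $\exitt^\lambda_1\ge\exitt^\rho_1$ and $S^\lambda_{1,\exitt^\lambda_1}\ge\tfrac{1-\rho}{1-\lambda}S^\rho_{1,\exitt^\rho_1}$. Your decomposition therefore leaves over the nonnegative term $\E^\rho[\,S^\rho_{1,\exitt^\lambda_1}-S^\rho_{1,\exitt^\rho_1}\,]$, and there is no reason this is of order $m(\lambda-\rho)$ --- each of the two random variables can separately have expectation of order $m$, and you do not have a usable bound on $\P\{\exitt^\lambda_1>\exitt^\rho_1\}$ to feed into Cauchy--Schwarz. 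Your differentiation fallback hits the same wall: it needs an a priori $O(m)$ bound on $\frac{d}{d\alpha}\E^\alpha[S_{1,\exitt_1}]$, which the computation \eqref{cov78} does not supply (there only the $e_1$-boundary parameter is varied, not the whole stationary system), and which is essentially the thing you set out to control.

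The idea you are missing is to use the \emph{second} variance identity \eqref{VGr0.1}, in terms of $S_{2,\exitt_2}$, where the monotonicity of the coupling points the right way: as the parameter increases the $e_2$-boundary becomes \emph{less} attractive, so $\exitt^\lambda_2\le\exitt^\rho_2$ and
\[
 S^\lambda_{2,\exitt^\lambda_2}=\sum_{j=1}^{\exitt^\lambda_2}J^\lambda_{je_2}\ \le\ \sum_{j=1}^{\exitt^\rho_2}J^\lambda_{je_2}\ =\ \frac{\rho}{\lambda}\,S^\rho_{2,\exitt^\rho_2},
\]
a one-sided upper bound of exactly the required shape. Substituting this into \eqref{VGr0.1}, writing $-\tfrac{n}{\lambda^2}+\tfrac{2\rho}{\lambda^2}\E[S^\rho_{2,\exitt_2}]=\tfrac{\rho^2}{\lambda^2}\bigl(-\tfrac{n}{\rho^2}+\tfrac{2}{\rho}\E[S^\rho_{2,\exitt_2}]\bigr)$, and recognizing the parenthesis as $\Vvv[G^\rho_{0,(m,n)}]-\tfrac{m}{(1-\rho)^2}$ gives
\[
\Vvv[G^\lambda_{0,(m,n)}]\ \le\ \frac{\rho^2}{\lambda^2}\,\Vvv[G^\rho_{0,(m,n)}]\ +\ \frac{m}{(1-\rho)^2}\biggl(\frac{(1-\rho)^2}{(1-\lambda)^2}-\frac{\rho^2}{\lambda^2}\biggr),
\]
and the proof finishes with $\rho^2/\lambda^2\le1$, nonnegativity of the variance, and a mean-value bound on the bracket, uniform over $\rho,\lambda\in[\e,1-\e]$. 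No probability estimate on the exit point is required.
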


\begin{proof}    From \eqref{coup7} 
\[  S^\lambda_{2, \exitt_2}=\sum_{j=1}^{\exitt^\lambda_2}J^\lambda_{je_2} 
\le  \sum_{j=1}^{\exitt^\rho_2}J^\lambda_{je_2}  =   \frac{\rho}{\lambda}  \sum_{j=1}^{\exitt^\rho_2}J^\rho_{je_2}   =   \frac{\rho}{\lambda}  S^\rho_{2, \exitt_2} . 
\]
Using the second line of \eqref{VGr}, 
\begin{align*}
\Vvv[G^\lambda_{0,(m,n)}]  &=  \frac{m}{(1-\lambda)^2}-\frac{n}{\lambda^2} +\frac{2}{\lambda}  \E[S^\lambda_{2, \exitt_2}] \\
&\le  \frac{(1-\rho)^2}{(1-\lambda)^2} \cdot  \frac{m}{(1-\rho)^2}-\frac{\rho^2}{\lambda^2}\cdot \frac{n}{\rho^2} +\frac{\rho^2}{\lambda^2}\cdot \frac{2}{\rho}  \E[S^\rho_{2, \exitt_2}]\\
&= \frac{\rho^2}{\lambda^2}\cdot   \Vvv[G^\rho_{0,(m,n)}] +  \frac{m}{(1-\rho)^2}\biggl(  \frac{(1-\rho)^2}{(1-\lambda)^2}-  \frac{\rho^2}{\lambda^2}\biggr)  \\
&\le  \Vvv[G^\rho_{0,(m,n)}] + Cm(\lambda-\rho). 
\qedhere \end{align*}
\end{proof}

\medskip 

\begin{lemma} \label{lm70}     Let $0<\e<1$.   Then there exists a constant $C=C(\e)$ such that, for $\e\le \rho\le 1-\e$,  
\be\label{S9}  \E^\rho_{0,(m,n)}[S_{1,\exitt_1}]   \le   C \bigl(  \E^\rho_{0,(m,n)}[\exitt_1] + 1) . 
 \ee
\end{lemma}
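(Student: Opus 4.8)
The plan is to condition on everything except the boundary weights on the $e_1$--axis, turn the event $\{\exitt_1\ge i\}$ into a one--sided threshold event for the single variable $I_{ie_1}$, use the memoryless property of the exponential law, and then control an ``overshoot'' remainder. Throughout, $\E$ abbreviates $\E^\rho_{0,(m,n)}$ and $C=C(\e)$ may change from line to line; we assume $\e\le\rho\le1-\e$ so that $(1-\rho)^{\pm1},\rho^{\pm1}\le\e^{-1}$.

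\textbf{Step 1 (conditioning).} Let $\mathcal H=\sigma(\w_x:x\in\Z_{>0}^2;\ J_{je_2}:j\ge1)$, i.e.\ all weights except the $I_{ie_1}$. For $0\le k\le m$ let $g_k$ be the best last-passage value over up-right paths from $0$ to $(m,n)$ whose exit point from the $e_1$--axis equals $k$: thus $g_k=\Gpp_{(k,1),(m,n)}$ for $k\ge1$, while $g_0=\max_{1\le\ell\le n}\{S_{2,\ell}+\Gpp_{(1,\ell),(m,n)}\}$ is the best value among paths starting with an $e_2$--step. Each $g_k$ is $\mathcal H$--measurable (the relevant weights lie in the bulk or on the $e_2$--axis), and from \eqref{e:Gr2},
\be
G^\rho_{0,(m,n)}=\max_{0\le k\le m}\bigl(S_{1,k}+g_k\bigr),\qquad S_{1,0}=0,\quad S_{1,k}=\sum_{i=1}^k I_{ie_1},
\ee
with $\exitt_1$ the (a.s.\ unique) maximizing index.

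\textbf{Step 2 (threshold events and the memoryless property).} Fix $i\in[m]$. Split the maximum at index $i$: for $k\ge i$ one has $S_{1,k}+g_k=S_{1,i-1}+I_{ie_1}+\bigl(g_k+\sum_{l=i+1}^k I_{le_1}\bigr)$, and for $k<i$ one has $S_{1,k}+g_k=S_{1,i-1}+\bigl(g_k-\sum_{l=k+1}^{i-1} I_{le_1}\bigr)$. Hence $\{\exitt_1\ge i\}=\{I_{ie_1}>\tau_i\}$ with $\tau_i:=L_i-R_i$, where $L_i:=\max_{0\le k<i}\bigl(g_k-\sum_{l=k+1}^{i-1}I_{le_1}\bigr)$ and $R_i:=\max_{i\le k\le m}\bigl(g_k+\sum_{l=i+1}^k I_{le_1}\bigr)\ge g_i$, and neither $L_i$ nor $R_i$ involves $I_{ie_1}$. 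So, conditionally on $\mathcal H$, $\tau_i$ is independent of $I_{ie_1}\sim\mathrm{Exp}(1-\rho)$, and the memoryless identity $\E[X\,\ind\{X>t\}]=(t^+\!+\tfrac1{1-\rho})\,\P(X>t)$ for $X\sim\mathrm{Exp}(1-\rho)$ gives, after summing and taking expectations,
\be
\E[S_{1,\exitt_1}]=\sum_{i=1}^m\E\bigl[I_{ie_1}\,\ind\{\exitt_1\ge i\}\bigr]
=\frac{\E[\exitt_1]}{1-\rho}+\sum_{i=1}^m\E\bigl[\tau_i^+\,\ind\{\exitt_1\ge i\}\bigr].
\ee
It remains to bound the last sum by $C\bigl(\E[\exitt_1]+1\bigr)$.

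\textbf{Step 3 (bounding the overshoot).} Since $R_i\ge g_i$ and $g_k-g_i=\sum_{l=k}^{i-1}(g_l-g_{l+1})$, on the event $\{\exitt_1\ge i\}$ we have
\[
0\le\tau_i^+\le(L_i-g_i)^+=\Bigl(\max_{0\le k<i}\ \Bigl[\,\sum_{l=k}^{i-1}(g_l-g_{l+1})-\sum_{l=k+1}^{i-1}I_{le_1}\Bigr]\Bigr)^+,
\]
which is (up to the single boundary term $g_0-g_1$, absorbed by the additive constant) the positive part of a maximal suffix of the walk with steps $(g_l-g_{l+1})-I_{(l+1)e_1}$, $l\ge1$. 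For $l\ge1$ the increment $g_l-g_{l+1}=\Gpp_{(l,1),(m,n)}-\Gpp_{(l+1,1),(m,n)}\ge0$ is a horizontal increment of the i.i.d.\ LPP process. The plan is: (a) show $\E\bigl[\Gpp_{(l,1),(m,n)}-\Gpp_{(l+1,1),(m,n)}\bigr]\le C_0(\e)$ uniformly over the indices $l$ that can be visited before the exit, i.e.\ over the range where the direction from $(l,1)$ to $(m,n)$ stays in a cone bounded away from both axes (there $\nabla\gpp$, with $\gpp(s,t)=(\sqrt s+\sqrt t)^2$, is bounded, and Theorem \ref{t:sh} upgrades this to a bound on expected increments); and (b) combine (a) with a standard maximal-suffix/overshoot estimate for a random walk whose increments have bounded mean and exponential tails, together with a split $\tau_i^+\,\ind\{\exitt_1\ge i\}\le a\,\ind\{\exitt_1\ge i\}+\tau_i^+\,\ind\{\tau_i^+>a\}$ for a constant $a=a(\e)$, to get $\sum_{i=1}^m\E[\tau_i^+\,\ind\{\exitt_1\ge i\}]\le C(\E[\exitt_1]+1)$, the constant covering the finitely many small-$i$ and boundary terms. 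Substituting into the identity of Step 2 finishes the proof.

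\textbf{Main obstacle.} The delicate point is Step 3(a): bounding the expected horizontal increments of the i.i.d.\ LPP process \emph{uniformly} in $(m,n)$ without circular appeal to fluctuation bounds. An equivalent, more transparent framing: conditioned on $\mathcal H$ and $\{\exitt_1=k\}$, the vector $(I_{ie_1})_{i\le k}$ is a string of i.i.d.\ $\mathrm{Exp}(1-\rho)$ variables conditioned to keep its reversed partial sums above the obstacle $p\mapsto g_{k-p}-g_k$; one must show this conditioning inflates $\E[S_{1,k}]$ by at most a bounded multiple of $k$ plus an absolute constant, which amounts to a drift comparison between the exponential walk (drift $\tfrac1{1-\rho}$) and the obstacle (increments $g_l-g_{l+1}$). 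The way to keep this self-contained is an a priori first-moment bound: either iterate $\Gpp_{(l,1),(m,n)}=\w_{(l,1)}+\Gpp_{(l+1,1),(m,n)}\vee\Gpp_{(l,2),(m,n)}$ to peel the increment, or dominate $\Gpp_{(l,1),(m,n)}$ by a stationary-boundary process of a suitably chosen parameter so that the increment is stochastically bounded by an $\mathrm{Exp}(\mu)$ variable with $\mu=\mu(\e)>0$; the shape theorem then handles the regime where the exit point is not macroscopically small, while in the complementary regime $\E[\exitt_1]$ is itself $O(1)$ and the claimed bound is immediate.
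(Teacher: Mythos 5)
Your Steps 1 and 2 are correct, and the memoryless decomposition is a genuinely different way to extract the leading term $\E[\exitt_1]/(1-\rho)$: one can check that $\sum_i\E[\tau_i^+\ind\{\exitt_1\ge i\}]=\E[\overline S_{1,\exitt_1}]$, so the residual quantity you must bound is exactly the one the paper centers out. Up to that point the two approaches agree.

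The gap is Step 3, and it is not cosmetic. You reduce to bounding $\sum_i\E[\tau_i^+\ind\{\exitt_1\ge i\}]$ via the a.s.\ inequality $\tau_i^+\le(L_i-g_i)^+$, and the ingredient you need is a uniform bound $\E[g_l-g_{l+1}]\le C_0(\e)$. This cannot hold for all $l\le m$: $g_l-g_{l+1}=\Gpp_{(l,1),(m,n)}-\Gpp_{(l+1,1),(m,n)}$ is a horizontal increment of the i.i.d.\ LPP process, and when $l$ is close to $m$ the direction from $(l,1)$ to $(m,n)$ is nearly vertical, where such increments have mean of order $\sqrt{n/(m-l)}$, unbounded in $(m,n)$. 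Your proposed remedy --- dominate by a stationary process of a suitable parameter $\rho'$ --- would require $\rho'\to1$ in that regime, so $\E[I^{\rho'}]=(1-\rho')^{-1}\to\infty$, and no uniform Exp$(\mu(\e))$ domination is available. The alternative fallback, \emph{``in the complementary regime $\E[\exitt_1]$ is itself $O(1)$''}, is not true (take $(m,n)=(m,1)$, where $\exitt_1$ is of order $m$) and in any case cannot be invoked without first proving something of the kind the lemma asserts, so it is circular. There is also a structural issue with the ``standard maximal-suffix/overshoot estimate'': conditionally on $\mathcal H$ the walk has deterministic steps $g_l-g_{l+1}$ plus i.i.d.\ noise $-I_{le_1}$, and without uniform control of the deterministic drift the overshoot theory does not apply.

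The paper's proof avoids all of this with a much shorter argument. After the same centering, it writes the residual as $\sum_{k=1}^m\E[\overline S_{1,k}\,,\,\exitt_1=k]$ and splits each term at level $k$: on $\{\overline S_{1,k}\le k\}$ the summand is at most $k\P(\exitt_1=k)$, and summing gives $\E[\exitt_1]$; on $\{\overline S_{1,k}>k\}$ it simply drops the indicator $\{\exitt_1=k\}$, applies Cauchy--Schwarz, and uses that $\P(\overline S_{1,k}>k)\le e^{-c(\e)k}$ by a Chernoff bound for i.i.d.\ centered exponentials with rate bounded away from $0$ and $1$. Since $(\E[\overline S_{1,k}^2])^{1/2}=(1-\rho)^{-1}\sqrt{k}$, the series $\sum_k\sqrt{k}\,e^{-c(\e)k}$ converges to a constant $C(\e)$, and the lemma follows. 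The insight is to choose the split threshold $k$ (rather than a fixed $a$) so that the large-deviation decay absorbs the polynomial growth; no information about LPP increments, the direction of $(m,n)$, or overshoot theory is needed. If you want to pursue your route, you would first need an a priori bound on the LPP increments along the whole south boundary, which is essentially a harder statement than the lemma itself.
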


\begin{proof}  Recall that $I_{ie_1}\sim$ Exp($1-\rho$).   The steps of the derivation are   self-explanatory.  
\begin{align*}
 \E [S_{1,\exitt_1}]  &= \E\biggl[\;\sum_{i=1}^{\exitt_1} I_{ie_1}\biggr]  =  \E\biggl[\;\sum_{i=1}^{\tau_1} \overline I_{ie_1}\biggr]  +  \frac{\E[\exitt_1]}{1-\rho} 
 =  \sum_{k=1}^m  \E[\;\overline S_{1,k} \,, \exitt_1=k ]  +  \frac{\E[\exitt_1]}{1-\rho} \\
 &\le \sum_{k=1}^m  \E[\;\overline S_{1,k} \,, \overline S_{1,k}\le k,\,  \exitt_1=k ]  +  \sum_{k=1}^m  \E[\;\overline S_{1,k} \,, \overline S_{1,k}> k,\,  \exitt_1=k ] + \frac{\E[\exitt_1]}{1-\rho} \\
 &\le \sum_{k=1}^m  k \,\P(  \exitt_1=k )  +  \sum_{k=1}^m  \E[\;\overline S_{1,k} \,, \overline S_{1,k}> k  ] + \frac{\E[\exitt_1]}{1-\rho} \\
  &\le \E[\exitt_1] \frac{2-\rho}{1-\rho}    +  \sum_{k=1}^m \bigl(  \E[\;\overline S_{1,k}^2 \,] \bigr)^{1/2}  \bigl(  P\{\overline S_{1,k}> k  \}\bigr)^{1/2}  \\
    &\le \E[\exitt_1] \frac{2-\rho}{1-\rho}    +  C(\rho)  \sum_{k=1}^\infty k^{1/2} e^{-c(\rho)k}   
 \le   C(\rho) \bigl(  \E[\exitt_1] + 1\bigr) .  
\qedhere   \end{align*}
\end{proof} 

The main estimate for the upper bound in \eqref{e:Gr11} is contained in the next proposition.    Recall the definition of $\kappa_N$ from  \eqref{ch5}: 
\[  
\kappa_N=\abs{\,m-N(1-\rho)^2\,}\vee\abs{\,n-N\rho^2\,}   .  
\]  

\begin{proposition} \label{tpr6}   Consider the increment-stationary CGM  in the rectangle $[0,(m,n)]$ for $(m,n)\in\Z_{>0}^2$ and    with weight distributions \eqref{e:w7} for a given $0<\rho<1$.    Three positive constants $a_0$, $a_1$ and $N_0$ are given and the assumption is that 
\be\label{a11}  \kappa_N\le a_0N^{2/3} \quad\text{and}\quad m\le a_1N \quad\text{for} \quad N\ge N_0. 
\ee
 
   Then  there exist constants $c_2, c_3<\infty$ such that  the following two bounds hold:  
\be\label{t8}  \P^\rho_{0,(m,n)}\{\exitt_1\ge \ell\} \le   c_3\Bigl(\,\frac{N^2}{\ell^3}  + (1+a_0)   \frac{N^{8/3}}{\ell^4}\Bigr)
\qquad 
\text{ for  $N\ge N_0$ and   $1\vee c_2\kappa_N\le \ell \le m$}  \ee
and 
\be\label{t9}  \E^\rho_{0,(m,n)}[(\exitt_1)^q\,] \le \bigl( c_2a_0^q+ \frac{c_3}{3-q}\Bigr)N^{2q/3}\qquad \text{for $N\ge N_0$ and   $1\le q<3$.}  \ee
The functional dependencies of the constants $c_2,c_3$ on the parameters is as follows:  \be\label{t10} c_2=c_2(\rho) \quad\text{ and }\quad  c_3=c_3(a_1,\rho).   \ee
Furthermore, $c_2$ and $c_3$ are locally bounded functions of their arguments. 
\end{proposition}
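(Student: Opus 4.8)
The plan is to run the standard Cator--Groeneboom-type argument adapted to the corner growth model, exactly as in the earlier estimates, but carefully tracking the dependence of the constants on $a_0$ and $a_1$. The heart of the matter is a comparison inequality for the exit point: if $\rho < \lambda$ and we couple the boundary weights via \eqref{coup7}, then raising the parameter of the $x$-axis weights can only increase $\exitt_1$, while the quantity $S^\lambda_{1,k}-S^\rho_{1,k}$ is monotone in $k$ by \eqref{coup8}. The key observation is that on the event $\{\exitt_1 \ge \ell\}$, the difference $G^\lambda_{0,(m,n)} - G^\rho_{0,(m,n)}$ picks up the full increase $S^\lambda_{1,\ell} - S^\rho_{1,\ell}$ in the boundary weight along the first $\ell$ horizontal edges, because the maximizing path for the larger parameter still uses those edges. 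This gives the pointwise bound
\[
\bigl(S^\lambda_{1,\ell} - S^\rho_{1,\ell}\bigr)\, \ind_{\{\exitt^\rho_1 \ge \ell\}} \le G^\lambda_{0,(m,n)} - G^\rho_{0,(m,n)},
\]
and after centering and taking expectations, combined with Chebyshev applied to $S^\lambda_{1,\ell} - S^\rho_{1,\ell}$ (which is a sum of $\ell$ i.i.d.\ exponentials with explicit mean $\ell\bigl(\tfrac1{1-\lambda} - \tfrac1{1-\rho}\bigr) \asymp \ell(\lambda - \rho)$ and variance of order $\ell(\lambda-\rho)^2$ plus a piece of order $\ell(\lambda-\rho)$), one gets
\[
\P^\rho\{\exitt_1 \ge \ell\} \le \frac{C\bigl(\,\E|G^\lambda_{0,(m,n)} - G^\rho_{0,(m,n)}| + \sqrt{\ell}\,(\lambda-\rho)\bigr)}{\ell(\lambda-\rho)}.
\]
Then $\E|G^\lambda - G^\rho|$ is controlled by the difference of means plus the square roots of the two variances. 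The difference of means $g^\lambda(m,n) - g^\rho(m,n) = m\bigl(\tfrac1{1-\lambda}-\tfrac1{1-\rho}\bigr) + n\bigl(\tfrac1\lambda - \tfrac1\rho\bigr)$, evaluated with $(m,n)$ within $\kappa_N$ of the characteristic point of $\rho$, vanishes to first order in $\lambda-\rho$ at the characteristic direction, so a Taylor expansion leaves a term of order $N(\lambda-\rho)^2 + \kappa_N(\lambda-\rho)$; and each variance is bootstrapped from Lemma \ref{lm66}, the variance formula \eqref{VGr}--\eqref{VGr0.1}, and Lemma \ref{lm70}, so $\Var[G^\rho_{0,(m,n)}] \le C(N + \E^\rho[\exitt_1])$, with the analogous bound for $\lambda$ after using Lemma \ref{lm66}.

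The mechanics then go as follows. First I would fix the tuning $\lambda - \rho = b\ell/N$ for a suitable small constant $b = b(\rho)$ (this is the scale at which the Chebyshev bound is optimized); the constraint $\ell \ge c_2 \kappa_N$ with $c_2 = c_2(\rho)$ is what is needed to keep $\lambda$ bounded away from $0$ and $1$ and to absorb the $\kappa_N(\lambda-\rho)$ term into the main term. Feeding the variance bootstrap into the exit-point tail gives, schematically,
\[
\P^\rho\{\exitt_1 \ge \ell\} \le C\,\frac{N(\lambda-\rho)^2 + \kappa_N(\lambda-\rho) + \sqrt{N + \E^\rho[\exitt_1]}\,}{\ell(\lambda-\rho)} + C\frac{\sqrt\ell}{\ell},
\]
and substituting $\lambda - \rho = b\ell/N$ turns the right-hand side into a self-referential inequality of the form $\P^\rho\{\exitt_1 \ge \ell\} \le C\bigl(\ell/N + \kappa_N/\ell + \sqrt{N + \E^\rho[\exitt_1]}\,/\,(\ell^{3/2}/N) \cdot N^{-1}\bigr) + \cdots$; being careful, and using $\kappa_N \le a_0 N^{2/3}$, the clean closed form is the claimed
\[
\P^\rho_{0,(m,n)}\{\exitt_1 \ge \ell\} \le c_3\Bigl(\frac{N^2}{\ell^3} + (1+a_0)\frac{N^{8/3}}{\ell^4}\Bigr),
\]
valid for $1 \vee c_2\kappa_N \le \ell \le m$. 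The second bound \eqref{t9} is then immediate by integrating the tail: split $\E[(\exitt_1)^q] = \int_0^\infty q s^{q-1}\P\{\exitt_1 \ge s\}\,ds$ at the threshold $s_0 = c_2\kappa_N \vee 1 \le c_2 a_0 N^{2/3}$, bound the contribution below $s_0$ by $s_0^q \le c_2 a_0^q N^{2q/3}$ (trivial tail bound $\le 1$ there), and above $s_0$ use \eqref{t8}; the integral $\int_{s_0}^\infty q s^{q-1}(N^2/s^3)\,ds$ converges precisely for $q < 3$ and contributes a term $\asymp \frac{c_3}{3-q} N^{2/3} s_0^{q-3} \cdot s_0^{?}$ — one checks it is of order $\frac{c_3}{3-q} N^{2q/3}$ after using $s_0 \ge c_2\kappa_N$ and $\kappa_N \le a_0 N^{2/3}$ — while the $N^{8/3}/\ell^4$ term is lower order. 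The functional dependence \eqref{t10} and local boundedness are read off from the construction: $c_2$ comes only from keeping $\lambda$ inside a compact subinterval of $(0,1)$ around $\rho$ (hence $c_2 = c_2(\rho)$), whereas $c_3$ inherits the constant $C$ from Lemma \ref{lm66} applied on that interval together with the factor $m \le a_1 N$ used when bounding $\E^\rho[\exitt_1] \le m$ crudely in the worst case, so $c_3 = c_3(a_1,\rho)$, and all constants are continuous, hence locally bounded, in their arguments.

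I expect the main obstacle to be the bootstrap/self-referential nature of the estimate: the exit-point tail \eqref{t8} is proved using a variance bound that itself involves $\E^\rho[\exitt_1]$ (via \eqref{VGr} and Lemma \ref{lm70}), so one cannot cleanly separate the two. The resolution is the usual one — first prove an a priori polynomial bound $\E^\rho[\exitt_1] \le m \le a_1 N$ (trivial, since $\exitt_1 \le m$), plug that crude bound into the variance formula to get $\Var \le C(a_1) N$, use that to run the tail argument and obtain \eqref{t8}, then integrate to get the sharp $\E^\rho[(\exitt_1)^q] \lesssim N^{2q/3}$ of \eqref{t9}; one could iterate once more to sharpen constants but \eqref{t9} is already the desired statement. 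A secondary technical point requiring care is verifying that the Taylor remainder in the mean difference $g^\lambda(m,n) - g^\rho(m,n)$ is genuinely of size $O(N(\lambda-\rho)^2) + O(\kappa_N(\lambda-\rho))$ uniformly — this uses that $\nabla g^\rho$ at the characteristic point is orthogonal to the $\rho$-derivative direction, which is exactly the content of \eqref{v:ch8} and the definition \eqref{v:ch7} of $\xi(\rho)$, together with the hypothesis $m \le a_1 N$ to control the second-order term. With these two points handled, the rest is bookkeeping of constants.
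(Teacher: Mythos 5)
Your overall plan matches the paper's: couple at $\lambda = \rho + r\ell/N$, use the inclusion $\{\exitt_1 \ge \ell\} \subseteq \{S^\lambda_{1,\ell} - S^\rho_{1,\ell} \le G^\lambda_{0,(m,n)} - G^\rho_{0,(m,n)}\}$, exploit the vanishing first $\lambda$-derivative of the mean at the characteristic direction to separate the means, apply concentration, and close a bootstrap via the variance formula \eqref{VGr} and Lemmas \ref{lm66} and \ref{lm70}. Two of your steps, however, cannot yield the stated powers. Your intermediate bound
\[ \P^\rho\{\exitt_1 \ge \ell\} \le \frac{C\bigl(\,\E|G^\lambda_{0,(m,n)} - G^\rho_{0,(m,n)}| + \sqrt{\ell}\,(\lambda-\rho)\bigr)}{\ell(\lambda-\rho)} \]
is a first-moment (Markov) estimate: plugging $\E[G^\lambda_{0,(m,n)}-G^\rho_{0,(m,n)}] \asymp N(\lambda-\rho)^2$ and $\lambda-\rho = r\ell/N$ into it gives only $\P\{\exitt_1\ge\ell\}\le C(r + 1/\sqrt{\ell})$, with no $\ell^{-3}$ decay at all. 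One has to center \emph{both} $S^\lambda_{1,\ell}-S^\rho_{1,\ell}$ and $G^\lambda_{0,(m,n)}-G^\rho_{0,(m,n)}$, split along the mean gap $\ge r\ell^2/(c_3N)$, and apply Chebyshev (second moment) to each; division by the \emph{square} of that gap is precisely what creates the prefactor $N^2/\ell^4$ acting on the variances.

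The bootstrap closure is also misdescribed. Plugging the crude a priori bound $\E^\rho[\exitt_1]\le m\le a_1N$ into the variance formula and then into Chebyshev gives $\P\{\exitt_1\ge\ell\}\le C(N^2/\ell^3 + N^3/\ell^4)$ — note $N^3$, not $N^{8/3}$, so this is \emph{not} \eqref{t8} — and integrating this yields only $\E^\rho[\exitt_1]\le CN^{3/4}$, not $N^{2/3}$. Iterating pushes the exponent through $1, 3/4, 11/16, \dotsc$, which converges to $2/3$ but never equals it after finitely many passes, so ``iterate once more to sharpen constants'' does not produce \eqref{t9}. The paper avoids the infinite regress by leaving $\E^\rho[\exitt_1]$ \emph{inside} the tail bound, $\P\{\exitt_1\ge\ell\}\le CN^2/\ell^3 + (CN^2/\ell^4)\E^\rho[\exitt_1]$, integrating from a threshold $bN^{2/3}$, and solving the resulting self-referential inequality $\E^\rho[\exitt_1]\le bN^{2/3} + CN^{2/3}/(2b^2) + (C/(3b^3))\E^\rho[\exitt_1]$ by choosing $b$ large enough that $C/(3b^3)\le 1/3$. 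Relatedly, your integration threshold $s_0 = c_2\kappa_N\vee 1$ is wrong: when $\kappa_N$ is $O(1)$ the threshold is $O(1)$, and then $\int_{s_0}^\infty q s^{q-1}\bigl(N^2/s^3 + (1+a_0)N^{8/3}/s^4\bigr)\,ds$ is of order $N^2$, not $N^{2q/3}$; the threshold must be of order $N^{2/3}$ unconditionally (the paper's $b\ge 1+C$ guarantees $bN^{2/3}\ge N^{2/3}$).
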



The upper variance  bound  in \eqref{e:Gr11} follows from a combination of  \eqref{VGr},   assumption \eqref{ch5}, \eqref{S9}, and \eqref{t9} for $q=1$. 

\begin{proof}[Proof of Proposition \ref{tpr6}]    Consider $N\ge N_0$ so that the assumptions are in force. 
    Assume that, for some $0<c_2<\infty$,  the integer $\ell$ satisfies 
\be\label{ell-ass}    1\vee c_2\kappa_N\le \ell \le m\le a_1N.   \ee
 The proof will choose $c_2=c_2(\rho)$  large enough. 
Let $0<r<1$ be a constant that will be set small enough in the proof.   Let 
\be\label{la1}
\lambda=\rho+\frac{r\ell}N. 
\ee
We take $r=r(a_1,\rho)$ at least small enough so that $ra_1<\tfrac12(1-\rho)$.  This guarantees  that for $N\ge N_0$, $\lambda\in(\rho, \frac{1+\rho}2)$ is also a legitimate parameter for an increment-stationary CGM. 

In the first inequality  below  use  $S^\lambda_{1,k}+G_{(k,1),(m,n)}\le G^\lambda_{0,(m,n)}$ and then  \eqref{coup8}.     Recall that $\overline X=X-\E X$ denotes a centered random variable.  
\begin{align}
\P\{ \exitt^\rho_1\ge \ell\} &=\P\{\,\exists k\ge \ell: \, S^\rho_{1,k}+G_{(k,1),(m,n)}=G^\rho_{0,(m,n)} \,\} \nn\\
&\le  \P\{\,\exists k\ge \ell: \,  S^\lambda_{1,k}-S^\rho_{1,k}\le G^\lambda_{0,(m,n)} -G^\rho_{0,(m,n)} \,\} \nn\\
&=  \P\{\,  S^\lambda_{1,\ell}-S^\rho_{1,\ell}\le G^\lambda_{0,(m,n)} -G^\rho_{0,(m,n)} \,\} \nn\\
&=\P\Bigl\{\,  \overline{S^\lambda_{1,\ell}}- \overline{S^\rho_{1,\ell}}  \le  \overline{G^\lambda_{0,(m,n)}} -\overline{G^\rho_{0,(m,n)}}  -  \bigl( \,\E[S^\lambda_{1,\ell}-S^\rho_{1,\ell}] -  \E[G^\lambda_{0,(m,n)}-G^\rho_{0,(m,n)}] \,\bigr)    \,\Bigr\} 
\label{L78}
\end{align}

Next we compute and bound  the means of the random variables in the last probability above.   This will show that the event in the probability is a deviation, and not a typical event.  First 
\begin{align}\label{65} 
&\E[S^\lambda_{1,\ell}-S^\rho_{1,\ell}]=  \ell\Bigl(\frac1{1-\lambda}-\frac1{1-\rho}\Bigr)
= \frac{\ell}{(1-\lambda)(1-\rho)} (\lambda-\rho)
= \frac{1}{(1-\lambda)(1-\rho)} \cdot  \frac{r\ell^2}{N}
\end{align} 
Introduce the quantities 
\be\label{ka4}   \kappa^1_N=m-N(1-\rho)^2 \quad\text{and}\quad  \kappa^2_N=n-N\rho^2 \ee 
that  satisfy (with $\kappa_N$ as in \eqref{ch5})  
\[  \abs{\kappa^1_N}\vee\abs{\kappa^2_N}= \kappa_N.  \] 
Then  the LPP values.  
\be\label{66} \begin{aligned}
&\E[G^\lambda_{0,(m,n)}-G^\rho_{0,(m,n)}]  
=m\Bigl(\frac1{1-\lambda}-\frac1{1-\rho}\Bigr) + n\Bigl(\frac1{\lambda}-\frac1{\rho}\Bigr)\\
&\quad 
=\Bigl( \,\frac{m}{(1-\lambda)(1-\rho)}-\frac{n}{\lambda\rho}\,\Bigr)(\lambda-\rho) \\
&\quad 
= N\Bigl( \,\frac{1-\rho}{1-\lambda}-\frac{\rho}{\lambda}\,\Bigr)(\lambda-\rho)
+ \Bigl( \frac{\kappa^1_N}{(1-\lambda)(1-\rho)}-\frac{\kappa^2_N}{\lambda\rho}\Bigr)(\lambda-\rho)  \\
&\quad 
=  \frac{N}{\lambda(1-\lambda)} (\lambda-\rho)^2
+ \Bigl( \frac{\kappa^1_N}{(1-\lambda)(1-\rho)}-\frac{\kappa^2_N}{\lambda\rho}\Bigr)(\lambda-\rho)  \\
&\quad 
=  \frac{r^2\ell^2}{\lambda(1-\lambda)N}  
+ \Bigl( \frac{\kappa^1_N}{(1-\lambda)(1-\rho)}-\frac{\kappa^2_N}{\lambda\rho}\Bigr)\frac{r\ell}N  \\
&\quad 
\le   \frac{r^2\ell^2}{\lambda(1-\lambda)N}  +  \Bigl( \frac1{(1-\lambda)(1-\rho)}+\frac1{\lambda\rho}\Bigr) \cdot  \frac{r\ell^2}{c_2N} .
\end{aligned}\ee
The last inequality used    $\abs{\kappa^1_N}\vee\abs{\kappa^2_N}=\kappa_N\le \ell/c_2$ from \eqref{ell-ass}. 

The key step is the appearance of the quadratic $(\lambda-\rho)^2$ in the middle of the development above.  This is how curvature enters  this upper bound proof.    The quadratic comes  because $\lambda\mapsto \E[G^\lambda_{0,(m,n)}]$ is minimized at $\lambda=\rho$ by virtue of the choice of $(m,n)$ as the (approximate) characteristic direction for $\rho$. 

On the last line above the first term is the important one and the second an error that came from the $\kappa_N$ discrepancies.  
Comparison of the means in \eqref{65} and  \eqref{66}  shows that if we choose $c_2$ large enough and then $r$ small enough, both  as functions of $(\lambda, \rho)$, then for a large enough constant  $c_3=c_3(\lambda, \rho)$, 
\be\label{68}  \E[S^\lambda_{1,\ell}-S^\rho_{1,\ell}]  > \E[G^\lambda_{0,(m,n)}-G^\rho_{0,(m,n)}]   +    \frac{r\ell^2}{c_3N}. \ee 
Since the range $\lambda\in(\rho, \frac{1+\rho}2)$ is determined by $\rho$, the dependence on $\lambda$ can be dropped and we have $c_2=c_2(\rho)$,  $r=r(a_1,\rho)$ and $c_3=c_3(\rho)$. 

We continue the bound on $\P\{ \exitt^\rho_1\ge \ell\} $  from line \eqref{L78} and apply \eqref{68}.    Below we subsume 
 $(r, a_1, \rho,\lambda)$-dependent factors into a constant $C=C(a_1,\rho)$.    Along the way we use Lemma \ref{lm66}, Theorem \ref{thGr3}, \eqref{ka4},  $\kappa_N\le c_2^{-1}\ell$, $m\le a_1N$,  and Lemma \ref{lm70}.  
\begin{align}
&\P\{ \exitt^\rho_1\ge \ell\}  \le  
\P\Bigl\{\,  \overline{S^\lambda_{1,\ell}}-\overline{S^\rho_{1,\ell}}  \le  \overline{G^\lambda_{0,(m,n)}} -\overline{G^\rho_{0,(m,n)}}  -  \frac{r\ell^2}{c_3N}  \,\Bigr\} \nn\\
&\quad\le 
\P\Bigl\{\,  \overline{S^\lambda_{1,\ell}}-\overline{S^\rho_{1,\ell}}  \le   -  \frac{r\ell^2}{2c_3N}  \,\Bigr\}
+
\P\Bigl\{\,    \overline{G^\lambda_{0,(m,n)}} -\overline{G^\rho_{0,(m,n)}}  \ge  \frac{r\ell^2}{2c_3N}  \,\Bigr\} \nn\\
&\quad\le 
\frac{CN^2}{\ell^4} \Vvv[S^\lambda_{1,\ell}-S^\rho_{1,\ell}] 
+   \frac{CN^2}{\ell^4} \Vvv[G^\lambda_{0,(m,n)} -G^\rho_{0,(m,n)}]  
\label{L82}
\nn\\
&\quad\le 
\frac{CN^2}{\ell^3}  +   \frac{CN^2}{\ell^4} \bigl(\, \Vvv[G^\lambda_{0,(m,n)} ] +\Vvv[G^\rho_{0,(m,n)}] \, \bigr) \nn\\
&\quad\le 
\frac{CN^2}{\ell^3}  +   \frac{CN^2}{\ell^4}  \bigl( \Vvv[G^\rho_{0,(m,n)}]  +   m(\lambda-\rho)\bigr) \nn\\
&\quad= 
\frac{CN^2}{\ell^3}  +   \frac{CN^2}{\ell^4}  \Bigl(   -\,\frac{m}{(1-\rho)^2}+\frac{n}{\rho^2} +\frac{2}{1-\rho}  \E[S^\rho_{1,\exitt_1}]  +  a_1N \cdot 
\frac{r\ell}N\,\Bigr) \nn\\
&\quad\le  
\frac{CN^2}{\ell^3}  +   \frac{CN^2}{\ell^4}  \bigl(     \E[\exitt^\rho_1]  +   \ell \bigr) 
\le   
\frac{CN^2}{\ell^3}  +   \frac{CN^2}{\ell^4}      \E[\exitt^\rho_1] . 
\end{align}


The fourth inequality above used the fact that the boundary variables have diffusive fluctuations, that is, the variance of $S^\lambda_{1,\ell}-S^\rho_{1,\ell}$ is of order $\ell$. 

  Define a constant 
\[ b=c_2a_0+1+C,  \] 
 with $c_2$ from \eqref{ell-ass} and  $C=C(a_1,\rho)$ from line \eqref{L82}  above.    By  the assumption  $\kappa_N\le a_0N^{2/3}$  this ensures $bN^{2/3}\ge c_2\kappa_N$,  which lets us use  bound \eqref{L82}  for integers $\ell\ge bN^{2/3}$.  By adjusting the constant $C$ in the front of \eqref{L82} we can apply the bound to all real $\ell\ge bN^{2/3}$.    Note first below that $\exitt^\rho_1\le m$ because we are considering the point-to-point LPP process in the rectangle $[0, (m,n)]$.    Then get an upper  bound  by increasing the upper integration limit to $\infty$. 
\begin{align*}
  \E[\exitt^\rho_1]  &= \int_0^m  \P(\exitt^\rho_1\ge s) \,ds 
\;  \le \; bN^{2/3} + 
 C \int_{bN^{2/3}}^\infty  \Bigl(   \frac{N^2}{s^3}  +   \frac{N^2}{s^4}      \E[\exitt^\rho_1] \Bigr)\,ds
\\
&= bN^{2/3} +   \frac{CN^{2/3}}{2b^2} +   \frac{C}{3b^3}    \E[\exitt^\rho_1]
\;  \le \;    bN^{2/3} +   \tfrac12N^{2/3}  +   \tfrac13    \E[\exitt^\rho_1]. 
\end{align*}  
From this we obtain the bound
\[     \E[\exitt^\rho_1]   \le   \bigl(c_2(\rho)a_0+ C_1(a_1,\rho)\bigr) N^{2/3} \]
and thereby \eqref{t9} has been proved for $q=1$.  Substituting this bound back into line \eqref{L82} gives 
\[  \P\{\exitt^\rho_1\ge \ell\} \le   C_2 \Bigl(\,\frac{N^2}{\ell^3}  + (1+a_0)   \frac{N^{8/3}}{\ell^4}\Bigr) \]
for a constant $C_2=C_2(a_1,\rho)$, verifying \eqref{t8}.   Another integration with  $  b=c_2(\rho)a_0+1+C_2  $
  proves  \eqref{t9}   for $1<q<3$:
 \begin{align*}
  \E[(\exitt^\rho_1)^q]  &= \int_0^\infty  \P(\exitt^\rho_1\ge s) \,qs^{q-1} \,ds 
\;  \le \; b^qN^{2q/3} + 
 C_2 \int_{bN^{2/3}}^\infty  \bigl(   {N^2}{s^{q-4}}  +  (1+a_0)  {N^{8/3}}{s^{q-5}}     \bigr)\,ds
\\[2pt]
&= b^qN^{2q/3} + \frac{C_2b^{q-3}}{3-q} N^{2q/3} +  \frac{C_2}{b^{4-q}} (1+a_0)N^{2q/3}\le  \Bigl(c_2(\rho)a_0^q + \frac{C_3}{3-q}\Bigr)N^{2q/3} 
 \end{align*}   
 where we summarized the  $(a_1,\rho)$-dependent constants into $C_3=C_3(a_1,\rho)$.   Note that $a_0$ acquired a power $q$ because it is contained in $b$.  
 
   This completes the proof of Proposition \ref{tpr6},   with $c_3$ defined as the constant that appears in front of the right-hand sides of \eqref{t8}--\eqref{t9}.  
\end{proof}

We state a corollary that quantifies the effect of deviating the direction from the characteristic.   Let $N$ be the scaling parameter and the endpoint 
\[  (m,n)=\bigl(  \fl{N(1-\rho)^2}\,,  \fl{N\rho^2}   \bigr). \]

\begin{corollary}\label{cor-lb8}    Fix $0<\rho<1$.  Then there exists a constant $C=C(\rho)$ such that  for $N>0$ such that $m\wedge n\ge 1$ and  $b>0$, 
\be\label{lb-23}   \P^\rho_{0,(m,\,n+\fl{bN^{2/3}})}\{\exitt_1\ge 1\} \le \frac{C}{b^3}
\ee
and 
\be\label{lb-24}   \P^\rho_{0,(m,\,n- \fl{bN^{2/3}})}\{\exitt_2\ge 1\} \le \frac{C}{b^3}
\ee
\end{corollary}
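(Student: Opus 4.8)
The plan is to prove the corollary by deploying Proposition \ref{tpr6} in off-characteristic directions, where the endpoint has been shifted away from the characteristic direction of $\rho$ by an amount of order $N^{2/3}$. First I would observe that for the endpoint $(m,\,n+\fl{bN^{2/3}})$ with $(m,n)=(\fl{N(1-\rho)^2},\fl{N\rho^2})$, the discrepancy $\kappa_N$ of this point from the characteristic direction of $\rho$, as defined in \eqref{ch5}, is of order $bN^{2/3}$; more precisely $\kappa_N\le b N^{2/3}+1$ for $N\ge1$. Thus in Proposition \ref{tpr6} I would take $a_0$ comparable to $b$ (say $a_0=b+1$) and $a_1$ a fixed constant depending only on $\rho$ (since $m\le N$), so that the hypothesis \eqref{a11} holds with $N_0$ depending only on $\rho$. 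The event $\{\exitt_1\ge1\}$ is contained in $\{\exitt_1\ge\ell\}$ for any $\ell\le1$, but what I actually want is to apply the tail bound \eqref{t8} at the smallest admissible value $\ell=\lceil c_2\kappa_N\rceil$, which is of order $bN^{2/3}$.

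The key computation is then to substitute $\ell\asymp c_2\kappa_N\asymp bN^{2/3}$ into \eqref{t8}:
\[
\P^\rho_{0,(m,\,n+\fl{bN^{2/3}})}\{\exitt_1\ge c_2\kappa_N\}\le c_3\Bigl(\frac{N^2}{(bN^{2/3})^3}+(1+a_0)\frac{N^{8/3}}{(bN^{2/3})^4}\Bigr)=c_3\Bigl(\frac{C}{b^3}+\frac{(1+a_0)C}{b^4\,N^{2/3}}\Bigr).
\]
Since $a_0\asymp b$, the second term is $O(1/(b^3 N^{2/3}))$, which is dominated by the first, so the whole bound is $O(1/b^3)$ with a constant depending only on $\rho$. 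Crucially, once $b$ is large enough that $c_2\kappa_N>1$, the event $\{\exitt_1\ge1\}$ is \emph{contained} in $\{\exitt_1\ge c_2\kappa_N\}$ only if we instead note the reverse: on this off-characteristic shifted rectangle, the boundary edge weights on the $e_2$-axis have rate $\rho$ but the direction is steeper than characteristic, so the maximizing path is pulled toward the $e_2$-axis and $\exitt_1$ (exit from the $e_1$-axis) should typically be zero. The point is that $\{\exitt_1\ge1\}\subseteq\{\exitt_1\ge1\}$ trivially, and when $\ell_0:=\lceil 1\vee c_2\kappa_N\rceil$ we have $\{\exitt_1\ge1\}=\{\exitt_1\ge\ell_0\}\cup\{1\le\exitt_1<\ell_0\}$; but I should instead handle small $b$ and large $b$ separately: for $b$ bounded the inequality $C/b^3\ge1$ makes it trivial, and for $b$ large, $c_2\kappa_N\le c_2(bN^{2/3}+1)$, and I apply \eqref{t8} with $\ell=\lceil c_2\kappa_N\rceil$ after checking $\{\exitt_1\ge1\}\subseteq\{\exitt_1\ge\ell\}$ fails --- so the correct route is to show directly that $\P\{\exitt_1\ge1\}\le\P\{\exitt_1\ge \lceil c_2\kappa_N\rceil\}$ is \emph{wrong}, and one must argue monotonicity of exit-point tails in the endpoint location.

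The main obstacle, therefore, is the logical gap just flagged: Proposition \ref{tpr6} bounds $\P\{\exitt_1\ge\ell\}$ only for $\ell\ge 1\vee c_2\kappa_N$, whereas the corollary concerns $\ell=1$. Resolving this requires a monotonicity/coupling argument: shifting the endpoint from $(m,n)$ (the characteristic point for $\rho$, where $\exitt_1$ is typically of order $N^{2/3}$) upward to $(m,\,n+\fl{bN^{2/3}})$ can only make exit from the $e_1$-axis \emph{less} likely, because adding rows on top attracts the path toward the $e_2$-boundary (formally, $G_{(k,1),(m,n+j)}-G_{(k,1),(m,n)}$ is nondecreasing in $k$ by the same planar-LPP increment inequalities as in Lemma \ref{lm-til}, hence $\exitt_1$ is stochastically nonincreasing in $j$; alternatively one compares with the characteristic rectangle directly). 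I would therefore: (i) fix $b$ large enough that $c_2(bN^{2/3}+1)\ge1$, set $\ell=\lceil c_2\kappa_N\rceil$ with $\kappa_N\le bN^{2/3}+1$; (ii) use the increment monotonicity to get $\P^\rho_{0,(m,n+\fl{bN^{2/3}})}\{\exitt_1\ge1\}\le\P^\rho_{0,(m',n')}\{\exitt_1\ge\ell\}$ for the characteristic-type endpoint $(m',n')$ for parameter $\rho$ with the same ratio where the hypotheses of Proposition \ref{tpr6} apply with $\kappa_{N'}\le c_2^{-1}\ell$, giving $C/b^3$; (iii) for bounded $b$ the bound is trivial since the probability is at most $1\le C/b^3$ for suitable $C$. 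Inequality \eqref{lb-24} follows by the transpose symmetry exchanging $(e_1,\rho)\leftrightarrow(e_2,1-\rho)$, exactly as \eqref{VGr0.1} follows from \eqref{VGr}. I expect step (ii) --- making the monotonicity comparison rigorous so that Proposition \ref{tpr6}'s range restriction on $\ell$ is met --- to be where the real work lies.
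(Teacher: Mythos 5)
Your overall strategy is right: the only way to use \eqref{t8} (which only applies for $\ell\ge 1\vee c_2\kappa_N$) to control $\P\{\exitt_1\ge 1\}$ is to transfer the event to a characteristic rectangle where it becomes a tail event $\{\exitt_1\ge\ell\}$ with $\ell\asymp bN^{2/3}$, and then split into $b$ bounded (trivial) and $b$ large. You correctly flag step (ii) as the real work. But the specific monotonicity you lean on is the wrong one, and the genuinely needed ingredient is already in the paper as Lemma~\ref{app-lm3}, which you did not invoke.

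Two concrete issues with your step (ii). First, the claim ``$\exitt_1$ is stochastically nonincreasing in $j$ when the endpoint is $(m,n+j)$'' --- even if true --- only gives $\P^\rho_{0,(m,n+\fl{bN^{2/3}})}\{\exitt_1\ge 1\}\le \P^\rho_{0,(m,n)}\{\exitt_1\ge 1\}$, and the right-hand side is a constant of order $1$ in the characteristic rectangle (almost surely exactly one of $\exitt_1,\exitt_2$ is $\ge 1$), so this route yields nothing of order $b^{-3}$. The shift that does the job is \emph{horizontal}, not vertical: view $(m,\,n+\fl{bN^{2/3}})$ as $(\fl{M(1-\rho)^2}-k,\,\fl{M\rho^2})$ for an enlarged scaling parameter $M>N$ with $k\asymp b(\tfrac{1-\rho}\rho)^2 N^{2/3}$. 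Then Lemma~\ref{app-lm3} gives the \emph{exact identity}
\[
\P^\rho_{0,(m,\,n+\fl{bN^{2/3}})}\{\exitt_1\ge 1\}=\P^\rho_{0,(\fl{M(1-\rho)^2},\,\fl{M\rho^2})}\{\exitt_1\ge 1+k\},
\]
no stochastic-domination argument is needed, and the right-hand side is now a tail event with $\ell\asymp bN^{2/3}$ in a rectangle that is characteristic for $\rho$ at scale $M$, so \eqref{t8} applies with $a_0,a_1$ fixed constants of $\rho$. This is what the paper does; your phrase ``compares with the characteristic rectangle directly'' points in that direction but you never produce the identity, and your preferred route (vertical-shift monotonicity via Lemma~\ref{lm-til}) cannot be salvaged into a $b^{-3}$ bound. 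Second, a small algebra slip: plugging $\ell=bN^{2/3}$ into $N^{8/3}/\ell^4$ gives $1/b^4$, not $1/(b^4 N^{2/3})$; harmless here since both are $O(b^{-3})$ when $a_0\asymp b$, but the $N$-dependence you wrote is spurious. For \eqref{lb-24} your appeal to transpose symmetry is fine and matches the paper's reasoning.
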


\begin{proof}    For \eqref{lb-23}   introduce another scaling parameter $M$ and a constant $d$  via
\[ M\rho^2= n+bN^{2/3} \quad\text{and}\quad d=b\bigl(\tfrac{1-\rho}\rho\bigr)^2. \] 
Then  $\fl{M\rho^2}= n+\fl{bN^{2/3}}$ while  
\begin{align*}
M(1-\rho)^2   = \frac{n(1-\rho)^2}{\rho^2} + d N^{2/3}  = m + d N^{2/3} + \frac{\fl{N\rho^2} (1-\rho)^2}{\rho^2} - \fl{N(1-\rho)^2}, 
\end{align*}
from which follows 
\[    \fl{M(1-\rho)^2} \ge  m + \fl{d N^{2/3}}   - \biggl(\frac{1-\rho}\rho\biggr)^2.  \] 

We can assume that $b\ge 2$.  By ensuring that  $C\ge 8$, \eqref{lb-23}  will then hold trivially for $0<b<2$.    Assumption $m\wedge n\ge 1$ forces also $N>1$. 
By  Lemma \ref{app-lm3}  in Appendix \ref{app:coupl}, 
\begin{align*}
 \P^\rho_{0,(m,\,n+\fl{bN^{2/3}})}\{\exitt_1\ge 1\} 
 &=  \P^\rho_{0,(\fl{M(1-\rho)^2}\,,  \,\fl{M\rho^2}  )}\{\,\exitt_1\ge 1+\fl{M(1-\rho)^2}-m\,\}\\
& \le  \P^\rho_{0,(\fl{M(1-\rho)^2}\,,  \,\fl{M\rho^2}  )}\Bigl\{\exitt_1\ge dN^{2/3} - \bigl(\tfrac{1-\rho}\rho\bigr)^2\Bigr\}  \\
& \le  \P^\rho_{0,(\fl{M(1-\rho)^2}\,,  \,\fl{M\rho^2}  )}\Bigl\{\exitt_1\ge \tfrac12b\bigl(\tfrac{1-\rho}\rho\bigr)^2N^{2/3}  \Bigr\}  \\
&\le \frac{C}{b^3}. 
\end{align*}
The last inequality is from   the upper bound \eqref{t8}. 

\medskip 

For bound \eqref{lb-24} apply again 
  Lemma \ref{app-lm3}  in Appendix \ref{app:coupl} and then   the upper bound \eqref{t8}: 
\[ 
 \P^\rho_{0,(m,\,n- \fl{bN^{2/3}})}\{\exitt_2\ge 1\} \le  \P^\rho_{0,(m,n)}\{\exitt_2\ge bN^{2/3}\} \le \frac{C}{b^3}
\qedhere \] 
\end{proof} 

\bigskip 

\subsection{Lower bound for the passage time exponent}   This lower bound proof is adapted to the corner growth model from the one given in \cite{bala-sepp-aom}  for the asymmetric simple exclusion process (ASEP).

The parameter $0<\rho<1$ of the increment-stationary LPP process is fixed.   Let $N$ be the scaling parameter that is sent to infinity and  $(m,n)$  the endpoint of the point-to-point LPP process.  As before, we measure deviation from the characteristic direction of $\rho$  by 
$\kappa_N=\abs{m-N(1-\rho)^2}\vee\abs{n-N\rho^2}$. 
We prove the lower  bounds  on the right and left tail stated in the theorem below.

\begin{theorem}\label{t-st-lb}  Assume that for some constants $a_0, k_0>0$, we have $\kappa_N\le a_0N^{2/3}$ for $N\ge k_0$.   Then we have the conclusions below. 

There exist  constants   $1<a_1(\rho), a_2(\rho),  N_0(\rho, s)<\infty$  such that, for $s\ge   a_2(\rho)(1+ a_0^2)$ and   $N\ge N_0(\rho, s)\vee k_0$, 
 \be\label{st-lb-1}
 \P\bigl\{ \w: G^\rho_{0,(m,n)}  \ge \E[G^\rho_{0,(m,n)}] +  sN^{1/3}  \bigr\}
 \ge  e^{-a_1(\rho)s^{3/2}}  .  
 \ee
 Furthermore, there exist  constants   $0<t=t(\rho, a_0), a_3(\rho, a_0),    N_1(\rho)<\infty$  such that for    $N\ge N_1(\rho)\vee k_0$, 
  \be\label{st-lb-2}
 \P\bigl\{ \w: G^\rho_{0,(m,n)}  \le \E[G^\rho_{0,(m,n)}]  - tN^{1/3}  \bigr\}
 \ge  a_3(\rho, a_0) t^2  .  
 \ee
\end{theorem}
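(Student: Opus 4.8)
The plan is to obtain both tail bounds in Theorem~\ref{t-st-lb} from the variance identity \eqref{VGr}--\eqref{VGr0.1} together with the exit-point control of Proposition~\ref{tpr6}, using the standard ``exit-point against last-passage-value'' trade-off. The idea is: if $G^\rho_{0,(m,n)}$ were atypically \emph{small} with large probability, then the maximizing path cannot afford to exit far along a boundary, so $\E[\exitt_1]$ and $\E[\exitt_2]$ would both be small; feeding this into \eqref{VGr}--\eqref{VGr0.1} would force $\Vvv[G^\rho_{0,(m,n)}]$ below $c N^{2/3}$, contradicting (for the lower bound on the variance) the matching lower bound, or more directly contradicting an averaged consequence of Proposition~\ref{tpr6}. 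Conversely, the right-tail lower bound \eqref{st-lb-1} comes from a change-of-measure (tilting $\rho$) argument.

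For the right tail \eqref{st-lb-1}: fix $N$ and set $\lambda = \rho - c s^{1/2} N^{-1/3}$ for a small constant $c=c(\rho)$ (this is legitimate for $N\ge N_0(\rho,s)$). Couple the $\rho$- and $\lambda$-boundary systems as in \eqref{coup7} (with roles of large/small swapped since $\lambda<\rho$), so that $G^\lambda_{0,(m,n)} \ge G^\rho_{0,(m,n)}$ deterministically, and the weight increase is controlled: $G^\lambda_{0,(m,n)} - G^\rho_{0,(m,n)} \le S^\lambda_{2,\exitt^\lambda_2} - S^\rho_{2,\exitt^\lambda_2}$ or a similar boundary-difference bound. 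Compute $\E[G^\lambda_{0,(m,n)} - G^\rho_{0,(m,n)}]$: by the same curvature computation as in \eqref{66} (with $(m,n)$ the characteristic point for $\rho$, not $\lambda$) this mean is of order $N(\lambda-\rho)^2 \asymp c^2 s N^{1/3}$, so choosing $c$ appropriately this is at least, say, $2sN^{1/3}$. Then
\[
\P\{ G^\rho_{0,(m,n)} \ge \E[G^\rho_{0,(m,n)}] + sN^{1/3}\} \ge \P\{ G^\lambda_{0,(m,n)} - (S^\lambda - S^\rho) \ge \E[G^\rho] + sN^{1/3}\},
\]
and one estimates the right side by: (a) $G^\lambda_{0,(m,n)}$ concentrates around $\E[G^\lambda_{0,(m,n)}] = \E[G^\rho_{0,(m,n)}] + \Theta(sN^{1/3})$ with fluctuations $O(N^{1/3})$ by the upper variance bound (Lemma~\ref{lm66} plus the already-proved upper half of \eqref{e:Gr11} for $\lambda$, valid since $(m,n)$ is within $O(s N^{2/3})$ of the characteristic direction of $\lambda$), so $\P\{G^\lambda \ge \E[G^\lambda] - sN^{1/3}/2\} \ge \tfrac12$ for $s$ large; (b) the correction $S^\lambda - S^\rho$ is a sum of at most $m$ i.i.d.\ exponential differences with mean $\asymp s^{1/2}N^{-1/3}\cdot N \asymp sN^{2/3}$ — this is too big, so one must instead restrict the exit point. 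Here is the refinement: intersect with the event $\{\exitt^\lambda_2 \le C s^{1/2} N^{2/3}\}$, which by \eqref{t8} (applied with $a_0 \asymp s^{1/2}$, since the deviation parameter $\kappa$ relative to $\lambda$ is of that order) has probability $\ge 1 - C/s^{3/2} \cdot(\dots)$ — actually one wants this event to have probability $\ge 1/2$, which needs $s$ bounded below by $a_2(\rho)(1+a_0^2)$ so that $\kappa$-terms are dominated; on that event $S^\lambda_{2,\exitt^\lambda_2} - S^\rho_{2,\exitt^\lambda_2}$ has mean $O(s^{1/2}N^{-1/3}\cdot s^{1/2}N^{2/3}) = O(sN^{1/3})$, so with a further constant adjustment the whole event has probability bounded below — but only by a \emph{constant}, not $e^{-a_1 s^{3/2}}$. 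To recover the stretched-exponential rate one does \emph{not} use concentration of $G^\lambda$ but rather the trivial lower bound $\P\{G^\lambda_{0,(m,n)}\ge x\} \ge \P\{S^\lambda_{1,m} \ge x\}$ or $\P\{S^\lambda_{2,n}\ge x\}$ (a single boundary sum), i.e.\ forcing the path to run entirely along one axis; a sum of $n\asymp \rho^2 N$ i.i.d.\ $\mathrm{Exp}(\lambda)$ variables exceeding its mean by $\Theta(sN^{1/3})$ has probability $\ge e^{-C s^{3/2}}$ by a standard moderate-deviation lower bound for gamma random variables. Combining the shift $\lambda\to\rho$ (which accounts for the bulk of the needed excess via the deterministic monotonicity) with this single-axis lower deviation is what yields the exponent $3/2$. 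I will organize the argument so that the deterministic shift produces excess $\asymp sN^{1/3}$ for free and the gamma lower-deviation only has to produce an extra $O(sN^{1/3})$, keeping the rate $e^{-a_1(\rho)s^{3/2}}$.

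For the left tail \eqref{st-lb-2}: argue by contradiction with the variance. Suppose $\P\{G^\rho_{0,(m,n)} \le \E[G^\rho_{0,(m,n)}] - tN^{1/3}\} < a_3 t^2$ for $t$ to be chosen. Combined with the right-tail upper bound (which follows from the already-established upper variance bound $\Vvv \le C N^{2/3}$ via Chebyshev, or from \eqref{t8}), one gets that $\overline{G^\rho_{0,(m,n)}}$ is, with high probability, of order at most $tN^{1/3}$ in absolute value; more precisely one shows $\E[(\overline{G^\rho_{0,(m,n)}})^2] $ can be bounded using: $\Vvv = \E[(\overline G)^2 \mathbf 1_{\overline G \ge 0}] + \E[(\overline G)^2 \mathbf 1_{\overline G < 0}]$, the first term is $\le C N^{2/3}$ (upper bound), the second is $\le t^2 N^{2/3} + \int_{t N^{1/3}}^\infty 2s\,\P\{\overline G \le -s\}\,ds$. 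If $\P\{\overline G \le -s\}$ decays fast for $s \ge tN^{1/3}$ — which one does \emph{not} know a priori — this would bound $\Vvv$. The cleaner route, following \cite{bala-sepp-aom}, is via \eqref{VGr0.1}: $\Vvv[G^\rho] = \tfrac{m}{(1-\rho)^2} - \tfrac n{\rho^2} + \tfrac2\rho \E[S_{2,\exitt_2}]$, and one lower-bounds $\E[S_{2,\exitt_2}] \ge c\,\E[\exitt_2]$ minus a fluctuation term (cf.\ Lemma~\ref{lm70}), so a lower bound on $\Vvv$ (which comes from the right-tail estimate \eqref{st-lb-1} just proved, giving $\Vvv \ge c N^{2/3}$) forces $\E[\exitt_2] \ge c' N^{2/3}$ — wait, that is the wrong direction. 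The actual mechanism: the variance identity with a \emph{perturbed} parameter, as in the proof of Lemma~\ref{lm66}, relates the left-tail of $G^\rho$ to the right-tail of $G^{\lambda}$ for $\lambda$ slightly above $\rho$. Concretely: take $\lambda = \rho + c t N^{-1/3}$; then $G^\lambda_{0,(m,n)} \ge G^\rho_{0,(m,n)}$, and $\E[G^\lambda - G^\rho] \asymp N(\lambda-\rho)^2 + (\text{error}) \asymp c^2 t^2 N^{1/3} + C\kappa_N t N^{-1/3}$; choosing $t$ depending on $a_0$ so the error is controlled, this mean is $\le t^2 N^{1/3}$ say. On the event $\{G^\rho \le \E[G^\rho] - tN^{1/3}\}$ we have, combined with the typical size of $G^\lambda - G^\rho$ and a lower bound on $\exitt^\lambda_1$ via Corollary~\ref{cor-lb8}-type reasoning, an improvement: the left deviation of $G^\rho$ \emph{cannot} be made up by the shift, so $G^\lambda$ is also atypically small, and iterating / combining with $\Vvv[G^\lambda] \le CN^{2/3}$ and a second-moment (Paley--Zygmund) argument forces the probability to be $\ge a_3 t^2$.

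The main obstacle I anticipate is the left-tail bound \eqref{st-lb-2}: unlike the right tail, there is no direct change-of-measure that \emph{produces} a downward deviation, so one must extract it from the variance identity via a Paley--Zygmund / second-moment argument, and the delicate point is controlling the error terms proportional to $\kappa_N$ (which is why $t$ is allowed to depend on $a_0$) and ensuring the lower bound on $\Vvv[G^\rho_{0,(m,n)}]$ — needed to run the argument — is itself available; this lower bound must come from \eqref{st-lb-1}, so the two parts of the theorem are genuinely sequential. The right-tail part is more routine but still requires care in separating the deterministic-shift contribution (rate-free) from the single-boundary gamma moderate-deviation contribution (which supplies the $e^{-a_1 s^{3/2}}$), and in checking that the exit-point restriction needed for the coupling error has probability bounded below, which is precisely where the hypothesis $s \ge a_2(\rho)(1+a_0^2)$ is used.
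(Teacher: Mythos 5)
Your left-tail plan is essentially the paper's once stripped of the digressions: with \eqref{st-lb-1} in hand, write $X=G^\rho_{0,(m,n)}-\E[G^\rho_{0,(m,n)}]$, observe $\E[X^-]=\E[X^+]\ge 2tN^{1/3}$ with $t=\tfrac12 s\,e^{-a_1(\rho)s^{3/2}}$ from the right-tail bound, split $\E[X^-]$ at level $tN^{1/3}$ and apply Cauchy--Schwarz with the already-proved upper bound $\Vvv[G^\rho_{0,(m,n)}]\le CN^{2/3}$ to get $\P\{X^-\ge tN^{1/3}\}\ge a_3 t^2$. You correctly identify that the two parts of the theorem are sequential. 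Your detour through $\lambda=\rho+ctN^{-1/3}$ and ``$G^\lambda\ge G^\rho$'' is unnecessary and, as explained next, relies on a false claim.

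For the right tail there are two genuine gaps. First, the coupling \eqref{coup7} does \emph{not} give $G^\lambda_{0,(m,n)}\ge G^\rho_{0,(m,n)}$ deterministically, whether $\lambda<\rho$ or $\lambda>\rho$: one boundary's weights increase while the other's decrease, so the two last-passage values are not ordered pathwise. You cannot ``account for the bulk of the needed excess via deterministic monotonicity.'' Second, and decisively, the single-boundary lower bound $\P\{G^\lambda_{0,(m,n)}\ge x\}\ge\P\{S^\lambda_{1,m}\ge x\}$ is useless at this scale: the path running the full length of the $e_1$-axis pays a deficit of order $N$, not $N^{1/3}$, since $\E[G^\lambda_{0,(m,n)}]-\E[S^\lambda_{1,m}]\ge n/\lambda\asymp N$. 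To push $S^\lambda_{1,m}$ up to $\E[G^\rho_{0,(m,n)}]+sN^{1/3}$ requires a gamma deviation of order $N$, i.e.\ probability $e^{-cN}$, nothing like $e^{-a_1 s^{3/2}}$. Exiting at $\ell\asymp s^{1/2}N^{2/3}$ instead of $\ell=m$ fixes the order of magnitude but reintroduces the bulk piece $G_{(\ell,1),(m,n)}$, whose lower deviations you have no handle on.

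The mechanism the paper actually uses --- which your first sentence names (``change-of-measure'') but your sketch does not carry out --- is a \emph{localized} Radon--Nikodym argument. Set $\lambda=\rho+rN^{-1/3}$ with $r\asymp s^{1/2}$. Let $\Lambda_N$ be the set of paths with $x_1=e_1$ that exit the $e_1$-axis by step $\fl{4\rho^{-1}rN^{2/3}}$, define the restricted value $G_{0,(m,n)}(\Lambda_N)$, and set $B_{N,r}=\{G_{0,(m,n)}(\Lambda_N)\ge\E^\rho[G]+\tfrac12 c_6 r^2N^{1/3}\}$. Under $\P^\lambda$ the event $B_{N,r}$ has probability $\ge 1-Cr^{-3}$, because $(m,n)$ is ``too far north'' for $\lambda$ so the exit point is in the prescribed range with high probability (Proposition \ref{tpr6} applied to the $\lambda$-system via the shifts in Lemmas \ref{app-lm3}--\ref{app-lm5}), and $\E^\lambda[G]-\E^\rho[G]\ge c_6 r^2 N^{1/3}$ from the curvature computation \eqref{66}. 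The crucial observation is that $G_{0,(m,n)}(\Lambda_N)$ depends only on the $I$-weights $\{\w_{ie_1}:1\le i\le\fl{4\rho^{-1}rN^{2/3}}\}$ and the bulk weights, and sees no $J$-weights at all. So one switches from $\P^\lambda$ to $\P^\rho$ through the mixed measure $\wh\P$ that puts $\lambda$-weights only on that short segment of the $e_1$-axis; the Radon--Nikodym derivative $f_N=d\wh\P/d\P^\rho$ is a product over $\fl{4\rho^{-1}rN^{2/3}}$ exponential factors with $\E^\rho[f_N^2]\le e^{4r^3\rho^{-3}}$, uniformly in $N$. Cauchy--Schwarz then gives $\tfrac12\le\P^\lambda(B_{N,r})=\E^\rho[\one_{B_{N,r}}f_N]\le(\P^\rho(B_{N,r}))^{1/2}e^{2r^3\rho^{-3}}$, hence $\P^\rho(B_{N,r})\ge\tfrac14 e^{-4r^3\rho^{-3}}$, and since $G_{0,(m,n)}\ge G_{0,(m,n)}(\Lambda_N)$ this is \eqref{st-lb-1} with $s=\tfrac12 c_6 r^2$. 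The restriction to $\Lambda_N$ is not cosmetic --- it is what makes the change-of-measure cost $e^{O(r^3)}=e^{O(s^{3/2})}$ rather than $e^{O(N)}$, and it is precisely the idea your proposal is missing.
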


From \eqref{st-lb-1} we derive  the lower variance bound in \eqref{e:Gr11}: 
\begin{align*}
\Vvv[G^\rho_{0,(m,n)}]  =\E\bigl[ \bigl( G_{0,(m,n)}  - \E^\rho[G_{0,(m,n)}]\bigr)^2\, \bigr]
\ge  s^2N^{2/3} \cdot   e^{-a_1(\rho)s^{3/2}} . 
\end{align*} 
Setting $s= a_2(\rho)(1+ a_0^2)$ gives the lower bound in \eqref{e:Gr11.5}.

%
%
%

\medskip 

In this proof also we perturb the parameter of the boundary weights.  
Introduce a quantity  $r>0$ which, in the end, will be a constant multiple of $s^{1/2}$ in \eqref{st-lb-1}.  Define another parameter for the increment-stationary CGM by 
\[\lambda=\rho+\frac{r}{N^{1/3}}. \]
To guarantee that  $\lambda\in(\rho, \frac{1+\rho}2)$ we   assume that 
\be\label{e:N4}   N\ge N_0=N_0(\rho, r)=8\bigl(\tfrac{r}{1-\rho}\bigr)^3.   \ee
$N_0$ can  increase during the proof,  but remains a function of $\rho$ and $r$. 

\medskip 

{\it Notational comment.}    In this section we argue by switching measures on the space of weights, in addition to  coupling different weight configurations.  Hence  it is  convenient to attach the parameters $\rho$ and  $\lambda$ to the measure $\P$ and the expectation $\E$ and variance $\Vvv$  to indicate which distribution is placed on the weights.    We denote all the weights   by $\w_x$.  So for example   under the probability measure   $\P^\rho_{0,(m,n)}$ the weights on the rectangle $[0,(m,n)]$  have distributions as in   \eqref{e:w7} but without the $I$ and $J$ notation,  namely 
 \be\label{e:w9} 
 \text{$\w_{x}\sim$  Exp$(1)$ for bulk vertices $x\in\Z_{>0}^2$,  \  $\w_{ie_1}\sim$   Exp$(1-\rho)$,  \   and \  $\w_{je_2}\sim$   Exp$(\rho)$.  }  
 \ee
The point-to-point LPP process is defined   by   
\be\label{e:w13}   G_{0,(m,n)}=\max_{x_{0,m+n}\in\Pi_{0,(m,n)}} \sum_{k=1}^{m+n} \w_{x_k} \ee 
with maximum over paths that satisfy $x_0=0$ and $x_{m+n}=(m,n)$. 
\hfill   $\triangle$ 

\medskip 
 
For $N\in\Z_{>0}$ and $r>0$ define the event
\be\label{ANr}  A_{N,r}=\bigl\{ (1-\rho)rN^{2/3}\le \exitt_1 \le 4\rho^{-1}rN^{2/3}\bigr\}. \ee 
Variable $\exitt_1$ is the exit point from the $e_1$-axis  of the maximizing path from $0$ to $(m,n)$ in \eqref{e:w13}, defined as in  \eqref{d-tt}.  
We develop a lower bound for the probability of $A_{N,r}$ under $\P^\lambda_{0,(m,n)}$, that is,  for the increment-stationary   process with parameter $\lambda$, restricted to the rectangle $[0,(m,n)]$.  Note that this rectangle is {\it not} of the characteristic shape for $\lambda$.  We take advantage of this in the proof.  
 
\begin{lemma}\label{lm-ANr}  There exists a constant $C_1=C_1(\rho)$ such that  for 
\be\label{r54}  r\ge 1\vee \frac{2a_0}{\rho\wedge(1-\rho)}\ee
 and for $N\ge N_0(\rho, r)$ we have this bound:
\be\label{t55} 
\P^\lambda_{0,(m,n)}(A_{N,r}) \ge 1- C_1r^{-3} . 
\ee
\end{lemma}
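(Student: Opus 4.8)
The plan is to bound the complement $A_{N,r}^c = \{\exitt_1 < (1-\rho)rN^{2/3}\} \cup \{\exitt_1 > 4\rho^{-1}rN^{2/3}\}$ under $\P^\lambda_{0,(m,n)}$, treating the two pieces separately. For the upper tail $\{\exitt_1 > 4\rho^{-1}rN^{2/3}\}$, the key point is that $(m,n)$ is approximately the characteristic direction for $\rho$, and $\lambda = \rho + rN^{-1/3}$ is larger than $\rho$, so $(m,n)$ lies on the side where the $J$-boundary (the $e_2$-axis) is attractive and the $e_1$-exit point should be small. Concretely, I would pass to a comparison rectangle whose shape is characteristic for $\lambda$ — or simply invoke the already-proved upper bound \eqref{t8} of Proposition \ref{tpr6} with the roles adjusted — after checking the hypothesis \eqref{a11} holds with $a_0$ replaced by a constant multiple of $r$ (this is exactly why we need $r \ge 1 \vee 2a_0/(\rho\wedge(1-\rho))$: the deviation of $(m,n)$ from the characteristic direction of $\lambda$ is of order $\kappa_N + \lambda$-shift $\lesssim a_0 N^{2/3} + rN^{2/3} \lesssim rN^{2/3}$). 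Proposition \ref{tpr6} then yields $\P^\lambda_{0,(m,n)}\{\exitt_1 \ge \ell\} \le c_3(N^2/\ell^3 + (1+a_0')N^{8/3}/\ell^4)$; plugging in $\ell = 4\rho^{-1}rN^{2/3}$ gives a bound of order $r^{-3}$, as desired.

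For the lower tail $\{\exitt_1 < (1-\rho)rN^{2/3}\}$ under $\P^\lambda$, I would use the transposition/reflection symmetry to convert this into an upper-bound-on-$\exitt_2$ statement, or argue directly. The cleanest route: on the event that $\exitt_1$ is small, the maximizing path from $0$ to $(m,n)$ exits the $e_1$-axis early, which (since the $e_1$-boundary weights under $\lambda$ are $\text{Exp}(1-\lambda)$, hence \emph{larger} on average than under $\rho$) is itself a deviation because the $\lambda$-system strongly prefers to stay on the $e_1$-axis when the target is off the $\lambda$-characteristic direction in the relevant way. I would again reduce to a rectangle whose shape makes this an instance of the exit-point tail bound already available — either \eqref{t8} applied after a transposition (exchanging the roles of the two axes, sending $\rho \mapsto 1-\rho$, $\lambda \mapsto 1-\lambda$), or Corollary \ref{cor-lb8} style reasoning with the shift parameter $b$ a constant times $r$. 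Either way the bound comes out as $C r^{-3}$.

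Combining the two pieces and absorbing constants into a single $C_1 = C_1(\rho)$ gives $\P^\lambda_{0,(m,n)}(A_{N,r}^c) \le C_1 r^{-3}$, hence \eqref{t55}. Throughout one must check that the constraints \eqref{r54} and $N \ge N_0(\rho,r)$ ensure all invoked propositions apply with constants depending only on $\rho$: the $a_1$-type hypothesis $m \le a_1 N$ holds automatically since $m \approx N(1-\rho)^2 \le N$ up to the $\kappa_N$-correction, and the deviation parameter entering Proposition \ref{tpr6} is $O(r N^{2/3})$, so its constants $c_2(\rho), c_3(a_1,\rho)$ stay bounded in terms of $\rho$ alone once $a_1$ is fixed.

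The main obstacle I anticipate is the bookkeeping in the first (upper-tail) step: correctly identifying the auxiliary rectangle of characteristic shape for $\lambda$, verifying that the $\lambda$-perturbation $rN^{-1/3}$ plus the original $\kappa_N \le a_0 N^{2/3}$ discrepancy together produce a combined off-characteristic deviation of order exactly $rN^{2/3}$ (so that $a_0$ in Proposition \ref{tpr6} is replaced by a harmless constant times $r$, and the resulting bound is genuinely $O(r^{-3})$ rather than, say, $O(r^{-3}(1+r)) = O(r^{-2})$), and making sure the lower-order error terms $N^{8/3}/\ell^4$ also contribute only $O(r^{-3})$ after scaling. The condition \eqref{r54} is precisely what makes the $\kappa_N$ contribution subdominant to the $r$ contribution, so the inequality $r \ge 2a_0/(\rho\wedge(1-\rho))$ must be used at exactly this point.
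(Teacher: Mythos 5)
Your upper-tail argument is sound and matches the paper in spirit: one introduces the coordinate $\wt m \approx n\lambda^{-2}(1-\lambda)^2$ so that $(\wt m, n)$ has the characteristic shape for $\lambda$, shows via \eqref{ch5} and the hypothesis \eqref{r54} that $0 < m - \wt m \lesssim r N^{2/3}$, applies the shift identity of Lemma \ref{app-lm3} to rewrite $\P^\lambda_{0,(m,n)}\{\exitt_1 > 4\rho^{-1}rN^{2/3}\}$ as $\P^\lambda_{0,(\wt m,n)}\{\exitt_1 > 4\rho^{-1}rN^{2/3} - (m-\wt m)\}$ with the shifted threshold still $\gtrsim rN^{2/3}$, and then invokes \eqref{t8}. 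The condition $r \gtrsim a_0$ is indeed what makes the $\kappa_N$ correction subdominant, exactly as you say.

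The lower-tail step is where your proposal has a genuine gap. You want to bound $\P^\lambda_{0,(m,n)}\{\exitt_1 < (1-\rho)rN^{2/3}\}$, which is a \emph{lower}-tail event for an exit point, and neither of the routes you suggest produces such a bound directly. Transposing the lattice (sending $\lambda \mapsto 1-\lambda$, swapping $e_1$ and $e_2$) converts $\{\exitt_1 < \ell\}$ into $\{\exitt_2 < \ell\}$ in the transposed system — still a small-exit-point event, so \eqref{t8} does not apply. Corollary \ref{cor-lb8}-type arguments bound $\P\{\exitt_2 \ge 1\}$, which controls $\{\exitt_1 = 0\}$, but leave $\{1 \le \exitt_1 < (1-\rho)rN^{2/3}\}$ untouched, and the complementary quantity $\P^\lambda\{\exitt_1 > \ell\}$ cannot be pushed close to $1$ by any of the tail bounds you have in hand (that would require a lower-tail estimate, which is not available). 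The paper closes this hole with Lemma \ref{app-lm5}, a coupling identity of the form
\[
\P^\lambda_{0,(m,n)}\{\exitt_1 < m-\bar m\} = \P^\lambda_{0,(\bar m,\bar n)}\{\exitt_2 > \bar n - n\}, \qquad \bar m < m,\ \bar n > n,
\]
which literally transforms a small-$\exitt_1$ event in the original rectangle into a large-$\exitt_2$ event in a different rectangle. Choosing $(\bar m, \bar n) = (\fl{N(1-\lambda)^2}, \fl{N\lambda^2})$ makes the latter rectangle $\lambda$-characteristic and gives $m-\bar m \ge (1-\rho)rN^{2/3}$, $\bar n - n \ge \rho r N^{2/3}$, so \eqref{t8} applied to $\exitt_2$ in the $(\bar m,\bar n)$ rectangle delivers the $r^{-3}$ decay. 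The proof of Lemma \ref{app-lm5} is a nontrivial coupling (via the Appendix \ref{app:coupl} construction), not a symmetry of the lattice; without it, or some equivalent device for converting the direction of an exit-time inequality, your argument does not close.
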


\begin{proof}   We derive first an upper bound for $ \P^\lambda_{0,(m,n)}\{ \exitt_1 > 4\rho^{-1}rN^{2/3}\}$.  Define a coordinate $\wt m$  and bound it as below: 
\begin{align*}
  \wt m=\bigl\lfloor n\lambda^{-2}(1-\lambda)^2\bigr\rfloor  
&\ge N\rho^2 \lambda^{-2}(1-\lambda)^2 - a_0 \lambda^{-2}(1-\lambda)^2 N^{2/3} -1\\
&\ge  N\rho^2 \lambda^{-2}(1-\lambda)^2 - a_0 \rho^{-2}(1-\rho)^2 N^{2/3} -1.
\end{align*}
The second inequality is from $\lambda>\rho$. 

Vector   $(\wt m,n)$ points in the characteristic direction of $\lambda$,  up to a constant    error   from integer parts. 
We have the bound 
\begin{align*}
 0< m-\wt m&\le N\bigl( (1-\rho)^2 -\rho^2\lambda^{-2}(1-\lambda)^2 \bigr)+a_0\bigl( 1+\rho^{-2}(1-\rho)^2\bigr)  N^{2/3} +1\\
 & \le  N\frac{\lambda+\rho-2\lambda\rho}{\lambda^2} (\lambda-\rho) + a_0 \rho^{-2}  N^{2/3}+1 \\
&\le  {2}\rho^{-1} rN^{2/3} + a_0 \rho^{-2}  N^{2/3}
\end{align*} 
for $N\ge N_0(\rho, r)$,  for a suitably chosen $N_0(\rho, r)$.  
 By Lemma \ref{app-lm3} in the appendix,  and then by the upper bound \eqref{t8},  taking into consideration \eqref{r54}, 
\begin{align*}
 \P^\lambda_{0,(m,n)}\{ \exitt_1 > 4\rho^{-1}rN^{2/3}\} &=   \P^\lambda_{0,(\wt m,n)}\{ \exitt_1 > 4\rho^{-1}rN^{2/3}-(m-\wt m)\}  \\
& \le   \P^\lambda_{0,(\wt m,n)}\{ \exitt_1 > \rho^{-1}rN^{2/3}\}
 \le \frac{c_4}{r^3}  
\end{align*}
where 
$c_4=c_4(\rho)$ contains $c_3$ from \eqref{t8}.  

Next we derive an upper  bound for $ \P^\lambda_{0,(m,n)}\{ \exitt_1 < (1-\rho)rN^{2/3}\}$.    Let 
\be\label{bar} (\bar m,\bar n)=\bigl(\fl{N(1-\lambda)^2},\,\fl{N\lambda^2}\bigr)
\ee
point in the characteristic direction $\lambda$.   Bound these differences:
\begin{align*}
m-\bar m &\ge  N((1-\rho)^2-(1-\lambda)^2) -a_0N^{2/3} -1 = N(\lambda-\rho)(2-\rho-\lambda)-a_0N^{2/3} -1  \\
&= (2-\rho-\lambda) rN^{2/3} -a_0N^{2/3}-1 \ge (1-\rho)rN^{2/3}  
\end{align*}
and 
\begin{align*}
\bar n-n \ge  N(\lambda^2-\rho^2) -a_0N^{2/3} -1 = N(\lambda-\rho)(\rho+\lambda) -a_0N^{2/3}-1 \ge \rho  rN^{2/3}, 
\end{align*}
again for large enough $N$ relative to $(\rho, r)$.
By Lemma \ref{app-lm5} in the appendix,  and then by the upper bound \eqref{t8},  
\be\label{64}\begin{aligned}
\P^\lambda_{0,(m,n)}\bigl\{ \exitt_1 < (1-\rho)rN^{2/3}\bigr\}
&\le \P^\lambda_{0,(m,n)}\bigl\{ \exitt_1 <  m-\bar m\bigr\}
= \P^\lambda_{0,(\bar m, \bar n)}\bigl\{ \exitt_2 > \bar n -n \bigr\} \\
&\le \P^\lambda_{0,(\bar m, \bar n)}\bigl\{ \exitt_2 > \rho rN^{2/3}  \bigr\} \le  \frac{c_5}{r^3} 
\end{aligned}\ee
where 
$c_5=c_5(\rho)$ contains $c_3$ from \eqref{t8}.  

Combine the bounds: 
\begin{align*}
& \P^\lambda_{0,(m,n)}\bigl\{ (1-\rho)rN^{2/3}\le \exitt_1 \le 4\rho^{-1}rN^{2/3}\bigr\} \\
 &\qquad = 1- \P^\lambda_{0,(m,n)}\{ \exitt_1 > 4\rho^{-1}rN^{2/3}\}- \P^\lambda_{0,(m,n)}\{ \exitt_1 < (1-\rho)rN^{2/3}\}\\
&\qquad \ge 1- C_1r^{-3} . 
\qedhere \end{align*}
This completes the proof. 
 \end{proof} 
 
 For the subsequent parts of the proof we could  have defined $ A_{N,r}=\{ 1 \le \exitt_1 \le 4\rho^{-1}rN^{2/3}\}$ instead of with the tighter lower bound in \eqref{ANr}.   But we still would have had to prove an estimate such as \eqref{64}. 
 
Following the first four lines of the computation  in \eqref{66}, 
\be\label{69}\begin{aligned} 
 &\E^\lambda[G_{0,(m,n)}]-\E^\rho[G_{0,(m,n)}]  
=m\Bigl(\frac1{1-\lambda}-\frac1{1-\rho}\Bigr) + n\Bigl(\frac1{\lambda}-\frac1{\rho}\Bigr)\\
&=  \frac{N}{\lambda(1-\lambda)} (\lambda-\rho)^2
+ \biggl\{ \, \frac{\kappa^1_N}{(1-\lambda)(1-\rho)} \, -\, \frac{\kappa^2_N}{\lambda\rho}\,\biggr\}(\lambda-\rho)  
\\
&\ge  \frac{r^2N^{1/3}}{\lambda(1-\lambda)}  -  \frac{a_0rN^{1/3}}{C(\rho)} 
\ge c_6r^2N^{1/3}
\end{aligned}\ee 
where $c_6=c_6(\rho)>0$ is a constant chosen small enough to satisfy the inequality above for all $N\ge N_0$ and $r\ge b(\rho)a_0$ where $b(\rho)$ is a constant that depends on $\rho$.  

Let $\Lambda_N$ denote the set of paths $x_{\bbullet} \in\Pi_{0,(m,n)}$ that satisfy
$x_1=e_1$ and $x_k\cdot e_2>0$  for $k>\fl{4\rho^{-1}rN^{2/3}}$.  In other words,  the path stays on the $e_1$-axis for a while after leaving the origin,  but  not   beyond the point $\fl{4\rho^{-1}rN^{2/3}}e_1$.    For any given weights $\{\w_x\}$ on  the rectangle $\{0,\dotsc,m\}\times\{0,\dotsc,n\}$, let 
\be\label{GL}   G_{0,(m,n)}(\Lambda_N) = \max_{x_{\bbullet}\in\Lambda_N} \sum_{k=1}^{m+n} \w_{x_k}
\ee
 denote the LPP value   whose maximum is restricted to the paths in $\Lambda_N$.  Observe that  $G_{0,(m,n)}(\Lambda_N)=G_{0,(m,n)} $ if event $A_{N,r}$ of \eqref{ANr}  occurs for weights $\{\w_x\}$.  (This would be true even if the lower bound in $A_{N,r}$ would be relaxed to $\exitt_1\ge 1$ instead of $\exitt_1\ge (1-\rho)rN^{2/3}$.) 
 
We derive our second probability bound.   Define the event 
\be\label{BNr}    B_{N,r}=\bigl\{ \w: G_{0,(m,n)}(\Lambda_N) \ge \E^\rho[G_{0,(m,n)}] + \tfrac12 c_6r^2N^{1/3}  \bigr\} 
\ee
where $c_6=c_6(\rho)$ is the constant from \eqref{69}. 
 
\begin{lemma}  There exists a constant $C_2=C_2(\rho)$ such that the bound below holds for $r\ge b(\rho)a_0\vee 1$ and $N\ge N_0$:
\be\label{t59} 
 \P^\lambda_{0,(m,n)}(B_{N,r}) 
 \ge 1- C_2r^{-3} . 
\ee
\end{lemma}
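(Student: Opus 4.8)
The plan is to prove \eqref{t59} by a change-of-measure (tilting) argument that transfers the lower bound \eqref{st-lb-1}-type estimate from the $\rho$-process to the $\lambda$-process, combined with the exit-point control already established in Lemma \ref{lm-ANr}. First I would observe that on the event $A_{N,r}$ of \eqref{ANr} the restricted last-passage value coincides with the unrestricted one, $G_{0,(m,n)}(\Lambda_N)=G_{0,(m,n)}$, so it suffices to bound from below the $\P^\lambda$-probability of the event $\{A_{N,r}\}\cap\{G_{0,(m,n)}\ge \E^\rho[G_{0,(m,n)}]+\tfrac12 c_6 r^2 N^{1/3}\}$. The key input is the mean gap estimate \eqref{69}: under the $\lambda$-measure the last-passage value has mean at least $\E^\rho[G_{0,(m,n)}]+c_6 r^2 N^{1/3}$, so the event in $B_{N,r}$ is, under $\P^\lambda$, a \emph{typical} (in fact below-the-mean by $\tfrac12 c_6 r^2 N^{1/3}$) event rather than a deviation.

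Concretely, I would write
\[
\P^\lambda_{0,(m,n)}(B_{N,r}^c) \le \P^\lambda_{0,(m,n)}\bigl\{ G_{0,(m,n)}(\Lambda_N) < \E^\rho[G_{0,(m,n)}] + \tfrac12 c_6 r^2 N^{1/3}\bigr\}.
\]
On the event $A_{N,r}$ we may replace $G_{0,(m,n)}(\Lambda_N)$ by $G_{0,(m,n)}$, and then by \eqref{69},
\[
\E^\rho[G_{0,(m,n)}] + \tfrac12 c_6 r^2 N^{1/3} \le \E^\lambda[G_{0,(m,n)}] - \tfrac12 c_6 r^2 N^{1/3},
\]
so this part of the probability is bounded by $\P^\lambda_{0,(m,n)}\{\,\overline{G_{0,(m,n)}}\le -\tfrac12 c_6 r^2 N^{1/3}\,\}$, a centered lower deviation for the $\lambda$-process. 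Now I would use Chebyshev together with the variance upper bound from Theorem \ref{thGr2}/Proposition \ref{tpr6}: since $(m,n)$ deviates from the characteristic direction of $\lambda$ by $\kappa^{(\lambda)}_N = O(rN^{2/3})$ (compute this gap from $m-\bar m$ and $\bar n-n$ as in \eqref{bar}), the upper bound \eqref{e:Gr11.5} gives $\Vvv^\lambda[G_{0,(m,n)}] \le C(\rho)(1+ r) N^{2/3}$, whence
\[
\P^\lambda_{0,(m,n)}\bigl\{\,\overline{G_{0,(m,n)}}\le -\tfrac12 c_6 r^2 N^{1/3}\,\bigr\} \le \frac{C(\rho)(1+r)N^{2/3}}{(\tfrac12 c_6 r^2 N^{1/3})^2} = \frac{C'(\rho)(1+r)}{r^4}\le \frac{C_2(\rho)}{r^3}.
\]
Adding the contribution $\P^\lambda_{0,(m,n)}(A_{N,r}^c)\le C_1 r^{-3}$ from Lemma \ref{lm-ANr} and absorbing constants proves \eqref{t59}.

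The main obstacle I anticipate is verifying that the variance upper bound from Proposition \ref{tpr6} is applicable to the $\lambda$-process at the endpoint $(m,n)$, i.e.\ checking the hypotheses \eqref{a11}: one needs $\kappa^{(\lambda)}_N \le a_0' N^{2/3}$ and $m \le a_1 N$ with constants that stay controlled as $\lambda$ ranges over $(\rho,\tfrac{1+\rho}{2})$ and $r$ grows with $N$ only up to $o(N^{1/3})$ (recall $\lambda-\rho = rN^{-1/3}$ and $N\ge N_0(\rho,r)$). Since $\kappa^{(\lambda)}_N = |m-\fl{N(1-\lambda)^2}|\vee|n-\fl{N\lambda^2}|$ and $m-\bar m, \bar n - n$ are each $\Theta(rN^{2/3})$ by the computations preceding \eqref{64}, one gets $\kappa^{(\lambda)}_N \le C(\rho)\,r\,N^{2/3}$, so the relevant ``$a_0$'' in Proposition \ref{tpr6} is a constant times $r$; feeding $a_0\rightsquigarrow C(\rho)r$ into \eqref{e:Gr11.5} produces the factor $(1+r)$ used above, which is harmless since it only weakens $r^{-4}$ to $r^{-3}$. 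One must also keep $N_0$ depending only on $(\rho,r)$ throughout, which is consistent with the running conventions of this section. Everything else is routine: the replacement on $A_{N,r}$ is exact, the mean comparison is \eqref{69}, and Chebyshev plus the established variance bound finishes it.
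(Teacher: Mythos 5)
Your proof is correct and follows the same skeleton as the paper's: use \eqref{69} to convert the event in $B_{N,r}^c$ to a lower deviation of $G_{0,(m,n)}$ below its $\P^\lambda$-mean, replace the restricted value $G_{0,(m,n)}(\Lambda_N)$ by the unrestricted one at the cost of $\P^\lambda(A_{N,r}^c)\le C_1 r^{-3}$ from Lemma~\ref{lm-ANr}, and then finish by Chebyshev. The one place where you genuinely diverge is the bound on $\Vvv^\lambda[G_{0,(m,n)}]$. The paper invokes Lemma~\ref{lm66}, which gives the one-line comparison $\Vvv^\lambda[G_{0,(m,n)}]\le \Vvv^\rho[G_{0,(m,n)}]+Cm(\lambda-\rho)$, then uses that $(m,n)$ is characteristic for $\rho$ so $\Vvv^\rho \le CN^{2/3}$, and $m(\lambda-\rho)\le CrN^{2/3}$, yielding the same $(1+r)N^{2/3}$ estimate. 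You instead treat $(m,n)$ as an off-characteristic point for the $\lambda$-process, compute $\kappa^{(\lambda)}_N=O(rN^{2/3})$, and feed $a_0\rightsquigarrow C(\rho)r$ into the quantitative variance bound \eqref{e:Gr11.5}. This is valid and gets you the same $(1+r)N^{2/3}$ factor, but it requires re-verifying the hypotheses of Proposition~\ref{tpr6} with $\lambda$ replacing $\rho$ and tracking the uniformity of the constants $c_2(\lambda),c_3(\lambda)$ over $\lambda\in(\rho,\tfrac{1+\rho}{2})$ — extra bookkeeping that Lemma~\ref{lm66} was specifically designed to avoid. You do flag and handle these checks, so the argument closes, but the paper's route via Lemma~\ref{lm66} is the more economical one.
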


\begin{proof}   Since by \eqref{69} for $r\ge b(\rho)a_0$, 
\[ \E^\rho[G_{0,(m,n)}] + \tfrac12 c_6r^2N^{1/3} \le  \E^\lambda[G_{0,(m,n)}] - \tfrac12 c_6r^2N^{1/3} , \]
we can bound the complementary probability as follows.  Constant $C$  changes from line to line.    Below we use Lemma \ref{lm66}. 
\begin{align*}
 \P^\lambda_{0,(m,n)}(B_{N,r}^c)  &=\P^\lambda_{0,(m,n)}\bigl\{ G_{0,(m,n)}(\Lambda_N) < \E^\rho[G_{0,(m,n)}] + \tfrac12 c_6r^2N^{1/3}  \bigr\}\\
 &\le \P^\lambda_{0,(m,n)}\bigl\{ G_{0,(m,n)}(\Lambda_N) < \E^\lambda[G_{0,(m,n)}] - \tfrac12 c_6r^2N^{1/3} \bigr\}\\
 &\le \P^\lambda_{0,(m,n)}\bigl\{ G_{0,(m,n)}  < \E^\lambda[G_{0,(m,n)}] - \tfrac12 c_6r^2N^{1/3} \bigr\} +  \P^\lambda_{0,(m,n)}(A_{N,r}^c) \\
 &\le \frac{C}{r^4N^{2/3}} \Vvv^\lambda[G_{0,(m,n)}]  +  \frac{C_1}{r^{3}}  \\
 &\le \frac{C}{r^4N^{2/3}} \bigl(  \Vvv^\rho[G_{0,(m,n)}]  + m(\lambda-\rho)  \bigr)  +  \frac{C_1}{r^{3}}  \\
 &\le  \frac{C}{r^4} +  \frac{C_1}{r^{3}}  \le  \frac{C_2}{r^{3}}. 
\qedhere\end{align*} 
\end{proof} 
 
 With the preliminary work done, we turn to the proof of Theorem \ref{t-st-lb}. 

\begin{proof}[Proof of Theorem \ref{t-st-lb}] 
We construct a coupling of three environments. Let $\w^\rho$  and $\w^\lambda$ denote environments as described in \eqref{e:w9} with parameters $\rho$ and $\lambda$.  We assume that these environments are coupled so that in the bulk, for $x\in\Z_{>0}^2$,  
$\w^\rho_x=\w^\lambda_x=\w_x$, while the boundary variables 
$\{\w^\rho_{ie_1}, \w^\rho_{je_2}, \w^\lambda_{ie_1}, \w^\lambda_{je_2}: i,j\in\Z_{>0}\}$ are mutually independent.  

Construct a mixed environment $\wh\w$ as follows: 
\begin{align*} 
\wh\w_{ie_1}&=\w^\lambda_{ie_1} \quad\text{for} \quad  1\le i\le \fl{4\rho^{-1}rN^{2/3}} \\
\text{and} \qquad \wh\w_{x}&=\w^\rho_{x} \quad\text{for} \quad x\notin \bigl\{ie_1: 1\le i\le \fl{4\rho^{-1}rN^{2/3}}\bigr\} . 
\end{align*}
Thus in the bulk all weights agree and  are i.i.d.\ Exp(1):  for $x\in\Z_{>0}^2$,  $\wh\w_x=\w^\rho_x=\w^\lambda_x=\w_x$.   On the boundary $\wh\w$ follows $\w^\lambda$ on the segment that is relevant for the event $B_{N,r}$ and   elsewhere $\wh\w$ follows $\w^\rho$.     Note that $\w^\lambda\in B_{N,r}$ iff  $\wh\w\in B_{N,r}$. 

Let the distributions of the three environments  $\w^\rho$, $\w^\lambda$ and $\wh\w$, restricted    to the rectangle $\{0,\dotsc,m\}\times\{0,\dotsc,n\}$, be   denoted by $\P^\rho_{0,(m,n)}$, $\P^\lambda_{0,(m,n)}$  and $\wh\P_{0,(m,n)}$, respectively.    These are all probability measures on the product space $\R_+^{\{0,\dotsc,m\}\times\{0,\dotsc,n\}}$. 
The Radon-Nikodym derivative 
\[ f_N(\w) =\frac{d\wh\P_{0,(m,n)}}{d\P^\rho_{0,(m,n)}}(\w) =\prod_{i=1}^{\fl{4\rho^{-1}rN^{2/3}}}   \frac\lambda\rho \,e^{-(\lambda-\rho)\w_{ie_1}}  \]  
is a product of the Radon-Nikodym derivatives of the exponential single weight marginal distributions on that segment of the boundary where  $\w^\rho$ and   $\wh\w$  differ.   
Computation of the mean square gives
\begin{align*}  \E^\rho_{0,(m,n)}[   f_N^2 ]&= \biggl(\frac{\lambda^2}{\rho^2}   \int_0^\infty   
e^{-2(\lambda-\rho)s} \rho e^{-\rho s}\,ds\biggr)^{\fl{4\rho^{-1}rN^{2/3}}}  
=   \biggl(\frac{\lambda^2}{\rho(2\lambda-\rho)}   \biggr)^{\fl{4\rho^{-1}rN^{2/3}}}  \\
&=\exp\biggl\{  \fl{4\rho^{-1}rN^{2/3}}\biggl[ 2\log\Bigl(1+\frac{r}{\rho N^{1/3}}\Bigr) -  \log\Bigl(1+\frac{2r}{\rho N^{1/3}}\Bigr) \biggr]  \biggr\}  \\
& \le e^{4r^3\rho^{-3}}. 
\end{align*} 
The point is that the bound above is independent of $N$. 

Consider  $r\ge b(\rho)a_0\vee 1$ and  large enough relative to $C_2=C_2(\rho)$ from \eqref{t59}  so that $C_2r^{-3}<1/2$.  
\be\label{lb15}  \begin{aligned}
\tfrac12< 1- C_2r^{-3}&\le 
  \P^\lambda_{0,(m,n)}(B_{N,r})   = \wh\P_{0,(m,n)}(B_{N,r})  =\E^\rho_{0,(m,n)}[ \one_{B_{N,r}} f_N ]  \\
   &\le \bigl\{  \P^\rho_{0,(m,n)}(B_{N,r}) \bigr\}^{1/2} \bigl\{ \E^\rho_{0,(m,n)}[   f_N^2 ] \bigr\}^{1/2}\\
  &\le \bigl\{  \P^\rho_{0,(m,n)}(B_{N,r}) \bigr\}^{1/2}  e^{2r^3\rho^{-3}}. 
\end{aligned}\ee
Since  $G_{0,(m,n)}\ge G_{0,(m,n)}(\Lambda_N)$,  from this comes the lower bound
\be\label{lb7} \begin{aligned}  
 &\P^\rho_{0,(m,n)}\bigl\{ \w: G_{0,(m,n)}  \ge \E^\rho[G_{0,(m,n)}] + \tfrac12 c_6r^2N^{1/3}  \bigr\}\\
 &\qquad \ge  \P^\rho_{0,(m,n)}\bigl\{ \w: G_{0,(m,n)}(\Lambda_N) \ge \E^\rho[G_{0,(m,n)}] + \tfrac12 c_6r^2N^{1/3}  \bigr\}\\
&\qquad =\P^\rho_{0,(m,n)}(B_{N,r}) \ge \tfrac14e^{-4r^3\rho^{-3}}
\end{aligned} \ee
To complete the proof of  inequality \eqref{st-lb-1} of  Theorem \ref{t-st-lb}, set $s=\tfrac12 c_6r^2$ and let $a_1(\rho), a_2(\rho)$ be suitable functions of $\rho$, $c_6$ and $C_2$.  

To prove the second inequality \eqref{st-lb-2}, abbreviate temporarily $X=G_{0,(m,n)} - \E^\rho[G_{0,(m,n)}] $ and first derive this estimate from  inequality \eqref{st-lb-1}:
\begin{align*} 
0=\E^\rho[X] = \E^\rho[X^+] - \E^\rho[X^-] \ge  sN^{1/3}e^{-a_1(\rho)s^{3/2}}  - \E^\rho[X^-]
= 2t N^{1/3}  - \E^\rho[X^-]   
\end{align*}
where we set 
\[  t=\tfrac12 s \,e^{-a_1(\rho)s^{3/2}}  \]   
and then fix  $s\ge a_2(\rho)(1+a_0^2)$ to maximize $t$.  
 Next, 
\begin{align*}
2tN^{1/3}  \le \E^\rho[X^-]  &= \E^\rho[X^-, \, X^-< t N^{1/3}\,]  +  \E^\rho[X^-, \, X^-\ge  tN^{1/3}\,] \\
&\le  t N^{1/3} + \bigl(  \E^\rho[(X^-)^2] \bigr)^{1/2}  \bigl(\P^\rho\{X^-\ge t N^{1/3}\}\bigr)^{1/2} \\
&\le t N^{1/3}+  \bigl(  \Vvv^\rho[  G_{0,(m,n)}] \bigr)^{1/2}  \bigl(\P^\rho\{X^-\ge t N^{1/3}\}\bigr)^{1/2} . 
\end{align*}
The above and  the upper variance bound from Theorem \ref{thGr2} give,  for a positive constant $a_3(\rho, a_0)$, 
\begin{align*}
\P^\rho\{X^-\ge t N^{1/3}\} \ge a_3(\rho, a_0) t^2  . 
\end{align*}
This inequality is the same as  \eqref{st-lb-2}.   This completes the proof of Theorem \ref{t-st-lb}.
\end{proof}  


\subsection{Bounds for path fluctuations} \label{sec:p-exp}


In this section we prove Theorem \ref{thGr2.5} for fluctuations of the maximizing path  $\pi^{0,(m,n)}_\dbullet$ of the last-passage value $G^\rho_{0,(m,n)}$.   The parameter $0<\rho<1$ is fixed and    $(m,n)=(\fl{N(1-\rho)^2}, \fl{N\rho^2})$.  

\medskip 

{\bf Upper bound proof.} 
Consider $t\in(0,1)$ fixed for a while. 
 Let 
\[  v_0=(m_0, n_0)=(\fl{tm}, \fl{tn}). \]
 This is an  integer point that 
lies immediately  to the left and below the rectangle $A_{rN^{2/3}}(tm,tn)$, and coincides with the lower left corner $(tm,tn)$ of $A_{rN^{2/3}}(tm,tn)$ if this point has integer coordinates.  See Figure \ref{fig-p1}.

\begin{figure}
\begin{center}
\begin{picture}(200,150)(20,-20)
\put(40,0){\line(1,0){170}} 

 \put(90,32){\line(1,0){120}}\put(210,0){\line(0,1){110}}
\put(40,110){\line(1,0){170}}
\put(40,0){\line(0,1){110}} \put(90,32){\line(0,1){78}}
 
\put(37,-3){\Large$\bullet$} 
\put(30,-2){\small$0$}

\put(86.5,28.5){\Large$\bullet$} 
\put(77,26){\small$v_0$}

\put(126.5,28.5){\Large$\bullet$} 
\put(134,24){\small$x$}

%

\put(206,106){\Large$\bullet$} 
\put(214,108){\small$(m,n)$}


\linethickness{3pt} 
\put(44.5,0){\line(1,0){72}}  \put(115,0){\line(0,1){21}} \put(114,20){\line(1,0){17.5}}
\put(130,20){\line(0,1){8}}  \put(130,36){\line(0,1){24}} 
\put(128.5,61.5){\line(1,0){60}} \put(189,60){\line(0,1){21.5}}
\put(189,80){\line(1,0){22}}  \put(210,78.5){\line(0,1){27}}
 \multiput(94.5,32)(8.5,0){4}{\line(1,0){5.5}}
 
\put(94,37){\dashbox{2}(30,30){$A$}} 

 \end{picture}
\end{center}  
\caption{ \small Illustration of the upper bound proof of Theorem \ref{thGr2.5}. The maximizing path (solid thickset line) for $G^\rho_{0,(m,n)}$ goes through $x$ and avoids the square $A=A_{rN^{2/3}}(tm,tn)$.   By Lemma \ref{app-lm1}, the maximizing path for $G^{(0)}_{v_0, (m,n)}$  goes from $v_0$ to $(m,n)$  through $x$.   The distance from $v_0$ to $x$ equals $\exitt^{(v_0)}_1$. } \label{fig-p1}
\end{figure}

 The proof utilizes the coupling described in Appendix \ref{app:coupl}.    As in Lemma \ref{app-lm1}, let $G^{(0)}_{v_0, x}$ denote last-passage values for lattice points $x\in v_0+\Z_{\ge 0}^2$ defined as in \eqref{Gr1}--\eqref{Gr2},   using boundary weights $I_{v_0+ie_1}$ and $J_{v_0+je_2}$ for $i,j\ge 1$ and bulk weights $\w_x$ for $x\in v_0+\Z_{>0}^2$.   The bulk weights are the same as the ones used by $G^\rho_{0,(m,n)}$.  The  boundary edge weights are determined by the   process $G^\rho_{0,{\cbullet}}$ as in \eqref{IJG6}: 
\begin{align*}
I_{v_0+ie_1}&=G^\rho_{0,\,v_0+ie_1}-G^\rho_{0,\,v_0+(i-1)e_1}\\
J_{v_0+je_2}&=G^\rho_{0,\,v_0+je_2}-G^\rho_{0,\,v_0+(j-1)e_2}
\qquad \text{for $i,j\ge 1$.}  
\end{align*}

Let $x^{(v_0)*}_{\dbullet}\in\Pi_{v_0, x}$ denote the maximizing path for $G^{(0)}_{v_0, x}$.     Analogously with \eqref{d-tt}, let $\exitt_r^{(v_0)}$  denote the number of steps the maximizing path $x^{(v_0)*}_{\bbullet}$ takes on the $e_r$-axis, relative to the origin $v_0$:
\[  \exitt_r^{(v_0)} =\max\{k\ge 0:  v_0+ke_r \,\in\, x^{(v_0)*}_{\bbullet} \} , \qquad r\in\{1,2\} . \]  

Let $(m_1, n_1)=(m,n)-(m_0,n_0)$.   This point satisfies 
\be\label{pp97}    \abs{\,m_1-(1-t)N(1-\rho)^2\,}\vee\abs{\,n_1-(1-t)N\rho^2\,}  \le 1. \ee
Due to the stationarity of the LPP process,  we have the distributional equalities 
\be\label{pp98}   \P^\rho_{0,(m,n)}\{ \exitt_r^{(v_0)} \ge j\}=\P^\rho_{0,(m_1, n_1)}\{ \exitt_r \ge j\}. \ee

\medskip

 We are ready to prove the upper bound \eqref{e:Gr13}. 
  If $\pi^{0,(m,n)}_\dbullet$ does not  intersect $A_{rN^{2/3}}(tm,tn)$, then the edge along which $\pi^{0,(m,n)}_\dbullet$ enters the quadrant $(m_0, n_0)+\Z_{>0}^2$ is either $((k,n_0),(k,n_0+1))$ for some $k\ge m_0+rN^{2/3}$ or 
 $((m_0,\ell),(m_0+1,\ell))$ for some $\ell\ge n_0+rN^{2/3}$.    Lemma \ref{app-lm1} forces the occurrence of 
 \be\label{pp102} \exitt_1^{(v_0)}\ge rN^{2/3}\quad\text{ or }\quad  \exitt_2^{(v_0)}\ge rN^{2/3}. \ee
 We apply  the upper bound  \eqref{t8} of Proposition \ref{tpr6} to the right-hand side of  \eqref{pp98}.  By \eqref{pp97} the scaling parameter ``$N$''  of Proposition \ref{tpr6}  is now $N'=(1-t)N$.  To ensure that $N'\ge 1$ we assume $N\ge (1-t)^{-1}$.   Then we can take    $N'_0=1$, $a_0=1$ and $a_1=2$ in Proposition \ref{tpr6}.  By \eqref{t8},  for $r\ge 1$  the event in \eqref{pp102}  has probability at most $C(\rho)r^{-3}$.     This bound is uniform over $t\in(0,1)$, as long as $N\ge (1-t)^{-1}$.   We have  verified the upper bound \eqref{e:Gr13}.

\medskip 

{\bf Lower bound proof.} 
 Fix $t\in(0,1)$.  Take $N$ large enough so that $(m,n)\in\Z_{>0}^2$.   This time put 
\[  v_0=(m_0, n_0)=(\ce{tm}-1, \ce{tn}-1) \;\in\;\Z_{\ge0}^2 \]
so that $v_0$ is strictly below and to the left of the rectangle $A_{\delta N^{2/3}}(tm,tn)$.  Let again $(m_1, n_1)=(m,n)-(m_0,n_0)$. 
 By Lemma \ref{lm70}, variance identity \eqref{VGr}, and the lower variance bound in \eqref{e:Gr11},   with a constant $C=C(\rho)$ that may vary, 
\begin{align*}
\E^\rho_{0,(m,n)}[ \exitt_1^{(v_0)}] &=\E^\rho_{0,(m_1,n_1)}[ \exitt_1]  \ge C^{-1}  \E^\rho_{0,(m_1,n_1)}[S_{1,\exitt_1}]  -1 \ge C^{-1} \Vvv[G^\rho_{0,(m_1,n_1)}]-C \\
&\ge C^{-1} N^{2/3} -C \ge C^{-1} N^{2/3} . 
\end{align*} 
In the last step we take $N$ large enough relative to $C(\rho)$. 
Next,  combine the lower bound from above with the upper bound of \eqref{t9} for $q=2$. Then   for $\delta_1>0$,  again with $C=C(\rho)$ and for $N\ge N_0(\rho)$, 
\begin{align*}
C^{-1} N^{2/3} &\le \E^\rho_{0,(m,n)}[ \exitt_1^{(v_0)}] =\E^\rho_{0,(m_1,n_1)}[ \exitt_1] \\
&= \E^\rho_{0,(m_1,n_1)}[ \exitt_1, \,  \exitt_1\le \delta_1N^{2/3} \,]  + \E^\rho_{0,(m_1,n_1)}[ \exitt_1, \,  \exitt_1> \delta_1N^{2/3} \,] \\
&\le  \delta_1N^{2/3} + \bigl( \E^\rho_{0,(m_1,n_1)}[ \exitt_1^2]\bigr)^{1/2} \bigl( \P^\rho_{0,(m_1,n_1)}\{ \exitt_1> \delta_1N^{2/3}\}\bigr)^{1/2} \\
&\le  \delta_1N^{2/3} +  CN^{2/3} \bigl( \P^\rho_{0,(m_1,n_1)}\{ \exitt_1> \delta_1N^{2/3} \}\bigr)^{1/2} . 
\end{align*}
From this  we deduce that, for small enough $\delta_1$ relative to the  constant $C(\rho)$, there exists    $\e_1>0$ such that  
\[  \P^\rho_{0,(m,n)}\{ \exitt_1^{(v_0)}> \delta_1N^{2/3} \}\ge \e_1\qquad \text{for all $N\ge N_0(\rho,t)$. }\]
Note that $N$ has to be taken large enough relative to $t$ also so that $\delta_1N^{2/3}<m_1$, for otherwise the event above is empty.   Hence the dependence of $N_0(\rho,t)$ on $t$. 
 A similar estimate works for $\exitt_2^{(v_0)}$.

Now the proof of the  lower bound \eqref{e:Gr14}.  
If $\pi^{0,(m,n)}_\dbullet$   intersects $A_{\delta N^{2/3}}(tm,tn)$, then the path $\pi^{0,(m,n)}_\dbullet$  enters $A_{\delta N^{2/3}}(tm,tn)$ either through the south side or the west side,  and hence either $1\le \exitt_1^{(v_0)}\le \delta N^{2/3}$ or $1\le \exitt_2^{(v_0)}\le \delta N^{2/3}$.     (Note that if $v_0$ were the lower left corner of $A_{\delta N^{2/3}}(tm,tn)$, we could not make the last assertion because the path could run along either boundary $v_0+e_r\Z_{>0}$ and give $\exitt_r^{(v_0)}$ a larger value.)
Take $\delta<\delta_1$ and  utilize the fact that exactly one of $\exitt_1^{(v_0)}$ and $\exitt_2^{(v_0)}$ is positive.  
 \begin{align*}
 &\P^\rho_{0,(m,n)}\{ \text{$\pi^{0,(m,n)}_\dbullet$    intersects $A_{\delta N^{2/3}}(tm,tn)$} \} \\
&\qquad  \le \P^\rho_{0,(m,n)}\{ 1\le \exitt_1^{(v_0)}\le \delta N^{2/3}\} + \P^\rho_{0,(m,n)}\{ 1\le \exitt_2^{(v_0)}\le \delta N^{2/3}\}  \\
&\qquad = 1- \P^\rho_{0,(m,n)}\{ \exitt_1^{(v_0)}> \delta N^{2/3}\} - \P^\rho_{0,(m,n)}\{  \exitt_2^{(v_0)}> \delta N^{2/3}\}    \le  1-2\e_1. 
 \end{align*} 
 
 This completes the proof of Theorem \ref{thGr2.5}.

%



\appendix 
 
 \section{Coupling the corner growth model} \label{app:coupl} 
 
 
 In this section we develop the couplings used in the proofs, beginning with  a lemma for arbitrary deterministic weights. 
 Fix a point $a\in\Z^2$.  
 Suppose boundary weights  $\{\w_{a+ke_r}: k\in\Z_{>0}, r\in\{1,2\}\}$ on the south and west boundaries  of the quadrant  $a+\Z_{\ge0}^2$ and bulk weights $\{\w_x\}_{x\in a+\Z_{>0}^2}$ are given.  Put an irrelevant weight $\w_a=0$ in the corner $a$.   Let  $G_{a,x}$ denote the LPP value for points $x\in a+\Z_{\ge0}^2$, defined as in \eqref{v:G}.  
 
Let $b\ge a$ on $\Z^2$.   On the lattice $b+\Z_{\ge0}^2$,  put a corner weight $\eta_b=0$ and  define boundary weights 
 \be\label{eta6} \eta_{b+ke_r} =  G_{a,\,b+ke_r}- G_{a,\,b+(k-1)e_r} 
 \qquad\text{for $k\in\Z_{>0}$ and $r\in\{1,2\}$. }\ee
    In the bulk use $\eta_x=\w_x$ for $x\in b+\Z_{>0}^2$.    Denote the LPP process in $b+\Z_{\ge0}^2$ that uses weights $\{\eta_x\}_{x\in b+\Z_{\ge0}^2}$ by 
 \be\label{wtG8}   G^{(a)}_{b,x}=\max_{x_{\bbullet}\in\Pi_{b,x}}  \sum_{i=0}^{\abs{x-b}_1} \eta_{x_i}, \qquad 
 x\in b+\Z_{\ge0}^2. \ee
 The superscript $(a)$ reminds us that the LPP process $G^{(a)}$ uses boundary weights determined by the LPP process $G_{a,\cbullet}$  with lower left base point $a$.   See Figure \ref{fig-app1} for an illustration of the next lemma. 

\begin{figure}
\begin{center}
\begin{picture}(200,140)(20,-10)
\put(40,0){\line(1,0){170}} 

 \put(90,40){\line(1,0){120}}\put(210,0){\line(0,1){110}}
\put(40,110){\line(1,0){170}}
\put(40,0){\line(0,1){110}} \put(90,40){\line(0,1){70}}
 
\put(37,-3){\Large$\bullet$} 
\put(30,-2){\small$a$}

\put(86.5,36.5){\Large$\bullet$} 
\put(80,30){\small$b$}

\put(126.5,36.5){\Large$\bullet$} 
\put(134,32){\small$x$}

%

\put(206,106){\Large$\bullet$} 
\put(215,107){\small$v$}


\linethickness{3pt} 
\put(44.5,0){\line(1,0){72}}  \put(115,0){\line(0,1){21}} \put(114,20){\line(1,0){17.5}}
\put(130,20){\line(0,1){16}}  \put(130,44){\line(0,1){16}} 
\put(128.5,61.5){\line(1,0){60}} \put(189,60){\line(0,1){21.5}}
\put(189,80){\line(1,0){22}}  \put(210,78.5){\line(0,1){27}}
 \multiput(94.5,40)(8.5,0){4}{\line(1,0){5.5}}

 \end{picture}
\end{center}  
\caption{ \small Illustration of Lemma \ref{app-lm1}. Path $a$-$x$-$v$ is maximal for $G_{a,v}$ and path $b$-$x$-$v$ is maximal for $G^{(a)}_{b,v}$. } \label{fig-app1}
\end{figure}

\begin{lemma} \label{app-lm1}  Let $a\le b\le v$ in $\Z^2$.  Then  
$G_{a,v}=G_{a,b}+G^{(a)}_{b,v}$.    The restriction of any maximizing path for $G_{a,v}$ to $b+\Z_{\ge0}^2$ is part of a maximizing path for $G^{(a)}_{b,v}$.    The edges with one endpoint in $b+\Z_{>0}^2$ that belong to 
any particular  maximizing path for $G^{(a)}_{b,v}$  extend to a maximizing path for $G_{a,b}$.  
\end{lemma}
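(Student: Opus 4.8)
The plan is to reduce everything to the single induction hidden in the boundary definition \eqref{eta6}. The key observation is that the weights $\eta_x$ on the quadrant $b+\Z_{\ge0}^2$ are built so that $\{\eta_x\}$ records the $G_{a,\cbullet}$-increments along the south and west boundaries of that quadrant and reproduces the original bulk weights in the interior; consequently the LPP recursion $G^{(a)}_{b,x}=\eta_x+G^{(a)}_{b,x-e_1}\vee G^{(a)}_{b,x-e_2}$ and the shifted recursion $x\mapsto G_{a,b}+G^{(a)}_{b,x}$ both solve the same boundary value problem on $b+\Z_{\ge0}^2$ with the same input, namely $G_{a,\cbullet}$. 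I would therefore prove the identity $G_{a,x}=G_{a,b}+G^{(a)}_{b,x}$ for all $x\in b+\Z_{\ge0}^2$ by induction on $|x-b|_1$.

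\emph{First}, the base case. For $x=b$ the claim is $G_{a,b}=G_{a,b}+G^{(a)}_{b,b}$, true since $G^{(a)}_{b,b}=\eta_b=0$. For $x=b+ke_r$ on a boundary axis of $b+\Z_{\ge0}^2$, unfold the telescoping sum $G^{(a)}_{b,b+ke_r}=\sum_{i=1}^k\eta_{b+ie_r}=\sum_{i=1}^k[G_{a,b+ie_r}-G_{a,b+(i-1)e_r}]=G_{a,b+ke_r}-G_{a,b}$, which is exactly \eqref{eta6} summed. \emph{Second}, the induction step for $x\in b+\Z_{>0}^2$: by \eqref{wtG8} and the inductive hypothesis at $x-e_1$ and $x-e_2$,
\begin{align*}
G_{a,b}+G^{(a)}_{b,x}&=G_{a,b}+\eta_x+G^{(a)}_{b,x-e_1}\vee G^{(a)}_{b,x-e_2}\\
&=\w_x+\bigl(G_{a,b}+G^{(a)}_{b,x-e_1}\bigr)\vee\bigl(G_{a,b}+G^{(a)}_{b,x-e_2}\bigr)\\
&=\w_x+G_{a,x-e_1}\vee G_{a,x-e_2}=G_{a,x},
\end{align*}
using $\eta_x=\w_x$ in the bulk and the LPP recursion for $G_{a,\cbullet}$ in the last line. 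Taking $x=v$ gives the first assertion.

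\emph{Third}, the statements about maximizing paths. For the forward direction: given a maximizing path $\pi$ for $G_{a,v}$, let $w$ be the point where $\pi$ first enters the closed quadrant $b+\Z_{\ge0}^2$ (it lies on the south or west boundary of that quadrant, or equals $b$), and let $\pi'$ be the portion of $\pi$ from $w$ to $v$. Additivity of $G_{a,\cbullet}$ along $\pi$ gives $G_{a,v}=G_{a,w}+{}^o\!\sum_{\text{rest of }\pi}\w$, and the boundary weights $\eta$ were defined precisely so that the $\eta$-weight collected along $\pi'$ (with the $G_{a,\cbullet}$-increment reading on the boundary segment from $b$ or $w$) equals $G_{a,v}-G_{a,b}=G^{(a)}_{b,v}$; hence $\pi'$, suitably completed along a boundary axis of $b+\Z_{\ge0}^2$ from $b$ to $w$, is maximal for $G^{(a)}_{b,v}$. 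For the backward direction: given a maximizing path $\sigma$ for $G^{(a)}_{b,v}$, let $\sigma$ leave the boundary of $b+\Z_{\ge0}^2$ at a point $w=b+ke_r$, and extract the sub-path $\sigma''$ from $w$ to $v$ lying in $b+\Z_{>0}^2$ together with its first edge into the bulk. Along the boundary stretch $\sigma$ collects $\sum_{i=1}^k\eta_{b+ie_r}=G_{a,w}-G_{a,b}$, so by the additivity identity the bulk stretch $\sigma''$ collects $G^{(a)}_{b,v}-(G_{a,w}-G_{a,b})=G_{a,v}-G_{a,w}$, i.e.\ $\sigma''$ is a geodesic from $w$ to $v$ in the original weights; concatenating $\sigma''$ with any $G_{a,\cbullet}$-maximizing path from $a$ to $w$ yields a maximizing path for $G_{a,v}$, and since the edges of $\sigma$ with one endpoint in $b+\Z_{>0}^2$ are exactly the edges of $\sigma''$ beyond $w$ plus the entering edge, these extend to a maximizer for $G_{a,v}$ (more precisely for $G_{a,b}$ after noting the geodesic passes through $w$ — here one uses that restricting a geodesic to a sub-rectangle is again a geodesic, a standard fact from \eqref{v:B7}-type arguments).

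The main obstacle is purely bookkeeping: matching up the ``$\eta$-weight along a path'' with ``$G_{a,\cbullet}$-increment along the same path'' on the boundary axes, and being careful that the endpoint conventions (which weight is included at $b$, at $w$, at $v$) are consistent, since \eqref{v:G}/\eqref{wtG8} include the first weight. No curvature or probabilistic input is needed; the whole lemma is a deterministic consequence of the recursive structure, exactly parallel to the boundary-value-problem discussion around \eqref{v:H1} and the construction in Theorem \ref{v:tIJw1}.
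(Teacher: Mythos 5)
Your proof is correct but takes a genuinely different route from the paper. You first establish the stronger statement that $G_{a,x} = G_{a,b} + G^{(a)}_{b,x}$ for \emph{every} $x \in b + \Z_{\ge 0}^2$ by induction on $|x-b|_1$, telescoping the boundary definition \eqref{eta6} on the axes and invoking the one-step LPP recursion with $\eta_x = \w_x$ in the bulk; the two geodesic statements are then read off by comparing weight sums along paths. The paper instead proves the lemma by a single sandwich of (in)equalities: it writes $G_{a,v} = G_{a,x} + G_{y,v}$ at the edge $(x,y)$ through which the $G_{a,v}$-geodesic enters $b+\Z_{>0}^2$, rewrites the first term as $G_{a,b} + \sum_{i=1}^k \eta_{b+ie_r}$, observes that this exhibits a $G^{(a)}_{b,v}$-admissible path so is $\le G_{a,b} + G^{(a)}_{b,v}$, and then runs the same decomposition backwards from the $G^{(a)}_{b,v}$-maximizer to close the chain with $\le G_{a,v}$. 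Forcing equality throughout gives the identity and the path statements simultaneously, which makes the paper's argument more compact; your induction reveals a slightly stronger fact (the two recursions agree everywhere, not just at $v$), at the cost of a separate bookkeeping argument for the geodesics. One small remark: the last sentence of the lemma as printed says the relevant edges ``extend to a maximizing path for $G_{a,b}$,'' which must be a typo for $G_{a,v}$; your proof correctly argues the $G_{a,v}$ version.
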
 

\begin{proof}   If $v=b+ke_r$ (that is, $v$ is on the boundary of $b+\Z_{\ge0}^2$) the situation is straightforward and we omit the details.  

 Suppose $v>b$ coordinatewise.   Suppose a maximal path for $G_{a,v}$  enters the quadrant $b+\Z_{>0}^2$ through the edge  $(x,y)=(b+ke_r, b+ke_r+e_{3-r})$.   Suppose a maximal path for  $G^{(a)}_{b,v}$  enters  $b+\Z_{>0}^2$ through the edge  $(\wt x, \wt y)=(b+\ell e_s, b+\ell e_s+e_{3-s})$.   Then 
\begin{align*}
G_{a,v}&=G_{a,x}+G_{y,v} = G_{a,b}+\sum_{i=1}^k  \eta_{b+ie_r}  + G_{y,v} \\
&\le  G_{a,b}+  G^{(a)}_{b,v}
=    G_{a,b}+\sum_{i=1}^\ell  \eta_{b+ie_s}  + G_{\wt y,v} \\
&=G_{a,\wt x}+G_{\wt y,v} \le  G_{a,v} .
\end{align*}
Thus  the inequalities above are in fact equalities.   The claims of the lemma follow from this.  
\end{proof} 
 
 We return to the exponential CGM.  
 For  lattice points $0\le v$,   write $\P_{0,v}$ for the probability measure of the LPP process in the rectangle $[0,v]$ with boundary and bulk weights  \eqref{e:w7}.

 \begin{lemma}\label{app-lm3}
  Let $1\le k<k+\ell\le m$.    Then $\P_{0,(m,n)}(\exitt_1 \ge k+\ell )=\P_{0,(m-k,n)}(\exitt_1 \ge \ell )$.  
 \end{lemma}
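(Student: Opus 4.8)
The plan is to reduce the equality to the case $k=1$ and then peel off one coordinate at a time by conditioning on the first weight $I_{e_1}$, exploiting the same Markov/renewal structure that makes $\exitt_1$ meaningful. Concretely, write $\P_{0,(m,n)}(\exitt_1\ge k+\ell)$ and observe that, on the event $\{\exitt_1\ge 1\}$, the maximizing path starts with the step $e_1$; moreover on $\{\exitt_1\ge k+\ell\}$ it follows the $e_1$-axis at least through the point $(k,0)$. So $\exitt_1\ge k+\ell$ is equivalent to: the path takes its first step along $e_1$, and the last-passage value from $0$ to $(m,n)$ equals $\sum_{i=1}^k I_{ie_1}$ plus the last-passage value from $(k,0)$ to $(m,n)$ using the $I$-axis weights on $\{(i,0): i>k\}$, the original axis weights on $\{(0,j)\}$ (which are irrelevant here since the path never touches the $e_2$-axis on this event), and the bulk weights, with the latter exit-from-$e_1$-axis variable being $\ge \ell$.

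First I would set up this decomposition cleanly using Lemma~\ref{app-lm1} with $a=0$ and $b=(k,0)$: the process $G^{(0)}_{(k,0),\cbullet}$ has south-boundary edge weights $\eta_{(k+i,0)} = G^\rho_{0,(k+i,0)}-G^\rho_{0,(k+i-1,0)} = I_{(k+i)e_1}$ for $i\ge 1$, west-boundary edge weights $\eta_{(k,j)} = G^\rho_{0,(k,j)}-G^\rho_{0,(k,j-1)}$, and bulk weights $\w_x$. The key distributional input is Theorem~\ref{v:tIJw1}(i) (equivalently the down-right path independence recorded after it): applied to the down-right path consisting of the horizontal axis segment $\{(i,0): i\ge k\}$ together with the vertical line $\{(k,j): j\ge 0\}$, it shows that the weights $\{I_{(k+i)e_1}, \eta_{(k,j)}, \w_x : i\ge 1, j\ge 1, x\in (k,0)+\Z_{>0}^2\}$ are mutually independent with exactly the marginals of \eqref{e:w7} relative to the shifted origin $(k,0)$. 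In particular the process $G^{(0)}_{(k,0),\cbullet}$ in the rectangle $[(k,0),(m,n)]$ has the same law as the stationary process $G^\rho_{0,\cbullet}$ in the rectangle $[0,(m-k,n)]$.

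Then I would check that on the event $\{\exitt_1\ge 1\}$ — which has full probability of being disjoint from $\{\exitt_2\ge 1\}$ — the restriction of a maximizing path for $G^\rho_{0,(m,n)}$ to $(k,0)+\Z_{\ge0}^2$ is a maximizing path for $G^{(0)}_{(k,0),(m,n)}$ (this is the content of Lemma~\ref{app-lm1}), and that $\exitt_1 \ge k+\ell$ holds iff that restricted path uses at least $\ell$ steps along the $e_1$-axis of the shifted quadrant, i.e. iff $\exitt_1^{(k,0)}\ge \ell$ in the obvious notation. One subtlety: when $\exitt_1\ge k$ but the path might a priori leave and the ``axis'' weights $\eta_{(k,j)}$ on the west boundary of the shifted quadrant differ from the original $J$-weights; but since $\exitt_1\ge k$ already forces the path to be on $\{(i,0):0\le i\le k\}$, and $\exitt_1\ge k+\ell$ concerns only the continuation, this causes no problem — the west boundary of the shifted quadrant is simply never visited on the relevant event. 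Combining the distributional identity from Theorem~\ref{v:tIJw1}(i) with this pathwise characterization gives
\[
\P_{0,(m,n)}(\exitt_1\ge k+\ell) = \P_{0,(m,n)}\bigl(\exitt_1^{(k,0)}\ge \ell\bigr) = \P_{0,(m-k,n)}(\exitt_1\ge \ell),
\]
where the last equality is the distributional equality of $G^{(0)}_{(k,0),\cbullet}$ on $[(k,0),(m,n)]$ and $G^\rho_{0,\cbullet}$ on $[0,(m-k,n)]$.

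The main obstacle I anticipate is purely bookkeeping rather than conceptual: making the pathwise characterization of $\{\exitt_1\ge k+\ell\}$ in terms of the shifted exit point $\exitt_1^{(k,0)}$ airtight, including the degenerate cases (first step along $e_2$, paths touching corners, almost-sure uniqueness of the maximizer so that ``the'' exit point is well defined), and verifying that the irrelevant corner weight conventions do not interfere. Once Lemma~\ref{app-lm1} and Theorem~\ref{v:tIJw1}(i) are invoked correctly, the identity drops out; the temptation to instead prove it by an explicit induction on $k$ via conditioning on $I_{e_1}$ would work too but is messier, so I would present the Lemma~\ref{app-lm1} route.
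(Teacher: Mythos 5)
Your proof is correct and follows essentially the same route as the paper: apply Lemma \ref{app-lm1} with $a=0$, $b=(k,0)$, $v=(m,n)$ to identify the maximizing paths inside the shifted quadrant, and use Theorem \ref{v:tIJw1}(i) to identify the law of $G^{(0)}_{(k,0),\cbullet}$ on $[(k,0),(m,n)]$ with that of $G^\rho_{0,\cbullet}$ on $[0,(m-k,n)]$. The remark that the west boundary of the shifted quadrant ``is never visited on the relevant event'' is inessential (the distributional identity and the pathwise correspondence from Lemma \ref{app-lm1} hold regardless), but this does not affect the correctness of the argument.
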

 
 \begin{proof}  Take $a=0$, $b=(k,0)$ and $v=(m,n)$ in Lemma \ref{app-lm1}. 
 Then, under $\P_{0,(m,n)}$,  the LPP process $G^{(a)}_{b,x}$  in $[b,v]$  has the   same distribution, modulo the translation of the origin to $b$,  as an LPP process  under $\P_{0,(m-k,n)}$.   By Lemma \ref{app-lm1} the maximizing paths from $a$ and $b$ to $v$ agree in their  portions inside $[k+1,m]\times[0,n]$. 
 \end{proof} 
 
 \begin{lemma}\label{app-lm5} 
  Let $1\le \bar m< m$ and $1\le n< \bar n$.  Then $\P_{0,(m,n)}(\exitt_1 <m-\bar m)=\P_{0,(\bar m,\bar n)}( \exitt_2>\bar n-n)$.  
 \end{lemma}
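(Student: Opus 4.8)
\textbf{Proof plan for Lemma \ref{app-lm5}.}

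The plan is to reduce this to a single application of Lemma \ref{app-lm1}, just as Lemma \ref{app-lm3} did, but this time choosing the intermediate base point on the east boundary rather than the south boundary, and exploiting the symmetry that swaps the roles of the two exit points. Concretely, set $a=0$, $b=(\bar m, 0)$ and $v=(m,n)$. By Lemma \ref{app-lm1}, any maximizing path for $G_{0,(m,n)}$ splits at the point where it enters the quadrant $b+\Z_{\ge0}^2$: it crosses the vertical line $\{x_1=\bar m\}$ at some height, and the event $\{\exitt_1 < m-\bar m\}$ records exactly whether that entry happens through the bottom edge (forcing the path to leave the $e_1$-axis before reaching column $\bar m$ within the translated picture, i.e.\ $\exitt_1^{(b)}=0$) or higher up. The distribution of the weights driving $G^{(0)}_{b,\cbullet}$, by Theorem \ref{v:tIJw1}(i) applied to the down-right path forming the south and west boundaries of $[b,v]$ (the $I$-edge weights on the segment $b+ie_1$ are i.i.d.\ Exp$(1-\rho)$ and the bulk weights are i.i.d.\ Exp$(1)$, all independent), is exactly that of an increment-stationary LPP process in a rectangle of shape $(m-\bar m, n)$.

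First I would state carefully the combinatorial equivalence: for the process $G^{(0)}_{b,\cbullet}$ in $[b,v]$, the maximizing path leaves the $e_1$-axis immediately (its exit point from that axis is $0$) precisely when the original maximizing path for $G_{0,(m,n)}$ reaches the column $x_1=\bar m$ already at height $\ge 1$, which is exactly the event $\{\exitt_1 < m-\bar m\}$ after the relabeling. So $\P_{0,(m,n)}(\exitt_1<m-\bar m)=\P_{0,(m-\bar m,n)}(\exitt_1=0)=\P_{0,(m-\bar m,n)}(\exitt_2\ge1)$, using that almost surely exactly one of $\exitt_1,\exitt_2$ is positive. Then I would apply the transposition symmetry of the increment-stationary model — reflecting the lattice across the diagonal sends parameter $\rho$ to $1-\rho$, swaps $e_1\leftrightarrow e_2$, swaps $\exitt_1\leftrightarrow\exitt_2$, and swaps the rectangle shape $(p,q)\leftrightarrow(q,p)$ — together with a second application of Lemma \ref{app-lm1}, this time with base point on the north boundary, to rewrite $\P_{0,(m-\bar m,n)}(\exitt_2\ge1)$ in terms of the process on $[0,(\bar m,\bar n)]$ with exit point $\exitt_2>\bar n-n$. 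The cleanest route is probably to run Lemma \ref{app-lm1} in the transposed picture with $a=0$, $b=(0,n)$, $v=(\bar m,\bar n)$, which gives $\P_{0,(\bar m,\bar n)}(\exitt_2\ge \bar n-n+1)=\P_{0,(\bar m,n)}(\exitt_2\ge1)$, and then match $\P_{0,(\bar m,n)}(\exitt_2\ge1)=\P_{0,(m-\bar m,n)}(\exitt_1\ge1)$ via transposition once I have arranged that $m-\bar m$ and $\bar m$ play interchangeable roles — but in fact the statement as written pairs $(m,n)$ with $(\bar m,\bar n)$ directly, so I would instead keep track of indices so that the two quadrant-splitting steps are applied to the rectangle $[0,(\bar m,\bar n)]$ itself, with the south-boundary cut at $b=(\bar m,0)$ producing the shape $(m-\bar m, n)$ wait — here $m>\bar m$ so this needs the cut to go the other way; the correct bookkeeping is to cut $[0,(\bar m,\bar n)]$ along its \emph{north} boundary at $b=(0,n)$, producing a sub-rectangle of shape $(\bar m, \bar n-n)$, and observe that $\{\exitt_2>\bar n-n\}$ for the original process equals $\{\exitt_2^{(b)}=0\}$ for $G^{(0)}_{b,\cbullet}$, which is an increment-stationary process of shape $(\bar m, \bar n-n)$.

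The remaining task is then purely to equate the two reduced probabilities, $\P_{0,(m-\bar m,n)}(\exitt_1=0)$ and $\P_{0,(\bar m,\bar n-n)}(\exitt_2=0)$, and for this the hypothesis $m-\bar m$ and $\bar n-n$ are \emph{not} assumed equal, so these cannot be equal in general unless the statement is using a further identity — and indeed they are: the event $\exitt_1=0$ means the maximizing path's first step is $e_2$, equivalently $G_{0,e_2}+G_{e_2,(p,q)}>G_{0,e_1}+G_{e_1,(p,q)}$, and by inspection this probability for the increment-stationary model in $[0,(p,q)]$ depends on $(p,q)$ only through... this is the crux, and I expect \textbf{the main obstacle} is precisely verifying that $\P_{0,(p,q)}(\exitt_1=0)$, as a function of $(p,q)$, is invariant under the particular change $(m-\bar m,n)\mapsto(\bar m,\bar n-n)$ forced by the constraints $n<\bar n$, $\bar m<m$. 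The resolution is that one should \emph{not} reduce both sides to exit-point-zero events on rectangles of different shapes; rather, one should apply Lemma \ref{app-lm1} only once, with $a=0$, $b=(\bar m,0)$, $v=(m,n)$, giving the shape $(m-\bar m,n)$ on the right — but the claimed right-hand side has shape $(\bar m,\bar n)$, so the honest statement is that $\P_{0,(m,n)}(\exitt_1<m-\bar m)$ and $\P_{0,(\bar m,\bar n)}(\exitt_2>\bar n-n)$ are \emph{both} equal to $\P_{0,(\bar m,n)}(\text{path's step into column }\bar m\text{ is at height}\ge1)$ — wait, the shapes $(\bar m,n)$ appear on both via the two cuts — cutting $[0,(m,n)]$ at $b=(\bar m,0)$ the relevant sub-object governing $\{\exitt_1<m-\bar m\}$ is actually the \emph{first column crossing}, controlled by the LPP process $G_{0,\cbullet}$ on $[0,(\bar m,n)]$ through boundary weights $I$; dually, cutting $[0,(\bar m,\bar n)]$ at $b=(0,n)$ the event $\{\exitt_2>\bar n-n\}$ is controlled by $G_{0,\cbullet}$ on $[0,(\bar m,n)]$. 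So both probabilities equal $\P_{0,(\bar m,n)}(\exitt_1\ge1)$ (equivalently $\P_{0,(\bar m,n)}(\exitt_2=0)$), and that common reduction is the content of the proof. Thus the plan is: (i) cut $[0,(m,n)]$ along the south at $(\bar m,0)$, apply Lemma \ref{app-lm1}, and identify $\{\exitt_1<m-\bar m\}$ with the event that the maximizing path for the induced process on $[0,(\bar m,n)]$ does not stay on the $e_1$-axis all the way to column $\bar m$, i.e.\ $\{\exitt_1\ge1\}$ for $\P_{0,(\bar m,n)}$, using Theorem \ref{v:tIJw1}(i) to identify distributions; (ii) cut $[0,(\bar m,\bar n)]$ along the north at $(0,n)$, apply Lemma \ref{app-lm1} (transposed), and identify $\{\exitt_2>\bar n-n\}$ with $\{\exitt_2\ge1\}$ for $\P_{0,(m,\bar n-? )}$ — arranged to land on the same rectangle $[0,(\bar m,n)]$ — and note $\{\exitt_2\ge1\}=\{\exitt_1=0\}$ a.s., which equals the complement inside... finishing by noting both events have the same probability $\P_{0,(\bar m,n)}(\exitt_1\ge1)$; (iii) conclude. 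I expect step (i)–(ii), namely pinning down exactly which auxiliary rectangle and which exit event the quadrant-splitting produces and checking the distributional identification via Theorem \ref{v:tIJw1}(i), to be the one place requiring genuine care; everything else is bookkeeping analogous to the proof of Lemma \ref{app-lm3}.
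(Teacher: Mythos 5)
The route you land on in the final plan --- cut $[0,(m,n)]$ along the bottom axis and $[0,(\bar m,\bar n)]$ along the left axis so that both sides reduce via Lemma \ref{app-lm1} to an exit-time probability for the \emph{same} rectangle $[0,(\bar m,n)]$ --- is a genuinely different route from the one the paper takes. The paper fixes the single terminal $v=(\bar m,n)$, places two auxiliary origins $a=(\bar m-m,0)$ and $a'=(0,n-\bar n)$ to its southwest, builds one coupled environment on which the three processes $\wt G_{a,\cdot}$, $\wt G_{0,\cdot}$, $\wt G_{a',\cdot}$ live simultaneously, and shows that the two events defining the two sides of the identity are the \emph{same event} on this common space (both are determined by whether the maximizing path enters $\Z_{>0}^2$ through an edge emanating from a point $\ell e_2$ with $\ell\ge1$). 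No third rectangle appears and no intermediate probability needs to be computed.

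However, as written your plan does not constitute a proof, because the index bookkeeping is off in several places. In step (i) you cut at $b=(\bar m,0)$, but then the sub-rectangle $[b,(m,n)]$ has shape $(m-\bar m,n)$, not $(\bar m,n)$; the cut should be at $b=(m-\bar m,0)$. After correcting that, the translation of exit-time events through Lemma \ref{app-lm1} is delicate precisely at the boundary: the induced process based at $b$ has exit time $\exitt_1^{(b)}=0$ whenever the original path does not step strictly past $b$ on the $e_1$-axis, and this corresponds to $\exitt_1\le m-\bar m$ for the original process --- the boundary case $\exitt_1=m-\bar m$, in which the original path reaches $b$ and immediately turns upward, also forces $\exitt_1^{(b)}=0$. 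So this cut gives $\P_{0,(m,n)}(\exitt_1\le m-\bar m)=\P_{0,(\bar m,n)}(\exitt_2\ge1)$, a non-strict bound on the left. By contrast, cutting $[0,(\bar m,\bar n)]$ at $(0,\bar n-n)$ gives $\P_{0,(\bar m,\bar n)}(\exitt_2>\bar n-n)=\P_{0,(\bar m,n)}(\exitt_2\ge1)$ cleanly, since $\exitt_2>\bar n-n$ is already the statement that the original path strictly passes the cut point. The two translations therefore meet at $\le m-\bar m$, not $<m-\bar m$. This one-step discrepancy between the strict inequality on the $\exitt_1$ side and the strict inequality on the $\exitt_2$ side is exactly what your stream-of-consciousness write-up never pins down, and it has to be resolved carefully before the argument (and, for that matter, the boundary case in the paper's own proof) can be trusted. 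A smaller slip: the common reduced quantity should be $\P_{0,(\bar m,n)}(\exitt_2\ge1)$ (equivalently $\P_{0,(\bar m,n)}(\exitt_1=0)$), not $\P_{0,(\bar m,n)}(\exitt_1\ge1)$ as you wrote.
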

 
 \begin{proof}
 We couple these LPP processes as follows.  Let 
 \[  a=(\bar m-m,0), \quad a'=(0, n-\bar n) \quad \text{ and }\quad v=(\bar m,n). \]  The origin $0$ takes the  role of $b$ in Lemma \ref{app-lm1}. 
 
 Let i.i.d.\ Exp(1) weights $\{\w_x\}_{x\in\Z^2}$ be given.  Then 
place  independent boundary edge  weights with distributions dictated by \eqref{e:w7}  on the south and west boundaries of the lattice region $(a+\Z_{\ge0}^2)\cup(a'+\Z_{\ge0}^2)$: 
\begin{enumerate}
\item[(a)] On horizontal boundary edges put Exp($1-\rho$) weights $\sigma_{(i-1)e_1, ie_1}$ for $\bar m-m+1\le i\le 0$ and  $\sigma_{a'+(i-1)e_1, \,a'+ie_1}$ for $i\in\Z_{>0}$. 
\item[(b)] On vertical boundary edges   put  Exp($\rho$) weights $\sigma_{(j-1)e_2,\,je_2}$ for $n-\bar n+1\le j\le 0$ and  $\sigma_{a+(j-1)e_2,\,a+je_2}$ for $j\in\Z_{>0}$.  
 \end{enumerate} 
 
 Next consider two LPP processes    that emanate from $a$ and $a'$ and use the boundary weights described above in (a) and (b):   $G_{a,\ell e_2}$ for points $\ell e_2$, $\ell\ge 1$, on the $y$-axis, and $G_{a',ke_1}$ for points $ke_1$, $k\ge 1$,  on the $x$-axis.  
  Let these processes   define boundary weights on $\Z_{\ge0}^2$:  
$\eta_{(i-1)e_1,\,ie_1}=G_{a', \,ie_1}-G_{a', (i-1)e_1}$  and 
  $\eta_{(j-1)e_2,\,je_2}=G_{a, \, je_2}-G_{a, (j-1)e_2}$ for $i,j\in\Z_{>0}$.    See Figure \ref{fig-app3} for an illustration of the setting described here.  
 

 \begin{figure}
\begin{center}
\begin{picture}(200,150)(20,-10)
\put(90,0){\line(1,0){120}} 
\put(150,2){\small$\sigma$} 

 \put(40,40){\line(1,0){170}}\put(210,0){\line(0,1){110}}
 \put(63,42){\small$\sigma$}   \put(150,43){\small$\eta$}
\put(40,110){\line(1,0){170}}
\put(40,40){\line(0,1){70}} \put(90,0){\line(0,1){110}}
 \put(31,74){\small$\sigma$}
  \put(82,17){\small$\sigma$}
  \put(82,74){\small$\eta$} 
 
\put(37,37){\Large$\bullet$} 
\put(-30,38){\small$a=(\overline m-m,0)$}

\put(86.5,36.5){\Large$\bullet$} 
\put(93,43){\small$b=0$}

\put(87,-3){\Large$\bullet$} 
\put(50,-10){\small$a'=(0, n-\bar n)$}

\put(207,37){\Large$\bullet$} 
\put(215,37){\small$(\overline m,0)$}

\put(206,106){\Large$\bullet$} 
\put(215,107){\small$v=(\overline m,n)$}

\put(87,106){\Large$\bullet$} 
\put(80,118){\small$(0, n)$}

 \end{picture}
\end{center}  
\caption{ \small Illustration of the proof of Lemma \ref{app-lm5}. 
The $\eta$ and $\sigma$ labels on the south and west boundaries of rectangles indicate which boundary edge weights are placed on that side. } \label{fig-app3}
\end{figure}

 
 Now consider three LPP processes with lower left corners $a$, $0$ and $a'$:
\begin{enumerate}
\item[(i)] $\wt G_{a, x}$ uses boundary weights $\sigma_{(i-1)e_1,\,ie_1}$ for $\bar m-m+1\le i\le 0$ and $\eta_{(i-1)e_1,\,ie_1}$ for $i\in\Z_{>0}$ on  the horizontal axis emanating from $a$, and boundary weights  $\sigma_{a+(j-1)e_2,\,a+je_2}$ for $j\in\Z_{>0}$ on the vertical axis emanating from $a$.   
 
\item[(ii)] $\wt G_{0,x}$   uses boundary weights $\eta_{(i-1)e_1,\,ie_1}$  and 
  $\eta_{(j-1)e_2,\,je_2}$ on the standard axes emanating from $0$. 
  
\item[(iii)]  $\wt G_{a', x}$  uses boundary weights  $\sigma_{a'+(i-1)e_1, \,a'+ie_1}$ for $i\in\Z_{>0}$ on the horizontal axis emanating from $a'$,  and boundary  weights $\sigma_{(j-1)e_2,\,je_2}$ for $n-\bar n+1\le j\le 0$ and  $\eta_{(j-1)e_2,\,je_2}$ for $j\in\Z_{>0}$ on the vertical axis emanating from $a'$.   
\end{enumerate} 

  Let $\wt \P$ denote the probability measure under which this coupling has been constructed, that is, the probability measure of the independent weights $\w_x$ and $\sigma_{x,\,x+e_k}$.  
   
  Let $A$ be the event that the (a.s.\ unique) maximal path for $\wt G_{a,v}$ does not  go through the origin.  Let   $B$  the event that the (a.s.\ unique) maximal path for $\wt G_{a',v}$  goes through the point $e_2$.   Lemma \ref{app-lm1} applies to the pair $\wt G_{a, v}$ and $\wt G_{0, v}$, and also to the pair $\wt G_{a', v}$ and $\wt G_{0, v}$.  Thus the maximizing paths for  $\wt G_{a,v}$ and  $\wt G_{a',v}$ agree from that edge  onwards through which they enter  the positive first quadrant $\Z_{>0}^2$.  Both $A$ and $B$ are equivalent to the statement that this edge emanates from some point $\ell e_2$ for $\ell\ge 1$.    Hence $A=B$.  
 
 On the other hand,  LPP processes   $\{\wt G_{a,\,a+x}\}_{x\in\Z_{\ge0}^2}$  and $\{\wt G_{a',\,a'+x}\}_{x\in\Z_{\ge0}^2}$ both  have  the same distribution  as  the  LPP process $\{G^\rho_{0,x}\}_{x\in\Z_{\ge0}^2}$ with stationary increments.    Event $A$ is equivalent to the condition that the maximizing path for $\wt G_{a,v}$ takes at most $m-\bar m-1$ consecutive $e_1$-steps from $a$, which is the same as $\exitt_1<m-\bar m$ for  $G^\rho_{0,(m,n)}$. Similarly,  event $B$ says that 
 the   maximizing path for $\wt G_{a',v}$  takes at least $\bar n-n+1$ consecutive $e_2$-steps from $a'$, which for  $G^\rho_{0,(\bar m,\bar n)}$ is the same as $\exitt_2>\bar n-n$.    Thus 
 \[ \P_{0,(m,n)}(\exitt_1 <m-\bar m)=\wt \P(A)=\wt \P(B)=\P_{0,(\bar m,\bar n)}( \exitt_2>\bar n-n).  \qedhere \]
 \end{proof}

\medskip 

\small

\bibliographystyle{plain}

\bibliography{growthrefs}

\begin{thebibliography}{10}

\bibitem{ahlb-hoff}
Daniel Ahlberg and Christopher Hoffman.
\newblock Random coalescing geodesics in first-passage percolation.
\newblock {\em {\tt arXiv:1609.02447}}, 2016.

\bibitem{aldo-diac95}
David Aldous and Persi Diaconis.
\newblock Hammersley's interacting particle process and longest increasing
  subsequences.
\newblock {\em Probab. Theory Related Fields}, 103(2):199--213, 1995.

\bibitem{alm-98}
Sven~Erick Alm.
\newblock A note on a problem by {W}elsh in first-passage percolation.
\newblock {\em Combin. Probab. Comput.}, 7(1):11--15, 1998.

\bibitem{alm-wier-99}
Sven~Erick Alm and John~C. Wierman.
\newblock Inequalities for means of restricted first-passage times in
  percolation theory.
\newblock {\em Combin. Probab. Comput.}, 8(4):307--315, 1999.
\newblock Random graphs and combinatorial structures (Oberwolfach, 1997).

\bibitem{auff-damr-13-alea}
Antonio Auffinger and Michael Damron.
\newblock The scaling relation {$\chi=2\xi-1$} for directed polymers in a
  random environment.
\newblock {\em ALEA Lat. Am. J. Probab. Math. Stat.}, 10(2):857--880, 2013.

\bibitem{auff-damr-14}
Antonio Auffinger and Michael Damron.
\newblock A simplified proof of the relation between scaling exponents in
  first-passage percolation.
\newblock {\em Ann. Probab.}, 42(3):1197--1211, 2014.

\bibitem{baik-deif-joha-99}
Jinho Baik, Percy Deift, and Kurt Johansson.
\newblock On the distribution of the length of the longest increasing
  subsequence of random permutations.
\newblock {\em J. Amer. Math. Soc.}, 12(4):1119--1178, 1999.

\bibitem{bakh-cato-khan-14}
Yuri Bakhtin, Eric Cator, and Konstantin Khanin.
\newblock Space-time stationary solutions for the {B}urgers equation.
\newblock {\em J. Amer. Math. Soc.}, 27(1):193--238, 2014.

\bibitem{bala-cato-sepp}
M\'arton Bal{\'a}zs, Eric Cator, and Timo Sepp{\"a}l{\"a}inen.
\newblock Cube root fluctuations for the corner growth model associated to the
  exclusion process.
\newblock {\em Electron. J. Probab.}, 11:no. 42, 1094--1132 (electronic), 2006.

\bibitem{bala-komj-sepp-12}
M{\'a}rton Bal{\'a}zs, J{\'u}lia Komj{\'a}thy, and Timo Sepp{\"a}l{\"a}inen.
\newblock Microscopic concavity and fluctuation bounds in a class of deposition
  processes.
\newblock {\em Ann. Inst. Henri Poincar\'e Probab. Stat.}, 48(1):151--187,
  2012.

\bibitem{bala-sepp-aom}
M{\'a}rton Bal{\'a}zs and Timo Sepp{\"a}l{\"a}inen.
\newblock Order of current variance and diffusivity in the asymmetric simple
  exclusion process.
\newblock {\em Ann. of Math. (2)}, 171(2):1237--1265, 2010.

\bibitem{boiv-derr}
Daniel Boivin and Yves Derriennic.
\newblock The ergodic theorem for additive cocycles of {${\bf Z}^d$} or {${\bf
  R}^d$}.
\newblock {\em Ergodic Theory Dynam. Systems}, 11(1):19--39, 1991.

\bibitem{boro-gori-16}
Alexei Borodin and Vadim Gorin.
\newblock Lectures on integrable probability.
\newblock In {\em Probability and statistical physics in {S}t. {P}etersburg},
  volume~91 of {\em Proc. Sympos. Pure Math.}, pages 155--214. Amer. Math.
  Soc., Providence, RI, 2016.

\bibitem{cato-groe-06}
Eric Cator and Piet Groeneboom.
\newblock Second class particles and cube root asymptotics for {H}ammersley's
  process.
\newblock {\em Ann. Probab.}, 34(4):1273--1295, 2006.

\bibitem{cato-pime-13}
Eric Cator and Leandro P.~R. Pimentel.
\newblock Busemann functions and the speed of a second class particle in the
  rarefaction fan.
\newblock {\em Ann. Probab.}, 41(4):2401--2425, 2013.

\bibitem{chat-13-aom}
Sourav Chatterjee.
\newblock The universal relation between scaling exponents in first-passage
  percolation.
\newblock {\em Ann. of Math. (2)}, 177(2):663--697, 2013.

\bibitem{cohn-elki-prop-96}
Henry Cohn, Noam Elkies, and James Propp.
\newblock Local statistics for random domino tilings of the {A}ztec diamond.
\newblock {\em Duke Math. J.}, 85(1):117--166, 1996.

\bibitem{come-16}
Francis Comets.
\newblock {\em Directed polymers in random environments}, volume 2175 of {\em
  Lecture Notes in Mathematics}.
\newblock Springer, Cham, 2017.
\newblock Lecture notes from the 46th Probability Summer School held in
  Saint-Flour, 2016.

\bibitem{corw-12-rev}
Ivan Corwin.
\newblock The {K}ardar-{P}arisi-{Z}hang equation and universality class.
\newblock {\em Random Matrices Theory Appl.}, 1(1):1130001, 76, 2012.

\bibitem{corw-16-rev}
Ivan Corwin.
\newblock Kardar-{P}arisi-{Z}hang universality.
\newblock {\em Notices Amer. Math. Soc.}, 63(3):230--239, 2016.

\bibitem{damr-hans-17}
Michael Damron and Jack Hanson.
\newblock Bigeodesics in first-passage percolation.
\newblock {\em Comm. Math. Phys.}, 349(2):753--776, 2017.

\bibitem{damr-rass-sepp-16}
Michael Damron, Firas Rassoul-Agha, and Timo Sepp\"al\"ainen.
\newblock Random growth models.
\newblock {\em Notices Amer. Math. Soc.}, 63(9):1004--1008, 2016.

\bibitem{denholl-polymer}
Frank den Hollander.
\newblock {\em Random polymers}, volume 1974 of {\em Lecture Notes in
  Mathematics}.
\newblock Springer-Verlag, Berlin, 2009.
\newblock Lectures from the 37th Probability Summer School held in Saint-Flour,
  2007.

\bibitem{deus-zeit-95}
Jean-Dominique Deuschel and Ofer Zeitouni.
\newblock Limiting curves for i.i.d.\ records.
\newblock {\em Ann. Probab.}, 23(2):852--878, 1995.

\bibitem{deus-zeit-99}
Jean-Dominique Deuschel and Ofer Zeitouni.
\newblock On increasing subsequences of {I}.{I}.{D}.\ samples.
\newblock {\em Combin. Probab. Comput.}, 8(3):247--263, 1999.

\bibitem{ferr-font-94a}
P.~A. Ferrari and L.~R.~G. Fontes.
\newblock Current fluctuations for the asymmetric simple exclusion process.
\newblock {\em Ann. Probab.}, 22(2):820--832, 1994.

\bibitem{ferr-mart-pime-09}
Pablo~A. Ferrari, James~B. Martin, and Leandro P.~R. Pimentel.
\newblock A phase transition for competition interfaces.
\newblock {\em Ann. Appl. Probab.}, 19(1):281--317, 2009.

\bibitem{ferr-pime-05}
Pablo~A. Ferrari and Leandro P.~R. Pimentel.
\newblock Competition interfaces and second class particles.
\newblock {\em Ann. Probab.}, 33(4):1235--1254, 2005.

\bibitem{geor}
Hans-Otto Georgii.
\newblock {\em Gibbs measures and phase transitions}, volume~9 of {\em de
  Gruyter Studies in Mathematics}.
\newblock Walter de Gruyter \& Co., Berlin, 1988.

\bibitem{geor-rass-sepp-16}
Nicos Georgiou, Firas Rassoul-Agha, and Timo Sepp{\"a}l{\"a}inen.
\newblock Variational formulas and cocycle solutions for directed polymer and
  percolation models.
\newblock {\em Comm. Math. Phys.}, 346(2):741--779, 2016.

\bibitem{geor-rass-sepp-17-buse}
Nicos Georgiou, Firas Rassoul-Agha, and Timo Sepp\"al\"ainen.
\newblock Stationary cocycles and {B}usemann functions for the corner growth
  model.
\newblock {\em Probab. Theory Related Fields}, 169(1-2):177--222, 2017.

\bibitem{geor-rass-sepp-yilm-15}
Nicos Georgiou, Firas Rassoul-Agha, Timo Sepp{\"a}l{\"a}inen, and Atilla
  Yilmaz.
\newblock Ratios of partition functions for the log-gamma polymer.
\newblock {\em Ann. Probab.}, 43(5):2282--2331, 2015.

\bibitem{hamm}
John~M. Hammersley.
\newblock A few seedlings of research.
\newblock In {\em Proceedings of the Sixth Berkeley Symposium on Mathematical
  Statistics and Probability (Univ. California, Berkeley, Calif., 1970/1971),
  Vol. I: Theory of statistics}, pages 345--394, Berkeley, Calif., 1972. Univ.
  California Press.

\bibitem{hoff-05}
Christopher Hoffman.
\newblock Coexistence for {R}ichardson type competing spatial growth models.
\newblock {\em Ann. Appl. Probab.}, 15(1B):739--747, 2005.

\bibitem{jock-prop-shor-98}
William Jockusch, James Propp, and Peter Shor.
\newblock Random domino tilings and the arctic circle theorem.
\newblock {\em {\tt arXiv:math/9801068}}.

\bibitem{joha}
Kurt Johansson.
\newblock Shape fluctuations and random matrices.
\newblock {\em Comm. Math. Phys.}, 209(2):437--476, 2000.

\bibitem{johalect-05}
Kurt Johansson.
\newblock {\em Random matrices and determinantal processes}.
\newblock Les Houches lecture notes. 2005.
\newblock {\tt arXiv:math-ph/0510038}.

\bibitem{kimj-96}
Jeong~Han Kim.
\newblock On increasing subsequences of random permutations.
\newblock {\em J. Combin. Theory Ser. A}, 76(1):148--155, 1996.

\bibitem{kosy-07}
Elena Kosygina.
\newblock Homogenization of stochastic {H}amilton-{J}acobi equations: brief
  review of methods and applications.
\newblock In {\em Stochastic analysis and partial differential equations},
  volume 429 of {\em Contemp. Math.}, pages 189--204. Amer. Math. Soc.,
  Providence, RI, 2007.

\bibitem{kris-16}
Arjun Krishnan.
\newblock Variational formula for the time constant of first-passage
  percolation.
\newblock {\em Comm. Pure Appl. Math.}, 69(10):1984--2012, 2016.

\bibitem{loga-shep-77}
B.~F. Logan and L.~A. Shepp.
\newblock A variational problem for random {Y}oung tableaux.
\newblock {\em Advances in Math.}, 26(2):206--222, 1977.

\bibitem{mart-04}
James~B. Martin.
\newblock Limiting shape for directed percolation models.
\newblock {\em Ann. Probab.}, 32(4):2908--2937, 2004.

\bibitem{more-sepp-valk-14}
Gregorio~R. Moreno~Flores, Timo Sepp{\"a}l{\"a}inen, and Benedek Valk{\'o}.
\newblock Fluctuation exponents for directed polymers in the intermediate
  disorder regime.
\newblock {\em Electron. J. Probab.}, 19:no. 89, 28, 2014.

\bibitem{newm-icm-95}
Charles~M. Newman.
\newblock A surface view of first-passage percolation.
\newblock In {\em Proceedings of the {I}nternational {C}ongress of
  {M}athematicians, {V}ol.\ 1, 2 ({Z}\"urich, 1994)}, pages 1017--1023, Basel,
  1995. Birkh\"auser.

\bibitem{rass-sepp-p2p}
Firas Rassoul-Agha and Timo Sepp{\"a}l{\"a}inen.
\newblock Quenched point-to-point free energy for random walks in random
  potentials.
\newblock {\em Probab. Theory Related Fields}, 158(3-4):711--750, 2014.

\bibitem{rass-sepp-yilm-17}
Firas Rassoul-Agha, Timo Sepp{\"a}l{\"a}inen, and Atilla Yilmaz.
\newblock Variational formulas and disorder regimes of random walks in random
  potentials.
\newblock {\em Bernoulli}, 23(1):405--431, 2017.

\bibitem{rose-phd-06}
Jeffrey~M. Rosenbluth.
\newblock {\em Quenched large deviation for multidimensional random walk in
  random environment: {A} variational formula}.
\newblock ProQuest LLC, Ann Arbor, MI, 2006.
\newblock Thesis (Ph.D.)--New York University.

\bibitem{rost}
Hermann Rost.
\newblock Nonequilibrium behaviour of a many particle process: density profile
  and local equilibria.
\newblock {\em Z. Wahrsch. Verw. Gebiete}, 58(1):41--53, 1981.

\bibitem{sepp-96}
Timo Sepp{\"a}l{\"a}inen.
\newblock A microscopic model for the {B}urgers equation and longest increasing
  subsequences.
\newblock {\em Electron. J. Probab.}, 1:no.\ 5, approx.\ 51 pp.\ (electronic),
  1996.

\bibitem{sepp98ebp}
Timo Sepp{\"a}l{\"a}inen.
\newblock Coupling the totally asymmetric simple exclusion process with a
  moving interface.
\newblock {\em Markov Process. Related Fields}, 4(4):593--628, 1998.
\newblock I Brazilian School in Probability (Rio de Janeiro, 1997).

\bibitem{sepp98mprf}
Timo Sepp{\"a}l{\"a}inen.
\newblock Hydrodynamic scaling, convex duality and asymptotic shapes of growth
  models.
\newblock {\em Markov Process. Related Fields}, 4(1):1--26, 1998.

\bibitem{sepp-ptrf-98}
Timo Sepp{\"a}l{\"a}inen.
\newblock Large deviations for increasing sequences on the plane.
\newblock {\em Probab. Theory Related Fields}, 112(2):221--244, 1998.

\bibitem{sepp-10-ens}
Timo Sepp{\"a}l{\"a}inen.
\newblock {\em Current fluctuations for stochastic particle systems with drift
  in one spatial dimension}, volume~18 of {\em Ensaios Matem\'aticos
  [Mathematical Surveys]}.
\newblock Sociedade Brasileira de Matem\'atica, Rio de Janeiro, 2010.

\bibitem{sepp-12-aop-corr}
Timo Sepp{\"a}l{\"a}inen.
\newblock Scaling for a one-dimensional directed polymer with boundary
  conditions.
\newblock {\em Ann. Probab.}, 40(1):19--73, 2012.
\newblock Corrected version available at {\tt http://arxiv.org/abs/0911.2446}.

\bibitem{vers-kero-77}
A.~M. Ver{\v{s}}ik and S.~V. Kerov.
\newblock Asymptotic behavior of the {P}lancherel measure of the symmetric
  group and the limit form of {Y}oung tableaux.
\newblock {\em Dokl. Akad. Nauk SSSR}, 233(6):1024--1027, 1977.
\newblock English translation: Soviet Math. Dokl. 233 (1977), no. 1--6,
  527--531.

\end{thebibliography}

\end{document}